\DeclareSymbolFont{cyrletters}{OT2}{wncyr}{m}{n}
\newtheorem{theorem}{Theorem}[section]
\newtheorem{lemma}[theorem]{Lemma}
\newtheorem{corollary}[theorem]{Corollary}
\newtheorem{proposition}[theorem]{Proposition}
\theoremstyle{remark}
\newtheorem{remark}[theorem]{Remark}
\theoremstyle{definition}
\newtheorem{definition}[theorem]{Definition}
\newtheorem*{ack}{Acknowledgements}
\newcommand{\bbA}{{\mathbb A}}
\newcommand{\bbF}{{\mathbb F}}
\newcommand{\bbG}{{\mathbb G}}
\newcommand{\bbQ}{{\mathbb Q}}
\newcommand{\bbR}{{\mathbb R}}
\newcommand{\bbZ}{{\mathbb Z}}
\newcommand{\frf}{{\mathfrak f}}
\newcommand{\cA}{{\mathcal A}}
\newcommand{\cB}{{\mathcal B}}
\newcommand{\cC}{{\mathcal C}}
\newcommand{\cE}{{\mathcal E}}
\newcommand{\cG}{{\mathcal G}}
\newcommand{\cO}{{\mathcal O}}
\newcommand{\cT}{{\mathcal T}}
\newcommand{\et}{{\textnormal{\'et}}}
\DeclareMathOperator{\Gal}{Gal}
\DeclareMathOperator{\Br}{Br}
\DeclareMathOperator{\Aut}{Aut}
\DeclareMathOperator{\Hom}{Hom}
\DeclareMathOperator{\End}{End}
\DeclareMathOperator{\Frob}{Frob} 
\DeclareMathOperator{\Kum}{Kum}
\DeclareMathOperator{\ord}{ord}
\DeclareMathOperator{\Cor}{Cor}
\DeclareMathOperator{\Res}{Res}
\DeclareMathOperator{\ev}{ev}
\DeclareMathOperator{\inv}{inv}
\DeclareMathOperator{\Pic}{Pic}
\DeclareMathOperator{\NS}{NS}
\DeclareMathOperator{\disc}{disc}
\DeclareMathOperator{\GL}{GL}
\DeclareMathOperator{\PGL}{PGL}
\author{\sc Mohamed Alaa Tawfik}
\address{Mohamed Alaa Tawfik\\
Department of Mathematics\\ 
King's College London\\
Strand\\ 
London\\
WC2R 2LS\\
UK.}
   \email{mohamed.tawfik@kcl.ac.uk}
\urladdr{https://sites.google.com/view/mohamed-a-tawfik/}
\author{\sc Rachel Newton}
\address{Rachel Newton\\
Department of Mathematics\\ 
King's College London\\
Strand\\ 
London\\
WC2R 2LS\\
UK.}
   \email{rachel.newton@kcl.ac.uk}
\urladdr{https://sites.google.com/view/rachelnewton/}
\title[]{Transcendental Brauer--Manin obstructions on singular K3 surfaces}
\begin{document}

\subjclass[2020]
{14G05 (primary), 
14F22, 
11G05, 
14J28 
(secondary).} 


\begin{abstract}
Let $E$ and $E'$ be elliptic curves over $\bbQ$ with complex multiplication by the ring of integers of an imaginary quadratic field $K$ and let $Y=\Kum(E\times E')$ be the minimal desingularisation of the quotient of $E\times E'$ by the action of $-1$. 
We study the Brauer groups of such surfaces $Y$ and use them to furnish new examples of transcendental Brauer--Manin obstructions to weak approximation.
\end{abstract}

\maketitle

\section{Introduction}
Let $k$ be a number field and let $\bbA_k$ denote the ad\`{e}les of $k$.
Let $X/k$ be a smooth, projective, geometrically irreducible algebraic variety, let $\bar{X}$ denote its base change to an algebraic closure of $k$, and let $\Br(X)=\mathrm{H}^2_{\et}(X,\bbG_m)$ denote the Brauer group of $X$. For $v$ a place of $k$ and $\cA\in \Br(X)$, functoriality yields an evaluation map 
\begin{align*}
\ev_{\cA,v}:X(k_v)&\to \Br(k_v)\\
x_v&\mapsto \cA(x_v).
\end{align*}
 The Hasse invariant $\inv_v:\Br(k_v)\to\bbQ/\bbZ$ is an isomorphism for $v$ finite, and has image $\frac{1}{2}\bbZ/\bbZ$ for $v$ real and zero for $v$ complex. In~\cite{Manin}, Manin defined what became known as the \emph{Brauer--Manin pairing}
\begin{align}\label{eq:BM}
X(\bbA_k)\times \Br (X)&\to \bbQ/\bbZ\\
((x_v)_v \ ,\  \cA)&\mapsto \sum_{v}\inv_v(\cA(x_v))\nonumber
\end{align}
where the sum is over all places $v$ of $k$. For $B\subset \Br(X)$, the subset of $X(\bbA_k)$ consisting of all elements that are orthogonal to $B$ under the pairing~\eqref{eq:BM} is denoted $X(\bbA_k)^{B}$. The \emph{Brauer--Manin set} is 
$X(\bbA_k)^{\Br(X)}$. Global class field theory (the Albert--Brauer--Hasse--Noether Theorem), and continuity of evaluation maps, shows that $X(\bbA_k)^{\Br(X)}$ contains the closure of $X(k)$ in $X(\bbA_k)=\prod_v X(k_v)$ with respect to the product of the $v$-adic topologies. This shows that, in some cases, the emptiness of $X(k)$ despite $X$ having points in all completions can be explained by the emptiness of $X(\bbA_k)^{\Br(X)}$. This is known as a Brauer--Manin obstruction to the Hasse principle. In cases where $X(k)$ is non-empty, one would like to understand more about the rational points on $X$: for example, does weak approximation hold, i.e.~is $X(k)$ dense in $X(\bbA_k)$? If $X(\bbA_k)^B$ is not equal to $X(\bbA_k)$ for some $B\subset \Br(X)$, we say that $B$ obstructs weak approximation on $X$.

Manin's work 
 initiated a great deal of activity, 
see~\cite{Wittenbergsurvey} for a recent summary.
Initially, most research focused on the algebraic part of the Brauer group, which by definition is $\Br_1(X)=\ker(\Br(X)\to\Br(\bar{X}))$, 
and the more mysterious transcendental part $\Br(X)/\Br_1(X)$ was rarely computed. In~\cite{ISZ, I-S}, the authors computed the odd order torsion in the transcendental Brauer groups of diagonal quartic surfaces by relating these surfaces to Kummer surfaces of products of elliptic curves over $\bbQ$ with complex multiplication by $\bbZ[i]$. In~\cite{I-S, IScorrigendum}, Ieronymou and Skorobogatov went on to study the evaluation maps for these elements of odd order and thus gave new examples of Brauer--Manin obstructions to weak approximation coming from transcendental Brauer group elements. 

In this paper, we replace $\bbZ[i]$ by the ring of integers $\cO_K$ of an imaginary quadratic field $K$ and study Brauer groups and Brauer--Manin obstructions to weak approximation for Kummer surfaces of products of elliptic curves $E, E'$ over $\bbQ$ with complex multiplication by $\cO_K$. Note that the assumption that $\cO_K$ is the endomorphism ring of an elliptic curve over $\bbQ$ implies that $K$ is an imaginary quadratic field of class number one (see e.g.~\cite[Theorem~II.4.1]{SilvermanAdv}), but this is the only restriction on $K$. Moreover, our assumptions also imply that the elliptic curves $E$ and $E'$ are geometrically isomorphic, see e.g.~\cite[Proposition~II.2.1]{SilvermanAdv}.

\begin{theorem}\label{thm:mainoptions}
Let $K$ be an imaginary quadratic field and 
let $Y=\Kum(E\times E')$ for elliptic curves $E,E'$ over $\bbQ$ with $\End \bar{E}=\End\bar{E}'=\cO_K$. 
Suppose that $\Br(Y)/\Br_1(Y)$ contains an element of order $n>1$. Then $K\in \{\bbQ(\zeta_3), \bbQ(i), \bbQ(\sqrt{-7}), \bbQ(\sqrt{-2}), \bbQ(\sqrt{-11})\}$ and $n\leq 10$.
\end{theorem}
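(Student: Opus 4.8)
The strategy is to control $\Br(Y)/\Br_1(Y)$ via the well-known identification with the Galois-invariant part of the transcendental lattice modulo its algebraic counterpart. Writing $A = E \times E'$, the Kummer surface $Y = \Kum(A)$ has transcendental lattice $T_Y \cong T_A$ (up to a twist by $2$ that does not affect the rational Hodge structure), and since $E, E'$ have CM by $\cO_K$ the surface $Y$ is a \emph{singular} K3 surface, so $T_Y$ has rank $2$. Concretely $T_A \otimes \bbQ$ is a rank-two $K$-vector space on which $G_\bbQ$ acts through $\Gal(K/\bbQ)$-semilinear transformations, i.e.\ through a character-type action twisted by the CM Hecke characters of $E$ and $E'$. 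The standard exact sequence (see Skorobogatov--Zarhin, and the papers \cite{ISZ,I-S} cited above) gives an injection
\begin{equation*}
\Br(Y)/\Br_1(Y) \hookrightarrow \Hom_{G_\bbQ}\bigl(T_Y, \bbQ/\bbZ(1)\bigr)^{\vee}\text{-type object},
\end{equation*}
and more usefully, after tensoring with $\bbZ_\ell$ for each $\ell$, an identification of the $\ell$-primary part with a quotient of $(T_Y \otimes \bbQ_\ell/\bbZ_\ell)$ cut out by the $G_\bbQ$-action on $\ell$-adic \'etale cohomology $\mathrm{H}^2_{\et}(\bar Y, \bbZ_\ell(1))$. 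So the order $n$ of a nontrivial element is constrained by which primes $\ell$ can divide $\#\bigl(\mathrm{H}^2_{\et}(\bar Y,\bbQ_\ell/\bbZ_\ell(1))^{G_\bbQ}/(\text{algebraic part})\bigr)$.

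First I would set up the Galois representation on $V_\ell = T_A \otimes \bbQ_\ell$ in terms of the CM characters: $E$ corresponds to a Hecke character $\psi$ of $K$ and $E'$ to $\psi'$, both with infinity type consistent with being defined over $\bbQ$, and $T_A\otimes\bbQ$ as a $G_\bbQ$-representation is $\Ind_{G_K}^{G_\bbQ}(\psi\psi'{}^{-1})$ or $\Ind_{G_K}^{G_\bbQ}(\psi\psi')$ (the Tate twist making it weight $0$); the point is that invariants and the mod-$\ell$ reduction are governed by the order of a finite-order character built from $\psi/\psi'$ and from the discriminant character of $K$. Next I would reduce modulo $\ell$ and ask: for which $\ell$ can $\mathrm{H}^2(\bar Y, \bbZ/\ell(1))^{G_\bbQ}$ be strictly larger than the reduction of the N\'eron--Severi part? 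This forces a congruence condition tying $\ell$ to the conductor/discriminant data of $K$ and to the CM Hecke characters. Using that $E$ is defined over $\bbQ$ with CM by $\cO_K$, the field $K$ has class number one, so $K \in \{\bbQ(i), \bbQ(\sqrt{-2}), \bbQ(\sqrt{-3}), \bbQ(\sqrt{-7}), \bbQ(\sqrt{-11}), \bbQ(\sqrt{-19}), \bbQ(\sqrt{-43}), \bbQ(\sqrt{-67}), \bbQ(\sqrt{-163})\}$; the bulk of the argument is to eliminate the last four fields and to bound the possible $\ell$ (and their exponents) in the remaining five.

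The two mechanisms I expect to use are: (i) a local argument at a well-chosen prime — reduce $Y$ at a prime $p$ of good ordinary reduction for both curves, where $\Frob_p$ acts on $T_\ell$ with eigenvalues that are explicit Jacobi/Gauss sums (algebraic Hecke character values), and demand that $\Frob_p$ fix the putative order-$\ell$ class; this yields a divisibility $\ell \mid (\alpha_p - \beta_p)$ or $\ell \mid (\text{norm of a Gauss sum} - 1)$, and running over several $p$ pins down $\ell$; (ii) a global ramification/weight argument: the transcendental part of $\Br(Y)$ is unramified everywhere, which combined with the fact that the representation ramifies precisely at primes dividing $\disc K \cdot N_E \cdot N_{E'}$ bounds the size of the cyclotomic-type field through which the mod-$\ell$ representation factors, hence bounds $\ell$ by something like $\ell - 1 \mid 2\cdot(\text{small explicit integer})$, giving $\ell \in \{3,5,7\}$ and then $n \le 10$ after accounting for $2$-primary contributions (which require a slightly separate, more delicate analysis since $\bbQ_2$ and real places behave differently). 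Collating: $K=\bbQ(\zeta_3)$ allows $\ell=3$ (and a factor of $2$), $K=\bbQ(i)$ allows $\ell=2$ powers and possibly $5$, $\bbQ(\sqrt{-7})$ allows $7$, $\bbQ(\sqrt{-2})$ allows small $\ell$ from its unit/discriminant structure, $\bbQ(\sqrt{-11})$ allows $\ell=11$-adic-looking contributions that in fact cap at the stated bound; the remaining fields contribute nothing because the relevant character has too small order and no small $\ell$ divides the corresponding Gauss-sum differences.

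The main obstacle, I expect, will be step (i)/(ii) made uniform: getting a \emph{clean, field-by-field} bound on the admissible $\ell$ and their multiplicities without simply reducing to a large case analysis. In particular, separating the genuinely transcendental classes from the algebraic ones inside $\mathrm{H}^2(\bar Y,\bbZ/\ell(1))^{G_\bbQ}$ — i.e.\ quotienting by $\NS(\bar Y)/\ell$ correctly when $\ell \mid \disc(\NS)$ — and handling $\ell = 2$ together with the archimedean place, is the delicate point; the odd-$\ell$, good-reduction case should follow relatively cleanly from the Gauss-sum description of Frobenius eigenvalues as in \cite{ISZ,I-S}.
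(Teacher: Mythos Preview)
Your proposal is a plan rather than a proof, and several of its concrete predictions are wrong. You write that $\bbQ(\sqrt{-7})$ ``allows $7$'' and that $\bbQ(\sqrt{-11})$ ``allows $\ell=11$-adic-looking contributions''; in fact the transcendental Brauer group is killed by $4$ in the first case and by $3$ in the second (Theorem~\ref{thm:quad}). The heuristic ``$\ell-1\mid 2\cdot(\text{small explicit integer})$'' is pointing in the right direction, but you never identify what that integer is, and without it you cannot distinguish $\bbQ(\sqrt{-11})$ from $\bbQ(\sqrt{-19})$ or exclude the large-discriminant class-number-one fields. The Gauss-sum/Frobenius mechanism you sketch would require choosing primes and computing eigenvalue differences field by field, and you give no indication of how to make that finite or uniform.

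The paper's argument avoids the transcendental-lattice Galois representation and Frobenius computations entirely. For $K\notin\{\bbQ(i),\bbQ(\zeta_3)\}$ it runs as follows. Write $A=E\times E'$; since $\cO_K^\times=\{\pm1\}$, the curve $E'$ is a quadratic twist of $E$, so over $L=K(\sqrt{a})$ with $[L:K]\leq 2$ one has $A_L\cong E\times E$. The key structural input (Theorem~\ref{thm:RN}, from~\cite{transBrprod}) is that $(\Br(A_L)/\Br_1(A_L))_{\ell^\infty}$ is killed by $\ell^{n(\ell)}$, where $n(\ell)$ is the largest $t$ for which the ring class field $K_{\ell^t}$ embeds into $L$. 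The explicit degree formula~\eqref{eq:ringclass} then reduces everything to the inequality $[K_{\ell^t}:K]\leq 2$: for $\ell\geq 5$ this fails for all $t\geq 1$; for $\ell=3$ it forces $(\Delta_K/3)=1$, which among the class-number-one fields singles out $\bbQ(\sqrt{-2})$ and $\bbQ(\sqrt{-11})$; for $\ell=2$ the analogous Kronecker-symbol check leaves only $\bbQ(\sqrt{-2})$ and $\bbQ(\sqrt{-7})$. This immediately eliminates $\bbQ(\sqrt{-19}),\bbQ(\sqrt{-43}),\bbQ(\sqrt{-67}),\bbQ(\sqrt{-163})$ and gives the exponent bounds for the survivors. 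The cases $K=\bbQ(i)$ and $K=\bbQ(\zeta_3)$ are read off from Valloni~\cite{Valloni}. The passage from $A$ to $Y$ is Skorobogatov--Zarhin's embedding (Theorem~\ref{thm:SZBrInject}).

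So the idea you are missing is the ring-class-field criterion: it converts the question into a pure degree comparison $[K_{\ell^t}:K]\leq [L:K]$, requiring no $\ell$-adic Hodge theory and no local Frobenius analysis. Your observation that $h_K=1$ is correct and is used, but after that the argument is combinatorial rather than arithmetic-geometric.
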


\begin{remark}
Similar results can be obtained in the more general setting where the elliptic curves can have CM by non-maximal orders in $\cO_K$, see Remark~\ref{rem:non-max} below.
\end{remark}

In Theorem~\ref{thm:mainoptions} and throughout the paper, we write $\Kum(E\times E')$ to mean the minimal desingularisation of the quotient of $E\times E'$ by the action of $-1$, which sends $(P,Q)$ to $(-P,-Q)$.
Such Kummer surfaces are examples of so-called singular K3 surfaces, which are defined to be K3 surfaces of maximal Picard rank. 

The cases of Theorem~\ref{thm:mainoptions} where $K$ is $\bbQ(i)$ or $\bbQ(\zeta_3)$ follow from work of Valloni on Brauer groups of principal K3 surfaces with CM in~\cite{Valloni}.
In the case where $K=\bbQ(i)$, the odd order torsion in the Brauer group was computed by Ieronymou--Skorobogatov--Zarhin~\cite{ISZ} and Ieronymou--Skorobogatov~\cite{I-S} in their study of Brauer groups of diagonal quartic surfaces. In particular,~\cite[Theorem~1.1]{I-S} also applies to the Kummer surface $Y=\Kum(E^{m_1}\times E^{m_2})$ where $E^m$ has affine equation $y^2=x^3-mx$ for $m\in\bbQ^\times$. It shows that 
\begin{equation}\label{ISgp}
(\Br(Y)/\Br(\bbQ))_{\textrm{odd}}=(\Br(Y)/\Br_1(Y))_{\textrm{odd}}\cong\begin{cases}\bbZ/3\bbZ & \textrm{ if } -3m_1m_2\in\langle-4\rangle\bbQ^{\times 4},\\
\bbZ/5\bbZ & \textrm{ if } 5^3m_1m_2\in\langle-4\rangle\bbQ^{\times 4},\\
0 & \textrm{ otherwise}.
\end{cases}
\end{equation}

Our next result handles all cases where $\cO_K^\times=\{\pm 1\}$.

\begin{theorem}\label{thm:mainquad}
Let $K$ be an imaginary quadratic field with $\cO_K^\times=\{\pm 1\}$ and 
let $Y=\Kum(E\times E')$ for elliptic curves $E,E'$ over $\bbQ$ with $\End \bar{E}=\End\bar{E}'=\cO_K$. Suppose that $\Br(Y)\setminus\Br(\bbQ)$ contains an element of odd order. 
Then 
\begin{enumerate}
\item \label{Kquadintro} $K\in\{\bbQ(\sqrt{-2}),\bbQ(\sqrt{-11})\}$;
\item \label{trBrquadintro} $\Br(Y)/\Br_1(Y)=\Br(Y)_3/\Br_1(Y)_3\cong\bbZ/3\bbZ$;
\item  \label{algBrquadintro} $\Br_1(Y)/\Br(\bbQ)\cong\begin{cases}(\bbZ/2\bbZ)^2&\textrm{ if }K=\bbQ(\sqrt{-2}),\\
\bbZ/2\bbZ &\textrm{ if }K=\bbQ(\sqrt{-11});
\end{cases}$
\item  \label{Yquadintro} $Y$ is the minimal desingularisation of the projective surface with affine equation $u^2=af(x)f(t)$
where \begin{align*}
&f(x)=x^3 + 4x^2 + 2x \textrm{ and }a\in\{-3,6\} &\textrm{ if }K=\bbQ(\sqrt{-2}),\\ 
&f(x)=x^3-2^5\cdot3^3\cdot11x+2^4\cdot3^3\cdot7\cdot 11^2 \textrm{ and } a\in\{-3,33\} &\textrm{ if }K=\bbQ(\sqrt{-11}).
\end{align*}
 \end{enumerate}
\end{theorem}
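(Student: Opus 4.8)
The plan is to bootstrap from Theorem~\ref{thm:mainoptions} and then carry out an explicit Hecke-character computation for the three remaining fields. Since $\cO_K^\times=\{\pm1\}$ rules out $\bbQ(i)$ and $\bbQ(\zeta_3)$, Theorem~\ref{thm:mainoptions} leaves only $K\in\{\bbQ(\sqrt{-7}),\bbQ(\sqrt{-2}),\bbQ(\sqrt{-11})\}$, and tells us that an odd-order transcendental class has order in $\{3,5,7,9\}$. As $Y=\Kum(E\times E')$ has a $\bbQ$-point (the image of a $2$-torsion point of $E\times E'$) and $\mathrm{H}^3(G_\bbQ,\bar\bbQ^\times)=0$, the Hochschild--Serre spectral sequence, together with the section coming from the rational point, yields a short exact sequence $0\to\Br(\bbQ)\to\Br_1(Y)\to\mathrm{H}^1(G_\bbQ,\Pic\bar Y)\to0$ and a surjection $\Br(Y)\twoheadrightarrow\Br(\bar Y)^{G_\bbQ}$. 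Since $\Pic\bar Y$ is torsion-free and $\mathrm{H}^1(G_\bbQ,\Pic\bar Y)$ will be computed in \eqref{algBrquadintro} to have exponent $2$, every odd-order element of $\Br(Y)\setminus\Br(\bbQ)$ is transcendental and lies in the odd part of $\Br(\bar Y)^{G_\bbQ}$.

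Next I would identify the transcendental part of $\Br(\bar Y)$ as a Galois module. Writing $A=E\times E'$, the comparison between the Brauer group of a Kummer surface and that of the underlying abelian surface identifies $\Br(\bar Y)$ with $\Br(\bar A)$ as $G_\bbQ$-modules, and $\Br(\bar A)$ is the transcendental quotient of $\mathrm{H}^2_\et(\bar A,\widehat\bbZ(1))=\wedge^2\mathrm{H}^1_\et(\bar A,\widehat\bbZ(1))$, whose only non-algebraic piece lies in $\mathrm{H}^1_\et(\bar E,\widehat\bbZ(1))\otimes\mathrm{H}^1_\et(\bar E',\widehat\bbZ(1))$ and has rank $2$. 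As $\End\bar E=\End\bar E'=\cO_K$ and the class number of $K$ is $1$, $E$ and $E'$ are isomorphic over $\bar\bbQ$, so $E'=E^{(d)}$ for some $d\in\bbQ^\times$, and over $G_K$ the $\ell$-adic realisations of $\mathrm{H}^1(\bar E)$, $\mathrm{H}^1(\bar E')$ are given by the CM Hecke character $\psi$ of $E$ and by $\psi\eta_d$, where $\eta_d$ cuts out $\bbQ(\sqrt d)$. A Künneth computation then shows that the transcendental part of $\Br(\bar Y)$ is, after the Tate twist, governed by an explicit character $\chi$ of $G_K$ built from $\psi$, $\bar\psi$ and $\eta_d$, and that for odd $\ell$ one has $\Br(\bar Y)^{G_\bbQ}[\ell^n]\neq0$ if and only if $\chi\equiv1\pmod{\ell^n}$.

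The heart of the proof is the resulting casework on $(K,\ell,n)$. Computing $\chi$ explicitly in terms of the CM Hecke character of $K$ (normalised as in the class-number-one situation) and the twist $\eta_d$, and examining its conductor at the primes above $\ell$, the ramified prime of $K$, and the bad primes of $E$ and $E'$, one finds that $\chi\equiv1\pmod{\ell^n}$ can hold only for $\ell=3$ and $n=1$; that it never holds for $K=\bbQ(\sqrt{-7})$, consistently with Theorem~\ref{thm:mainoptions} allowing only $2$-power transcendental classes there; and that for $K\in\{\bbQ(\sqrt{-2}),\bbQ(\sqrt{-11})\}$ it holds precisely when $d$ lies in a single nontrivial square class, giving \eqref{Kquadintro}. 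Combined with the surjection $\Br(Y)\twoheadrightarrow\Br(\bar Y)^{G_\bbQ}$ this yields $\Br(Y)/\Br_1(Y)=\Br(Y)_3/\Br_1(Y)_3\cong\bbZ/3\bbZ$, i.e.\ \eqref{trBrquadintro}. I expect this Hecke-character analysis — in particular ruling out $n=2$ (order $9$) and disposing of $\bbQ(\sqrt{-7})$ together with the primes $5$ and $7$ — to be the main technical obstacle, paralleling the analysis behind \eqref{ISgp} in the $\bbQ(i)$ case.

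Finally, for \eqref{algBrquadintro} and \eqref{Yquadintro} I would make everything explicit. Class number one gives a unique $\bar\bbQ$-isomorphism class of elliptic curve with CM by $\cO_K$, and $\cO_K^\times=\{\pm1\}$ forces its $j$-invariant to differ from $0$ and $1728$; taking the model $E:y^2=f(x)$ with $f$ as in the statement and $E'=E^{(d)}$ defined by $dw^2=f(t)$, the Kummer surface $\Kum(E\times E')$ is the minimal desingularisation of $u^2=d\,f(x)f(t)$, so $a=d$ is determined modulo squares by the previous paragraph; combining this with the harmless simultaneous twist of $E$ and $E'$ (which does not change $Y$) and with the fact that twisting by the generator of $K$ interchanges the two Kummer surfaces carrying a nonzero transcendental class, one is left with exactly the two listed values of $a$, proving \eqref{Yquadintro}. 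For \eqref{algBrquadintro} I would compute $\mathrm{H}^1(G_\bbQ,\Pic\bar Y)$ directly from the Galois-module structure of $\NS\bar Y$ for a Kummer surface — the $16$ exceptional curves permuted via the action on $E[2]\times E'[2]$, together with the pullback of $\NS\bar A$, whose additional generators are the graphs of the CM isogenies — the permutation part contributing nothing to $\mathrm{H}^1$ and the remainder contributing $(\bbZ/2\bbZ)^2$ when $K=\bbQ(\sqrt{-2})$ and $\bbZ/2\bbZ$ when $K=\bbQ(\sqrt{-11})$, according to the $2$-division and CM fields of the curves involved.
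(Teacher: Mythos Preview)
Your proposal contains a genuine gap: the claimed surjection $\Br(Y)\twoheadrightarrow\Br(\bar Y)^{G_\bbQ}$ does not follow from Hochschild--Serre together with a rational point. The rational point and $\mathrm{H}^3(G_\bbQ,\bar\bbQ^\times)=0$ do give the short exact sequence for $\Br_1$, but the map $\Br(Y)/\Br_1(Y)\to\Br(\bar Y)^{G_\bbQ}$ is in general only an injection; its cokernel is governed by the differential into $\mathrm{H}^2(G_\bbQ,\Pic\bar Y)$, which a rational point does not kill. You invoke this surjection to obtain the lower bound in \eqref{trBrquadintro}, so as written the logic is incomplete. The gap is repairable in this instance: once you compute the odd part of $\Br(\bar Y)^{G_\bbQ}$ to be cyclic of order $3$, the \emph{hypothesis} of the theorem together with the injection already forces the odd transcendental quotient to equal $\bbZ/3\bbZ$ --- no surjectivity is needed.

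Beyond this, the paper takes a substantially different and more elementary route that sidesteps both the surjectivity issue and the Hecke-character analysis you outline. Via Skorobogatov--Zarhin, odd torsion in $\Br(Y)/\Br_1(Y)$ is identified with odd torsion in $\Br(A)/\Br_1(A)$ for $A=E\times E'$, and for $n$ coprime to $2\disc(K)$ one has the explicit formula $\Br(A)_n/\Br_1(A)_n\cong\Hom_{\Gamma_\bbQ}(E_n,E'_n)^-$. Upper bounds on the order then come from ring class field degree estimates (e.g.\ $[K_\ell:K]>2$ for $\ell\geq 5$, and $[K_3:K]>2$ unless $(\Delta_K/3)=1$, which singles out $\bbQ(\sqrt{-2})$ and $\bbQ(\sqrt{-11})$), and the twist is pinned down by the requirement $K(\sqrt a)=K_3=K(\sqrt{-3})$. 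The lower bound is a direct finite computation of $\Hom_{\Gamma_\bbQ}(\cE_3,\cE^a_3)^-$ for explicit models $\cE$, not a congruence on a Gr\"ossencharakter. Your Hecke-character approach is viable in principle and is closer in spirit to Valloni's methods, but it is considerably heavier here and you correctly anticipate that ruling out $\ell\in\{5,7\}$ and $n=2$ would be the crux; the ring class field argument dispatches all of this in a few lines.

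For \eqref{algBrquadintro} you propose to analyse the full $\NS\bar Y$ with its sixteen exceptional curves. The paper instead uses the Skorobogatov--Zarhin identification $\mathrm{H}^1(\bbQ,\Pic\bar Y)\cong\mathrm{H}^1(\bbQ,\Hom(\bar E,\bar E'))$, which reduces everything to Tate cohomology of the rank-two lattice $\cO_K$ twisted by the quadratic character of $a$; the answer then depends only on whether $\bbQ(\sqrt a)\subset K$ and on $-d\bmod 4$, yielding \eqref{algBrquadintro} in a couple of lines. For \eqref{Yquadintro}, the two values of $a$ arise simply as the two elements of $(-3K^{\times 2}\cap\bbQ^\times)/\bbQ^{\times 2}$, rather than from the interchange argument you sketch.
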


The remaining, and most interesting, case is $K=\bbQ(\zeta_3)$. Any elliptic curve $E$ over $\bbQ$ with $\End\bar{E}=\bbZ[\zeta_3]$ has an affine equation of the form $E^a:  y^2=x^3+a$ for some $a\in\bbQ^\times$. 
Let $c,d\in\bbQ^\times $ and let $Y=\Kum(E^c\times E^d)$. Then $\Br(Y)$ can contain transcendental elements of odd order $n$ for $n\leq 9$. Cases involving elements of order divisible by $3$ require a more delicate analysis, essentially because $3$ ramifies in the CM field $\bbQ(\zeta_3)$, and will be explored in future work. For elements of order $5$ or $7$, we have the following:

\begin{theorem}\label{thm:intro57}
For $a\in\bbQ^\times$, let $E^a$ be the elliptic curve over $\bbQ$ with affine equation $y^2=x^3+a$. Let $c,d\in\bbQ^\times $ and let $Y=\Kum(E^c\times E^d)$. Let $\ell\in\{5,7\}$ and let $\varepsilon(\ell)=(-1)^{(\ell-1)/2}$. Then
\[(\Br(Y)/\Br_1(Y))_{\ell^\infty}\cong \begin{cases}\bbZ/\ell\bbZ &\textrm{ if } \varepsilon(\ell)\cdot 2^4\cdot \ell^{\varepsilon(\ell)} \cdot cd\in \langle-3^3\rangle\bbQ^{\times 6},\\
0 &\textrm{ otherwise.}
\end{cases}\]
Furthermore, if $\varepsilon(\ell)\cdot 2^4\cdot \ell^{\varepsilon(\ell)} \cdot cd\in \langle-3^3\rangle\bbQ^{\times 6}$ then \[\Br(Y)/\Br_1(Y)=\Br(Y)_\ell/\Br_1(Y)_\ell.\]
\end{theorem}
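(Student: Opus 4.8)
Write $A=E^c\times E^d$ and, for a field $F\subseteq\overline{\bbQ}$, set $G_F=\Gal(\overline{\bbQ}/F)$; fix $\ell\in\{5,7\}$. The plan is to compute $\bigl(\Br(\bar A)_{\ell^\infty}\bigr)^{G_\bbQ}$ from the complex multiplication of $E^c$ and $E^d$ and then to descend, in the spirit of the arguments of Skorobogatov--Zarhin and Ieronymou--Skorobogatov(--Zarhin) for $\cO_K=\bbZ[i]$; one could equally well argue through Valloni's formula~\cite{Valloni} for the Brauer group of a K3 surface with CM. The birational map $A\dashrightarrow Y$ induces, for $\ell$ odd, a $G_\bbQ$-isomorphism $\Br(\bar A)_{\ell^\infty}\cong\Br(\bar Y)_{\ell^\infty}$ (Skorobogatov--Zarhin), and since $\mathrm{H}^3(\bbQ,\overline{\bbQ}^{\times})$ has no $\ell$-torsion, the Hochschild--Serre spectral sequence gives an injection $\bigl(\Br(Y)/\Br_1(Y)\bigr)_{\ell^\infty}\hookrightarrow\bigl(\Br(\bar A)_{\ell^\infty}\bigr)^{G_\bbQ}$ with cokernel a subquotient of $\mathrm{H}^2(\bbQ,\Pic\bar Y)$. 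Once the target is shown to be cyclic of order at most $\ell$, one checks that the corresponding class (when nonzero) lifts to $\Br(Y)$ — exhibiting a cyclic algebra over the function field, equivalently verifying that the relevant component of the differential $\mathrm{H}^0(\bbQ,\Br\bar Y)\to\mathrm{H}^2(\bbQ,\Pic\bar Y)$ vanishes — so that the injection is an isomorphism.

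For the geometric module: since $\End\bar E^c=\End\bar E^d=\cO_K$ and $K$ has class number one, $E^c$ and $E^d$ are isogenous over $\overline{\bbQ}$ and $\Hom_{\overline{\bbQ}}(E^c,E^d)$ is a free rank-one $\cO_K$-module, which accounts precisely for the algebraic classes; hence the transcendental part of $\Br(\bar A)[\ell^n]$ is identified $G_\bbQ$-equivariantly with the group of $\cO_K$-conjugate-linear homomorphisms $E^c[\ell^n]\to E^d[\ell^n]$. Each $E^a$ is the sextic twist of $E^1\colon y^2=x^3+1$ by $a$, so $G_\bbQ$ acts on $T_\ell E^a$ by $\rho_{E^1,\ell}\otimes\chi_a$, where $\chi_a\colon\sigma\mapsto\sigma(a^{1/6})/a^{1/6}$ takes values in $\mu_6\subset\cO_K^{\times}$; the restriction $\chi_a|_{G_K}$ is an honest homomorphism $G_K\to\mu_6$ (as $G_K=G_{\bbQ(\mu_6)}$) which the nontrivial $\tau\in\Gal(K/\bbQ)$ sends to its inverse. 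Over $K$, $\rho_{E^1,\ell}$ is the $\ell$-adic realisation $\psi_\ell\colon G_K\to(\cO_K\otimes\bbZ_\ell)^{\times}$ of the CM Hecke character of $E^1$, with $\psi_\ell\bar\psi_\ell=\chi_{\mathrm{cyc}}$. Feeding this in, $\Br(\bar A)_{\ell^\infty}$ restricted to $G_K$ is $(\cO_K\otimes\bbQ_\ell/\bbZ_\ell)(\nu)$ with $\nu=\psi_\ell^{2}\,\chi_{\mathrm{cyc}}^{-1}\,\bigl(\chi_{cd}|_{G_K}\bigr)$, and $\tau$ acts on it $\cO_K$-semilinearly as an involution with $\nu^{\tau}=\nu^{-1}$. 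Thus $\bigl(\Br(\bar A)_{\ell^\infty}\bigr)^{G_\bbQ}$ is the $\tau$-fixed part of $(\cO_K\otimes\bbQ_\ell/\bbZ_\ell)(\nu)^{G_K}$, which is nonzero exactly when $\bar\nu\colon G_K\to(\cO_K/\ell)^{\times}$ is trivial; in that case, because $\chi_{\mathrm{cyc}}$ surjects onto $1+\ell\bbZ_\ell$ on wild inertia at $\ell$ while $\chi_{cd}$ is tame there, $\nu$ is nontrivial modulo $\ell^2$, so $(\cO_K\otimes\bbQ_\ell/\bbZ_\ell)(\nu)^{G_K}\cong\cO_K/\ell$ and its $\tau$-fixed part is cyclic of order exactly $\ell$. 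In particular there is no $\ell^2$-torsion, which will yield the last sentence of the theorem.

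It remains to convert $\bar\nu=1$ into the stated congruence. As $E^1$ has good reduction at $5$ and $7$ and conductor $36$, the ramification of $\nu$ away from $\ell$ is supported at $2$ and $3$, so $\bar\nu=1$ amounts to saying that $\chi_{cd}|_{G_K}$ agrees with a fixed character of $G_K$ built from $E^1$, which — using that $E^1$ is defined over $\bbQ$ — is of the form $\chi_t|_{G_K}$ for some $t\in\bbQ^{\times}$. Pinning down $t$ is the main obstacle: one must compute the components of the CM Hecke character of $E^1$ at $2$ and at $3$, where $E^1$ has additive reduction and, at $3$, the prime ramifies in $K$, by Tate's algorithm on the N\'eron model, together with its component at $\ell$, which is controlled by $\psi_\ell\bar\psi_\ell=\chi_{\mathrm{cyc}}$ and by whether $\ell$ is inert ($\ell=5$, supersingular reduction) or split ($\ell=7$, ordinary reduction) in $K$; a sign slip anywhere here propagates directly into the final answer. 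The outcome is that one may take $t=\varepsilon(\ell)\cdot 2^4\cdot\ell^{\varepsilon(\ell)}$, the sign $\varepsilon(\ell)$ and the exponent on $\ell$ recording the reduction type at $\ell$. Finally $\chi_{cd}|_{G_K}=\chi_t|_{G_K}$ if and only if $t\,cd\in\ker\bigl(s\mapsto\chi_s|_{G_K}\bigr)=K^{\times 6}\cap\bbQ^{\times}=\langle-3^3\rangle\bbQ^{\times 6}$ — the kernel is $\langle-3^3\rangle\bbQ^{\times 6}$ precisely because $-3^3=(\sqrt{-3})^{6}\in K^{\times 6}$, which is why the condition only makes sense modulo $\langle-3^3\rangle$. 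Combined with the first paragraph, $\bigl(\Br(Y)/\Br_1(Y)\bigr)_{\ell^\infty}\cong\bbZ/\ell\bbZ$ exactly when $\varepsilon(\ell)\,2^4\,\ell^{\varepsilon(\ell)}\,cd\in\langle-3^3\rangle\bbQ^{\times 6}$, and is $0$ otherwise.

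For the last assertion, by Theorem~\ref{thm:mainoptions} the finite group $\Br(Y)/\Br_1(Y)$ is supported at $\{2,3,5,7\}$. If the $\ell$-condition holds and the $p$-part were nonzero for the other $p\in\{5,7\}$, then the analogous condition $\varepsilon(p)\,2^4\,p^{\varepsilon(p)}\,cd\in\langle-3^3\rangle\bbQ^{\times 6}$ together with the $\ell$-condition would force $\varepsilon(\ell)\varepsilon(p)\,\ell^{\varepsilon(\ell)}\,p^{-\varepsilon(p)}\in\langle-3^3\rangle\bbQ^{\times 6}$, which is impossible because its $\ell$-adic valuation is $\pm1\not\equiv0\pmod 6$ whereas every element of $\langle-3^3\rangle\bbQ^{\times 6}$ has $\ell$-adic valuation $\equiv0\pmod 6$. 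The $2$-part and $3$-part are ruled out by the finer analysis underlying Theorem~\ref{thm:mainoptions} (the argument at $3$ being more delicate, as $3$ ramifies in $K$). Hence $\Br(Y)/\Br_1(Y)=\Br(Y)_\ell/\Br_1(Y)_\ell$, as required.
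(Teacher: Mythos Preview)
Your overall architecture is sound and closely parallels the paper's: identify the odd-torsion transcendental Brauer group of $Y$ with that of $A=E^c\times E^d$ via Skorobogatov--Zarhin, then identify the latter with the $\cO_K$-antilinear (equivalently, ``minus'') part $\Hom_{\Gamma_\bbQ}(E^d_\ell,E^c_\ell)^-$, and finally determine when this is nonzero. Your character $\nu=\psi_\ell^{2}\chi_{\mathrm{cyc}}^{-1}\chi_{cd}$ is exactly what the paper's computation amounts to. However, there is a genuine gap at the step you yourself flag as ``the main obstacle'': you assert the outcome $t=\varepsilon(\ell)\cdot 2^4\cdot \ell^{\varepsilon(\ell)}$ but do not derive it. This is the heart of the theorem. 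The paper carries this out explicitly in Proposition~\ref{prop:gen}: rather than analysing the Hecke character at the bad primes and at $\ell$ as you propose, it uses Eisenstein's sextic reciprocity law (Theorem~\ref{thm:sextic}) to rewrite $\pi\bar\pi^{-1}\pmod \ell$ as a sextic residue symbol. For $\ell=5$ one gets $\pi\bar\pi^{-1}\equiv\bigl(\tfrac{5}{\pi}\bigr)_6^{-1}$ directly; for $\ell=7$ one factors $7=\varpi\bar\varpi$ and obtains $\pi\bar\pi^{-1}\equiv\bigl(\tfrac{-7}{\pi}\bigr)_6$. Substituting into the invariance condition $\bigl(\tfrac{2^4cd}{\pi}\bigr)_6^{-1}\pi\bar\pi^{-1}=1$ immediately yields the stated sextic-power conditions on $cd$. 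This is a short, clean route that avoids any delicate local analysis of $\psi$ at $2$, $3$, or $\ell$; I recommend you replace your hand-wave by this argument.

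A secondary point: your route via Hochschild--Serre leaves you needing to check that the invariant class lifts to $\Br(Y)$, which you defer (``one checks that the relevant component of the differential vanishes''). The paper bypasses this entirely: Proposition~\ref{prop:SZhom} gives $\Br(A)_n/\Br_1(A)_n=\Hom_{\Gamma_\bbQ}(E^d_n,E^c_n)/(\Hom(\bar E^d,\bar E^c)/n)^{\Gamma_\bbQ}$ as an equality, not merely an injection into invariants, and Theorem~\ref{thm:SZBrInject} transports this to $Y$ for odd $n$. So no obstruction check is needed. Finally, for the last sentence you appeal to ``the finer analysis underlying Theorem~\ref{thm:mainoptions}'' for the vanishing at $2$ and $3$; the paper makes this precise in Lemma~\ref{lem:repel}, which cites Valloni's computation of $\Br(\bar Y)^{\Gamma_{\bbQ(\zeta_3)}}$ to show that a nonzero $\ell$-part for $\ell\in\{5,7\}$ forces the whole transcendental Brauer group to equal its $\ell$-torsion.
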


The reason for the focus on odd order torsion in $\Br(Y)/\Br_1(Y)$ is a result of Skorobogatov and Zarhin (Theorem~\ref{thm:SZBrInject} below), which shows that odd order torsion in $\Br(E\times E')/\Br_1(E\times E')$ descends to the transcendental Brauer group of $\Kum(E\times E')$. This means that one can transfer many calculations concerning transcendental Brauer classes to the realm of abelian varieties. In particular, the method of Skorobogatov and Zarhin described in Section~\ref{sec:prelim_eval} enables one to compute Brauer--Manin pairings for transcendental Brauer group elements of odd order without the need to find explicit Azumaya algebras representing them.

However, it is important that we consider Kummer surfaces and not just abelian surfaces, for the following reason. For torsors under abelian varieties over number fields, any Brauer--Manin obstruction to the Hasse principle or weak approximation can already be explained by an algebraic element in the Brauer group, see~\cite{Creutz, Manin}. In contrast, for K3 surfaces, it can happen that the algebraic part of the Brauer group consists only of constant elements (and so does not contribute to any Brauer--Manin obstruction), while there is an obstruction coming from a transcendental element in the Brauer group. Examples of this phenomenon were given in~\cite{Ieronymou, transBrprod, MSTVA}. Our next two results yield a new source of examples.

\begin{theorem}\label{thm:mainWA}
Let $K$ be an imaginary quadratic field and 
let $Y=\Kum(E\times E')$ for elliptic curves $E,E'$ over $\bbQ$ with $\End \bar{E}=\End\bar{E}'=\cO_K$. Let $\ell$ be an odd prime and if $K=\bbQ(\zeta_3)$ assume that $\ell>3$. 
Suppose that $\cA\in\Br(Y)_{\ell}\setminus \Br(\bbQ)$. Then the evaluation map $\ev_{\cA,\ell}: Y(\bbQ_\ell)\to\Br(\bbQ_\ell)_\ell$ is surjective and hence
\[Y(\bbA_\bbQ)^\cA\neq Y(\bbA_\bbQ),\] 
i.e.\ $\cA$ obstructs weak approximation on $Y$.
\end{theorem}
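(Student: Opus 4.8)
The plan is to reduce the evaluation of $\cA$ at the prime $\ell$ to a computation on the abelian surface $A=E\times E'$, using the result of Skorobogatov--Zarhin (Theorem~\ref{thm:SZBrInject}) that identifies $\Br(Y)/\Br_1(Y)$ with the odd part of $\Br(A)/\Br_1(A)$, together with the compatibility of the Kummer map $A\dashrightarrow Y$ with evaluation at local points. Since $\ell$ is odd and the relevant Brauer class has order $\ell$, any representative lifts to a class $\cB\in\Br(A)_\ell$, and the evaluation map $\ev_{\cA,\ell}$ on $Y(\bbQ_\ell)$ has the same image as $\ev_{\cB,\ell}$ on the locus of $A(\bbQ_\ell)$ lying over $Y(\bbQ_\ell)$ (this uses that the $2$-torsion points, which are the only ones blown up, do not affect the $\ell$-part, and that the desingularisation does not change $\Br$). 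So it suffices to show that $\ev_{\cB,\ell}:A(\bbQ_\ell)\to\Br(\bbQ_\ell)_\ell=\bbZ/\ell\bbZ$ is surjective, i.e.\ nonconstant.

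Next I would bring in the explicit description of transcendental Brauer classes on CM abelian surfaces via the method of Section~\ref{sec:prelim_eval}: such a class $\cB$ of order $\ell$ is built from a Galois-equivariant pairing on $A[\ell]$, and its evaluation at a point $P\in A(\bbQ_\ell)$ can be computed, after possibly enlarging to an unramified extension, as a cup product / Weil pairing expression in terms of the image of $P$ under the Kummer map $A(\bbQ_\ell)/\ell \to \mathrm{H}^1(\bbQ_\ell,A[\ell])$. The key input is that $\ell$ is a prime of good reduction for $E$ and $E'$ (or can be arranged to be, after noting the statement is about a fixed $\cA$ so finitely many bad primes are excluded — though in fact one wants this for \emph{all} such $\ell$, so one argues uniformly), and that at a prime of good ordinary or supersingular reduction the local Kummer map has image of the expected size; combined with the nontriviality of the pairing defining $\cB$ (guaranteed by $\cA\notin\Br(\bbQ)$, equivalently $\cB\notin\Br_1(A)$), this forces the evaluation to hit a nonzero value.

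The main obstacle, and where the real work lies, is establishing surjectivity \emph{at the specific prime $\ell$} rather than at some auxiliary prime: one must rule out the possibility that the local conditions at $\ell$ conspire to make $\ev_{\cB,\ell}$ constant. This requires understanding the reduction type of $E$ and $E'$ at $\ell$ in the CM setting — for $E/\bbQ$ with CM by $\cO_K$, the behaviour at $\ell$ is governed by whether $\ell$ splits, is inert, or ramifies in $K$, which is precisely controlled by the Legendre symbol $\left(\frac{\disc K}{\ell}\right)$. In the split/ordinary case the formal group and the étale quotient of $A[\ell^\infty]$ give enough room for the Kummer image; in the supersingular case one uses that $A[\ell]$ is then an irreducible Galois module of the right shape, again yielding a nonconstant evaluation. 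I would handle these cases separately, in each case computing the image of $\ev_{\cB,\ell}$ by pairing a suitably chosen $\ell$-adic point (e.g.\ one coming from a nontrivial element of the formal group, or from moving within a residue disc) against the class $\cB$, and checking the resulting Hilbert-symbol-type expression is nonzero.

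Finally, once surjectivity of $\ev_{\cA,\ell}$ is in hand, the conclusion $Y(\bbA_\bbQ)^\cA\neq Y(\bbA_\bbQ)$ is immediate: pick a local point $x_\ell\in Y(\bbQ_\ell)$ with $\inv_\ell(\cA(x_\ell))\neq 0$, and at all other places $v$ pick any $x_v\in Y(\bbQ_v)$ (which exists since $Y$, being a Kummer surface of a product of elliptic curves with rational points, has points everywhere locally — $E(\bbQ_v)\times E'(\bbQ_v)$ surjects onto the relevant locus); then adjust $x_\ell$ within its residue disc so that $\sum_v\inv_v(\cA(x_v))\neq 0$, which is possible because varying $x_\ell$ realises all of $\bbZ/\ell\bbZ$ while the other terms stay fixed. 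Hence this adelic point is not orthogonal to $\cA$, so $\cA$ obstructs weak approximation.
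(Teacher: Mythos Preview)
Your overall framework is correct --- reduce to the abelian surface, use the cup-product description from Section~\ref{sec:prelim_eval}, and split into cases according to the behaviour of $\ell$ in $K$ --- and this matches the paper's strategy. But the proposal has real gaps at the step you yourself flag as ``where the real work lies''.

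First, the claim that $\ell$ is (or can be arranged to be) a prime of good reduction for $E$ and $E'$ is false in general and not what the paper uses. In several of the relevant cases (e.g.\ $K=\bbQ(\sqrt{-2})$ with $E^{-3}$, or $K=\bbQ(\zeta_3)$ with various twists at $\ell=7$) the reduction at $\ell$ is additive, and the argument must proceed through explicit computation of $E(\bbQ_\ell)/\lambda$ and $E(\bbQ_\ell)/\bar\lambda$ using the formal group and Tamagawa numbers (Propositions~\ref{prop:maxa}, \ref{prop:maxa-11}, \ref{prop:subspace}). The point is not that the Kummer image has ``expected size'' but that it sits correctly relative to the decomposition $\mathrm{H}^1(\bbQ_\ell,E_\ell)=\mathrm{H}^1(\bbQ_\ell,E_\lambda)\oplus\mathrm{H}^1(\bbQ_\ell,E_{\bar\lambda})$ into maximal isotropic subspaces, so that the induced pairing~\eqref{eq:pairing2} is genuinely non-degenerate on the relevant images. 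This requires case-by-case work; ``nontriviality of the pairing defining $\cB$'' alone does not force a nonzero cup product of Kummer images.

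Second, and more seriously, your treatment of the inert case is not a viable strategy. When $\ell$ is inert in $K$ (here $K=\bbQ(\zeta_3)$, $\ell=5$, and $K=\bbQ(i)$, $\ell=3$) there is no splitting $E_\ell=E_\lambda\oplus E_{\bar\lambda}$ to exploit, and irreducibility of $E_\ell$ as a Galois module does not by itself yield nonconstancy of the evaluation map --- both $\chi_P$ and $\varphi_*(\chi_Q)$ lie in the same one-dimensional image $E(\bbQ_\ell)/\ell$ inside $\mathrm{H}^1(\bbQ_\ell,E_\ell)$ if things go wrong, and then the cup product vanishes since that image is isotropic. The paper's method here is substantially different: one must show that $\varphi_*(\chi_Q)$ is \emph{not} a scalar multiple of $\chi_P$, and for this the paper invokes a criterion of Harpaz--Skorobogatov (Proposition~\ref{prop:HS}) translating the question into whether two torsors are isomorphic as abstract schemes, then distinguishes them by an explicit computation of discriminants of the associated field extensions (Table~\ref{table:cd}). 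Your proposal contains no analogue of this step, and without it the inert case does not go through.
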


\begin{theorem}\label{thm:introtransob}
Reinstate the notation and assumptions of Theorem~\ref{thm:intro57}. Suppose that $\varepsilon(\ell)\cdot 2^4\cdot \ell^{\varepsilon(\ell)} \cdot cd\in \langle-3^3\rangle\bbQ^{\times 6}$ and $c\notin 2\cdot \ell^{\varepsilon(\ell)}\cdot \bbQ^{\times 3}$. 
Then 
\[\Br_1(Y)=\Br(\bbQ)\] 
and hence the failure of weak approximation in Theorem~\ref{thm:mainWA} cannot be explained by any algebraic element in the Brauer group of $Y$.
\end{theorem}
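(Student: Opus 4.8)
The plan is to reduce the theorem to the vanishing of $H^1(\bbQ,\Pic\bar Y)$ and then to compute this group one prime at a time. Since $Y$ is a K3 surface, $\Pic\bar Y=\NS(\bar Y)$ is a torsion-free $\bbZ$-module of rank $20$ on which $G_\bbQ=\Gal(\overline{\bbQ}/\bbQ)$ acts continuously, and $Y(\bbQ)\neq\emptyset$: resolving the $A_1$-singularity of $(E^c\times E^d)/\langle-1\rangle$ above the image of a $2$-torsion point produces a smooth conic with a rational point, hence a copy of $\bbP^1_\bbQ$ in $Y$. Hence the low-degree terms of the Hochschild--Serre spectral sequence for $\bbG_m$, together with $H^3(\bbQ,\overline{\bbQ}^\times)=0$, yield $\Br_1(Y)/\Br(\bbQ)\cong H^1(\bbQ,\Pic\bar Y)$, so it suffices to prove $H^1(\bbQ,\Pic\bar Y)=0$.

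I would then invoke the description of $\NS(\bar Y)$ as a $G_\bbQ$-module obtained earlier in the paper. The lattice $\NS(\bar Y)$ contains, with index a power of $2$, the orthogonal direct sum $\Lambda_0=(\bigoplus_{i=1}^{16}\bbZ\Gamma_i)\oplus\bbZ f_1\oplus\bbZ f_2\oplus\Hom(\overline{E^c},\overline{E^d})$, where the $\Gamma_i$ are the classes of the sixteen exceptional $(-2)$-curves, permuted by $G_\bbQ$ through its action on $E^c[2]\oplus E^d[2]$; $f_1,f_2$ are the $G_\bbQ$-fixed classes of the two elliptic fibrations on $Y$ induced by the projections of $E^c\times E^d$; and $\Hom(\overline{E^c},\overline{E^d})$, free of rank $2$ over $\bbZ$, carries the CM-twisting action described below. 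As $E^c[2]\oplus E^d[2]$ is finite, $\bigoplus_{i=1}^{16}\bbZ\Gamma_i$ is a permutation module, so $H^1(\bbQ,\bigoplus_{i=1}^{16}\bbZ\Gamma_i)=0$ by Shapiro's lemma, and $H^1(\bbQ,\bbZ f_1\oplus\bbZ f_2)=0$ since the action is trivial. Therefore $H^1(\bbQ,\Lambda_0)\cong H^1(\bbQ,\Hom(\overline{E^c},\overline{E^d}))$; as $\NS(\bar Y)/\Lambda_0$ is a finite $2$-group, the long exact cohomology sequence shows that if $H^1(\bbQ,\Hom(\overline{E^c},\overline{E^d}))=0$ then $H^1(\bbQ,\NS(\bar Y))$ is a finite $2$-group.

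Next I would compute $H^1(\bbQ,\Hom(\overline{E^c},\overline{E^d}))$. Since $E^c$ and $E^d$ are sextic twists of one another by $c/d$, over $\bbQ(\zeta_3)$ we have $\Hom(\overline{E^c},\overline{E^d})\cong\bbZ[\zeta_3]$ as a $\bbZ[\zeta_3]$-module, with $G_{\bbQ(\zeta_3)}$ acting by multiplication through the sextic Kummer character $\psi\colon G_{\bbQ(\zeta_3)}\to\mu_6=\bbZ[\zeta_3]^\times$, $\sigma\mapsto\sigma(\rho)/\rho$ for a fixed $\rho$ with $\rho^6=c/d$, while a lift of complex conjugation acts semilinearly through the non-trivial automorphism of $\bbZ[\zeta_3]$. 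Inflation--restriction along $1\to G_{\bbQ(\zeta_3)}\to G_\bbQ\to\bbZ/2\to1$ reduces the problem to computing $H^1$ of the cyclic group $\Gal(\bbQ(\zeta_3,\rho)/\bbQ(\zeta_3))$ with coefficients $\bbZ[\zeta_3]$ twisted by $\psi$; since $1-\zeta_6$ is a unit of $\bbZ[\zeta_6]=\bbZ[\zeta_3]$ while $1-\zeta_3$ is not, a short cocycle computation shows this group, hence also $H^1(\bbQ,\Hom(\overline{E^c},\overline{E^d}))$, vanishes exactly when $\psi$ has order $6$, i.e.\ when $c/d$ is neither a square nor a cube in $\bbQ(\zeta_3)^\times$. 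Now the standing hypothesis $\varepsilon(\ell)\cdot2^4\cdot\ell^{\varepsilon(\ell)}\cdot cd\in\langle-3^3\rangle\bbQ^{\times6}$ pins down $c/d$ modulo $\bbQ^{\times6}$ as $c^2$ times a constant whose squarefree part is $\pm\ell$ or $\pm3\ell$, so $c/d$ is never a square in $\bbQ(\zeta_3)^\times$; and reducing the same relation modulo cubes shows that $c/d\in\bbQ^{\times3}$ if and only if $c\in2\cdot\ell^{\varepsilon(\ell)}\cdot\bbQ^{\times3}$. Since $\bbQ^\times\cap\bbQ(\zeta_3)^{\times3}=\bbQ^{\times3}$, the hypothesis $c\notin2\cdot\ell^{\varepsilon(\ell)}\cdot\bbQ^{\times3}$ therefore forces $c/d$ to be a non-cube in $\bbQ(\zeta_3)^\times$ too, whence $\psi$ has order $6$ and $H^1(\bbQ,\Hom(\overline{E^c},\overline{E^d}))=0$.

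It remains to show that the finite $2$-group $H^1(\bbQ,\NS(\bar Y))$ is trivial, and this is where I expect the real work. One must compute $H^1(\bbQ,\Pi)$ for the Kummer lattice $\Pi$ (spanned by the $\Gamma_i$ together with the half-sums over the relevant affine subspaces of $E^c[2]\oplus E^d[2]$) and control the $2$-power gluing of $\Pi$ with $\NS(\overline{E^c\times E^d})$ inside $\NS(\bar Y)$, all in terms of the explicit $G_\bbQ$-action on $E^c[2]\oplus E^d[2]$. The input from the hypotheses is that reducing $\varepsilon(\ell)\cdot2^4\cdot\ell^{\varepsilon(\ell)}\cdot cd\in\langle-3^3\rangle\bbQ^{\times6}$ modulo cubes gives $cd\notin\bbQ^{\times3}$, so $c$ and $d$ are not both cubes in $\bbQ^\times$; hence at least one of the cubics $x^3+c$, $x^3+d$ is irreducible, with splitting field of degree $6$ and Galois group $S_3$ (its discriminant equals $-3$ times a square, hence is not a square), and one checks that this genericity of the $2$-division fields forces the $2$-torsion of $H^1(\bbQ,\NS(\bar Y))$ to vanish. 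Putting the two parts together gives $H^1(\bbQ,\Pic\bar Y)=0$, hence $\Br_1(Y)=\Br(\bbQ)$; combined with Theorem~\ref{thm:mainWA}, the failure of weak approximation coming from the transcendental class of order $\ell$ is therefore not explained by any element of $\Br_1(Y)$.
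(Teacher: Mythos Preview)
Your computation of $H^1(\bbQ,\Hom(\overline{E^c},\overline{E^d}))$ is essentially the same as the paper's (this is the content of Theorem~\ref{thm:Br1} in the case $[\bbQ(\sqrt[6]{c/d}):\bbQ]=6$), and your arithmetic showing that the hypotheses force $c/d\notin\bbQ^{\times 2}$ and $c/d\notin\bbQ^{\times 3}$ is correct and matches the paper's short deduction of Theorem~\ref{thm:introtransob} from Theorem~\ref{thm:Br1}.

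The gap is in how you pass from $H^1(\bbQ,\Hom(\overline{E^c},\overline{E^d}))$ to $H^1(\bbQ,\NS(\bar Y))$. The paper does not analyse the Kummer lattice at all: it quotes \cite[Proposition~1.4(i)]{SZtorsion}, which gives an \emph{exact} isomorphism
\[
H^1(\bbQ,\Pic(\bar Y))\;\cong\;H^1(\bbQ,\Hom(\bar E,\bar E')),
\]
valid for Kummer surfaces of products of elliptic curves over any field of characteristic zero (this is equation~\eqref{eq:PicHom1}). So once you know the Hom cohomology vanishes, you are done; there is no separate $2$-primary piece to control.

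By contrast, your route via a sublattice $\Lambda_0\subset\NS(\bar Y)$ of $2$-power index only shows that $H^1(\bbQ,\NS(\bar Y))$ is a $2$-group, and you then need a further argument to kill this $2$-torsion. You acknowledge this is ``where the real work'' lies and offer only a sketch (``one checks that this genericity of the $2$-division fields forces the $2$-torsion to vanish''). This is not a proof: the gluing of the Kummer lattice with $\NS(\overline{E^c\times E^d})$ inside $\NS(\bar Y)$ is genuinely delicate, and the vanishing you need is essentially the content of the Skorobogatov--Zarhin proposition you did not cite. Either invoke that result directly (as the paper does), or carry out the full lattice computation; the current sketch does not close the gap.
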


\begin{remark}
\begin{enumerate}

\item In Theorem~\ref{thm:const} we prove a complement to Theorem~\ref{thm:mainWA}, showing in many cases that the evaluation maps $\ev_{\cA,v}$ for places $v\neq \ell$ are constant, cf.~\cite[Theorem~1.2(i)]{I-S}.

\item Creutz and Viray showed in~\cite[Theorem~1.7]{CV} that if there is a Brauer--Manin obstruction to the Hasse principle on a Kummer variety, then there is an obstruction coming from an element in the $2$-primary part of the Brauer group. 
It was already known (see e.g.~\cite{transBrprod}) that the analogous statement for Brauer--Manin obstructions to weak approximation does not hold. Theorem~\ref{thm:introtransob} gives further illustration of this fact, generalising the example given in~\cite[Theorem~1.3]{transBrprod}. We note that the statement of~\cite[Theorem~1.3]{transBrprod} needs correcting: for $\cA\in\Br(X)_3\setminus \Br(\bbQ)$, the evaluation map $\ev_{\cA,v}:X(\bbQ_v)\to \Br(\bbQ_v)_3$ is constant, but not necessarily zero, for all $v\neq 3$. However, any such $\cA$ does give an obstruction to weak approximation and there is a choice of $\cA$ for which the theorem holds as stated -- one takes $\cA=\cB$, where the notation is as in the proof of Theorem~\ref{thm:const}. This choice of $\cA$ should be in force throughout~\cite[Section~5]{transBrprod}, yielding corrections to the statements of \cite[Proposition~5.1, Theorem~5.2, Theorem~5.3]{transBrprod}, see~\cite{trBrcorr}.

\item In the same way that Ieronymou, Skorobogatov and Zarhin used Kummer surfaces of products of elliptic curves with CM by $\bbZ[i]$ to study Brauer groups and Brauer--Manin obstructions on diagonal quartic surfaces in~\cite{ISZ,I-S,IScorrigendum}, the results of this paper could be applied to the study of other families of quartic surfaces, e.g.~those of the form $ax^4+cxy^3=bz^4+dzw^3$ with $abcd\in\bbQ^\times$, in other words the family of surfaces geometrically isomorphic to Schur's quartic surface. Note that these surfaces contain lines given by $x=z=0$ so any Brauer--Manin obstructions arising would be obstructions to weak approximation.
\end{enumerate}
\end{remark}

\subsection*{Outline of the paper}
We begin by gathering some preliminary results on transcendental elements of Brauer groups and their evaluations at local points in Section~\ref{sec:prelim}. In Section~\ref{sec:gpquad}, for $K$ an imaginary quadratic field with $\cO_K^\times=\{\pm 1\}$, we compute transcendental Brauer groups of products of elliptic curves over $\bbQ$ with CM by $\cO_K$ and prove Theorem~\ref{thm:mainoptions}. In Section~\ref{sec:zetagp} we perform the same calculation in the case where $K=\bbQ(\zeta_3)$ and prove Theorem~\ref{thm:intro57}. In Section~\ref{sec:Br1} we compute the algebraic part of the Brauer group for each of the Kummer surfaces under consideration and prove Theorem~\ref{thm:introtransob}. Combining the results for the transcendental and algebraic parts of the Brauer group allows us to prove Theorem~\ref{thm:mainquad}. In Section~\ref{sec:pair_const} we consider the evaluation of a Brauer group element of prime order $\ell$ at $p$-adic points with $p\neq \ell$ and show in many cases that these evaluation maps are constant, providing a complement to Theorem~\ref{thm:mainWA}. Section~\ref{sec:pair_nonconst} is devoted to the proof of Theorem~\ref{thm:mainWA}.

\subsection{Notation}
If $A$ is an abelian group and $n$ a positive integer, then $A_n$ and $A/n$
denote the kernel and cokernel, respectively, of multiplication by $n$ on $A$. If $\ell$ is
prime, then $A_{\ell^\infty}$ denotes the $\ell$-power torsion subgroup of $A$.

If $k$ is a field of characteristic zero, then $\bar{k}$ denotes an algebraic closure of $k$ and $\Gamma_k$ denotes the absolute Galois group $\Gal(\bar{k}/k)$. If $X$ is an algebraic variety over $k$ and $l/k$ is a field extension then $X_l$ denotes the base change $X\times_k l$. The base change $X\times_k \bar{k}$ is denoted by $\bar{X}$.

The Brauer group of $X/k$ is denoted by $\Br (X)$ and its algebraic part  
$\ker(\Br( X) \to \Br( \bar{X}))$ is denoted by $\Br_1(X)$. The quotient $\Br(X)/\Br_1(X)$ is called the transcendental part of $\Br(X)$, or the transcendental Brauer group of $X$.

For an elliptic curve $E/k$ we denote by $\End\bar{E}$ the full ring of endomorphisms defined over $\bar{k}$. 

\begin{ack}
We are grateful to David Kurniadi Angdinata, Tudor Ciurca, Netan Dogra, Jack Shotton, Alexei Skorobogatov and Alex Torzewski for useful discussions. We thank the anonymous reviewers for their thorough reading of an earlier draft of this paper and for many insightful comments and suggestions that have significantly improved it, including pointing out an error in a previous version of Theorem~\ref{thm:Br1}. Mohamed Alaa Tawfik was supported by a University of Reading International Research Studentship and a King's College London Faculty of Natural, Mathematical and Engineering Sciences Studentship. Rachel Newton was supported by EPSRC grant EP/S004696/1 and EP/S004696/2,
and UKRI Future Leaders Fellowship MR/T041609/1 and MR/T041609/2. \end{ack}

\section{Preliminaries}\label{sec:prelim}

\subsection{Transcendental Brauer groups}

The following result of Skorobogatov and Zarhin allows us to move between the transcendental Brauer group of an abelian surface and that of the associated Kummer surface.

\begin{theorem}[{\cite[Theorem~2.4]{SZtorsion}}]\label{thm:SZBrInject} 
Let $A$ be an abelian surface, let $Y=\Kum(A)$ and let $n\in\bbZ_{>0}$. There is a natural embedding
\begin{equation}\label{eq:Br_inj}
\Br (Y)_n/\Br_1 (Y)_n\hookrightarrow \Br (A)_n/\Br_1 (A)_n
\end{equation}
which is an isomorphism if $n$ is odd. The subgroups of elements of odd order of the transcendental Brauer groups $\Br (Y)/\Br_1 (Y)$ and $\Br (A)/\Br_1 (A)$ are isomorphic.
\end{theorem}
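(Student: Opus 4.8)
The plan is to realise the embedding \eqref{eq:Br_inj} as the map induced by pullback along a natural degree-two morphism, and then to control it by a transfer map in the opposite direction. First I would interpose the Kummer construction: let $\rho\colon\tilde A\to A$ be the blow-up of $A$ at the sixteen points of $A[2]$, with exceptional curves $E_1,\dots,E_{16}$. Blowing up a smooth centre does not change the Brauer group, so $\rho^*$ yields $\Gamma_k$-equivariant isomorphisms $\Br(A)\xrightarrow{\sim}\Br(\tilde A)$, $\Br_1(A)\xrightarrow{\sim}\Br_1(\tilde A)$ and $\Br(\bar A)\xrightarrow{\sim}\Br(\bar{\tilde A})$, and we may work with $\tilde A$ throughout. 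The involution $-1$ of $A$ lifts to an involution $\tilde\iota$ of $\tilde A$ whose fixed locus is precisely $\bigcup_iE_i$ --- on the exceptional $\bbP^1$ over a $2$-torsion point, $-1$ acts through $-\mathrm{id}$ on the two-dimensional tangent space and hence trivially --- and $Y=\tilde A/\tilde\iota$. Let $\pi\colon\tilde A\to Y$ be the quotient morphism; it is finite locally free of degree $2$ and \'etale away from $\bigcup_iE_i$. Pullback $\pi^*\colon\Br(Y)\to\Br(\tilde A)$ commutes with base change to $\bar k$, hence carries $\Br_1(Y)$ into $\Br_1(\tilde A)$, and so induces $\bar\pi^*\colon\Br(Y)/\Br_1(Y)\to\Br(\tilde A)/\Br_1(\tilde A)\cong\Br(A)/\Br_1(A)$; restricted to $n$-torsion this is the map I propose realises \eqref{eq:Br_inj}. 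In the other direction, the norm (transfer) map $\pi_*\colon\Br(\tilde A)\to\Br(Y)$ attached to the finite locally free degree-$2$ morphism $\pi$ is likewise compatible with base change, preserves algebraic parts, and satisfies $\pi_*\circ\pi^*=2$ and $\pi^*\circ\pi_*=1+\tilde\iota^*$.

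The decisive input is that $\tilde\iota^*$ acts trivially on $\Br(\bar{\tilde A})_n$ for every $n$: the automorphism $-1$ acts by $-1$ on $H^1_{\et}(\bar A,\bbZ/n)$, hence by $1$ on its second exterior power $H^2_{\et}(\bar A,\bbZ/n)\cong\bigwedge^2H^1_{\et}(\bar A,\bbZ/n)$, while $\tilde\iota$ fixes each exceptional class $[E_i]$; so $\tilde\iota^*$ is the identity on $H^2_{\et}(\bar{\tilde A},\bbZ/n)$ and, by the same token, on $H^2_{\et}(\bar{\tilde A},\mu_n)$, whence on $\Br(\bar{\tilde A})_n$ via the Kummer sequence. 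For $n$ odd this settles the claim. Injectivity of $\bar\pi^*$ on $\Br(Y)_n/\Br_1(Y)_n$: the relation $\pi_*\circ\pi^*=2$ forces its kernel to be killed by $2$, hence trivial when $n$ is odd. Surjectivity: given $\beta\in\Br(\tilde A)_n$, put $\gamma=2^{-1}(1+\tilde\iota^*)\beta$, where $2^{-1}$ is an integer inverse of $2$ modulo $n$; then $\gamma$ is $\tilde\iota^*$-fixed, the difference $\gamma-\beta=2^{-1}(\tilde\iota^*\beta-\beta)$ lies in $\Br_1(\tilde A)$ because $\tilde\iota^*-1$ annihilates $\Br(\bar{\tilde A})$, and $\pi^*(2^{-1}\pi_*\gamma)=2^{-1}(1+\tilde\iota^*)\gamma=\gamma$; so $\beta$ agrees modulo $\Br_1(\tilde A)$ with an element in the image of $\bar\pi^*$. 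Thus $\bar\pi^*$ is an isomorphism on $n$-torsion for every odd $n$, and passing to the union over odd $n$ gives the asserted isomorphism of the subgroups of odd-order elements.

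What remains is injectivity of $\bar\pi^*$ on $\Br(Y)_n/\Br_1(Y)_n$ for even $n$, and this is where I expect the real work. By the transfer argument the kernel is a $2$-group, so one may take $n=2^r$. One cannot conclude from $\pi^*$ alone, because over $\bar k$ the map $\pi^*\colon\Br(\bar Y)_{2^r}\to\Br(\bar{\tilde A})_{2^r}$ genuinely fails to be injective: comparing transcendental lattices through the isomorphism $T(\Kum A)\cong T(A)(2)$ (Morrison) one finds $\pi^*T(Y)=2\,T(\tilde A)$, so this map has a nontrivial $2$-torsion kernel. Hence one must show that no nonzero element of that geometric kernel lies in the image of the canonical inclusion $\Br(Y)/\Br_1(Y)\hookrightarrow\Br(\bar Y)^{\Gamma_k}$. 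The approach I would take is to describe this kernel explicitly --- by combining the Hochschild--Serre sequence of the \'etale double cover $\tilde A\setminus\bigcup_iE_i\to Y\setminus\bigcup_i\pi(E_i)$ with the Gysin sequences for the curves $E_i$ on $\tilde A$ and $\pi(E_i)$ on $Y$ --- so as to identify its elements with ramified, symbol-type Brauer classes attached to the branch curves, and then to exploit that $\pi$ carries the configuration $\{E_i\}$, together with its $\Gamma_k$-action, isomorphically onto $\{\pi(E_i)\}$ in order to conclude that such a class becomes trivial on $\bar{\tilde A}$ only if it is already algebraic on $Y$. Keeping exact track of the $2$-torsion through this chain of long exact sequences is the step I expect to be delicate; everything else is formal.
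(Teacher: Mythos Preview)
First, note that the paper does not prove this theorem: it is quoted verbatim from Skorobogatov--Zarhin \cite{SZtorsion}, with no argument given here beyond the later remark that ``the proof of \cite[Theorem~2.4]{SZtorsion} identifies $\Br(Y)$ with the subgroup of $\Br(E\times E')$ consisting of elements fixed by $\iota$''. So there is no in-paper proof to compare against; what Skorobogatov--Zarhin actually establish is the stronger statement that $\pi^*\colon\Br(Y)\to\Br(\tilde A)$ is injective with image exactly $\Br(\tilde A)^{\tilde\iota^*}$, from which the theorem follows formally using (as you correctly argue) that $\tilde\iota^*$ is trivial on $\Br(\bar{\tilde A})$.

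Your treatment of the odd case is correct and is essentially the standard transfer argument. The problem is your handling of the $2$-primary part. You assert that $\pi^*\colon\Br(\bar Y)_{2^r}\to\Br(\bar{\tilde A})_{2^r}$ ``genuinely fails to be injective'', inferring this from $\pi^*T(Y)=2\,T(\tilde A)$. This inference is wrong: the map $\pi^*$ on geometric Brauer groups is \emph{dual} to $\pi_*$ on transcendental lattices, not induced by $\pi^*$ on them. Concretely, under the identification $\Br(\bar X)_n\cong\Hom(T(X),\bbZ/n)$ (valid since $H^2$ is unimodular and $\NS$ is primitive), the map $\pi^*$ on Brauer groups is precomposition with $\pi_*\colon T(\tilde A)\to T(Y)$, by the projection formula $(\pi^*x,s)_{\tilde A}=(x,\pi_*s)_Y$. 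Now from $\pi_*\pi^*=2=\pi^*\pi_*$ and your own observation $\pi^*T(Y)=2T(\tilde A)$ one reads off that $\pi_*\colon T(\tilde A)\to T(Y)$ is an \emph{isomorphism} of abelian groups (equivalently, compare discriminants using $(\pi_*a,\pi_*b)_Y=2(a,b)_{\tilde A}$ together with Morrison's $T(Y)\cong T(A)(2)$). Hence $\pi^*\colon\Br(\bar Y)_n\to\Br(\bar{\tilde A})_n$ is an isomorphism for every $n$, including $n=2^r$. The injectivity in \eqref{eq:Br_inj} then follows at once from the commuting square with the inclusions $\Br(X)_n/\Br_1(X)_n\hookrightarrow\Br(\bar X)_n^{\Gamma_k}$, with no need for your proposed Hochschild--Serre/Gysin analysis of the branch locus. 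In short: the even case is easier than you think, not harder, once you dualize correctly.
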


To calculate the transcendental part of the Brauer group for products of elliptic curves, we use another result of Skorobogatov and Zarhin.

\begin{proposition}[{\cite[Proposition~3.3]{SZtorsion}}]\label{prop:SZhom}
Let $E$ and $E'$ be elliptic curves over a field $k$ of characteristic zero. 
For $n\in\bbZ_{>0}$, we have a canonical isomorphism of abelian groups
\[\Br (E\times E')_n/\Br_1 (E\times E')_n=\Hom_{\Gamma_k}(E_n,E'_n)/(\Hom(\bar{E},\bar{E}')/n)^{\Gamma_k}.\]
\end{proposition}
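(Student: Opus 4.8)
The plan is to identify both sides of the claimed isomorphism with a common object built from the $n$-torsion of the product $A = E \times E'$, and then compute. First I would recall the standard exact sequence relating $\Br(A)/\Br_1(A)$ to Galois cohomology of the Néron--Severi group and the Tate module. Concretely, for an abelian variety $A$ over a field $k$ of characteristic zero, the Hochschild--Serre spectral sequence together with the Kummer sequence gives, for each $n$, an identification of $\Br(A)_n/\Br_1(A)_n$ with a subquotient of $\mathrm{H}^2(\bar A,\mu_n)^{\Gamma_k}$; using $\mathrm{H}^\ast(\bar A,\mathbb{Z}/n) = \bigwedge^\ast \mathrm{H}^1(\bar A,\mathbb{Z}/n)$ and $\mathrm{H}^1(\bar A,\mu_n) = A_n^\vee$ (Weil pairing), one writes $\mathrm{H}^2(\bar A,\mu_n)$ in terms of $\wedge^2 A_n$ and the Néron--Severi group. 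The key input I would quote from Skorobogatov--Zarhin (the setup preceding their Proposition~3.3, which the paper permits me to use) is that the transcendental quotient $\Br(A)_n/\Br_1(A)_n$ is canonically the cokernel of the map $\mathrm{NS}(\bar A)/n \to (\wedge^2 A_n^\vee)(1)^{\Gamma_k}$ — equivalently, after dualising the Weil pairing, a quotient of $\Hom(\wedge^2 A_n,\mathbb{Z}/n)^{\Gamma_k}$ by the image of $\mathrm{NS}(\bar A)/n$.

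Next I would specialise to $A = E\times E'$. Here $A_n = E_n \oplus E'_n$, so $\wedge^2 A_n = \wedge^2 E_n \oplus (E_n\otimes E'_n) \oplus \wedge^2 E'_n$. Using the Weil pairings $\wedge^2 E_n \cong \mu_n$ and $\wedge^2 E'_n\cong \mu_n$ (each is free of rank one, generated by the pairing), the "diagonal" summands $\wedge^2 E_n$ and $\wedge^2 E'_n$ contribute exactly the classes coming from $\mathrm{NS}$ of the two factors (the pullbacks of the polarisations), so they die in the transcendental quotient. What survives is the cross term: $\Hom_{\Gamma_k}(E_n\otimes E'_n,\mu_n)$ modulo the image of $\mathrm{NS}(\bar A)/n$ restricted to that summand, which is $\mathrm{Hom}(\bar E,\bar E')/n$ via the standard description of $\mathrm{NS}(\bar E\times\bar E')$ as $\mathbb{Z}^2 \oplus \Hom(\bar E,\bar E')$ (the $\mathbb{Z}^2$ being the two factor polarisations). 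Finally, I would use the Weil pairing on $E'$ to identify $\Hom(E_n\otimes E'_n,\mu_n)$ with $\Hom(E_n, E'^\vee_n{}^\vee) = \Hom(E_n,E'_n)$ canonically — and check this identification is $\Gamma_k$-equivariant and matches the reduction-mod-$n$ map $\Hom(\bar E,\bar E')\to\Hom(E_n,E'_n)$ with the $\mathrm{NS}$ restriction map. Assembling these gives $\Br(E\times E')_n/\Br_1(E\times E')_n \cong \Hom_{\Gamma_k}(E_n,E'_n)/(\Hom(\bar E,\bar E')/n)^{\Gamma_k}$, as claimed.

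The main obstacle, and the step requiring genuine care rather than bookkeeping, is the precise identification of the image of $\mathrm{NS}(\bar A)/n$ inside the cross-term $\Hom_{\Gamma_k}(E_n\otimes E'_n,\mu_n)\cong\Hom_{\Gamma_k}(E_n,E'_n)$: one must verify that a class in $\mathrm{NS}(\bar E\times\bar E')$ corresponding to $\varphi\in\Hom(\bar E,\bar E')$ maps to (a unit multiple of) the mod-$n$ reduction of $\varphi$ under the chain of canonical isomorphisms above, and in particular that no spurious scalar or sign obstructs the identification being an isomorphism onto the subgroup $(\Hom(\bar E,\bar E')/n)^{\Gamma_k}$ of $\Hom_{\Gamma_k}(E_n,E'_n)$ (note this is the $\Gamma_k$-invariants of the cokernel, not the cokernel of the invariants — so one also needs that the relevant connecting map vanishes, which is where the characteristic-zero hypothesis and the divisibility of $\Hom(\bar E,\bar E')$-related $\mathrm{H}^1$ terms enter). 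A clean way to handle this is to use the interpretation of line bundles on $\bar E\times\bar E'$ via the seesaw theorem and the isomorphism $\mathrm{Pic}^0$, tracking the first Chern class into $\mathrm{H}^2(\bar A,\mu_n)$ explicitly; alternatively one cites the relevant compatibility directly from \cite{SZtorsion}. Everything else — the Künneth decomposition, the Weil-pairing identifications, $\Gamma_k$-equivariance — is formal.
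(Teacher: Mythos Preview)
The paper does not prove this proposition at all: it is quoted verbatim as \cite[Proposition~3.3]{SZtorsion} and used as a black box throughout. So there is no ``paper's own proof'' to compare against; your sketch is essentially an outline of Skorobogatov--Zarhin's argument in the cited reference, and the overall architecture (K\"unneth decomposition of $\mathrm{H}^2(\bar{A},\mu_n)$, identification of the cross term via the Weil pairing, matching the $\NS$ contribution with $\Hom(\bar{E},\bar{E}')/n$) is the right one.

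One point of genuine care: you state early on that $\Br(A)_n/\Br_1(A)_n$ is the cokernel of $\NS(\bar{A})/n\to(\wedge^2 A_n^\vee)(1)^{\Gamma_k}$, but later (correctly) flag that what one naturally obtains is a subgroup of the $\Gamma_k$-invariants of the cokernel $\Br(\bar{A})_n$. These are not a priori the same, and reconciling them is exactly the content of \cite[Lemma~3.1]{SZtorsion}: one must show that the image of $\Br(A)_n$ in $\Br(\bar{A})_n^{\Gamma_k}$ coincides with the image of $\mathrm{H}^2(\bar{A},\mu_n)^{\Gamma_k}$. This uses that $\mathrm{H}^3(k,\bar{k}^\times)=0$ and that $\Pic(\bar{A})$ admits a $\Gamma_k$-equivariant section of $\Pic(\bar{A})\to\NS(\bar{A})$ (true for abelian varieties because symmetric line bundles split the sequence up to $2$-torsion, and one handles the rest via the dual isogeny). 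Your parenthetical about ``the relevant connecting map vanishing'' gestures at this but does not quite name it; if you were writing this out in full, that lemma is the step that needs an argument rather than a citation to ``characteristic zero and divisibility''.
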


When the elliptic curves have complex multiplication, we can say more. 

\begin{definition}
Let $k$ be a field of characteristic zero 
and let $E,E'$ be elliptic curves over $k$ with $\bar{E}\cong \bar{E}'$ and
$\End\bar{E}=\End\bar{E}'=\cO_K$ for an imaginary quadratic field $K$.
For $n\in\bbZ_{>0}$, we define $\Gamma_k$-submodules of $\Hom(E_n,E'_n)$, as follows:
\begin{align*}
\Hom(E_n,E'_n)^+&=\{\varphi\in\Hom(E_n,E'_n)\mid\varphi \circ\beta=\beta\circ\varphi \ \forall \beta\in \cO_K \};\\
\Hom(E_n,E'_n)^-&=\{\varphi\in\Hom(E_n,E'_n)\mid\varphi\circ \beta=\bar{\beta}\circ\varphi \ \forall \beta\in \cO_K \},
\end{align*}
where $\bar{\beta}$ denotes the complex conjugate of $\beta$.
\end{definition}

The following lemma and corollary are slight generalisations of some results from~\cite[\S3]{I-S}.

\begin{lemma}\label{lem:+-}
Let $k$ be a field of characteristic zero. 
Let $d\in\bbZ_{>0}$ be squarefree and let $K=\bbQ(\sqrt{-d})$. Let $E,E'$ be elliptic curves over $k$ with $\bar{E}\cong \bar{E}'$ and $\End\bar{E}=\End\bar{E}'=\cO_K$. For all $n\in\bbZ_{>0}$ coprime to $2d$, we have equalities of $\Gamma_k$-modules
\begin{equation}\label{eq:hom+-}
\Hom(E_n,E'_n)=\Hom(E_n,E'_n)^+\oplus \Hom(E_n,E'_n)^-
\end{equation}
and 
\begin{equation}\label{eq:hom+}
\Hom(\bar{E},\bar{E}')/n=\Hom(E_n,E'_n)^+.
\end{equation}
In the case $d=3$,~\eqref{eq:hom+-} and~\eqref{eq:hom+} hold for all $n\in\bbZ_{>0}$ coprime to $3$.
\end{lemma}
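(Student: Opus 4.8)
The plan is to work prime-by-prime and reduce to a computation over the algebraic closure followed by Galois descent. First I would fix a prime $p\nmid n$ (so that $E_n$ and $E'_n$ are \'etale) and, using $\bar{E}\cong\bar{E}'$, identify $\Hom(\bar E,\bar E')$ with $\cO_K$ as an $\cO_K$-module; consequently $\Hom(\bar E,\bar E')/n\cong \cO_K/n\cO_K$ sits naturally inside $\Hom(E_n,E'_n)$ — this is the group denoted $\Hom(E_n,E'_n)^+$, since endomorphisms in $\cO_K$ visibly commute with the $\cO_K$-action. The key structural input is that $E_n$ is a free $\cO_K/n\cO_K$-module of rank one (true because $E$ has CM by the \emph{maximal} order $\cO_K$), and similarly for $E'_n$; hence $\Hom(E_n,E'_n)$, as a module over $\cO_K/n\cO_K$ acting on either side, is free of rank one over $(\cO_K/n\cO_K)\otimes_{\bbZ/n}(\cO_K/n\cO_K)$, or more usefully over $\cO_K/n\cO_K\otimes_{\bbZ/n}\cO_K/n\cO_K$. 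The involution $\beta\mapsto\bar\beta$ then acts, and $\Hom^+$ and $\Hom^-$ are its $+1$ and $-1$ eigen-submodules.

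The decomposition \eqref{eq:hom+-} into $\Hom^+\oplus\Hom^-$ is then a purely algebraic statement about $\cO_K/n\cO_K\otimes_{\bbZ/n}\cO_K/n\cO_K$ with its two commuting $\cO_K/n\cO_K$-actions, or equivalently about the $\bbZ/n$-algebra $R_n=\cO_K/n\cO_K$ equipped with its nontrivial involution. Concretely, writing $\cO_K=\bbZ[\omega]$ with $\omega$ satisfying a monic quadratic, the "difference of the two actions" is multiplication by $\omega-\bar\omega$, whose square is the discriminant, equal (up to a unit and sign) to $d$ (or $-d$, resp.\ $-3$ when $d=3$). So the obstruction to splitting off the $\pm$ eigenspaces is precisely the invertibility of $2$ and of $\omega-\bar\omega$ in $R_n$, i.e.\ of $2d$ (resp.\ $3$ when $d=3$, since then $\disc=-3$) in $\bbZ/n$. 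Under the hypothesis $\gcd(n,2d)=1$ (resp.\ $\gcd(n,3)=1$ when $d=3$), the element $\tfrac12(1\pm(\text{normalised }\omega-\bar\omega)^{-1}(\,\cdot\,))$ gives idempotents realising the direct-sum decomposition, and since these idempotents are built from elements of $\cO_K$ they are $\Gamma_k$-equivariant — giving \eqref{eq:hom+-} as $\Gamma_k$-modules. For \eqref{eq:hom+} one then checks $\Hom^+$ has the right size: it is free of rank one over $R_n$, hence has order $n^2=|\cO_K/n\cO_K|=|\Hom(\bar E,\bar E')/n|$, and the natural inclusion $\Hom(\bar E,\bar E')/n\hookrightarrow\Hom^+$ is therefore an isomorphism of $\Gamma_k$-modules, the Galois-equivariance being automatic since the map is induced by reduction of genuine endomorphisms.

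For the general (not prime-power) $n$ coprime to $2d$, I would simply invoke the Chinese Remainder Theorem: both sides of \eqref{eq:hom+-} and \eqref{eq:hom+} decompose compatibly over the prime-power factors of $n$, reducing to the case $n=p^r$ already treated. The $d=3$ case is handled identically except that one only needs $2$ invertible together with $\omega-\bar\omega$; since $K=\bbQ(\zeta_3)$ has $\cO_K=\bbZ[\zeta_3]$ with $\disc=-3$ and $\omega-\bar\omega=\zeta_3-\zeta_3^2=\sqrt{-3}$, the relevant unit/bad prime is just $3$ (the factor $2$ from $\tfrac12$ is already handled because $\cO_K^\times$ contains $\zeta_3$ and one can rescale, or more simply because $2$ is automatically coprime to $n$ once we've cleared the analysis — in fact when $d=3$ the congruence $n$ coprime to $3$ suffices since $2$ is invertible mod any odd $n$ and one checks $n$ coprime to $3$ forces... ) — the precise bookkeeping of why $2$ causes no trouble when $d=3$ is the one delicate point and mirrors \cite[\S3]{I-S}.

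\textbf{Main obstacle.} The genuinely non-formal step is establishing that $E_n$ is free of rank one over $\cO_K/n\cO_K$ (equivalently, a cyclic $\cO_K$-module) — this is where CM by the \emph{full} ring of integers, rather than a non-maximal order, is essential, and it must be used to pin down the sizes in \eqref{eq:hom+}. After that, the whole argument is linear algebra over the finite ring $\cO_K/n\cO_K$ with involution, plus the trivial observation that everything in sight is defined over $\bar k$-rationally by honest isogenies/endomorphisms and so is automatically $\Gamma_k$-equivariant.
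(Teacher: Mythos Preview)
Your approach is essentially the paper's for the generic case: both decompose $\Hom(E_n,E'_n)$ into eigenspaces for the involution ``conjugate by $\sqrt{-d}$'', using the idempotents $\tfrac{1}{2}(1\pm\sqrt{-d}\circ(\cdot)\circ\sqrt{-d}^{-1})$, which need exactly $2$ and $d$ invertible modulo $n$. Your more structural phrasing via $\cO_K/n\otimes\cO_K/n$ and the freeness of $E_n$ is fine (and the freeness fact you flag as the ``main obstacle'' is standard for CM by the maximal order --- the paper replaces it by the bare count $|\End(E_n)^+|=n^2$). The CRT reduction to prime powers is unnecessary but harmless.

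The genuine gap is the case $d=3$ with $n$ even, which you yourself flag as ``the one delicate point'' and then do not resolve: the parenthetical meanders (``$2$ is invertible mod any odd $n$ and one checks $n$ coprime to $3$ forces\dots'') and trails off, and the suggested fixes are wrong --- $n$ coprime to $3$ certainly does not force $n$ odd. Your idempotents built from $\tfrac{1}{2}$ and $(\omega-\bar\omega)^{-1}=(\sqrt{-3})^{-1}$ simply do not exist when $2\mid n$. The paper's fix is to replace conjugation by $\sqrt{-3}$ (an order-$2$ phenomenon, needing $\tfrac{1}{2}$) with conjugation by $\zeta_3$ (order $3$, needing only $\tfrac{1}{3}$). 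This gives a three-term eigenspace decomposition $\Hom=\Hom^0\oplus\Hom^1\oplus\Hom^2$ for eigenvalues $1,\zeta_3,\zeta_3^2$; one then checks $\Hom^1=0$ (since $\varphi\in\Hom^1$ forces $\varphi\circ(\zeta_3^{-1}-1)=0$, hence $\varphi\circ 3=0$, hence $\varphi=0$) and identifies $\Hom^0=\Hom^+$, $\Hom^2=\Hom^-$. Your remark that ``$\cO_K^\times$ contains $\zeta_3$ and one can rescale'' is pointing in this direction, but the actual argument needs to be made.
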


\begin{proof}
First suppose that $n$ is coprime to $2d$. Then multiplication by $\sqrt{-d}$ is invertible on $E_n$ and $E_n'$ and we have
\begin{align*}
\Hom(E_n,E'_n)^+&=\{\varphi\in\Hom(E_n,E'_n)\mid\sqrt{-d}\circ \varphi\circ\sqrt{-d}^{-1}=\varphi \};\\
\Hom(E_n,E'_n)^-&=\{\varphi\in\Hom(E_n,E'_n)\mid\sqrt{-d}\circ \varphi\circ\sqrt{-d}^{-1}=-\varphi\}.
\end{align*}
Since $n$ is odd, we can write any $\varphi\in \Hom(E_n,E'_n)$ as 
\[\varphi=\frac{1}{2}\left(\varphi+ \sqrt{-d}\circ \varphi\circ\sqrt{-d}^{-1}\right)+\frac{1}{2}\left(\varphi-\sqrt{-d}\circ \varphi\circ\sqrt{-d}^{-1}\right)\]
and thus prove~\eqref{eq:hom+-}.

The case $d=3$ and $n$ coprime to $3$ is similar except that we consider conjugation by $\zeta_3$ instead of $\sqrt{-d}$. Since $\zeta_3^3=1$, we have
\begin{align*}
\Hom(E_n,E'_n)=\Hom(E_n,E'_n)^0\oplus \Hom(E_n,E'_n)^1\oplus \Hom(E_n,E'_n)^2
\end{align*}
where $\Hom(E_n,E'_n)^j=\{\varphi\in \Hom(E_n,E'_n)\mid \zeta_3\circ\varphi\circ\zeta_3^{-1}=\zeta_3^j\circ\varphi\}$. But if $\varphi\in \Hom(E_n,E'_n)^1$ then $0=\varphi\circ(\zeta_3^{-1}-1)$, which implies that $\varphi\circ 3=0$ and hence $\varphi=0$, as $n$ is coprime to $3$. Therefore, $\Hom(E_n,E'_n)^1=0$. Now observe that $\Hom(E_n,E'_n)^0=\Hom(E_n,E'_n)^+$ and $\Hom(E_n,E'_n)^2=\Hom(E_n,E'_n)^-$ to complete the proof of~\eqref{eq:hom+-}.
 
For~\eqref{eq:hom+}, view $\End(\bar{E})$ and $\End(E_n)$ as $\cO_K^\times$-modules via the action of $\cO_K^\times=\Aut\bar{E}$ on the second factor in each case, so that $\alpha\in\cO_K^\times$ sends an endomorphism $\varphi$ to $\alpha\varphi$. Since $\bar{E}\cong\bar{E}'$, the elliptic curve $E'$ is a twist of $E$ by an element in $\mathrm{H}^1(k,\Aut\bar{E})=\mathrm{H}^1(k,\cO_K^\times)$. 
Thus, the $\Gamma_k$-modules $\Hom(\bar{E},\bar{E}')$ and $\Hom(E_n,E'_n)$ are twists of $\End (\bar{E})$ and $\End(E_n)$, respectively, by the same element of $\mathrm{H}^1(k,\cO_K^\times)$, acting on the second factor. Therefore, it is enough to prove that $\End (\bar{E})/n=\End(E_n)^+$. Since $\End (\bar{E})=\cO_K$, it is clear that $\End (\bar{E})/n\subset \End(E_n)^+$. Equality follows from the fact that both are isomorphic to $(\bbZ/n\bbZ)^2$ as abelian groups.
\end{proof}

\begin{corollary}\label{cor:Br-}
Under the assumptions of Lemma~\ref{lem:+-}, we have a canonical isomorphism of abelian groups
\[\Br (E\times E')_n/\Br_1 (E\times E')_n=\Hom_{\Gamma_k}(E_n,E'_n)^-.\]
\end{corollary}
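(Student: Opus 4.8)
The plan is to combine Proposition~\ref{prop:SZhom} with the two equalities of $\Gamma_k$-modules furnished by Lemma~\ref{lem:+-}. Recall that Proposition~\ref{prop:SZhom} gives a canonical isomorphism
\[
\Br(E\times E')_n/\Br_1(E\times E')_n=\Hom_{\Gamma_k}(E_n,E'_n)/\bigl(\Hom(\bar{E},\bar{E}')/n\bigr)^{\Gamma_k},
\]
so it suffices to identify the right-hand side with $\Hom_{\Gamma_k}(E_n,E'_n)^-$ under the stated hypotheses. First I would fix $n$ as in the statement (coprime to $2d$ in general, coprime to $3$ when $d=3$) so that Lemma~\ref{lem:+-} applies.

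The key step is to apply the functor $(-)^{\Gamma_k}$ to the direct-sum decomposition \eqref{eq:hom+-}. Since taking Galois invariants is left exact and commutes with finite direct sums, \eqref{eq:hom+-} yields
\[
\Hom_{\Gamma_k}(E_n,E'_n)=\Hom(E_n,E'_n)^{+,\Gamma_k}\oplus\Hom(E_n,E'_n)^{-,\Gamma_k},
\]
where I abbreviate $M^{\Gamma_k}$ for the submodule of $\Gamma_k$-invariants and $\Hom(E_n,E'_n)^{\pm,\Gamma_k}=\bigl(\Hom(E_n,E'_n)^{\pm}\bigr)^{\Gamma_k}=\Hom_{\Gamma_k}(E_n,E'_n)^{\pm}$. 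Next, by \eqref{eq:hom+} we have $\Hom(\bar{E},\bar{E}')/n=\Hom(E_n,E'_n)^+$ as $\Gamma_k$-modules, hence $\bigl(\Hom(\bar{E},\bar{E}')/n\bigr)^{\Gamma_k}=\Hom_{\Gamma_k}(E_n,E'_n)^+$. Feeding these two facts into the quotient from Proposition~\ref{prop:SZhom} gives
\[
\Br(E\times E')_n/\Br_1(E\times E')_n=\frac{\Hom_{\Gamma_k}(E_n,E'_n)^+\oplus\Hom_{\Gamma_k}(E_n,E'_n)^-}{\Hom_{\Gamma_k}(E_n,E'_n)^+}\cong\Hom_{\Gamma_k}(E_n,E'_n)^-,
\]
which is the claimed isomorphism, and it is canonical because every identification used is. One should double-check that the inclusion $\bigl(\Hom(\bar{E},\bar{E}')/n\bigr)^{\Gamma_k}\hookrightarrow\Hom_{\Gamma_k}(E_n,E'_n)$ appearing in Proposition~\ref{prop:SZhom} is precisely the one induced by \eqref{eq:hom+} landing in the $+$-summand, so that the quotient really does cancel that summand; this is immediate from the naturality of the reduction map $\End\bar{E}\twoheadrightarrow\End\bar{E}/n\hookrightarrow\End(E_n)$ used in the proof of Lemma~\ref{lem:+-}.

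I do not expect any serious obstacle here: the corollary is essentially a formal consequence of the preceding results, and the only point requiring a little care is the compatibility of the embedding in Proposition~\ref{prop:SZhom} with the decomposition \eqref{eq:hom+-}, i.e.\ checking that the map realising $\Hom(\bar{E},\bar{E}')/n$ inside $\Hom(E_n,E'_n)$ is exactly the inclusion of the $+$-part. This is already implicit in the proof of Lemma~\ref{lem:+-}, where $\Hom(\bar{E},\bar{E}')/n$ and $\Hom(E_n,E'_n)$ are described as twists of $\End\bar{E}$ and $\End(E_n)$ by the same cocycle, so the reduction-mod-$n$ map is the twist of $\End\bar{E}/n=\End(E_n)^+\hookrightarrow\End(E_n)$; invoking this identification closes the argument.
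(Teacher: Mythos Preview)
Your proposal is correct and follows exactly the approach of the paper: the paper's proof is the single sentence ``Follows immediately from Proposition~\ref{prop:SZhom} and Lemma~\ref{lem:+-}'', and you have simply spelled out the implicit details of that deduction. The compatibility check you flag at the end is indeed implicit in the proof of Lemma~\ref{lem:+-}, as you correctly observe.
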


\begin{proof}
Follows immediately from Proposition~\ref{prop:SZhom} and Lemma~\ref{lem:+-}.
\end{proof}

In order to use Proposition~\ref{prop:SZhom} to calculate the whole of the transcendental part of the Brauer group, we will need the following material from~\cite{transBrprod}.

\begin{definition}\label{def:n}
Fix a number field $L$, an imaginary quadratic field $K$ and a prime number $\ell\in\bbZ$. Define $n(\ell)$ to be the largest integer $t$ such that the ring class field $K_{\ell^t}$ corresponding to the order $\bbZ+\ell^t\cO_K$ embeds into $KL$.
\end{definition}

\begin{theorem}\label{thm:RN}
Let $L$ be a number field and let $E/L$ be an elliptic curve such that $\End \bar{E}=\cO_K$ for an imaginary quadratic field $K$. Then for all prime numbers $\ell\in\bbZ$,
\[\left(\frac{\Br(E\times E)}{\Br_1(E\times E)}\right)_{\ell^\infty}=\frac{\Br(E\times E)_{\ell^{n(\ell)}}}{\Br_1(E\times E)_{\ell^{n(\ell)}}}.\]
\end{theorem}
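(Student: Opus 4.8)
The plan is to reduce the statement to a bound on the Galois action on $\operatorname{Hom}(E_{\ell^m}, E_{\ell^m})$ and to identify the stabilising field with a ring class field. By Proposition~\ref{prop:SZhom} applied with $E'=E$, we have
\[
\left(\frac{\Br(E\times E)}{\Br_1(E\times E)}\right)_{\ell^\infty}=\varinjlim_{m}\frac{\Hom_{\Gamma_L}(E_{\ell^m},E_{\ell^m})}{\left(\End\bar E/\ell^m\right)^{\Gamma_L}},
\]
so the content of the theorem is that this direct limit is already attained at $m=n(\ell)$; equivalently, for every $m\geq n(\ell)$ the inclusion of the level-$n(\ell)$ term into the level-$m$ term is an isomorphism after passing to the quotient. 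First I would make everything concrete: since $\End\bar E=\cO_K$, the Tate module $T_\ell E$ is free of rank one over $\cO_K\otimes\bbZ_\ell$, so $\End(E_{\ell^m})\cong \cO_K/\ell^m$ as a ring with its natural $\Gamma_L$-action, and $\Hom(E_{\ell^m},E_{\ell^m})$ is the twist of this by the class in $\mathrm H^1(L,\cO_K^\times)$ defining $E$ among its $\bar L$-forms (exactly as in the proof of Lemma~\ref{lem:+-}). Because the twisting cocycle takes values in the abelian, in fact finite, group $\cO_K^\times$, the Galois action on $\Hom(E_{\ell^m},E_{\ell^m})$ and on $\End(E_{\ell^m})=\cO_K/\ell^m$ differ only by this bounded twist, so it suffices to understand $(\cO_K/\ell^m)^{\Gamma_L}$ and more precisely the Galois module structure of $\cO_K/\ell^m$, then correct by the twist at the very end.

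The key point is that the $\Gamma_L$-action on $\cO_K/\ell^m$ factors through $\Gal(L(E_{\ell^m})/L)$, and by the theory of complex multiplication this Galois group is governed by ring class fields: the action on $\cO_K/\ell^m\cong \cO_K\otimes\bbZ/\ell^m$ is by the reciprocity map landing in $(\cO_K/\ell^m)^\times/(\text{image of }\cO_K^\times)$ composed with multiplication, and the kernel of the action on the \emph{conjugation} (i.e. on $\End(E_{\ell^m})$, which only sees whether an automorphism acts $\cO_K$-linearly or conjugate-linearly, together with the scalar ambiguity) is precisely $\Gal(\bar L/L\cap K_{\ell^m}\cdot(\text{something}))$. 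Concretely: $\Hom_{\Gamma_L}(E_{\ell^m},E_{\ell^m})$ splits, just as in Lemma~\ref{lem:+-} when $\ell$ is unramified in $K$, into a "$+$"-part equal to $(\End\bar E)/\ell^m=(\cO_K)/\ell^m$ fixed-by-twist and a "$-$"-part, and the "$-$"-part is nonzero exactly when $\Gamma_L$ acts on $E_{\ell^m}$ through a group containing no conjugation, which by CM theory happens exactly when the ring class field $K_{\ell^m}$ embeds in $L$. So I would prove: (a) $\left(\End\bar E/\ell^m\right)^{\Gamma_L}$ is the full "$+$"-part for all $m$; (b) $\Hom_{\Gamma_L}(E_{\ell^m},E_{\ell^m})^- \neq 0$ if and only if $K_{\ell^m}\hookrightarrow L$, i.e. if and only if $m\leq n(\ell)$; and (c) when it is nonzero, $\Hom_{\Gamma_L}(E_{\ell^m},E_{\ell^m})^-$ has order $\ell^{n(\ell)}$ independent of $m\geq n(\ell)$, with the transition maps between levels being isomorphisms on these quotients. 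Granting (a), the transcendental quotient at level $m$ is exactly $\Hom_{\Gamma_L}(E_{\ell^m},E_{\ell^m})^-$, and then (b)+(c) give that the system stabilises at $m=n(\ell)$, proving the theorem. For $\ell$ ramified in $K$ (the case $\ell=2$ always, and $\ell\mid d_K$) the $+/-$ decomposition can fail, but one can run the same argument directly on $\cO_K/\ell^m$ without splitting, computing $\Gamma_L$-fixed points and the quotient by $(\End\bar E/\ell^m)^{\Gamma_L}$ via the structure of $\cO_K\otimes\bbZ_\ell$ as a $\bbZ_\ell[[\Gal]]$-module; the ring class field description of the image of Galois persists because it is part of the main theorem of CM over the base $L$, not of any unramifiedness hypothesis.

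The main obstacle I anticipate is step (b)/(c): pinning down \emph{exactly} when the "$-$"-part survives and why its size is controlled by $n(\ell)$ rather than growing with $m$. This requires carefully invoking the main theorem of complex multiplication to identify $\Gal(L(E_{\ell^\infty})/L)$ inside $(\cO_K\otimes\bbZ_\ell)^\times \rtimes \Gal(K/\bbQ)$ (or its quotient by $\cO_K^\times$), and then observing that conjugation-type elements act on $\Hom(E_{\ell^m},E_{\ell^m})$ with no fixed vectors outside the "$+$"-part, while the "$-$"-part is a cyclic $\cO_K/\ell^m$-module whose $\Gamma_L$-invariants are cut out by the largest ring class field extension inside $L$ — this is where the order $\ell^{n(\ell)}$ and the stabilisation come from. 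I would expect to need the fact that $\bbZ+\ell^t\cO_K$ being the order with $K_{\ell^t}\subseteq L$ for the maximal such $t$ translates into a precise congruence condition on the image of the reciprocity map modulo $\ell^m$, and checking that this congruence condition makes the invariants of $\cO_K/\ell^m$ (twisted, minus the $+$-part) independent of $m$ once $m\geq n(\ell)$ is the computational heart of the argument; everything else is bookkeeping with twists and the compatible direct system. I would cite the relevant statements from~\cite{transBrprod} wherever the ring class field dictionary has already been set up there, since Definition~\ref{def:n} and Theorem~\ref{thm:RN} are quoted from that source.
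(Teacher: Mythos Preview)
The paper's proof of this theorem is a single sentence: it is an immediate consequence of \cite[Proposition~2.2, Theorem~2.5 and Theorem~2.9]{transBrprod}. In other words, the statement is imported wholesale from \cite{transBrprod} and is not re-proved here; your closing remark already anticipates this.

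Your sketch is an attempt to reconstruct the argument behind those cited results, and it points in the right direction: reduce via Proposition~\ref{prop:SZhom} to $\End_{\Gamma_L}(E_{\ell^m})/(\cO_K/\ell^m)^{\Gamma_L}$, use the $+/-$ decomposition where available, and identify the field cut out by the conjugation action with a ring class field so that the invariants stabilise at level $n(\ell)$. Two comments on the sketch itself. First, since here $E'=E$, there is no twist: $\Hom(E_{\ell^m},E_{\ell^m})=\End(E_{\ell^m})$ on the nose as $\Gamma_L$-modules, so the paragraph about correcting by a cocycle in $\mathrm{H}^1(L,\cO_K^\times)$ is unnecessary and slightly misleading. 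Second, your step~(b) as stated (``$\Hom^-_{\Gamma_L}\neq 0$ iff $K_{\ell^m}\hookrightarrow L$'') is not quite the right dichotomy: what one actually needs, and what \cite{transBrprod} establishes, is that the \emph{exponent} of $\End_{\Gamma_L}(E_{\ell^m})^-$ is exactly $\ell^{\min(m,n(\ell))}$, which is a finer statement than nonvanishing and is what forces the direct system to stabilise. Your step~(c) gestures at this but conflates it with~(b). For the purposes of this paper, however, none of this needs to be unpacked: the citation suffices.
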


\begin{proof}
This is an immediate consequence of~\cite[Proposition~2.2, Theorem~2.5 and Theorem~2.9]{transBrprod}.
\end{proof}

To aid us in our applications of Theorem~\ref{thm:RN}, we will need the following well-known formula for the degree of a ring class field (see \cite[Theorem 7.24]{Cox}, for example). Let $K$ be an imaginary quadratic field with discriminant $\Delta_K$ and class number $h_K$, let $c\in\bbZ_{>0}$ and let $\cO_c=\bbZ+c\cO_K$ be the order of conductor $c$ in $\cO_K$. Then
\begin{equation}\label{eq:ringclass}
[K_{c} :K]= \frac{h_K\cdot c}{[\cO_K^\times:\cO_c^\times]}\prod_{p\mid c}\left(1-\left(\frac{\Delta_K}{p}\right)\frac{1}{p}\right).
\end{equation}
The symbol $(\frac{\cdot}{p})$ denotes the Legendre symbol for odd primes. For the prime $2$, the
Legendre symbol is replaced by the Kronecker symbol $(\frac{\cdot}{2} )$, with
\[\left(\frac{\Delta_K}{2}\right)=\begin{cases}
0 & \textrm{ if } 2\mid\Delta_K,\\
1& \textrm{ if }\Delta_K\equiv 1\pmod{8},\\
-1& \textrm{ if } \Delta_K\equiv 5   \pmod{8}.
\end{cases}\]

\subsection{Evaluation maps}\label{sec:prelim_eval}

Let $k$ be a number field. 
Let $d\in\bbZ_{>0}$ be squarefree and let $K=\bbQ(\sqrt{-d})$.
Let $E,E'$ be elliptic curves over $k$ with $\bar{E}\cong \bar{E}'$ and $\End\bar{E}=\End\bar{E}'=\cO_K$ and let $Y=\Kum(E\times E')$.
Let $n\in\bbZ_{>0}$ be coprime to $2d$ and let $\varphi\in \Hom_{\Gamma_k}(E'_n, E_n)^-$. For the reader's convenience, following~\cite[\S5.1]{I-S},~\cite[\S3]{SZtorsion}, we summarise here the construction of an element of $\Br(Y)_n$ from $\varphi$, and describe its evaluation at a $p$-adic point of $Y$ in terms of a cup-product map.

Multiplication by $n$ on $E$ turns $E$ into an $E$-torsor with structure group $E_n$. Denote this torsor by $\cT$ and let $[\cT]$ denote its class in $\mathrm{H}^1_{\et}(E, E_n)$. Similarly, let $\cT'$ denote $E'$ considered as an $E'$-torsor with structure group $E'_n$, and let $[\cT']$ denote its class in $\mathrm{H}^1_{\et}(E', E'_n)$. The homomorphism $\varphi: E'_n\to E_n$ gives rise to the $E'$-torsor $\varphi_*\cT'$ with structure group $E_n$, with class $[\varphi_*\cT'] \in \mathrm{H}^1_{\et}(E', E_n)$.

Composing the cup-product map with the Weil pairing $E_n\times E_n\to \mu_n$ yields a pairing
\begin{equation}\label{eq:torsorpair}
\mathrm{H}^1_{\et}(E\times E', E_n)\times \mathrm{H}^1_{\et}(E\times E', E_n)\to \Br(E\times E')_n.
\end{equation}
Let $p:E\times E'\to E$ and $p':E\times E'\to E'$ be the natural projection maps. The pullbacks $p^*\cT_1$ and $p'^*\varphi_*\cT'$ are $E\times E'$-torsors with structure group $E_n$; let $\cC\in \Br(E\times E')_n$ denote the pairing of their classes in $\mathrm{H}^1_{\et}(E\times E', E_n)$ via~\eqref{eq:torsorpair}. By~\cite[Lemma~3.1, Proposition~3.3]{SZtorsion}, the natural map
\[\Br(E\times E')_n\to \Br(E\times E')_n/\Br_1(E\times E')_n=\Hom_{\Gamma_k}(E'_n, E_n)^-\]
sends $\cC$ to $\varphi$. Let $\iota$ denote the involution on $\Br(E\times E')$ induced by $(P,Q)\mapsto (-P,-Q)$ on $E\times E'$. The proof of~\cite[Theorem~2.4]{SZtorsion} identifies $\Br(Y)$ with the subgroup of $\Br(E\times E')$ consisting of elements fixed by $\iota$. By the functoriality and bilinearity of the cup product, we find that $\iota(\cC)=\cC$. Let $\cB$ be the element of $\Br(Y)$ corresponding to $\cC$. 

Note that if we take quadratic twists of $E$ and $E'$ by the same element $a\in k^\times$ then there is a natural isomorphism $\Kum(E^a\times E'^a)\to Y$. Applying the construction described above to the homomorphism of $\Gamma_k$-modules $E'^a_n\to E^a_n$ coming from $\varphi$, we obtain an element of the Brauer group of $\Kum(E^a\times E'^a)$ that is identified with $\cB\in\Br(Y)$ under the isomorphism $\Kum(E^a\times E'^a)\to Y$.

Let $F$ be a field containing $k$ (e.g. $F$ could be the completion of $k$ at some place $v$ of $k$). Let $P\in E(F)\setminus E_2$, let $Q\in E'(F)\setminus E_2$ and let $[P,Q]\in Y(F)$ denote the corresponding point on the Kummer surface. Then 
\begin{equation*}
\cB([P,Q])=\cC((P,Q)).
\end{equation*}
Let $\chi_P$ denote the image of $P$ under the natural map $\chi:E(F)\to \mathrm{H}^1(F, E_n)$ and let $\chi_Q$ denote the image of $Q$ under the natural map $\chi:E'(F)\to \mathrm{H}^1(F, E'_n)$. The cup product and the Weil pairing $E_n\times E_n\to \mu_n$ give a pairing
\begin{equation}\label{eq:cup}
\cup: \mathrm{H}^1(F, E_n)\times \mathrm{H}^1(F, E_n)\to \Br(F)_n.
\end{equation}
Now the construction of $\cC$ and the functoriality of the cup product show that 
\begin{equation}\label{eq:pairingcup}
\cB([P,Q])=\cC((P,Q))=\chi_P\cup\varphi_*(\chi_Q)\in \Br(F)_n.
\end{equation}

This description of the evaluation map, due to Skorobogatov and Zarhin in~\cite{SZtorsion}, is very powerful because it enables one to evaluate transcendental elements of $\Br(Y)$ at local points (and thus compute the Brauer--Manin pairing) without the need to obtain explicit Azumaya algebras representing these elements of the Brauer group.

\begin{lemma}\label{lem:constant0}
Let $k$ be a number field and let $k_v$ be its completion at a place $v$. Let $d\in\bbZ_{>0}$ be squarefree and let $K=\bbQ(\sqrt{-d})$.
Let $E,E'$ be elliptic curves over $k$ with $\bar{E}\cong \bar{E}'$ and $\End\bar{E}=\End\bar{E}'=\cO_K$ and let $Y=\Kum(E\times E')$.
Let $n\in\bbZ_{>0}$ be coprime to $2d$ and let $\varphi\in \Hom_{\Gamma_k}(E'_n, E_n)^-$. Let $\cB\in \Br(Y)$ be constructed from $\varphi$ as described above.
If the evaluation map 
\begin{align*}
\ev_{\cB, v}: Y(k_v)&\to \Br(k_v)_n\\
y&\mapsto \cB(y)
\end{align*}
 is constant then it is zero.
\end{lemma}

\begin{proof}
Let $P\in E(k_v)\setminus E_4$ (so that $2P\notin E_2$) and let $Q\in E'(k_v)\setminus E_2$. If $\ev_{\cB}$ is constant then $\cB([2P,Q])=\cB([P,Q])$ and~\eqref{eq:pairingcup} gives
\[\cB([2P,Q])=\chi_{2P}\cup\varphi_*(\chi_Q)=2\cdot(\chi_P\cup\varphi_*(\chi_Q))=2\cB([P,Q]).\]
Hence $\cB([P,Q])=0$, which suffices to prove the lemma.
\end{proof}

\section{CM by $\cO_K$ with $\cO_K^\times=\{\pm 1\}$: transcendental Brauer groups}\label{sec:gpquad}

If $\End\bar{E}=\cO_K$ for an imaginary quadratic field $K$ with $\cO_K^\times=\{\pm 1\}$ (i.e.\ $K$ is not $\bbQ(i)$ or $\bbQ(\zeta_3)$), then 
the only twists of $E$ are quadratic twists. Theorem~\ref{thm:quad} below shows that in this case the transcendental part of the Brauer group of $E\times E'$ has exponent at most $6$, where $E'$ denotes a quadratic twist of $E$. The theorem is stated for elliptic curves over $K$, but the conclusion also holds for elliptic curves over $\bbQ$, by definition of the transcendental part of the Brauer group (cf.~\eqref{eq:L1} below).

\begin{theorem}\label{thm:quad}
Let $K$ be an imaginary quadratic field with $\cO_K^\times=\{\pm 1\}$. Let $E/K$ be an elliptic curve with~$\End\bar{E}=\cO_K$, let $E'$ be a quadratic twist of $E$ and let $T=\Br(E\times E') /\Br_1(E\times E')$. 
\begin{enumerate}[label=(\roman*)]
\item\label{-2} If $K=\bbQ(\sqrt{-2})$ then $T$ is killed by $6$.
\item If $K=\bbQ(\sqrt{-7})$ then $T$ is killed by $4$.
\item\label{-11} If $K=\bbQ(\sqrt{-11})$ then $T$ is killed by $3$.
\item In all other cases, $T=0$. 
\end{enumerate}
\end{theorem}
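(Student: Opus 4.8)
The plan is to reduce the assertion to Theorem~\ref{thm:RN} by means of a single quadratic base change. Two preliminary reductions come first. Since $E$ is defined over $K$, its $j$-invariant lies in $K$; as $j(\bar E)$ generates the Hilbert class field of $K$ over $K$, this forces $h_K=1$, so $K=\bbQ(\sqrt{-d})$ with $d\in\{2,7,11,19,43,67,163\}$ (the imaginary quadratic fields of class number one with $\cO_K^\times=\{\pm1\}$). Next, write $E'=E^{(a)}$ for the quadratic twist by some $a\in K^\times$ and put $L=K(\sqrt a)$, so that $E'_L\cong E_L$ as elliptic curves over $L$; consequently $(E\times E')_L\cong(E\times E)_L$ over $L$, compatibly with the algebraic parts of the Brauer groups.

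The key observation is that for any smooth projective variety $X/K$ the restriction map $\Br(X)/\Br_1(X)\to\Br(X_L)/\Br_1(X_L)$ is injective, because both quotients embed compatibly into $\Br(\bar X)$ (recall $\Br_1$ is by definition the kernel of $\Br\to\Br(\bar X)$). Taking $X=E\times E'$ and using the isomorphism above yields an embedding
\[
\frac{\Br(E\times E')}{\Br_1(E\times E')}\hookrightarrow\frac{\Br\big((E\times E)_L\big)}{\Br_1\big((E\times E)_L\big)},
\]
so it suffices to bound the exponent of the right-hand group for an arbitrary quadratic (or trivial) extension $L/K$. By Theorem~\ref{thm:RN}, applied to the CM elliptic curve $E_L$ over $L$, that group is killed by $\prod_\ell\ell^{n(\ell)}$, where $n(\ell)$ is the largest $t$ with $K_{\ell^t}\hookrightarrow L$. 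Since every ring class field $K_{\ell^t}$ contains $K$ and $[L:K]\le2$, an embedding $K_{\ell^t}\hookrightarrow L$ forces $[K_{\ell^t}:K]\le2$, whence $n(\ell)\le m(\ell):=\max\{t:[K_{\ell^t}:K]\le2\}$.

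It remains to compute $m(\ell)$ from the degree formula~\eqref{eq:ringclass}. Because $h_K=1$ and $\pm1\in\bbZ\subset\bbZ+\ell^t\cO_K$, we have $[\cO_K^\times:\cO_{\ell^t}^\times]=1$, so~\eqref{eq:ringclass} gives $[K_{\ell^t}:K]=\ell^{t-1}\big(\ell-(\tfrac{\Delta_K}{\ell})\big)$ for $t\ge1$, a quantity that is non-decreasing in $t$; hence $m(\ell)=0$ whenever $[K_\ell:K]>2$, in particular for all $\ell\ge5$ as $[K_\ell:K]\ge\ell-1\ge4$. For $\ell=3$ one has $[K_3:K]=3-(\tfrac{\Delta_K}{3})$, which is $\le2$ exactly when $\Delta_K\equiv1\pmod3$, and then $[K_9:K]=6$, so $m(3)=1$ in that case and $m(3)=0$ otherwise. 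For $\ell=2$ one has $[K_2:K]=2-(\tfrac{\Delta_K}{2})$, equal to $1,2,3$ according as $\Delta_K\equiv1\pmod8$, $2\mid\Delta_K$, or $\Delta_K\equiv5\pmod8$, and $[K_4:K]=2\big(2-(\tfrac{\Delta_K}{2})\big)$, $[K_8:K]=4\big(2-(\tfrac{\Delta_K}{2})\big)$, giving $m(2)=2,1,0$ in these three cases. Plugging in $\Delta_K=-8$ for $d=2$ and $\Delta_K=-d$ otherwise, and reducing modulo $8$ and modulo $3$, yields $\prod_\ell\ell^{m(\ell)}=6,4,3$ for $d=2,7,11$ respectively and $\prod_\ell\ell^{m(\ell)}=1$ for $d\in\{19,43,67,163\}$, which is exactly the four cases claimed.

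This argument needs no new computation with Brauer classes; its only non-formal ingredients are the base-change injectivity of $\Br/\Br_1$ (standard) and Theorem~\ref{thm:RN}, the remainder being elementary arithmetic of ring class field degrees. The point a reader should scrutinise most is the reduction to $E\times E$ over $L$ — that twisting and enlarging the base field neither creates nor destroys transcendental Brauer classes — but this is precisely what the injectivity above records. (A more hands-on alternative would avoid the base change entirely, instead describing $E_n$ and $E'_n$ via the CM character and the Weil pairing and reducing the statement to the triviality of an explicit character $\Gamma_K\to(\cO_K/\ell)^\times$; there the main obstacle would be pinning down the image of the mod-$\ell$ CM character, which one would again control using~\eqref{eq:ringclass}.)
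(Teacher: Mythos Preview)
Your proof is correct and follows essentially the same route as the paper's: base-change to $L=K(\sqrt{a})$ so that $E'\cong E$, inject $\Br/\Br_1$ into the corresponding quotient over $L$, invoke Theorem~\ref{thm:RN} to reduce to bounding $n(\ell)$, and then control $n(\ell)$ via the ring-class-field degree formula~\eqref{eq:ringclass} together with $[L:K]\le 2$. The paper organises the casework prime by prime rather than packaging it as $m(\ell)=\max\{t:[K_{\ell^t}:K]\le 2\}$, but the computations and the resulting bounds are identical.
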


\begin{proof}
Let $E'$ be the quadratic twist of $E$ by some $a\in K^\times$ and let $L=K(\sqrt{a})$. Write $A=E\times E'=E\times E^a$. Observe that $A_L$ is isomorphic to $E\times E$ over $L$ and by definition we have
\begin{equation}\label{eq:L1}
\Br(A)/\Br_1(A)\hookrightarrow \Br(A_L)/\Br_1(A_L).
\end{equation}
Applying~\cite[Theorem~1.1 and Proposition~2.2]{transBrprod}, we see that for any prime number $\ell$,
\begin{equation}\label{eq:n}
(\Br(A_L)/\Br_1(A_L))_{\ell^\infty}=\Br(A_L)_{\ell^{n(\ell)}}/\Br_1(A_L)_{\ell^{n(\ell)}}
\end{equation}
where $n(\ell)$ is the largest integer $t$ such that the ring class field $K_{\ell^t}$ corresponding to the order $\bbZ+\ell^t\cO_K$ embeds into $L$. Bounds on $n(\ell)$ are easily obtained by noting that if $K_{\ell^t}$ embeds into $L$ then $[K_{\ell^t}:K]$ divides $[L:K]\leq 2$. Furthermore, we can use the formula~\eqref{eq:ringclass} to calculate $[K_{\ell^t}:K]$, noting that the theory of complex multiplication shows that $h_K=1$, since the Hilbert class field $H_K$ is equal to $K(j(E))$ and $E$ is defined over $K$. In this way, we find that $n(\ell)=0$ for all $\ell\geq 5$. For $\ell=3$, we find that $n(3)\leq 1$, and $n(3)=0$ unless $K$ is an imaginary quadratic field of class number one with $\left(\frac{\Delta_K}{3}\right)=1$, i.e.\ unless $K\in \{\bbQ(\sqrt{-2}), \bbQ(\sqrt{-11})\}$. Similarly, we find that $n(2)=0$ unless $K\in \{\bbQ(\sqrt{-7}), \bbQ(\sqrt{-2})\}$, and furthermore $n(2)\leq 1 $ if $K=\bbQ(\sqrt{-2})$ and $n(2)\leq 2$ if $K=\bbQ(\sqrt{-7})$.
\end{proof}

\begin{proof}[Proof of Theorem~\ref{thm:mainoptions}]
As noted in the introduction, the assumptions of Theorem~\ref{thm:mainoptions} imply that the CM field $K$ has class number one and the elliptic curves $E$ and $E'$ are geometrically isomorphic. If $\cO_K^\times=\{\pm 1\}$, then this means that $E'$ is a quadratic twist of $E$ and the result follows from Theorems~\ref{thm:quad} and~\ref{thm:SZBrInject}. The remaining cases, where $K\in\{\bbQ(i), \bbQ(\zeta_3)\}$, follow from~\cite[Examples~1 and~2, pp.48--51]{Valloni} and Theorem~\ref{thm:SZBrInject}. 
\end{proof}

\begin{remark}\label{rem:non-max}
If we relax the assumptions of Theorem~\ref{thm:mainoptions} to allow $E$ and $E'$ to have CM by orders $\cO_\frf$ and $\cO_{\frf'}$ in $\cO_K$ of conductors $\mathfrak{f}$ and $\mathfrak{f}'$, respectively, we can obtain similar results using the existence of isogenies of degrees $\frf$ and $\frf'$ from $E$ and $E'$, respectively, to elliptic curves with CM by $\cO_K$. Since $K_\frf=K(j(E))=K=K(j(E'))=K_{\frf'}$ (see e.g.\ \cite[Theorem~11.1]{Cox}), the formula~\eqref{eq:ringclass} shows that $\frf,\frf'\leq 3$ and $K\in \{\bbQ(\zeta_3), \bbQ(i), \bbQ(\sqrt{-7}), \bbQ(\sqrt{-2}), \bbQ(\sqrt{-11})\}$. Bounds on the order of a non-trivial class in the transcendental Brauer group then follow from~\cite[Theorem~5.13]{BJN} and Theorem~\ref{thm:SZBrInject}.
\end{remark}

Thus, in the setting of Theorem~\ref{thm:quad}, an element of odd order in $\Br(E\times E')/\Br_1(E\times E')$ has order dividing $3$. In Section~\ref{sec:zetagp}, we will see that the situation is more interesting in the case of elliptic curves with complex multiplication by $\bbZ[\zeta_3]$, where sextic twists can occur. But first we will investigate the cases in Theorem~\ref{thm:quad} where non-trivial elements of odd order can occur in the transcendental part of the Brauer group, namely when $K\in\{\bbQ(\sqrt{-2}),\bbQ(\sqrt{-11})\}$. Elements of odd order are of particular interest to us because Theorem~\ref{thm:SZBrInject} shows that they descend to the transcendental part of the Brauer group of the relevant Kummer surface, where there is a chance they may give obstructions to weak approximation that cannot be explained by any algebraic element in the Brauer group.

\begin{lemma}\label{lem:3}
Let $K\in \{\bbQ(\sqrt{-2}),\bbQ(\sqrt{-11})\}$, let $F\in \{\bbQ, K\}$, let $E/F$ be an elliptic curve such that $\End\bar{E}=\cO_K$ and let $a\in F^\times$. Then
\[\left(\frac{\Br(E\times E^a)}{\Br_1(E\times E^{a})}\right)_{3^\infty}=\frac{\Br(E\times E^a)_3}{\Br_1(E\times E^a)_3}.\]
\end{lemma}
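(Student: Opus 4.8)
The plan is to reduce immediately to the case $F=K$ and then apply Theorem~\ref{thm:RN} to the elliptic curve $E\times E^a$ over $K$. First I would note that, by definition of the transcendental part of the Brauer group (cf.~the injection~\eqref{eq:L1} used in the proof of Theorem~\ref{thm:quad}), the group $\Br(E\times E^a)/\Br_1(E\times E^a)$ does not change under the base change from $\bbQ$ to $K$ when $F=\bbQ$; more precisely, $\Br(E\times E^a)/\Br_1(E\times E^a)\hookrightarrow \Br((E\times E^a)_K)/\Br_1((E\times E^a)_K)$, so it suffices to prove the statement for $F=K$. (Strictly, one also checks that the order-$3$ part is preserved; since we are bounding the exponent this is automatic.)

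Next, with $E$ now over $K$, Theorem~\ref{thm:RN} applied with $L=K$ gives
\[
\left(\frac{\Br(E\times E)}{\Br_1(E\times E)}\right)_{3^\infty}=\frac{\Br(E\times E)_{3^{n(3)}}}{\Br_1(E\times E)_{3^{n(3)}}},
\]
where $n(3)$ is the largest $t$ with $K_{3^t}\hookrightarrow K$, i.e.~$K_{3^t}=K$. So the whole point is to show $n(3)\le 1$, equivalently $[K_9:K]>1$. But $E\times E^a$ becomes isomorphic to $E\times E$ over the quadratic extension $L=K(\sqrt{a})$, so the same argument as in the $\ell=3$ case of the proof of Theorem~\ref{thm:quad} applies verbatim: by~\eqref{eq:ringclass}, $[K_9:K]=9-3\left(\frac{\Delta_K}{3}\right)\ge 6>[L:K]=2$ (recall $h_K=1$), whence $n(3)\le 1$ and $(\Br(E\times E^a)/\Br_1(E\times E^a))_{3^\infty}\hookrightarrow \Br((E\times E^a)_L)_3/\Br_1((E\times E^a)_L)_3$. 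This forces the $3$-primary part of the transcendental Brauer group of $E\times E^a$ to be killed by $3$, which is exactly the claim, since $\Br_3/\Br_{1,3}$ is tautologically the part killed by $3$ and it sits inside $\Br_{3^\infty}/\Br_{1,3^\infty}$.

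There is essentially no hard part here: the lemma is a direct specialisation of Theorem~\ref{thm:RN} together with the ring class field degree computation already carried out for $\ell=3$ in Theorem~\ref{thm:quad}, the only new input being the (routine) observation that passing from $\bbQ$ to $K$, and twisting by $a$, does not enlarge the relevant $n(3)$ because everything becomes constant after the quadratic extension $K(\sqrt a)$, whose degree over $K$ is too small to accommodate the ring class field $K_9$. If any care is needed, it is in spelling out that the embedding $\Br(E\times E^a)/\Br_1(E\times E^a)\hookrightarrow \Br((E\times E^a)_L)/\Br_1((E\times E^a)_L)$ respects the torsion filtration so that "killed by $3$" upstairs descends, but this is immediate from functoriality of the inclusion.
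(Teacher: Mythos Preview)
Your argument establishes that $(\Br(A)/\Br_1(A))_{3^\infty}$ is killed by $3$, but this is \emph{not} the same as the equality $(\Br(A)/\Br_1(A))_{3^\infty}=\Br(A)_3/\Br_1(A)_3$ asserted in the lemma. The natural map $\Br(A)_3/\Br_1(A)_3\hookrightarrow(\Br(A)/\Br_1(A))_3$ is always injective, but surjectivity can fail: its cokernel sits inside $\Br_1(A)_{3^\infty}/3\Br_1(A)_{3^\infty}$ by the snake lemma, and there is no reason for this to vanish. (For a toy model of the failure, take $A=\bbZ/9$, $B=3\bbZ/9$: the quotient $A/B\cong\bbZ/3$ is killed by $3$, yet $A_3/B_3=0$.) Your final sentence identifies the wrong subtlety: the issue is not that ``killed by $3$'' descends along the injection---that part is indeed immediate---but that ``killed by $3$'' does not automatically upgrade to ``represented by genuine $3$-torsion''. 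The same gap also undermines your reduction from $F=\bbQ$ to $F=K$: knowing the equality over $K$ only tells you, via the injection, that the group over $\bbQ$ is killed by $3$.

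The paper closes exactly this gap with a restriction--corestriction argument that you have omitted. Given $\cA\in\Br(A)_{3^\infty}$, restrict to $L=K(\sqrt{a})$ and use \eqref{eq:n3} to write $\Res_{L/F}\cA=\cB+\cC$ with $\cB\in\Br(A_L)_3$ and $\cC\in\Br_1(A_L)_{3^\infty}$. Corestricting gives $[L:F]\cdot\cA=\Cor_{L/F}\cB+\Cor_{L/F}\cC$ with $\Cor_{L/F}\cB\in\Br(A)_3$ and $\Cor_{L/F}\cC\in\Br_1(A)_{3^\infty}$. Since $[L:F]\in\{1,2,4\}$ is coprime to $3$, one inverts it modulo the order of $\cA$ to conclude that the class of $\cA$ in $\Br(A)_{3^\infty}/\Br_1(A)_{3^\infty}$ already lies in $\Br(A)_3/\Br_1(A)_3$. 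This step is short but essential, and it is precisely what your sketch is missing.
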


\begin{proof}
Let $A=E\times E^a$.
Since $\Br(A)$ is torsion (see~\cite[Lemma~3.5.3]{CTSbook}), 
\[(\Br(A)/\Br_1(A))_{3^\infty}=\Br(A)_{3^\infty}/\Br_1(A)_{3^\infty}.\]
Let $\cA\in \Br (A)_{3^\infty}$ and let $L=K(\sqrt{a})$. Then~\eqref{eq:L1}, \eqref{eq:n} and the fact that $n(3)\leq 1$ (see the proof of Theorem~\ref{thm:quad}) give
\begin{equation}\label{eq:n3}
(\Br(A)/\Br_1(A))_{3^\infty}\hookrightarrow \Br(A_L)_{3}/\Br_1(A_L)_{3}.
\end{equation}
Hence, $\Res_{L/F}\cA=\cB+\cC$ where $\cB\in \Br(A_L)_{3}$ and $\cC\in \Br_1(A_L)_{3^\infty}$. Applying corestriction yields 
\[[L:F]\cdot\cA=\Cor_{L/F}\cB+\Cor_{L/F}\cC,\] 
where $\Cor_{L/F}\cB\in \Br(A)_{3}$ and $\Cor_{L/F}\cC\in \Br_1(A)_{3^\infty}$, by~\cite[Lemme~1.4]{CTS}. Since $[L:F]$ is coprime to $3$, we can invert $[L:F]$ modulo the order of $\cA$ to see that the class of $\cA$ in $\Br(A)_{3^\infty}/\Br_1(A)_{3^\infty}$ lies in $\Br(A)_{3}/\Br_1(A)_{3}$.
\end{proof}

\begin{proposition}\label{prop:alpha}
Let $K\in \{\bbQ(\sqrt{-2}),\bbQ(\sqrt{-11})\}$, let $E/K$ be an elliptic curve with CM by $\cO_K$, let $a\in K^\times$ and let $E^a$ denote the quadratic twist of $E$ by $a$. Suppose that $\Br(E\times E^a)/\Br_1(E\times E^a)$ contains an element of order $3$. Then $K(\sqrt{a})=K(\sqrt{-3})$ and hence $a\in -3\cdot K^{\times 2}$.
\end{proposition}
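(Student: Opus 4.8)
The plan is to convert the hypothesis into a non-vanishing statement for a space of Galois-equivariant homomorphisms, and then compute the relevant $\Gamma_K$-module completely explicitly. Since $3$ is coprime to $2d$ (where $K=\bbQ(\sqrt{-d})$ with $d\in\{2,11\}$), Lemma~\ref{lem:3} together with Corollary~\ref{cor:Br-} shows that the hypothesis is equivalent to $\Hom_{\Gamma_K}(E_3,(E^a)_3)^-\neq 0$. Writing $\chi_a\colon\Gamma_K\to\{\pm1\}\subseteq\mathbb{F}_3^\times$ for the quadratic character cutting out $L=K(\sqrt a)$, the quadratic twist gives $(E^a)_3\cong E_3\otimes\chi_a$ as $\Gamma_K$-modules; since $\cO_K^\times=\{\pm1\}$ is central in $\cO_K$, this twisting respects the $\pm$-decompositions of Lemma~\ref{lem:+-}, so $\Hom(E_3,(E^a)_3)^-\cong\End(E_3)^-\otimes\chi_a$ and it suffices to show that $\big(\End(E_3)^-\otimes\chi_a\big)^{\Gamma_K}\neq 0$ forces $K(\sqrt a)=K(\sqrt{-3})$.

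Next I would make the $\Gamma_K$-module $\End(E_3)^-$ explicit. One checks (e.g.\ $\left(\frac{\Delta_K}{3}\right)=1$ for both fields, as noted in the proof of Theorem~\ref{thm:quad}, or directly $3=N(1+\sqrt{-2})=N\big(\tfrac{1+\sqrt{-11}}{2}\big)$) that $3$ splits in $\cO_K$, say $(3)=\mathfrak p\bar{\mathfrak p}$. Because the CM is defined over $K$, this induces a decomposition of $\Gamma_K$-modules $E_3=E[\mathfrak p]\oplus E[\bar{\mathfrak p}]$, each summand a one-dimensional $\mathbb{F}_3$-space on which $\Gamma_K$ acts through a character $\psi,\psi'\colon\Gamma_K\to\mathbb{F}_3^\times$ and on which $\beta\in\cO_K$ acts by the scalar $\beta\bmod\mathfrak p$, resp.\ $\beta\bmod\bar{\mathfrak p}$. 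Since complex conjugation interchanges $\mathfrak p$ and $\bar{\mathfrak p}$, and for every $\beta\in\cO_K$ the scalars $\beta\bmod\mathfrak p\in\cO_K/\mathfrak p=\mathbb{F}_3$ and $\bar\beta\bmod\bar{\mathfrak p}\in\cO_K/\bar{\mathfrak p}=\mathbb{F}_3$ agree, a short computation identifies $\End(E_3)^+=\Hom(E[\mathfrak p],E[\mathfrak p])\oplus\Hom(E[\bar{\mathfrak p}],E[\bar{\mathfrak p}])$ and $\End(E_3)^-=\Hom(E[\mathfrak p],E[\bar{\mathfrak p}])\oplus\Hom(E[\bar{\mathfrak p}],E[\mathfrak p])$. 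As $\mathbb{F}_3^\times$ has order $2$ every character occurring is quadratic, so $\Gamma_K$ acts on both summands of $\End(E_3)^-$ through $\psi\psi'$; by the Weil pairing $\psi\psi'$ is the determinant of the mod-$3$ Galois representation of $E$, namely the restriction to $\Gamma_K$ of the mod-$3$ cyclotomic character, which cuts out $K(\zeta_3)=K(\sqrt{-3})$ over $K$.

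Putting these together, $\big(\End(E_3)^-\otimes\chi_a\big)^{\Gamma_K}\neq 0$ if and only if $\psi\psi'\chi_a$ is trivial, i.e.\ $\chi_a$ agrees with the mod-$3$ cyclotomic character on $\Gamma_K$, i.e.\ $K(\sqrt a)=K(\sqrt{-3})$ and hence $a\in-3K^{\times 2}$. (The degenerate possibility $a\in K^{\times 2}$, i.e.\ $\chi_a$ trivial, is excluded because $\psi\psi'\neq 1$: the mod-$3$ cyclotomic character is non-trivial on $\Gamma_K$ since $\zeta_3\notin K$.) The one step requiring genuine care is the explicit determination of the $\Gamma_K$-module structure of $\End(E_3)^{\pm}$ — the bookkeeping with the two primes above $3$ and with the action of complex conjugation — together with the identification of $\psi\psi'$ with the cyclotomic character; everything else is formal manipulation of results already established in Section~\ref{sec:prelim}.
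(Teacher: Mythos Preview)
Your argument is correct, but it takes a genuinely different route from the paper. The paper's proof is a one-liner that recycles the ring class field computation from the proof of Theorem~\ref{thm:quad}: a non-trivial $3$-primary part forces $n(3)>0$, so the ring class field $K_3$ embeds in $L=K(\sqrt a)$; since $[K_3:K]=3-\left(\frac{\Delta_K}{3}\right)=2$ and $[L:K]\leq 2$, this gives $L=K_3$, and one then checks directly that $K_3=K(\sqrt{-3})$. By contrast, you bypass ring class fields entirely and compute the $\Gamma_K$-module $\End(E_3)^-$ by hand: using the splitting $(3)=\mathfrak p\bar{\mathfrak p}$ to decompose $E_3$, identifying $\End(E_3)^-$ with the off-diagonal blocks, and recognising the $\Gamma_K$-action on those blocks as the mod-$3$ cyclotomic character via the Weil pairing. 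The paper's route is shorter given what has already been set up, but yours is more self-contained and makes the mechanism (that the condition on $a$ is precisely a cyclotomic one) completely transparent; it also anticipates the $E[\lambda]\oplus E[\bar\lambda]$ decomposition that the paper exploits later in Section~\ref{sec:split}.
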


\begin{proof}
The proof of Theorem~\ref{thm:quad} shows that if the $3$-primary part of $\Br (E\times E^a)/\Br_1(E\times E^a)$ is non-trivial (so necessarily $n(3)>0$) then $K(\sqrt{a})=K_3$. It is easily checked that $K_3=K(\sqrt{-3})$.
\end{proof}

\begin{theorem}\label{thm:numerator3}
Let $K\in \{\bbQ(\sqrt{-2}),\bbQ(\sqrt{-11})\}$, let $F\in \{\bbQ, K\}$ and let $E/F$ be an elliptic curve such that $\End\bar{E}=\cO_K$. Furthermore, let $a\in F^\times \cap -3\cdot K^{\times 2}$. Then 
\begin{align*}
\Br(E\times E^a)/\Br_1(E\times E^a)&= \frac{\Br(E\times E^a)_3}{\Br_1(E\times E^{a})_3} \\
&=\Hom_{\Gamma_F}(E_3,E^{a}_3)^-\\
&\cong \begin{cases}
(\bbZ/3\bbZ)^2 &\textrm { if } F=K;\\
\bbZ/3\bbZ &\textrm { if } F=\bbQ.
\end{cases}
\end{align*}
\end{theorem}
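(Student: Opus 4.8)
The plan is to combine the structural input already in hand with a direct computation of the Galois module $\Hom(E_3, E^a_3)^-$. First I would invoke Lemma~\ref{lem:3} together with Corollary~\ref{cor:Br-} (applicable since $n=3$ is coprime to $2d$ for $d\in\{2,11\}$) to reduce to computing $\Hom_{\Gamma_F}(E_3, E^a_3)^-$; these two results already give the first two displayed equalities, so the entire content is the final isomorphism. Note that $\Hom(E_3, E^a_3)^-$ is, as a $\Gamma_F$-module, a twist of $\Hom(E_3,E_3)^-$ by the quadratic character $\chi_a$ cutting out $F(\sqrt a)$; and by the eigenspace decomposition in Lemma~\ref{lem:+-}, $\Hom(E_3,E_3)^-$ is a free $\cO_K/3$-module of rank one, on which $\Gamma_K$ acts through the character $\beta\mapsto\bar\beta/\beta$ composed with the mod-$3$ cyclotomic-type character describing the $\cO_K$-action on $E_3$. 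The key point, supplied by Proposition~\ref{prop:alpha} and its proof, is that the non-vanishing hypothesis forces $a\in -3\cdot K^{\times2}$, i.e. $K(\sqrt a)=K_3=K(\sqrt{-3})$, and this is exactly the quadratic extension by which one must twist $\End(E_3)\cong\cO_K/3$ to make the $(-)$-eigenspace $\Gamma_K$-invariant — essentially because the ring class field $K_3$ is the field of definition of the relevant isogenies.

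Concretely, for $F=K$ I would argue as follows. Over $K$, the module $M:=\Hom(E_3,E^a_3)^-$ is $2$-dimensional over $\bbF_3$, and $\Gamma_K$ acts $\cO_K/3$-semilinearly. Because $3$ is inert or split in $\cO_K$ (it is split for both $K=\bbQ(\sqrt{-2})$ and $K=\bbQ(\sqrt{-11})$, since $\left(\frac{\Delta_K}{3}\right)=1$ by the computation in the proof of Theorem~\ref{thm:quad}), $\cO_K/3\cong\bbF_3\times\bbF_3$ and $M$ splits as a sum of two characters that are swapped by complex conjugation; the $(-)$-condition and the twist by $\chi_a$ with $F(\sqrt a)=K_3$ conspire to make both of these characters trivial on $\Gamma_K$, giving $M^{\Gamma_K}=M\cong(\bbZ/3\bbZ)^2$. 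I would verify triviality of the characters by identifying the action of $\Gamma_K$ on $E_3$ via the CM theory: the two characters are $\psi$ and $\psi^{-1}\cdot(\text{cyclotomic})$ for $\psi$ the reciprocity character valued in $(\cO_K/3)^\times$, and the $(-)$-eigenspace picks out the product $\psi\bar\psi^{-1}$, which by the main theorem of CM is the character of $\Gal(K_3/K)$, precisely killed after twisting by $\chi_a$.

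For $F=\bbQ$ I would descend: $\Gamma_\bbQ$ acts on the same module $\bar M:=M\otimes_K\bar\bbQ$, and $\Gamma_K\subset\Gamma_\bbQ$ is index $2$ with $\bar M^{\Gamma_K}\cong(\bbZ/3\bbZ)^2$ by the previous paragraph; complex conjugation (a generator of $\Gamma_\bbQ/\Gamma_K$) acts on this $2$-dimensional space, and since conjugation interchanges the two eigenlines (it intertwines the $\cO_K$-action with its conjugate, hence swaps the $(+)$ and $(-)$ conditions' underlying characters), its action on $\bar M^{\Gamma_K}\cong\bbF_3^2$ is conjugate to the swap $(x,y)\mapsto(y,x)$ up to a scalar; the fixed space is therefore $1$-dimensional, giving $\Hom_{\Gamma_\bbQ}(E_3,E^a_3)^-\cong\bbZ/3\bbZ$. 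The main obstacle I anticipate is pinning down these characters precisely enough to be sure the twist by $a$ trivialises them rather than, say, twisting them to some other order-dividing-$3$ character — in other words, checking that $K(\sqrt a)=K_3$ is not merely necessary (as Proposition~\ref{prop:alpha} gives) but sufficient to force the full $(\bbZ/3\bbZ)^2$ over $K$; this should follow by a dimension count once the semilinear $\Gamma_K$-action is made explicit, using that both the source $\End(E_3)\cong\cO_K/3$ and the twisted target have the same $\bbF_3$-dimension, exactly as in the proof of \eqref{eq:hom+} in Lemma~\ref{lem:+-}.
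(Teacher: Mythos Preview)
Your reduction step has a genuine gap. Lemma~\ref{lem:3} together with Corollary~\ref{cor:Br-} gives only
\[
\left(\frac{\Br(E\times E^a)}{\Br_1(E\times E^a)}\right)_{3^\infty}=\frac{\Br(E\times E^a)_3}{\Br_1(E\times E^a)_3}=\Hom_{\Gamma_F}(E_3,E^a_3)^-,
\]
not the first displayed equality in the theorem, which asserts that the \emph{whole} transcendental Brauer group is $3$-torsion. For $K=\bbQ(\sqrt{-11})$ this follows from Theorem~\ref{thm:quad}\ref{-11}, but for $K=\bbQ(\sqrt{-2})$ Theorem~\ref{thm:quad}\ref{-2} only bounds the exponent by $6$, so a priori there could be $2$-torsion. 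The paper closes this gap with a separate computation: it shows $\Br(A_L)_2/\Br_1(A_L)_2=\End_{\Gamma_L}(E_2)/(\End\bar{E}/2)^{\Gamma_L}=0$ via Proposition~\ref{prop:SZhom}, where $L=K(\sqrt{a})$. You need an analogous step.

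On the computation of $\Hom_{\Gamma_F}(E_3,E^a_3)^-$ itself, your route is genuinely different from the paper's. The paper observes that, since $K$ has class number one, every such $E$ is a quadratic twist $\cE^\delta$ of a fixed model $\cE$, and the two $\delta$-twists cancel in $\Hom(E_3,E^a_3)$; it then fixes explicit equations for $\cE$ (namely $y^2=x^3+4x^2+2x$ for $K=\bbQ(\sqrt{-2})$ and the curve with LMFDB label 121.b2 for $K=\bbQ(\sqrt{-11})$) and simply computes the relevant $\Hom$ groups directly. Your CM-theoretic argument, identifying the $\Gamma_K$-action on $\End(E_3)^-$ with the character $\psi\bar\psi^{-1}$ cutting out $K_3/K$ and then noting that the twist by $\chi_a$ kills it precisely when $K(\sqrt{a})=K_3$, is more conceptual and would explain \emph{why} the answer comes out as it does rather than just verifying it. The cost is that the descent to $\bbQ$ --- your claim that complex conjugation acts on the two $\bbF_3$-lines by a swap up to scalar --- is only sketched, and you acknowledge the character identification as the main obstacle; the paper's explicit computation sidesteps both issues at the price of being less illuminating.
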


\begin{remark}
In Theorem~\ref{thm:numerator3}, if $F=K$ then one may assume $a=-3$ since multiplying $a$ by an element of $F^{\times 2}$ does not change $E^a$. Likewise, if $F=\bbQ$ then one may assume $a\in \{-3,6\}$ if $K=\bbQ(\sqrt{-2})$, and $a\in \{-3,33\}$ if $K=\bbQ(\sqrt{-11})$. Proposition~\ref{prop:alpha} shows that these are the only quadratic twists for which $\Br(E\times E^a)/\Br_1(E\times E^a)$ contains non-trivial elements of odd order. 
\end{remark}

\begin{proof}[Proof of Theorem~\ref{thm:numerator3}]
We begin by computing $\Hom_{\Gamma_F}(E_3,E^{a}_3)^-$. Since $K$ has class number one, the theory of complex multiplication shows that any two elliptic curves over $F$ with CM by $\cO_K$ are geometrically isomorphic and hence quadratic twists of each other, in this case. Therefore, we can select a chosen elliptic curve $\cE/F$ with CM by $\cO_K$ and write $E=\cE^\delta$ for some $\delta\in F^\times$. Thus, $E^a=\cE^{\delta a}$ and $\Hom(E_3,E^{a}_3)=\Hom(\cE_3^\delta,\cE^{a\delta}_3)$. Now $\Gamma_F$ acts on $\Hom(\cE_3^\delta,\cE^{a\delta}_3)$ by conjugation and the two quadratic twists by $\delta$ cancel each other out so that $\Hom(\cE_3^\delta,\cE^{a\delta}_3)=\Hom(\cE_3,\cE^{a}_3)$ as a $\Gamma_F$-module. 
Furthermore, the $\Gamma_F$-module $\Hom(\cE_3,\cE^{a}_3)^-$ is the quadratic twist of $\End( \cE_3)^-$ by the quadratic character corresponding to $F(\sqrt{a})/F$. In other words, 
we identify $\Hom(\cE_3,\cE^{a}_3)^-$ with the group $\End( \cE_3)^-$ equipped with an action of $\Gamma_F$ such that $\sigma\in\Gamma_F$ sends $\varphi\in \End( \cE_3)^-$ to $\frac{\sigma(\sqrt{a})}{\sqrt{a}}\sigma\varphi\sigma^{-1}$.

\paragraph{$\bullet\  K=\bbQ(\sqrt{-2})$:} We take $\cE$ to be the elliptic curve with affine equation $y^2 = x^3 + 4x^2 + 2x$, which has complex multiplication by $\bbZ[\sqrt{-2}]$ by~\cite[Proposition~II.2.3.1(ii)]{SilvermanAdv}. One computes that $\Hom_{\Gamma_K}(\cE_3,\cE^{-3}_3)^-\cong (\bbZ/3\bbZ)^2$ and $\Hom_{\Gamma_\bbQ}(\cE_3,\cE^{-3}_3)^-\cong \Hom_{\Gamma_\bbQ}(\cE_3,\cE^{6}_3)^- \cong \bbZ/3\bbZ$. This can be done by calculating the $3$-torsion points of $\cE$ and using the fact that the $\Gamma_F$-module $\Hom(\cE_3,\cE^{-3}_3)^-$ is the quadratic twist of $\End( \cE_3)^-$ by the quadratic character corresponding to $F(\sqrt{-3})/F$. Alternatively, for an explicit computation of the $\Gamma_\bbQ$-modules $\Hom(\cE_3,\cE^{a}_3)$ for $a\in\{-3,6\}$, see the proof of~\cite[Lemma~2.3.3]{MohamedThesis}.

Now Lemma~\ref{lem:3} and Corollary~\ref{cor:Br-} show that
\[\left(\frac{\Br(E\times E^a)}{\Br_1(E\times E^{a})}\right)_{3^\infty}= \frac{\Br(E\times E^a)_3}{\Br_1(E\times E^{a})_3}=\Hom_{\Gamma_F}(\cE_3,\cE^{a}_3)^-.\]
By Theorem~\ref{thm:quad}\ref{-2}, it only remains to show that the $2$-primary part of $\Br(E\times E^a)/\Br_1(E\times E^{a})$ is trivial. Write $A=E\times E^a$ and $L=K(\sqrt{a})$. By~\eqref{eq:L1},~\eqref{eq:n} and the computation of $n(2)$ in the proof of Theorem~\ref{thm:quad}, it is enough to show that $\Br(A_L)_2/\Br_1(A_L)_2=0$. Now Proposition~\ref{prop:SZhom} shows that \[\Br(A_L)_2/\Br_1(A_L)_2=\End_{\Gamma_L}(E_2)/(\End\bar{E}/2)^{\Gamma_L}.\]
One computes that this quotient is trivial. As before, one can take $E=\cE$ and compute the $2$-torsion explicitly. For the details, see the proof of~\cite[Lemma~2.3.6]{MohamedThesis}.

\paragraph{$\bullet\  K=\bbQ(\sqrt{-11})$:} We take $\cE$ to be the elliptic curve with LMFDB label 121.b2, which has affine equation $y^2+y=x^3-x^2-7x+10$. This is the modular curve $X_{\textrm{ns}}^+(11)$. It has complex multiplication by $\bbZ[\frac{1+\sqrt{-11}}{2}]$ by~\cite{LMFDB}. One computes that $\Hom_{\Gamma_K}(\cE_3,\cE^{-3}_3)^-\cong (\bbZ/3\bbZ)^2$ and $\Hom_{\Gamma_\bbQ}(\cE_3,\cE^{-3}_3)^-\cong \Hom_{\Gamma_\bbQ}(\cE_3,\cE^{33}_3)^- \cong \bbZ/3\bbZ$. Explicit calculations can be found in the proof of~\cite[Theorem~2.4.1]{MohamedThesis}. Now Theorem~\ref{thm:quad}\ref{-11}, Lemma~\ref{lem:3} and Corollary~\ref{cor:Br-} show that
\[\frac{\Br(E\times E^a)}{\Br_1(E\times E^{a})}= \frac{\Br(E\times E^a)_3}{\Br_1(E\times E^{a})_3}= \Hom_{\Gamma_F}(\cE_3,\cE^{a}_3)^-. \qedhere\]
\end{proof}

\section{CM by $\bbZ[\zeta_3]$: transcendental Brauer groups}\label{sec:zetagp}

Throughout this section, for $c\in\bbQ^\times$, let $E^c$ denote the elliptic curve over $\bbQ$ with affine equation
$$E^c:y^2=x^3+c.$$ 
The curve $E^c$ has complex multiplication by $\bbZ[\zeta_3]$, where $\zeta_3$ denotes a primitive $3$rd root of unity. Multiplication by $\zeta_3$ sends
\[(x,y)\mapsto (\zeta_3x, y). \]
The curve $E^c$ is the sextic twist of $y^2=x^3+1$ by the class of $c^{-1}$ in $\mathrm{H}^1(\bbQ,\mu_6)$. Since $\bbQ(\zeta_3)$ has class number one, any elliptic curve over $\bbQ$ with complex multiplication by $\bbZ[\zeta_3]$ is of the form $E^c$ for some $c\in\bbQ^\times$.

In this section, we study the transcendental Brauer groups of $E^c\times E^d$ and $\Kum(E^c\times E^d)$ for $c,d\in\bbQ^\times$.

\begin{lemma}
\label{lem:prime bound}
Let $c,d\in\bbQ^\times$.
For every prime number $\ell>7$, $$(\Br(E^c\times E^d)/\Br_1(E^c\times E^d))_{\ell^\infty}=0.$$ 
For $\ell\in\{5,7\}$, $$(\Br(E^c\times E^d)/\Br_1(E^c\times E^d))_{\ell^\infty}=\Br(E^c\times E^d)_{\ell}/\Br_1(E^c\times E^d)_{\ell}.$$
For $\ell\in\{2,3\}$,  $$(\Br(E^c\times E^d)/\Br_1(E^c\times E^d))_{\ell^\infty}=(\Br(E^c\times E^d)/\Br_1(E^c\times E^d))_{\ell^2}.$$
\end{lemma}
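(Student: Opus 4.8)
The plan is to invoke Theorem~\ref{thm:RN} with $E=E^c$ (or rather, since the statement concerns $E^c\times E^d$ with possibly distinct $c,d$, to first reduce to the diagonal case) and to bound the invariant $n(\ell)$ from Definition~\ref{def:n} for the field $L=\bbQ$, $K=\bbQ(\zeta_3)$, using the degree formula~\eqref{eq:ringclass}. Concretely, $\Delta_K=-3$, $h_K=1$, and $\cO_K^\times$ has order $6$, so for $\ell\neq 3$ and $t\geq 1$ we get $[K_{\ell^t}:K]=\ell^{t-1}(\ell-(\frac{-3}{\ell}))\geq \ell^{t-1}(\ell-1)$, while for $\ell=3$ we get $[K_{3^t}:K]=3^t$ (since $3\mid\Delta_K$). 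Since $L=\bbQ$ does not even contain $K$, strictly speaking $K_{\ell^t}$ never embeds into $\bbQ$, so $n(\ell)=0$ for all $\ell$ in the literal sense of Definition~\ref{def:n}; however, the relevant statement is about $\Br(E^c\times E^d)/\Br_1$, whose odd-order part (and more) is controlled by base change to $K$, so the honest computation is with $L=K$, giving $n(\ell)\geq 1$ possible only when $[K_\ell:K]=1$, i.e.\ $\ell-(\frac{-3}{\ell})=1$, which happens only for $\ell=2$ (where $[K_2:K]=2-(\frac{-3}{2})=2-(-1)=3$, so actually $n(2)=0$ too unless one is careful) — I will need to chase this exactly as in the proof of Theorem~\ref{thm:quad}.

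Let me restructure. First I would handle the reduction from $E^c\times E^d$ to a diagonal product: since $\bbQ(\zeta_3)$ has class number one, $E^c$ and $E^d$ are geometrically isomorphic, and after a common sextic twist one may relate $\Br(E^c\times E^d)/\Br_1$ to $\Br(\cE\times \cE^a)/\Br_1$ for $\cE=E^1$ and $a=c/d$ (up to sixth powers), exactly as in the proof of Theorem~\ref{thm:numerator3}; over a suitable finite extension $L/\bbQ$ of degree dividing $6$, $E^c\times E^d$ becomes isomorphic to $E^1\times E^1$, and the injection $\Br(A)/\Br_1(A)\hookrightarrow\Br(A_L)/\Br_1(A_L)$ together with a corestriction argument (as in Lemma~\ref{lem:3}) transfers the $\ell$-primary bounds for $\ell$ coprime to $[L:\bbQ]$, i.e.\ for $\ell\geq 5$. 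Then I would apply Theorem~\ref{thm:RN} to $E^1/\bbQ(\zeta_3)$ (or to $E^1$ over whatever field contains the needed ring class fields) and compute $n(\ell)$ via~\eqref{eq:ringclass}: for $\ell\geq 11$, $[K_\ell:K]\geq \ell-1\geq 10 > [L:\bbQ(\zeta_3)]$, forcing $n(\ell)=0$ and hence the $\ell$-primary part vanishes; for $\ell\in\{5,7\}$, $[K_{\ell^2}:K]=\ell(\ell-(\frac{-3}{\ell}))\geq \ell(\ell-1)\geq 20$, which is bigger than the relevant degree, so $n(\ell)\leq 1$ and the $\ell$-primary part is killed by $\ell$; for $\ell\in\{2,3\}$, I would show $[K_{\ell^3}:K]$ exceeds the relevant degree — e.g.\ $[K_8:K]=8$ and $[K_{27}:K]=27$ — forcing $n(\ell)\leq 2$ and hence the $\ell$-primary part is killed by $\ell^2$.

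The main obstacle, I expect, is getting the bookkeeping of the auxiliary field right: over $\bbQ$ the ring class field $K_{\ell^t}$ never embeds (it is not even totally real vs.\ imaginary issue — $K_{\ell^t}$ contains $K$ which is not in $\bbQ$), so one cannot apply Theorem~\ref{thm:RN} directly over $\bbQ$. One must pass to an auxiliary number field over which the product becomes a square of a single CM elliptic curve, keep track of its degree over $\bbQ$ (dividing $6$, since the obstruction to $E^c\cong E^d$ lies in $\mathrm{H}^1(\bbQ,\mu_6)$ and one also needs $K\subseteq L$), and ensure this degree is coprime to $\ell$ for the corestriction trick to recover the bound over $\bbQ$ — which it is precisely for $\ell\geq 5$. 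For $\ell\in\{2,3\}$ the corestriction argument loses a factor, but there the claimed bound is only $\ell^2$, not a vanishing statement, and this weaker bound survives; I would need to check that passing from $\bbQ$ to $L$ and back does not inflate the exponent beyond $\ell^2$, using that $n(\ell)\leq 2$ over $L$ and that $[L:K]$ is a power of $\ell$ times a unit modulo $\ell$-th roots... actually the cleanest route for $\ell\in\{2,3\}$ is to argue as in Lemma~\ref{lem:3}: write $\Res_{L/\bbQ}\cA=\cB+\cC$ with $\cB\in\Br(A_L)_{\ell^2}$ and $\cC$ algebraic, apply $\Cor_{L/\bbQ}$, and note that although $[L:\bbQ]$ may be divisible by $\ell$, we already know from the geometric injection $\Br(A)/\Br_1(A)\hookrightarrow\Br(\bar A)$ being torsion plus the structure of $\Hom_{\Gamma_\bbQ}(E^c_{\ell^k},E^d_{\ell^k})^-$ that the exponent is bounded — alternatively, and more robustly, I would pick $L$ to be the sextic (or smaller) twisting field that does \emph{not} introduce unnecessary $\ell$-divisibility, or simply invoke~\cite[Proposition~2.2]{transBrprod} directly for $E^c\times E^d$ over $\bbQ$ in a form that does not require $K\subseteq\bbQ$ (the cited results in~\cite{transBrprod} are proved for products $E\times E'$ over arbitrary number fields, not just $E\times E$, which sidesteps the diagonalisation entirely). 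I will double-check which of these two routes the paper's earlier results actually license, and use whichever keeps the argument shortest.
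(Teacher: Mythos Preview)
Your approach for $\ell\in\{2,3,5,7\}$ is essentially the paper's: pass to $L=\bbQ(\zeta_3,\sqrt[6]{c/d})$ where $A_L\cong E^c\times E^c$, invoke Theorem~\ref{thm:RN}, bound $n(\ell)$ via~\eqref{eq:ringclass}, and for $\ell\in\{5,7\}$ run the restriction--corestriction argument of Lemma~\ref{lem:3}. That part is fine in outline.

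There is, however, a computational slip that propagates into a genuine gap for $\ell>7$. For $K=\bbQ(\zeta_3)$ one has $|\cO_K^\times|=6$, so $[\cO_K^\times:\cO_{\ell^t}^\times]=3$ for any $t\geq 1$, and~\eqref{eq:ringclass} gives
\[
[K_{\ell^t}:K]=\frac{\ell^{t-1}\bigl(\ell-\bigl(\tfrac{-3}{\ell}\bigr)\bigr)}{3}\qquad(\ell\neq 3),
\]
not $\ell^{t-1}(\ell-(\frac{-3}{\ell}))$ as you wrote. In particular $[K_\ell:K]=(\ell\pm 1)/3$, which for $\ell\in\{11,13,17,19\}$ equals $4,4,6,6$ respectively. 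Since $[L:K]\mid 6$, the inequality $[K_\ell:K]>[L:K]$ can fail (e.g.\ for $\ell\in\{17,19\}$), so your argument only yields $n(\ell)\leq 1$ and hence that the $\ell$-primary transcendental Brauer group is killed by $\ell$, not that it \emph{vanishes}. (For $\ell\in\{11,13\}$ you can still recover $n(\ell)=0$ by the divisibility observation $4\nmid 6$, but not by your stated inequality.)

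The paper handles $\ell>7$ by an entirely different route: it passes to the Kummer surface $Y=\Kum(E^c\times E^d)$, uses the injection $\Br(Y)/\Br_1(Y)\hookrightarrow\Br(\bar Y)^{\Gamma_{\bbQ(\zeta_3)}}$, and then invokes Valloni's computation~\cite[Example~2]{Valloni} of $\Br(\bar Y)^{\Gamma_{\bbQ(\zeta_3)}}$ for singular K3 surfaces with CM by $\bbZ[\zeta_3]$, which directly gives vanishing of the $\ell$-primary part for $\ell>7$. It then transfers this back to $A=E^c\times E^d$ via Theorem~\ref{thm:SZBrInject}. Your proposal does not mention Valloni, and neither the ring-class-field bound nor~\cite[Proposition~2.2]{transBrprod} (which concerns $E\times E$, so still requires passing to $L$) will close the gap for $\ell\in\{17,19\}$ without additional input such as a direct calculation of $\Hom_{\Gamma_\bbQ}(E^c_\ell,E^d_\ell)^-$.
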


\begin{proof}
Let $A=E^c\times E^d$ and let $Y=\Kum A$.
By definition of $\Br_1 (Y)$, we have an injection
\[\Br (Y)/\Br_1 (Y)\hookrightarrow \Br (\bar{Y})^{\Gamma_{\bbQ(\zeta_3)}}.\]
Since $Y$ is a K3 surface with CM by $\bbZ[\zeta_3]$, we can apply \cite[Example~2, pp.50--51]{Valloni} to $Y$ to see that, for all primes $\ell>7$, $\Br (\bar{Y})^{\Gamma_{\bbQ(\zeta_3)}}_{\ell^\infty}=0$ and hence $(\Br (Y)/\Br_1 (Y))_{\ell^\infty}=0$. Since $\Br (Y)$ is a torsion group, $(\Br (Y)/\Br_1 (Y))_{\ell^\infty}=\Br (Y)_{\ell^\infty}/\Br_1 (Y)_{\ell^\infty}$. The first statement now follows easily from Theorem~\ref{thm:SZBrInject}. 

Let $L=\bbQ\bigl(\zeta_3, \sqrt[6]{c/d}\bigr)$ so that $A_L\cong E^c\times E^c$. By definition, 
 \begin{equation}\label{eq:L}
 \Br (A)_n/\Br_1(A)_n\hookrightarrow \Br(A_L)_n/\Br_1(A_L)_n
 \end{equation}
 for all $n\in\bbZ_{>0}$. By Theorem~\ref{thm:RN}, for any prime number $\ell$,
\begin{equation}\label{eq:m}
(\Br(A_L)/\Br_1(A_L))_{\ell^\infty}=\Br(A_L)_{\ell^{n(\ell)}}/\Br_1(A_L)_{\ell^{n(\ell)}}
\end{equation}
where $n(\ell)$ is as defined in Definition~\ref{def:n}.

Observe that $[L:\bbQ(\zeta_3)]\mid 6$. By~\eqref{eq:ringclass}, we have $[K_8:\bbQ(\zeta_3)]=4$ and $[K_{27}:\bbQ(\zeta_3)]=9$, whence $n(2), n(3)\leq 2$. This, together with~\eqref{eq:L} and~\eqref{eq:m}, proves the statement for $\ell\in\{2,3\}$. 

Now suppose that $\ell\in\{5,7\}$. By~\eqref{eq:ringclass}, $[K_{\ell^2}:\bbQ(\zeta_3)]=2\ell>6$, so $n(\ell)\leq 1$. Our discussion above shows that 
\begin{equation}\label{eq:m57}
(\Br(A_L)/\Br_1(A_L))_{\ell^\infty}=(\Br(A_L))_{\ell}/(\Br_1(A_L))_{\ell}.
\end{equation}
Now an argument using restriction and corestriction similar to the one used in the proof of Lemma~\ref{lem:3} shows that $(\Br(A)/\Br_1(A))_{\ell^\infty}=\Br(A)_{\ell}/\Br_1(A)_{\ell}$.
\end{proof}

In this paper, we will focus on the cases with CM by $\bbZ[\zeta_3]$ where the transcendental part of the Brauer group contains an element of order $5$ or $7$. The other cases will be discussed in future work.

\begin{lemma}\label{lem:repel}
Let $c,d\in\bbQ^\times$ and let $Y=\Kum(E^c\times E^d)$.
Let $\ell\in\{5,7\}$ and suppose that $(\Br (Y)/\Br_1 (Y))_\ell\neq 0$. Then~\eqref{eq:Br_inj} yields an isomorphism 
\[\Br (Y)/\Br_1 (Y)\cong \Br (E^c\times E^d)_\ell/\Br_1 (E^c\times E^d)_\ell.\]
\end{lemma}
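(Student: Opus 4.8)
The plan is to show that the hypothesis forces $\Br(Y)/\Br_1(Y)$ to be killed by $\ell$; once this is done, \eqref{eq:Br_inj} applied with the odd integer $n=\ell$ is an isomorphism onto $\Br(A)_\ell/\Br_1(A)_\ell$, where $A=E^c\times E^d$, and by Lemma~\ref{lem:prime bound} this group equals $(\Br(A)/\Br_1(A))_{\ell^\infty}=(\Br(Y)/\Br_1(Y))_{\ell^\infty}$, giving exactly the claimed isomorphism. By Lemma~\ref{lem:prime bound} and the injectivity in Theorem~\ref{thm:SZBrInject}, $\Br(Y)/\Br_1(Y)$ has no $p$-primary part for $p>7$, its $5$- and $7$-primary parts are killed by $5$ and $7$, and its $2$- and $3$-primary parts are killed by $4$ and $9$. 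So it suffices to prove that, under the hypothesis, the primary parts at $2$, at $3$, and at the element of $\{5,7\}$ other than $\ell$ all vanish.

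First I set up the descent. Put $L=\bbQ\bigl(\zeta_3,\sqrt[6]{c/d}\bigr)$, so $A_L\cong E^c\times E^c$ and $[L:\bbQ(\zeta_3)]\mid 6$. Combining Theorem~\ref{thm:SZBrInject}, the injection \eqref{eq:L}, the identity \eqref{eq:m} and Theorem~\ref{thm:RN}, a non-trivial $p$-primary part of $\Br(Y)/\Br_1(Y)$ forces $\Br(A_L)_{p^{n(p)}}/\Br_1(A_L)_{p^{n(p)}}\ne 0$; when $p=\ell$ this in particular forces $K_\ell\hookrightarrow L$ (as then $n(\ell)\geq 1$ and $K_\ell\neq\bbQ(\zeta_3)$). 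Evaluating \eqref{eq:ringclass} for $\bbQ(\zeta_3)$ (where $h_{\bbQ(\zeta_3)}=1$, $\Delta_{\bbQ(\zeta_3)}=-3$, and $[\cO_K^\times:\cO_c^\times]=3$ for $c>1$) gives $[K_5:\bbQ(\zeta_3)]=[K_7:\bbQ(\zeta_3)]=[K_4:\bbQ(\zeta_3)]=2$, $[K_9:\bbQ(\zeta_3)]=3$ and $[K_{35}:\bbQ(\zeta_3)]=12$. Since $[L:\bbQ(\zeta_3)]\mid 6$, the compositum $K_5K_7=K_{35}$ cannot embed into $L$, so at most one of $K_5,K_7$ does, and the primary part at the other element of $\{5,7\}$ vanishes. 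Moreover $K_\ell\hookrightarrow L$ forces $[L:\bbQ(\zeta_3)]\in\{2,6\}$, so the Kummer extension $L/\bbQ(\zeta_3)$ (with cyclic group of order $2$ or $6$) has a unique quadratic subextension, which must be $K_\ell=\bbQ(\zeta_3,\sqrt{c/d})$; since $K_\ell/\bbQ(\zeta_3)$ is unramified outside $\ell$ (as $\gcd(\ell,\Delta_{\bbQ(\zeta_3)})=1$) while $K_4/\bbQ(\zeta_3)$ ramifies at $2$, we get $K_4\neq K_\ell$, hence $K_4\not\hookrightarrow L$ and $n(2)\leq 1$; likewise $n(3)\leq 1$. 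In particular $c/d\equiv m_\ell\pmod{\bbQ^{\times 2}}$ for a rational integer $m_\ell$ divisible, up to sign, only by $\ell$ and by $3$; in particular $v_2(c/d)$ is even.

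It remains to kill the $2$- and $3$-primary parts of $\Br(Y)/\Br_1(Y)$. For $p=2$, since $2$ is coprime to $3$, Corollary~\ref{cor:Br-} gives $(\Br(A)/\Br_1(A))_{2^\infty}=\Hom_{\Gamma_\bbQ}(E^c_4,E^d_4)^-$, and a direct computation of the mod-$2$ complex-multiplication (Cartan) characters of $E^c$ and $E^d$ — which are the Kummer characters of $c$ and $d$ reduced modulo $3$ — shows that $\Hom_{\Gamma_\bbQ}(E^c_2,E^d_2)^-$, and hence (by reduction) the whole $2$-primary part, vanishes unless $cd\in\bbQ^{\times 3}$. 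For $p=3$, Corollary~\ref{cor:Br-} is unavailable because $3$ is not coprime to $3$; instead one uses Proposition~\ref{prop:SZhom} directly, and, using $n(3)\leq 1$ together with \eqref{eq:L}, \eqref{eq:m} and Theorem~\ref{thm:SZBrInject}, reduces the vanishing of the $3$-primary part to that of the $3$-torsion quotient $\Hom_{\Gamma_\bbQ}(E^c_3,E^d_3)/(\Hom(\bar{E^c},\bar{E^d})/3)^{\Gamma_\bbQ}$ — a similar but more delicate computation with the mod-$3$ Galois modules, again producing a divisibility/congruence condition at $3$ necessary for non-vanishing. Finally one checks these conditions cannot hold when $K_\ell\hookrightarrow L$: one must establish, from $\Br(A_L)_\ell/\Br_1(A_L)_\ell\neq 0$ (for $A_L=E^c\times E^c$ over $L$, i.e.\ from the mod-$\ell$ CM character of $E^c$ becoming trivial on $\Gamma_L$), a refinement of $c/d\equiv m_\ell\pmod{\bbQ^{\times 2}}$ that pins $c/d$ down modulo sixth powers — concretely forcing $v_2(cd)\not\equiv 0\pmod 3$, which rules out $cd\in\bbQ^{\times 3}$ — and the analogous $\ell$-adic refinement disposes of the $3$-primary part.

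The main obstacle is precisely this last step: extracting from $(\Br(Y)/\Br_1(Y))_\ell\neq 0$ a condition on $(c,d)$ strong enough to contradict the (already restrictive) conditions under which the $2$- or $3$-primary part can be non-zero — not merely ``$c/d$ is $\ell$ up to squares'' but the finer data, including the factor $2^4$ that appears in the eventual criterion of Theorem~\ref{thm:intro57}. The case $p=3$ is the most delicate, since $3$ ramifies in $\bbZ[\zeta_3]$, Corollary~\ref{cor:Br-} does not apply, the bound from Lemma~\ref{lem:prime bound} only gives $9$-torsion, and one must check that no $3$-torsion survives in the quotient computing $\Br(A)_3/\Br_1(A)_3$. (Alternatively, one may be able to short-circuit the $p\in\{2,3\}$ cases by invoking Valloni's structural description of the transcendental Brauer group of a principal K3 surface with CM in \cite[Example~2]{Valloni}, which constrains $\Br(\bar Y)^{\Gamma_{\bbQ(\zeta_3)}}$ enough to preclude a $p$-primary part coexisting with an $\ell$-primary part for distinct $p,\ell\in\{2,3,5,7\}$.)
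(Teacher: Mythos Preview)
Your parenthetical alternative at the very end---invoking Valloni's structural description in \cite[Example~2]{Valloni}---is precisely the paper's proof, which reads in its entirety: ``This follows from \cite[Example~2, pp.50--51]{Valloni} and Lemma~\ref{lem:prime bound}.'' Valloni's result constrains $\Br(\bar Y)^{\Gamma_{\bbQ(\zeta_3)}}$ enough that distinct primary parts cannot coexist, which immediately disposes of the $2$-, $3$-, and $\{5,7\}\setminus\{\ell\}$-primary parts once one is assumed non-trivial. So the paper short-circuits exactly the step you identify as ``the main obstacle.''

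Your main line of argument, by contrast, is not a proof but a programme: the handling of the $\{5,7\}\setminus\{\ell\}$ case is fine (though your claim $K_5K_7=K_{35}$ is false---the compositum has degree $4$, not $12$, over $\bbQ(\zeta_3)$; the conclusion survives because $4\nmid 6$), but for $p\in\{2,3\}$ you only sketch what would need to be computed and explicitly flag that you have not carried it out. The difficulty you anticipate is real: to make the direct approach work you would need the sixth-power congruence on $cd$ from Proposition~\ref{prop:gen}, which in the paper's logical order comes \emph{after} this lemma, and for $p=3$ you lose Corollary~\ref{cor:Br-} and must work with Proposition~\ref{prop:SZhom} by hand at the ramified prime. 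None of this is impossible, but it is substantially more work than the one-line appeal to Valloni, and as written your proposal does not complete it.
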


\begin{proof}
This follows from~\cite[Example~2, pp.50--51]{Valloni} and Lemma~\ref{lem:prime bound}.
\end{proof}

We now calculate $\Br (E^c\times E^d)_\ell/\Br_1 (E^c\times E^d)_\ell$ for $\ell\in\{5,7\}$, using Corollary~\ref{cor:Br-}.
To compute $\Hom_{\Gamma_\bbQ}(E^d_\ell,E^c_\ell)^-$ for $\ell\in\{5,7\}$, we will use Eisenstein's sextic reciprocity law, as stated in~\cite[Theorem~7.10]{Lemmermeyer}.

\begin{definition}
An element $a+b\zeta_3\in\bbZ[\zeta_3]$ is called \emph{E-primary} if $b\equiv 0\bmod{3}$ and 
\begin{align*}
a+b\equiv 1\bmod{4},& \textrm{ if }2\mid b,\\
b\equiv 1\bmod{4},& \textrm{ if }2\mid a,\\
a\equiv 3\bmod{4}, & \textrm{ if } 2\nmid ab.
\end{align*}
\end{definition}
Let $N$ denote the norm map $N_{\bbQ(\zeta_3)/\bbQ}:\bbQ(\zeta_3)\to\bbQ$.
Recall the definition of the sextic residue symbol: for $\lambda,\pi\in\bbZ[\zeta_3]$ with $\pi$ prime, $\left(\frac{\lambda}{\pi}\right)_6$ is the unique $6$th root of unity satisfying
\[\lambda^{\frac{N(\pi)-1}{6}}\equiv \left(\frac{\lambda}{\pi}\right)_6\pmod{\pi}.\]

\begin{theorem}[Eisenstein]\label{thm:sextic}
If $\beta,\gamma\in\bbZ[\zeta_3]$ are E-primary and relatively prime, then
\[\left(\frac{\beta}{\gamma}\right)_6=(-1)^{\frac{N(\beta)-1}{2}\frac{N(\gamma)-1}{2}}\left(\frac{\gamma}{\beta}\right)_6.\]
\end{theorem}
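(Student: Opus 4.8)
The plan is to exploit the factorisation $\mu_6=\{\pm 1\}\times\mu_3$ in order to split the sextic residue symbol into its quadratic and cubic parts, and then to invoke the classical cubic reciprocity law in $\bbZ[\zeta_3]$ together with a quadratic reciprocity law in $\bbZ[\zeta_3]$. Directly from the defining congruence $\lambda^{(N(\pi)-1)/6}\equiv\left(\frac{\lambda}{\pi}\right)_6\pmod{\pi}$, raising to the second and third powers gives $\left(\frac{\lambda}{\pi}\right)_6^{\,2}=\left(\frac{\lambda}{\pi}\right)_3$ and $\left(\frac{\lambda}{\pi}\right)_6^{\,3}=\left(\frac{\lambda}{\pi}\right)_2$, where $\left(\frac{\cdot}{\pi}\right)_3$ is the cubic residue symbol and $\left(\frac{\cdot}{\pi}\right)_2$ is the quadratic residue symbol in $\bbZ[\zeta_3]$ (the latter by Euler's criterion in the residue field). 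Since a sextic root of unity equals its seventh power, this yields $\left(\frac{\lambda}{\pi}\right)_6=\left(\frac{\lambda}{\pi}\right)_2\left(\frac{\lambda}{\pi}\right)_3^{\,2}$. Extending all three symbols multiplicatively in the lower argument (the Jacobi-symbol construction) propagates this identity to the E-primary, relatively prime $\beta,\gamma$ appearing in the statement, so it suffices to establish the reciprocity formulae for the cubic and quadratic parts separately and multiply.

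First I would check that an E-primary element is primary in the sense required by Eisenstein's cubic reciprocity law: the condition $b\equiv 0\bmod 3$ forces $\beta\equiv a\pmod{3\bbZ[\zeta_3]}$ with $a\in\bbZ$, so $\beta$ is congruent to a rational integer modulo $(1-\zeta_3)^2$, which (after possibly replacing $\beta$ by $-\beta$) is a form of the standard primarity hypothesis. Eisenstein's cubic reciprocity law (see e.g.\ Ireland--Rosen, Ch.~9, or \cite[Ch.~7]{Lemmermeyer}) then gives $\left(\frac{\beta}{\gamma}\right)_3=\left(\frac{\gamma}{\beta}\right)_3$ with no sign, hence in particular $\left(\frac{\beta}{\gamma}\right)_3^{\,2}=\left(\frac{\gamma}{\beta}\right)_3^{\,2}$. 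The genuine content here is the cubic reciprocity law itself, whose standard proof proceeds via Gauss and Jacobi sums attached to the cubic residue character; I would simply quote it.

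Next I would treat the quadratic part. The two mod-$4$ congruences in the definition of E-primary are precisely the normalisation that produces a clean quadratic reciprocity law in $\bbZ[\zeta_3]$, namely $\left(\frac{\beta}{\gamma}\right)_2=(-1)^{\frac{N(\beta)-1}{2}\frac{N(\gamma)-1}{2}}\left(\frac{\gamma}{\beta}\right)_2$ for E-primary, coprime $\beta,\gamma$. This can be deduced either by writing the quadratic residue symbol over $\bbZ[\zeta_3]$ in terms of Jacobi symbols over $\bbZ$ through the norm map and applying ordinary quadratic reciprocity, or, more uniformly, via Hilbert-symbol/Hasse reciprocity over $\bbQ(\zeta_3)$: the only relevant places are the inert prime $2$, the ramified prime $1-\zeta_3$ above $3$ (with $3=-\zeta_3^{2}(1-\zeta_3)^2$), and the single complex place, and the mod-$4$ conditions annihilate the local symbols at the last two, leaving exactly the sign $(-1)^{\frac{N(\beta)-1}{2}\frac{N(\gamma)-1}{2}}$ coming from the prime $2$.

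Finally, multiplying the symmetric cubic identity by the quadratic identity and recombining through $\left(\frac{\cdot}{\cdot}\right)_6=\left(\frac{\cdot}{\cdot}\right)_2\left(\frac{\cdot}{\cdot}\right)_3^{\,2}$ gives $\left(\frac{\beta}{\gamma}\right)_6=(-1)^{\frac{N(\beta)-1}{2}\frac{N(\gamma)-1}{2}}\left(\frac{\gamma}{\beta}\right)_6$, as claimed. I expect the main obstacle to be bookkeeping rather than conceptual: one must verify that the single "E-primary" normalisation simultaneously satisfies the primarity requirements of both the cubic and the quadratic reciprocity laws, and that the supplementary contributions (the units $\pm\zeta_3^{j}$ and the prime $1-\zeta_3$) are trivial on both sides of the identity. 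Since the statement is quoted verbatim as \cite[Theorem~7.10]{Lemmermeyer}, within the present paper it is enough to cite that reference.
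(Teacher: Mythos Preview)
The paper does not prove this theorem at all: it is stated as Eisenstein's sextic reciprocity law and simply cited as \cite[Theorem~7.10]{Lemmermeyer}. Your final sentence already recognises this, so for the purposes of matching the paper there is nothing more to do than cite the reference.

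Your sketched argument---splitting $\left(\frac{\cdot}{\cdot}\right)_6$ into its cubic and quadratic factors via $\left(\frac{\cdot}{\cdot}\right)_6=\left(\frac{\cdot}{\cdot}\right)_2\left(\frac{\cdot}{\cdot}\right)_3^{2}$, invoking Eisenstein's cubic reciprocity (which is symmetric for primary elements), and then handling the sign via quadratic reciprocity in $\bbZ[\zeta_3]$---is the standard route and is essentially how Lemmermeyer proceeds. Your caveat about bookkeeping is apt: the E-primary normalisation is designed precisely so that one set of congruence conditions simultaneously normalises both the cubic and quadratic symbols, and verifying this compatibility (including the supplementary laws for units and $1-\zeta_3$) is where the care is needed. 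But none of this is required here, since the paper treats the result as a black box.
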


\begin{definition}
Let $\alpha=\sqrt[6]{c/d}$ and let $\phi_{\alpha}:\bar{E}^d\to \bar{E}^c$ be the isomorphism defined over $\bbQ(\alpha)$ given by $(x,y)\mapsto (\alpha^2x, \alpha^3y)$. 
\end{definition}
We use $ \langle-3^3\rangle\bbQ^{\times 6}$ to denote $\bbQ^{\times 6}\cup-3^3\cdot \bbQ^{\times 6}$.
\begin{proposition}\label{prop:gen}
View $\Gamma_{\bbQ(\alpha)}$ and $\Gamma_{\bbQ(\zeta_3)}$ as subgroups of $\Gamma_\bbQ$, so that the set difference $\Gamma_{\bbQ(\alpha)}\setminus\Gamma_{\bbQ(\zeta_3)}$ is defined. 
\begin{enumerate}[label=(\roman*)]
\item If $2^4\cdot 5\cdot cd\in\langle-3^3\rangle\bbQ^{\times 6}$ then, for $\tau\in\Gamma_{\bbQ(\alpha)}\setminus\Gamma_{\bbQ(\zeta_3)}$, abusing notation and viewing $\tau\circ\phi_\alpha$ as an element of $\Hom(E^d_5,E^c_5)$,
\begin{equation*}
\Br (E^c\times E^d)_5/\Br_1 (E^c\times E^d)_5=\Hom_{\Gamma_\bbQ}(E^d_5,E^c_5)^-=(\bbZ/5\bbZ)\cdot \tau\circ\phi_\alpha
\cong \bbZ/5\bbZ.
\end{equation*}

Otherwise, $\Br (E^c\times E^d)_5/\Br_1 (E^c\times E^d)_5=0$.
\item If $-2^4\cdot 7^{-1}\cdot cd\in\langle-3^3\rangle\bbQ^{\times 6}$ then, for $\tau\in\Gamma_{\bbQ(\alpha)}\setminus\Gamma_{\bbQ(\zeta_3)}$, abusing notation and viewing $\tau\circ\phi_\alpha$ as an element of $\Hom(E^d_7,E^c_7)$,
\begin{equation*}
\Br (E^c\times E^d)_7/\Br_1 (E^c\times E^d)_7=\Hom_{\Gamma_\bbQ}(E^d_7,E^c_7)^-=(\bbZ/7\bbZ)\cdot \tau\circ\phi_\alpha
\cong \bbZ/7\bbZ.
\end{equation*}
Otherwise, $\Br (E^c\times E^d)_7/\Br_1 (E^c\times E^d)_7=0$.
\end{enumerate}
\end{proposition}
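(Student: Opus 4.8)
The plan is to compute $\Hom_{\Gamma_\bbQ}(E^d_\ell,E^c_\ell)^-$ for $\ell\in\{5,7\}$; since $\ell$ is coprime to $3$, Corollary~\ref{cor:Br-} identifies this group with $\Br(E^c\times E^d)_\ell/\Br_1(E^c\times E^d)_\ell$. First I would set up the relevant module. The isomorphism $\phi_\alpha\colon\bar E^d\to\bar E^c$ commutes with multiplication by $\zeta_3$, so on $\ell$-torsion it lies in $\Hom(E^d_\ell,E^c_\ell)^+$; on the other hand, for $\tau\in\Gamma_{\bbQ(\alpha)}\setminus\Gamma_{\bbQ(\zeta_3)}$, the $\bbZ$-linear automorphism of $E^d_\ell$ (or $E^c_\ell$) induced by $\tau$ intertwines multiplication by $\zeta_3$ with multiplication by $\bar\zeta_3$, so the composite $\tau\phi_\alpha$ (that is, $\phi_\alpha$ combined with the action of $\tau$ on $\ell$-torsion points) is an isomorphism lying in $\Hom(E^d_\ell,E^c_\ell)^-$. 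By Lemma~\ref{lem:+-}, $\Hom(E^d_\ell,E^c_\ell)$ has $\bbF_\ell$-dimension $4$ and $\Hom(E^d_\ell,E^c_\ell)^+=\Hom(\bar E^d,\bar E^c)/\ell$ has $\bbF_\ell$-dimension $2$, so $\Hom(E^d_\ell,E^c_\ell)^-$ is $2$-dimensional; as it is a module over $\bbZ[\zeta_3]/\ell$ via post-composition with $\End(\bar E^c)$, and $\tau\phi_\alpha$ is an isomorphism, we conclude that $\Hom(E^d_\ell,E^c_\ell)^-=(\bbZ[\zeta_3]/\ell)\cdot\tau\phi_\alpha$ is free of rank one.

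The heart of the argument is the $\Gamma_\bbQ$-action on this rank-one module. Its restriction to $\Gamma_{\bbQ(\zeta_3)}$ is $\bbZ[\zeta_3]/\ell$-linear, hence a character $\psi\colon\Gamma_{\bbQ(\zeta_3)}\to(\bbZ[\zeta_3]/\ell)^\times$. Unwinding the definition of $\tau\phi_\alpha$ by means of the cocycle formula for the Galois action on homomorphisms — and using that $E^c$ and $E^d$ are the sextic twists of $E^1$ by the classes of $c^{-1}$ and $d^{-1}$, so that the $\Gamma_{\bbQ(\zeta_3)}$-transform of $\phi_\alpha$ is governed by the sextic twisting character of $c/d$ — I expect to obtain $\psi=\kappa_{cd}^{-1}\cdot(\theta/\bar\theta)$, where $\theta\colon\Gamma_{\bbQ(\zeta_3)}\to(\bbZ[\zeta_3]/\ell)^\times$ is the mod-$\ell$ complex multiplication character of $E^1$, $\bar\theta$ is its complex conjugate, and $\kappa_{cd}$ is the sextic character of $cd$; the fact that the two twisting contributions combine into a single character depending on the \emph{product} $cd$ is exactly what produces the hypothesis of the proposition. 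Granting this, the structure of the answer is immediate: if $\psi\neq 1$ then $\Hom(E^d_\ell,E^c_\ell)^-$ has no nonzero $\Gamma_{\bbQ(\zeta_3)}$-invariant — clear when $\ell\equiv 2\bmod 3$ since $\bbZ[\zeta_3]/\ell$ is then a field, and when $\ell\equiv 1\bmod 3$ because complex conjugation interchanges the two primes of $\bbQ(\zeta_3)$ above $\ell$, so one coordinate of $\psi$ is trivial if and only if both are — whence $\Hom_{\Gamma_\bbQ}(E^d_\ell,E^c_\ell)^-=0$. If $\psi=1$ then $\Gamma_{\bbQ(\zeta_3)}$ acts trivially, so $\tau\phi_\alpha$ is $\Gamma_{\bbQ(\zeta_3)}$-invariant, and a short computation with the cocycle formula shows it is also fixed by $\tau$, hence by all of $\Gamma_\bbQ$; since $\tau$ acts $\bbZ[\zeta_3]/\ell$-semilinearly and fixes the generator $\tau\phi_\alpha$, the fixed subspace is $\{\beta\cdot\tau\phi_\alpha:\bar\beta=\beta\}=(\bbZ/\ell)\cdot\tau\phi_\alpha$, giving $\Hom_{\Gamma_\bbQ}(E^d_\ell,E^c_\ell)^-\cong\bbZ/\ell$ as stated.

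It remains to show that $\psi=1$ is equivalent to $\varepsilon(\ell)\cdot 2^4\cdot\ell^{\varepsilon(\ell)}\cdot cd\in\langle-3^3\rangle\bbQ^{\times 6}$. For this I would identify $(\theta/\bar\theta)|_{\Gamma_{\bbQ(\zeta_3)}}$ with the restriction of the sextic character $\kappa_m$ of $m:=\varepsilon(\ell)\cdot 2^4\cdot\ell^{\varepsilon(\ell)}$, after which $\psi=1$ reads $\kappa_{cd}|_{\Gamma_{\bbQ(\zeta_3)}}=\kappa_m|_{\Gamma_{\bbQ(\zeta_3)}}$, i.e.\ $cd/m\in\bbQ^\times\cap\bbQ(\zeta_3)^{\times 6}=\langle-3^3\rangle\bbQ^{\times 6}$, the last equality being a brief Kummer-theory check (note $-3^3=(\sqrt{-3})^6$). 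To compute $(\theta/\bar\theta)|_{\Gamma_{\bbQ(\zeta_3)}}$ I would use the explicit CM Hecke character of $E^1$, whose E-primary normalisation (in the sense of the definition preceding Theorem~\ref{thm:sextic}) sends a prime $\frp$ to the E-primary generator $\pi$ of $\frp$, so that $(\theta/\bar\theta)(\Frob_{\frp})\equiv\pi/\bar\pi\pmod{\frl}$; Eisenstein's sextic reciprocity law (Theorem~\ref{thm:sextic}), together with the supplementary laws for $\bigl(\tfrac{-1}{\frp}\bigr)_6$, $\bigl(\tfrac{2}{\frp}\bigr)_6$ and $\bigl(\tfrac{\sqrt{-3}}{\frp}\bigr)_6$, then converts the congruence $\pi/\bar\pi\equiv\bigl(\tfrac{cd}{\frp}\bigr)_6$ into the stated condition on $cd$. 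Here the power $2^4$ comes from the $2$-part of $\disc(E^1)=-2^4\cdot 3^3$ and the $2$-supplement, while the sign $\varepsilon(\ell)$ and the exponent $\varepsilon(\ell)$ on $\ell$ come from the factor $(-1)^{\frac{N(\beta)-1}{2}\frac{N(\gamma)-1}{2}}$ in Theorem~\ref{thm:sextic} and from $N(\frl)$ equalling $\ell^2$ when $\ell\equiv 2\bmod 3$ but $\ell$ when $\ell\equiv 1\bmod 3$. I expect this last step — pinning down the Hecke character of $y^2=x^3+1$ and carrying the residue-symbol bookkeeping through Eisenstein reciprocity with the correct sign and the correct power of $2$ — to be the main obstacle; the rest is formal.
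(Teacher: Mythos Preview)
Your approach is essentially the paper's: reduce via Corollary~\ref{cor:Br-}, describe the $\Gamma_{\bbQ(\zeta_3)}$-action on $\Hom(E^d_\ell,E^c_\ell)^-$ through the Gr\"ossencharakter (the paper cites \cite[Example~II.10.6]{SilvermanAdv} for $\psi_{E^a/\bbQ(\zeta_3)}(\pi)=\pm\overline{(4a/\pi)}_6\,\pi$, and the $2^4$ simply arises as $4c\cdot 4d$ there rather than from the discriminant), and then identify $\pi\bar\pi^{-1}\pmod\ell$ with a sextic residue symbol to obtain the condition on $cd$. The paper's reciprocity step is cleaner than the supplementary-law route you anticipate: for $\ell=5$ one just uses $\bar\pi\equiv\pi^5\pmod 5$ to get $\pi\bar\pi^{-1}\equiv\bigl(\tfrac{5}{\pi}\bigr)_6^{-1}$, and for $\ell=7$ one applies Theorem~\ref{thm:sextic} directly to the E-primary factors $\varpi=-1-3\zeta_3$ and $-\bar\varpi$ of $7$ to obtain $\pi\bar\pi^{-1}\equiv\bigl(\tfrac{-7}{\pi}\bigr)_6\pmod 7$.
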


\begin{proof}
Multiplying by $6$th powers if necessary, we may assume that $c,d\in\bbZ$. By Corollary~\ref{cor:Br-} it suffices to compute $ \Hom_{\Gamma_\bbQ}(E^d_\ell,E^c_\ell)^-$ for $\ell\in\{5,7\}$. Let $\varepsilon(\ell)=(-1)^{(\ell-1)/2}$. First, we will show that 
\[
 \Hom_{\Gamma_{\bbQ(\zeta_3)}}(E^d_\ell,E^c_\ell)^-=\begin{cases}
  \Hom(E^d_\ell,E^c_\ell)^- & \textrm{ if } \varepsilon(\ell)\cdot 2^4 \ell^{\varepsilon(\ell)}\cdot cd \in \langle-3^3\rangle\bbQ^{\times 6};\\
  0 & \textrm{ otherwise.}
 \end{cases}
\]
To prove this claim, we will determine the action of $\Gamma_{\bbQ(\zeta_3)}$ on $ \Hom(E^d_\ell,E^c_\ell)^- $ via a study of the actions of Frobenius elements for sufficiently many primes in $\bbZ[\zeta_3]$. The action of $\Gamma_{\bbQ(\zeta_3)}$ factors through $\Gal(\bbQ(\zeta_3, E^c_\ell, E^d_\ell)/\bbQ(\zeta_3))$. 
Let $\pi\in\bbZ[\zeta_3]$ be an E-primary prime
that is coprime to $6cd\ell$ and unramified in $\bbQ(\zeta_3, E^c_\ell, E^d_\ell)/\bbQ(\zeta_3)$. The prime ideals generated by such $\pi$ comprise all but finitely many prime ideals of $\bbZ[\zeta_3]$, as every prime ideal of $\bbZ[\zeta_3]$ that is coprime to $6$ has an E-primary generator. Furthermore, Chebotarev's density theorem shows that $\Gal(\bbQ(\zeta_3, E^c_\ell, E^d_\ell)/\bbQ(\zeta_3))$ is generated by Frobenius elements associated to such primes $\pi$. We require $\pi$ to be E-primary so that we can apply sextic reciprocity later on in the proof. In particular, we have $\pi\equiv \pm 1\pmod{3}$.

For $a\in\{c,d\}$, let $\psi_{E^a/\bbQ(\zeta_3)}$ be the Gr\"{o}ssencharakter attached to $E^a/\bbQ(\zeta_3)$. We write $\psi_{E^a/\bbQ(\zeta_3)}(\pi)$ for the image under $\psi_{E^a/\bbQ(\zeta_3)}$ of the id\`{e}le $(1,\ldots,1,\pi, 1,1,\ldots)$ with entry $\pi$ at the place $(\pi)$ and entry $1$ at every other place. Then $\Frob_\pi\in\Gal(\bbQ(\zeta_3, E^c_\ell, E^d_\ell)/\bbQ(\zeta_3))$ acts on $E^a_\ell$ as multiplication by $\psi_{E^a/\bbQ(\zeta_3)}(\pi)\in\bbZ[\zeta_3]$, see~\cite[Corollary~4.1.3]{Lang}, for example. By~\cite[Example II.10.6]{SilvermanAdv}, 
\begin{equation}\label{eq:grossencharpi}
\psi_{E^a/\bbQ(\zeta_3)}(\pi)=\pm\overline{\left(\frac{4a}{\pi}\right)}_6\pi=\pm\left(\frac{4a}{\pi}\right)^{-1}_6\pi.
\end{equation}
The $\pm 1$ here comes from the fact that~\cite[Example II.10.6]{SilvermanAdv} is stated for primes that are congruent to $-1\pmod{3}$, whereas our E-primary prime $\pi$ maybe congruent to either $1$ or $-1\pmod{3}$. In any case, the $\pm 1$ in~\eqref{eq:grossencharpi} is independent of $a$ and is therefore of no consequence for the action on $\Hom(E^d_\ell,E^c_\ell)^-$ by conjugation, as the $\pm 1$ for the actions on $E^c$ and $E^d$ cancel out.
Thus, for $\varphi\in \Hom(E^d_\ell,E^c_\ell)^-$, we have
\begin{align}\label{eq:conjFrob}
\Frob_\pi\cdot \varphi&=\left(\frac{4c}{\pi}\right)^{-1}_6\pi \varphi \left(\frac{4d}{\pi}\right)_6\pi^{-1}\nonumber\\
&= \left(\frac{2^4cd}{\pi}\right)^{-1}_6\pi \bar{\pi}^{-1}\varphi.
\end{align}

$\bullet$ For $\ell=5$, sextic reciprocity gives
\[\left(\frac{5}{\pi}\right)_6=\left(\frac{\pi}{5}\right)_6\equiv \pi^4\pmod{5}.\]
Furthermore, $\bar{\pi}\equiv \pi^5\pmod{5}$, whereby $\pi\bar{\pi}^{-1}\equiv \left(\frac{5}{\pi}\right)_6^{-1}\pmod{5}$. Substituting this into~\eqref{eq:conjFrob} gives 
\begin{equation}\label{eq:num5}
\Frob_\pi\cdot \varphi=\left(\frac{2^4\cdot 5 \cdot cd}{\pi}\right)^{-1}_6\varphi
\end{equation}
for all E-primary primes $\pi\in\bbZ[\zeta_3]$ that are coprime to $30cd$ and unramified in $\bbQ(\zeta_3, E^c_5, E^d_5)/\bbQ(\zeta_3)$. Since $z-1$ is invertible modulo $5$ for all $z\in\mu_6\setminus\{1\}$, we deduce that $\Frob_\pi\cdot \varphi=\varphi$ if and only if either $\varphi=0$ or $\left(\frac{2^4\cdot 5 \cdot cd}{\pi}\right)_6=1$ for all E-primary primes $\pi\in\bbZ[\zeta_3]$ that are coprime to $30cd$ and unramified in $\bbQ(\zeta_3, E^c_5, E^d_5)/\bbQ(\zeta_3)$. The latter condition holds if and only if
$2^4\cdot 5\cdot cd\in\bbQ^\times\cap \bbQ(\zeta_3)^{\times 6}=\langle-3^3\rangle\bbQ^{\times 6}$ (see~\cite[Theorem~9.1.11]{Neukirch}). Hence, if $ \Hom_{\Gamma_{\bbQ(\zeta_3)}}(E^d_5,E^c_5)^-\neq 0$ then $2^4\cdot 5\cdot cd\in\bbQ^\times\cap \bbQ(\zeta_3)^{\times 6}$, whereby~\eqref{eq:num5} shows that $\Hom_{\Gamma_{\bbQ(\zeta_3)}}(E^d_5,E^c_5)^-=\Hom(E^d_5,E^c_5)^-$, as required.

$\bullet$ For $\ell=7$, factorise $7$ in $\bbZ[\zeta_3]$ as $7=\varpi\bar{\varpi}$ with $\varpi=-1-3\zeta_3$. Then $\varpi$ and $-\bar{\varpi}$ are both E-primary and sextic reciprocity gives
\begin{equation}\label{eq:7recip}
\left(\frac{-7}{\pi}\right)_6=\left(\frac{\varpi}{\pi}\right)_6\left(\frac{-\bar{\varpi}}{\pi}\right)_6=\left(\frac{\pi}{\varpi}\right)_6\left(\frac{\pi}{\bar{\varpi}}\right)_6\equiv \pi\bar{\pi}^{-1}\pmod{\varpi}.
\end{equation}
Taking complex conjugates and then inverting both sides of~\eqref{eq:7recip} gives $\left(\frac{-7}{\pi}\right)_6\equiv \pi\bar{\pi}^{-1}\pmod{\bar{\varpi}}$ and hence $\left(\frac{-7}{\pi}\right)_6\equiv \pi\bar{\pi}^{-1}\pmod{7}$. Substituting this into~\eqref{eq:conjFrob} gives 
\begin{equation}\label{eq:num7}
\Frob_\pi\cdot \varphi=\left(\frac{-2^4\cdot 7^{-1}\cdot cd}{\pi}\right)^{-1}_6\varphi
\end{equation}
for all E-primary primes $\pi\in\bbZ[\zeta_3]$ that are coprime to $42cd$ and unramified in $\bbQ(\zeta_3, E^c_7, E^d_7)/\bbQ(\zeta_3)$. 
As before, we deduce that if $ \Hom_{\Gamma_{\bbQ(\zeta_3)}}(E^d_7,E^c_7)^-\neq 0$ then $-2^4\cdot 7^{-1}\cdot cd\in\bbQ^\times\cap \bbQ(\zeta_3)^{\times 6}=\langle-3^3\rangle\bbQ^{\times 6}$ and $ \Hom_{\Gamma_{\bbQ(\zeta_3)}}(E^d_7,E^c_7)^-= \Hom(E^d_7,E^c_7)^-$, completing the proof of our claim.

To complete the proof of Proposition~\ref{prop:gen}, it remains to compute $ \Hom_{\Gamma_\bbQ}(E^d_\ell,E^c_\ell)^-$ for $\ell\in\{5,7\}$ in the case where $\varepsilon(\ell)\cdot 2^4 \ell^{\varepsilon(\ell)}\cdot cd \in \langle-3^3\rangle\bbQ^{\times 6}$. It is easy to see that the conditions on $cd$ ensure that $\bbQ(\zeta_3)\not\subset\bbQ(\alpha)$ and hence there exists some $\tau\in \Gamma_{\bbQ(\alpha)}\setminus \Gamma_{\bbQ(\zeta_3)}$.
Now observe that 
\[\Hom_{\Gamma_{\bbQ(\zeta_3)}}(E^d_\ell,E^c_\ell)^-=\Hom(E^d_\ell,E^c_\ell)^-=\{(a+b\zeta_3)\tau\circ\phi_\alpha\mid a,b\in \bbZ/\ell\bbZ\}.\]
Indeed, it is clear that $\{(a+b\zeta_3)\tau\circ\phi_\alpha\mid a,b\in \bbZ/\ell\bbZ\}\subset \Hom(E^d_\ell,E^c_\ell)^-$ and both are isomorphic to $(\bbZ/\ell\bbZ)^2$ as abelian groups. 

Furthermore, since the image of $\tau$ generates $\Gal(\bbQ(\zeta_3)/\bbQ)$, an element of $\Hom_{\Gamma_{\bbQ(\zeta_3)}}(E^d_\ell,E^c_\ell)^-$ is fixed by the action of $\Gamma_{\bbQ}$ if and only if it commutes with $\tau$. Therefore, $\Hom_{\Gamma_{\bbQ}}(E^d_\ell,E^c_\ell)^-=(\bbZ/\ell\bbZ)\cdot\tau\circ\phi_\alpha$, as claimed.
\end{proof}

\begin{proof}[Proof of Theorem~\ref{thm:intro57}]
This now follows from Theorem~\ref{thm:SZBrInject}, 
Proposition~\ref{prop:gen} and Lemma~\ref{lem:repel}.
\end{proof}

\section{Algebraic Brauer groups}\label{sec:Br1}

Let $E$ and $E'$ be elliptic curves over $\bbQ$ and let $Y=\Kum(E\times E')$. 
Since $Y(\bbQ)\neq \emptyset$, the Hochschild--Serre spectral sequence gives a short exact sequence
\begin{equation}\label{eq:H-S}
0\to \Br(\bbQ)\to\Br_1(Y)\to \mathrm{H}^1(\bbQ, \Pic(\bar{Y}))\to 0.
\end{equation}
Since $Y$ is a K3 surface, $\Pic(\bar{Y})=\NS(\bar{Y})$. Furthermore,~\cite[Proposition~1.4(i)]{SZtorsion} gives a short exact sequence 
\begin{equation}\label{eq:NS}
0\to N_{\Lambda}\oplus N_{\Sigma}\to \NS(\bar{Y}) \to  \Hom(\bar{E},\bar{E}')\to 0,
\end{equation}
where $N_{\Lambda}$ and $N_{\Sigma}$ are permutation $\Gamma_\bbQ$ modules and hence $\mathrm{H}^1(\bbQ,N_{\Lambda})=\mathrm{H}^1(\bbQ, N_{\Sigma})=0$. Therefore, the long exact sequence of Galois cohomology attached to~\eqref{eq:NS} can be combined with~\eqref{eq:H-S} to yield
\begin{equation}\label{eq:PicHom1}
 \Br_1(Y)/\Br(\bbQ)=\mathrm{H}^1(\bbQ, \Pic(\bar{Y}))=\mathrm{H}^1(\bbQ, \Hom(\bar{E},\bar{E}')).
\end{equation}

Now suppose that $\End\bar{E}=\cO_K$ for an imaginary quadratic field $K$ and suppose that there exists a finite extension $L/K$ such that $L/\bbQ$ is Galois and $E_L\cong E'_L$. Then inflation-restriction gives an exact sequence
\begin{equation}\label{eq:infresNS}
0\to \mathrm{H}^1(\Gal(L/\bbQ), \Hom(\bar{E},\bar{E}'))\to \mathrm{H}^1(\bbQ, \Hom(\bar{E},\bar{E}'))\to \mathrm{H}^1(L, \Hom(\bar{E},\bar{E}')), 
\end{equation}
where we view $\Hom(\bar{E},\bar{E}')$ as a twist of $\cO_K$ defined over $L$. But then $\mathrm{H}^1(L, \Hom(\bar{E},\bar{E}'))\cong\Hom_{\textrm{cts}}(\Gamma_L, \bbZ^2)=0$.
Thus,~\eqref{eq:infresNS} gives a canonical isomorphism from $\mathrm{H}^1(\Gal(L/\bbQ), \Hom(\bar{E},\bar{E}'))$ to $\mathrm{H}^1(\bbQ, \Hom(\bar{E},\bar{E}'))$. Plugging this into~\eqref{eq:PicHom1} gives
\begin{equation}\label{eq:Pichom}
 \Br_1(Y)/\Br(\bbQ)=\mathrm{H}^1(\bbQ, \Hom(\bar{E},\bar{E}'))=\mathrm{H}^1(\Gal(L/\bbQ), \Hom(\bar{E},\bar{E}')).
\end{equation}

\begin{theorem}\label{thm:Br1-2} 
Let $d\in\bbZ_{>0}$ be squarefree so that $K=\bbQ(\sqrt{-d})$ is an imaginary quadratic field. Let $E/\bbQ$ be an elliptic curve with $\End\bar{E}=\cO_K$, let $E'/\bbQ$ be the quadratic twist of $E$ by $a\in\bbQ^\times$ and let $Y=\Kum(E\times E')$. Then 
\[\Br_1(Y)/\Br(\bbQ)\cong\begin{cases}
0 & \textrm{ if } \bbQ(\sqrt{a})\subset K\textrm{ and } -d\equiv 1\bmod{4};\\
(\bbZ/2\bbZ)^2 & \textrm{ if } \bbQ(\sqrt{a})\not\subset K \textrm{ and } -d\equiv 2,3 \bmod{4};\\
\bbZ/2\bbZ &\textrm{ otherwise.}\\
\end{cases}
\]
\end{theorem}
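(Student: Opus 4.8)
The plan is to compute the Galois cohomology group $\mathrm{H}^1(\Gal(L/\bbQ),\Hom(\bar E,\bar E'))$ appearing in \eqref{eq:Pichom}, where $L=K(\sqrt a)$ (enlarging if necessary so that $E_L\cong E'_L$, which holds here because $E'$ is the quadratic twist of $E$ by $a$). As a $\bbZ$-module, $M:=\Hom(\bar E,\bar E')$ is free of rank $2$, isomorphic to $\cO_K$; the Galois action is the twist of the natural $\Gal(\bar\bbQ/\bbQ)$-action on $\End\bar E=\cO_K$ by the quadratic character $\chi_a$ of $\bbQ(\sqrt a)/\bbQ$. First I would pin down the group $G=\Gal(L/\bbQ)$: it is cyclic of order $2$ when $\bbQ(\sqrt a)\subset K$ (i.e.\ $L=K$), and isomorphic to $(\bbZ/2\bbZ)^2$ otherwise (when $L=\bbQ(\sqrt a,\sqrt{-d})$ is biquadratic). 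In either case the action of $G$ on $M$ factors through this finite quotient, so it remains to make the module structure explicit and turn the crank.

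Next I would identify the action precisely. Complex conjugation $c\in\Gal(K/\bbQ)$ acts on $\cO_K=\End\bar E$ by $\beta\mapsto\bar\beta$; the twisting character $\chi_a$ contributes a sign. So if $L=K$, then $G=\langle\sigma\rangle$ with $\sigma$ restricting to $c$ on $K$ and $\sigma$ acting on $M\cong\cO_K$ by $\beta\mapsto -\bar\beta$ (the minus sign from $\chi_a$, since $\bbQ(\sqrt a)=K$ means $a\in -dK^{\times 2}\cdot\{\text{units}\}$... more precisely $\chi_a$ is the quadratic character of $K/\bbQ$). Then $\mathrm{H}^1(\bbZ/2\bbZ,M)=M^{G}$-quotient-by-norms is computed from the exact sequence / Herbrand quotient: one gets $\widehat{\mathrm{H}}^0$ and $\mathrm{H}^1$ from $\ker(1+\sigma)/\im(1-\sigma)$ and the parity analysis depends on whether $-d\equiv1\bmod 4$ (so $\cO_K=\bbZ[\tfrac{1+\sqrt{-d}}{2}]$) or $-d\equiv 2,3\bmod 4$ (so $\cO_K=\bbZ[\sqrt{-d}]$), because the $2$-torsion in the relevant quotient of $\cO_K$ differs in those two cases. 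This explains the dichotomy $-d\equiv 1$ vs $-d\equiv 2,3$ in the first and third lines of the statement. When $L$ is biquadratic, $G\cong(\bbZ/2\bbZ)^2$ is generated by $c$ (acting as $\beta\mapsto\bar\beta$, trivial twist, since $\sqrt a\in L$ is fixed) and by the element $\rho$ with $\rho|_K=\mathrm{id}$, $\rho(\sqrt a)=-\sqrt a$, acting on $M$ as $\beta\mapsto -\beta$. One then computes $\mathrm{H}^1((\bbZ/2\bbZ)^2,M)$ via the inflation-restriction sequence for the subgroup $\langle c\rangle$ (or $\langle\rho\rangle$, whichever gives $M$ with simpler invariants), reducing to $\mathrm{H}^1$ of cyclic groups, and assembles the answer.

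The main obstacle is the bookkeeping of the $2$-adic / mod-$2$ structure of $\cO_K$ as a module over these order-$2$ or order-$4$ groups: getting the Herbrand-quotient or direct cocycle computation right in the two cases $-d\equiv 1\bmod 4$ and $-d\equiv 2,3\bmod 4$, and correctly locating the twisting sign coming from $\chi_a$. Everything else—identifying $G$, applying \eqref{eq:Pichom}, and the inflation-restriction reduction—is routine. A clean way to handle the module computation is to base change to $\bbZ_2$ (since $\mathrm{H}^1$ of a finite group is torsion and $M$ has rank $2$, only the $2$-primary part matters) and diagonalise the action where possible: over $\bbZ_2$ with $-d\equiv 1\bmod 4$ one has $2\nmid d$ and the involution $\beta\mapsto\bar\beta$ on $\cO_K\otimes\bbZ_2$ can be analysed using the basis $1,\omega$ with $\omega=\tfrac{1+\sqrt{-d}}2$, $\bar\omega=1-\omega$; whereas for $-d\equiv 2,3\bmod 4$ one uses $1,\sqrt{-d}$ with $\overline{\sqrt{-d}}=-\sqrt{-d}$, which makes the computation essentially that of $\bbZ[\bbZ/2\bbZ]$ or $\bbZ[\sqrt{-d}]$ with a sign, and the extra $2$ in the conductor of $\bbZ[\sqrt{-d}]$ inside $\cO_K$ when $d\equiv 3\bmod 4$ is exactly what produces the larger cohomology group $(\bbZ/2\bbZ)^2$.
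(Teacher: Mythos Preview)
Your approach is correct and essentially the same as the paper's: split into the cyclic case $\bbQ(\sqrt a)\subset K$ (direct Tate cohomology of the twisted $\cO_K$) and the biquadratic case, where the paper uses inflation--restriction with the normal subgroup $N=\Gal(L/K)$ (your $\langle\rho\rangle$), exploiting $M^N=0$ to obtain $\mathrm{H}^1(G,M)\cong(M/2M)^{\Gal(K/\bbQ)}$ and then reading off the invariants from the action of conjugation on $\cO_K/2$. Two small slips to clean up in execution: in the cyclic case you only treat $\bbQ(\sqrt a)=K$ (action $\beta\mapsto-\bar\beta$) and omit $a\in\bbQ^{\times 2}$ (action $\beta\mapsto\bar\beta$), though both give the same answer; and your closing heuristic is backwards---when $d\equiv 3\bmod 4$ one has $-d\equiv 1\bmod 4$, so $\cO_K=\bbZ[\tfrac{1+\sqrt{-d}}{2}]$, conjugation acts nontrivially on $\cO_K/2$, and the cohomology is \emph{smaller} ($\bbZ/2\bbZ$), not $(\bbZ/2\bbZ)^2$.
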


\begin{proof}
Let $L=K(\sqrt{a})$. Then~\eqref{eq:Pichom} gives
\begin{equation*}
\Br_1(Y)/\Br(\bbQ)=\mathrm{H}^1(\Gal(L/\bbQ), \Hom(\bar{E},\bar{E}'))
\end{equation*}
where the $\Gamma_\bbQ$-module $ \Hom(\bar{E},\bar{E}')$ is the twist of $\End\bar{E}=\cO_K$ by the quadratic character corresponding to $\bbQ(\sqrt{a})/\bbQ$. 

If $\bbQ(\sqrt{a})\subset K$ then $\Gal(L/\bbQ)=\Gal(K/\bbQ)$ is cyclic and a Tate cohomology calculation gives 
\[\mathrm{H}^1(\Gal(L/\bbQ), \Hom(\bar{E},\bar{E}'))\cong\begin{cases}
0 & \textrm{ if } -d\equiv 1\bmod{4};\\
\bbZ/2\bbZ & \textrm{ if } -d\equiv 2,3\bmod{4}.
\end{cases}\]

If $\bbQ(\sqrt{a})\not\subset K$ then letting $G=\Gal(L/\bbQ)$, $N=\Gal(L/K)$ and $M=\Hom(\bar{E},\bar{E}')$, inflation-restriction gives an exact sequence
\[0\to \mathrm{H}^1(G/N, M^N)\to \mathrm{H}^1(G,M)\to \mathrm{H}^1(N, M)^{G/N}\to \mathrm{H}^2(G/N, M^N).\]
Since $M$ is the twist of $\cO_K$ by the character corresponding to $\bbQ(\sqrt{a})/\bbQ$, the generator of $N$ acts as multiplication by $-1$ on $M$, whereby $M^N=0$ and the inflation-restriction sequence yields an isomorphism from $ \mathrm{H}^1(G,M)$ to $\mathrm{H}^1(N, M)^{G/N}$. Now a Tate cohomology calculation gives $\mathrm{H}^1(N, M)=M/2M$. One checks that 
\[(M/2M)^{G/N}\cong \begin{cases}
\bbZ/2\bbZ& \textrm{ if } -d\equiv 1\bmod{4}\\
(\bbZ/2\bbZ)^2& \textrm{ if } -d\equiv 2,3\bmod{4}. \qedhere
\end{cases}\]
\end{proof}

We now have all the necessary ingredients for the proof of Theorem~\ref{thm:mainquad}.

\begin{proof}[Proof of Theorem~\ref{thm:mainquad}]
By Theorem~\ref{thm:Br1-2}, $\Br_1(Y)\setminus\Br(\bbQ)$ contains no elements of odd order. Therefore, the assumption that $\Br(Y)\setminus \Br(\bbQ)$ contains an element of odd order implies that $\Br(Y)/\Br_1(Y)\neq 0$. 
Therefore, by Theorem~\ref{thm:SZBrInject} and Theorem~\ref{thm:quad}, $\Br(Y)/\Br_1(Y)$ contains an element of order $3$ and $K\in\{\bbQ(\sqrt{-2}),\bbQ(\sqrt{-11})\}$, proving~\eqref{Kquadintro}. Hence, $Y=\Kum(E\times E^a)$ with $a\in -3K^{\times 2}\cap\bbQ^\times$, by Proposition~\ref{prop:alpha}. Now~\eqref{trBrquadintro} follows from Theorem~\ref{thm:SZBrInject} and Theorem~\ref{thm:numerator3}, and~\eqref{algBrquadintro} follows from Theorem~\ref{thm:Br1-2}. To prove~\eqref{Yquadintro}, note that $K$ has class number one and therefore $E$ 
is a quadratic twist of any chosen elliptic curve $\cE/\bbQ$ with $\End\bar{\cE}=\cO_K$. We take $\cE$ with affine equation $y^2=f(x)$, where $f(x)$ is as stated in Theorem~\ref{thm:mainquad}. Thus, $E$ has equation $\lambda y^2=f(x)$ for some $\lambda\in\bbQ^\times$ and $\Kum(E\times E^a)$ is the minimal desingularisation of the projective surface with affine equation $\lambda^2u^2=af(x)f(t)$. Replacing $u$ by $\lambda u$ and computing $(-3K^{\times 2}\cap\bbQ^\times)/\bbQ^{\times 2}$ completes the proof.
\end{proof}

\begin{theorem}\label{thm:Br1} For $a\in\bbQ$, let $E^a/\bbQ$ be the elliptic curve with affine equation $y^2=x^3+a$.
Let $c,d\in\bbZ\setminus\{0\}$, let $Y=\Kum(E^c\times E^d)$ and let $\alpha=\sqrt[6]{c/d}$.
We have
\[\Br_1(Y)/\Br(\bbQ)\cong\begin{cases}
\bbZ/3\bbZ & \textrm{ if } \bbQ(\zeta_3)\subset \bbQ(\alpha) \textrm{ and } [\bbQ(\alpha):\bbQ(\zeta_3)]=3;\\
\bbZ/2\bbZ & \textrm{ if } [\bbQ(\alpha):\bbQ]=2 \textrm{ and }\bbQ(\alpha)\neq\bbQ(\zeta_3);\\
0 & \textrm{ otherwise.}
\end{cases}
\]
\end{theorem}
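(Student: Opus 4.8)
The starting point is the identification~\eqref{eq:PicHom1}: since $Y=\Kum(E^c\times E^d)$ is a K3 surface with a rational point, $\Br_1(Y)/\Br(\bbQ)=\mathrm{H}^1(\bbQ,\Hom(\bar{E}^c,\bar{E}^d))$, so the whole theorem is a computation of this Galois cohomology group. The plan is to (1) describe the $\Gamma_\bbQ$-module $M:=\Hom(\bar{E}^c,\bar{E}^d)$ explicitly, (2) cut it down to a finite group cohomology problem, and (3) solve that problem in each of the relevant cases.

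For (1): since $\End\bar{E}^c=\End\bar{E}^d=\bbZ[\zeta_3]$ and $\bbQ(\zeta_3)$ has class number one, $M$ is free of rank one over $\cO:=\bbZ[\zeta_3]$, generated by $\phi_\alpha^{-1}$ for the isomorphism $\phi_\alpha\colon\bar{E}^d\to\bar{E}^c$, $(x,y)\mapsto(\alpha^2x,\alpha^3y)$ introduced before Proposition~\ref{prop:gen}. Writing $\sigma(\alpha)=\epsilon(\sigma)\alpha$ with $\epsilon(\sigma)\in\mu_6$, a short computation with this formula and the CM action $[\zeta_3]\colon(x,y)\mapsto(\zeta_3x,y)$ yields $\phi_{\sigma\alpha}=[\epsilon(\sigma)^{-1}]\circ\phi_\alpha$, which pins down the Galois action: $M\cong\cO$ with $\sigma$ acting through an explicit combination of the natural action of $\Gal(\bbQ(\zeta_3)/\bbQ)$ on $\bbZ[\zeta_3]$ and the cocycle $\epsilon$ (the precise bookkeeping, especially for complex conjugation, being the subtle point — see below). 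Since $\alpha^6=c/d\in\bbQ$ and $\mu_6\subset\bbQ(\zeta_3)$, this action factors through $\Gal(N/\bbQ)$ for $N:=\bbQ(\zeta_3,\alpha)$, a field that is independent of the choice of sixth root; and as $\Gamma_N$ acts trivially on $M$, we get $\mathrm{H}^1(\bbQ,M)=\mathrm{H}^1(\Gal(N/\bbQ),M)$. One should also note that although $[\bbQ(\alpha):\bbQ]$ may depend on which sixth root of $c/d$ one picks, the module $M$ — and hence $\Br_1(Y)/\Br(\bbQ)$ — does not, so the trichotomy in the statement is well posed once $\alpha$ is taken of smallest possible degree; equivalently the three cases correspond to $c/d$ being, modulo $\bbQ^{\times6}$, a square, a cube with $\sqrt[3]{c/d}\notin-3\bbQ^{\times2}$, or neither.

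For (2) and (3): according to $[\bbQ(\alpha):\bbQ]$ and the position of $\bbQ(\alpha)$ relative to $\bbQ(\zeta_3)$, the group $\Gal(N/\bbQ)$ is $C_2$ (when $\bbQ(\alpha)\subseteq\bbQ(\zeta_3)$), or $(\bbZ/2\bbZ)^2$ (when $[\bbQ(\alpha):\bbQ]=2$ and $\bbQ(\alpha)\neq\bbQ(\zeta_3)$), or $S_3$ (when $[\bbQ(\alpha):\bbQ]=3$, and also when $[\bbQ(\alpha):\bbQ]=6$ with $\zeta_3\in\bbQ(\alpha)$), or an order-$12$ subgroup of the holomorph of $\bbZ/6\bbZ$ (when $[\bbQ(\alpha):\bbQ]=6$ and $\zeta_3\notin\bbQ(\alpha)$). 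In each case I would feed a cyclic normal subgroup $H=\langle s\rangle$ into the inflation--restriction sequence; a generator $s$ acts on $M\cong\cO$ by $\pm1$, by a primitive cube root of unity, by a primitive sixth root of unity, or by $\pm$(complex conjugation), and one checks $M^H=0$ except in the pure-conjugation case, so that the sequence collapses to $\mathrm{H}^1(\Gal(N/\bbQ),M)\cong\mathrm{H}^1(H,M)^{\Gal(N/\bbQ)/H}$. The inner term $\mathrm{H}^1(H,M)\cong\ker(N_H)/(s-1)\cO$ is then read off: it is $\cO/2\cO$ when $s$ acts by $-1$, it is $\cO/(\zeta_3-1)\cO\cong\bbF_3$ when $s$ acts by a primitive cube root of unity, and it is $0$ when $s$ acts by $\pm$(conjugation) or by a primitive sixth root of unity (as $\zeta_6-1\in\cO^\times$); taking invariants under $\Gal(N/\bbQ)/H$ then leaves $0$, $\bbZ/2\bbZ$ or $\bbZ/3\bbZ$, matching the statement.

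The hard part is the Galois-module structure of $M$, and in particular the action of complex conjugation. A sign mistake here is invisible when the answer is $2$-torsion, but it decides the case $[\bbQ(\alpha):\bbQ]=3$: there $\mathrm{H}^1(S_3,M)\cong\mathrm{H}^1(C_3,M)^{C_2}\cong(\bbF_3)^{C_2}$, and $C_2$ acts on $\bbF_3$ either trivially (answer $\bbZ/3\bbZ$) or by $-1$ (answer $0$), so one must establish which. Relatedly, since $\Gal(N/\bbQ)\cong S_3$ also arises when $[\bbQ(\alpha):\bbQ]=6$ with $\zeta_3\in\bbQ(\alpha)$ — for instance when $c/d\in-3\cdot\bbQ^{\times6}$ — one has to verify that the module in that situation differs from the cubic-field one by precisely that sign, forcing the answer $0$ there. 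Establishing the sign is a matter of chasing the isomorphism behind~\eqref{eq:PicHom1} carefully together with the explicit CM; everything else is routine Tate cohomology over $\bbZ[\zeta_3]$.
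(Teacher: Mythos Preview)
Your proposal is correct and follows essentially the same route as the paper: identify $\Br_1(Y)/\Br(\bbQ)$ with $\mathrm{H}^1(\Gal(L/\bbQ),\bbZ[\zeta_3]\circ\phi_\alpha)$ for $L=\bbQ(\zeta_3,\alpha)$, apply inflation--restriction with a cyclic normal subgroup, and compute via Tate cohomology. Two minor differences are worth noting. First, the paper does not treat all cases uniformly: for $[\bbQ(\alpha):\bbQ]\leq 2$ it simply invokes Theorem~\ref{thm:Br1-2} (the general quadratic-twist calculation), and only for $[\bbQ(\alpha):\bbQ]\in\{3,6\}$ does it run the inflation--restriction argument, always with $H=\Gal(L/\bbQ(\zeta_3))$. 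Second, the sign question you rightly flag as the crux is dispatched in the paper without any chase through \eqref{eq:PicHom1}: when $[\bbQ(\alpha):\bbQ]=3$ one may choose $\alpha$ real, so complex conjugation fixes $\phi_\alpha$ and acts on the coefficient $\bbZ[\zeta_3]$ by $\zeta_3\mapsto\bar\zeta_3$, which is trivial modulo $(\zeta_3-1)$; when $[\bbQ(\alpha):\bbQ]=6$ with $\bbQ(\zeta_3)\subset\bbQ(\alpha)$, complex conjugation moves $\alpha$ and the resulting extra sign on $\phi_\alpha$ makes the action on $(\bbZ/3\bbZ)\cdot\phi_\alpha$ nontrivial. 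That direct check is the missing ingredient in your sketch; once you supply it, your argument is complete and matches the paper's.
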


\begin{proof}
If $[\bbQ(\alpha):\bbQ]\leq 2$ then $E^d$ is a quadratic twist of $E^c$ and the result follows from Theorem~\ref{thm:Br1-2}. So henceforth we may assume that $[\bbQ(\alpha):\bbQ]\in\{3,6\}$.

Let $\phi_{\alpha}:\bar{E}^d\to \bar{E}^c$ be the isomorphism defined over $\bbQ(\alpha)$ given by $(x,y)\mapsto (\alpha^2x, \alpha^3y)$, whereby
\begin{equation*}
\Hom(\bar{E}^d,\bar{E}^c)=\End(\bar{E}^c)\circ\phi_\alpha=\bbZ[\zeta_3]\circ\phi_\alpha
\end{equation*}
so~\eqref{eq:Pichom} gives
\begin{equation}\label{eq:infBr1}
\Br_1(Y)/\Br(\bbQ)=\mathrm{H}^1(\Gal(L/\bbQ),\bbZ[\zeta_3]\circ\phi_\alpha)
\end{equation}
where $L=\bbQ(\zeta_3,\alpha)$. It remains to calculate $\mathrm{H}^1(\Gal(L/\bbQ), \bbZ[\zeta_3]\circ\phi_\alpha)$.

Inflation-restriction gives an exact sequence 
\begin{align}\label{eq:inf-res}
&0\to\mathrm{H}^1(\Gal(\bbQ(\zeta_3)/\bbQ),(\bbZ[\zeta_3]\circ\phi_\alpha)^{\Gal(L/\bbQ(\zeta_3))}) \to \mathrm{H}^1(\Gal(L/\bbQ),\bbZ[\zeta_3]\circ\phi_\alpha)\\
& \to \mathrm{H}^1(\Gal(L/\bbQ(\zeta_3)),\bbZ[\zeta_3]\circ\phi_\alpha)^{\Gal(\bbQ(\zeta_3)/\bbQ)} \to \mathrm{H}^2(\Gal(\bbQ(\zeta_3)/\bbQ),(\bbZ[\zeta_3]\circ\phi_\alpha)^{\Gal(L/\bbQ(\zeta_3))}).\nonumber
\end{align}

Since $\bbQ(\alpha)\not\subset \bbQ(\zeta_3)$, the Galois group $\Gal(L/\bbQ(\zeta_3))$ acts non-trivially on $\alpha$ and therefore on $\phi_\alpha$, and we have $(\bbZ[\zeta_3]\circ\phi_\alpha)^{\Gal(L/\bbQ(\zeta_3))}=0$. Thus, \eqref{eq:inf-res} yields
\begin{align}\label{eq:inf-res3}
\mathrm{H}^1(\Gal(L/\bbQ),\bbZ[\zeta_3]\circ\phi_\alpha) = \mathrm{H}^1(\Gal(L/\bbQ(\zeta_3)),\bbZ[\zeta_3]\circ\phi_\alpha)^{\Gal(\bbQ(\zeta_3)/\bbQ)}.
\end{align}
Since $\Gal(L/\bbQ(\zeta_3))$ is cyclic, 
we can use Tate cohomology to compute $ \mathrm{H}^1(\Gal(L/\bbQ(\zeta_3)),\bbZ[\zeta_3]\circ\phi_\alpha)$.

First suppose that $\Gal(L/\bbQ(\zeta_3))\cong \bbZ/3\bbZ$. Then we can choose a generator $\sigma$ of $\Gal(L/\bbQ(\zeta_3 ))$ that sends $\phi_{\alpha}$ to $\zeta_3 \phi_{\alpha}$. Therefore,
\begin{align}\label{eq:3}
\mathrm{H}^1(\Gal(L/\bbQ(\zeta_3)),\bbZ[\zeta_3]\circ\phi_\alpha)&\cong (\bbZ[\zeta_3]\circ\phi_\alpha)/(\zeta_3-1) (\bbZ[\zeta_3]\circ\phi_\alpha)\\
&\cong(\bbZ/3\bbZ)\cdot\phi_\alpha.\nonumber
\end{align}
The first isomorphism in~\eqref{eq:3} is induced by sending a $1$-cocycle to its value at $\sigma$. Let $f: \Gal(L/\bbQ(\zeta_3))\to \bbZ[\zeta_3]\circ\phi_\alpha$ be the $1$-cocycle sending $\sigma$ to $\phi_\alpha$. To determine whether $\mathrm{H}^1(\Gal(L/\bbQ(\zeta_3)),\bbZ[\zeta_3]\circ\phi_\alpha)^{\Gal(\bbQ(\zeta_3)/\bbQ)}$ is trivial or isomorphic to $\bbZ/3\bbZ$, we just have to check whether the class of $f$ in $\mathrm{H}^1(\Gal(L/\bbQ(\zeta_3)),\bbZ[\zeta_3]\circ\phi_\alpha)$ is fixed by the action of $\Gal(\bbQ(\zeta_3)/\bbQ)$. Let $\tau\in \Gal(L/\bbQ)$ be such that its image in $\Gal(\bbQ(\zeta_3)/\bbQ)$ is non-trivial. Then $\tau\sigma\tau^{-1}=\sigma^{-1}$ and the $1$-cocycle property gives 
\[f(\tau^{-1}\sigma\tau)=f(\sigma^{-1})=-\sigma^{-1}(f(\sigma))=-\zeta_3^2\phi_\alpha.\]
Therefore, 
\begin{equation}\label{eq:tau}
f^\tau(\sigma)=\tau\cdot f(\tau^{-1}\sigma\tau)=\tau\cdot(-\zeta_3^2\phi_\alpha)=-\zeta_3\tau\cdot \phi_\alpha,
\end{equation}
by definition of the action of $\tau$ on $\mathrm{H}^1(\Gal(L/\bbQ(\zeta_3)),\bbZ[\zeta_3]\circ\phi_\alpha)$ in terms of its actions on $\Gal(L/\bbQ(\zeta_3))$ and on $\bbZ[\zeta_3]\circ\phi_\alpha=\Hom(\bar{E}^d,\bar{E}^c)$.

If $[\bbQ(\alpha):\bbQ]=3$ then we may assume that $\tau$ acts trivially on $\phi_\alpha$, whereby~\eqref{eq:tau} gives $f^\tau(\sigma)=-\zeta_3\phi_\alpha$. Hence, $f^\tau$ and $f$ are not cohomologous (as can be seen using~\eqref{eq:3}, for example). Therefore, $\mathrm{H}^1(\Gal(L/\bbQ(\zeta_3)),\bbZ[\zeta_3]\circ\phi_\alpha)^{\Gal(\bbQ(\zeta_3)/\bbQ)}=0$ and
\eqref{eq:infBr1} and \eqref{eq:inf-res3} give 
\begin{align*}
 \Br_1(Y)/\Br(\bbQ)&= \mathrm{H}^1(\Gal(L/\bbQ(\zeta_3)),\bbZ[\zeta_3]\circ\phi_\alpha)^{\Gal(\bbQ(\zeta_3)/\bbQ)}=0.
\end{align*}

On the other hand, if $[\bbQ(\alpha):\bbQ]=6$ then $\bbQ(\zeta_3)\subset \bbQ(\alpha)=L$ and in fact $\bbQ(\zeta_3)=\bbQ(\alpha^3)$. In this case we may assume that $\tau(\alpha)=-\alpha$ and hence $\tau\cdot \phi_\alpha=-\phi_\alpha$. Therefore,~\eqref{eq:tau} gives $f^\tau(\sigma)=\zeta_3\phi_\alpha$ and hence $f^\tau$ is cohomologous to $f$ (as can be seen using~\eqref{eq:3}, for example). Therefore, $\mathrm{H}^1(\Gal(L/\bbQ(\zeta_3)),\bbZ[\zeta_3]\circ\phi_\alpha)^{\Gal(\bbQ(\zeta_3)/\bbQ)}\cong (\bbZ/3\bbZ)\cdot\phi_\alpha$ and the result follows from \eqref{eq:infBr1} and \eqref{eq:inf-res3}.

Finally, suppose that $\Gal(L/\bbQ(\zeta_3))\cong \bbZ/6\bbZ$. Then we can choose a generator of $\Gal(L/\bbQ(\zeta_3))$ that sends $\phi_\alpha$ to $-\zeta_3\phi_\alpha$. Therefore,
\begin{align*}
\mathrm{H}^1(\Gal(L/\bbQ(\zeta_3)),\bbZ[\zeta_3]\circ\phi_\alpha)&\cong (\bbZ[\zeta_3]\circ\phi_\alpha)/(\zeta_3+1)(\bbZ[\zeta_3]\circ\phi_\alpha)=0.
\end{align*}
By \eqref{eq:infBr1} and \eqref{eq:inf-res3}, our proof is complete.
\end{proof}

Theorem~\ref{thm:introtransob} now follows easily from Theorem~\ref{thm:Br1}.

\begin{proof}[Proof of Theorem~\ref{thm:introtransob}]
We have $\varepsilon(\ell)\cdot 2^4\cdot \ell^{\varepsilon(\ell)}\cdot cd=(-3)^{3n}\cdot t^6$ for some $n\in\{0,1\}$ and $t\in\bbQ^\times$ so 
\[\frac{c}{d}=\frac{\varepsilon(\ell)\cdot2^4\cdot \ell^{\varepsilon(\ell)}\cdot c^2}{(-3)^{3n}\cdot t^6}.\]
Suppose for contradiction that $\bbQ(\zeta_3)\subset \bbQ(\sqrt[6]{c/d})$. Then $\bbQ(\zeta_3)=\bbQ(\sqrt{c/d})$ and hence $c/d\in-3\cdot\bbQ^{\times 2}$, which is evidently not the case.
Furthermore, $[\bbQ(\sqrt[6]{c/d}):\bbQ]=2$ if and only if $c/d$ lies in $\bbQ^{\times 3}$, if and only if $c\in 2\cdot \ell^{\varepsilon(\ell)}\cdot \bbQ^{\times 3}$. Now the result follows from Theorem~\ref{thm:Br1}.
\end{proof}

For completeness, we also include the calculation of the algebraic part of the Brauer group in the case of CM by $\bbZ[i]$.

\begin{theorem}\label{thm:Br1i} For $a\in\bbQ$, let $E^a/\bbQ$ be the elliptic curve with affine equation $y^2=x^3-ax$.
Let $c,d\in\bbZ\setminus\{0\}$, let $Y=\Kum(E^c\times E^d)$ and let $\alpha=\sqrt[4]{c/d}$.
We have
\[\Br_1(Y)/\Br(\bbQ)\cong\begin{cases}
(\bbZ/2\bbZ)^2 & \textrm{ if } [\bbQ(\alpha):\bbQ]=2 \textrm{ and }\bbQ(\alpha)\neq\bbQ(i);\\
\bbZ/2\bbZ & \textrm{ otherwise.}
\end{cases}
\]
\end{theorem}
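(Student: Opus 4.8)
The plan is to compute $\mathrm{H}^1(\bbQ,\Hom(\bar{E}^c,\bar{E}^d))$ directly, following the template of the proof of Theorem~\ref{thm:Br1}. By~\eqref{eq:PicHom1} we have $\Br_1(Y)/\Br(\bbQ)=\mathrm{H}^1(\bbQ,\Hom(\bar{E}^c,\bar{E}^d))$, and the $\Gamma_\bbQ$-module $M:=\Hom(\bar{E}^c,\bar{E}^d)$ becomes constant over $L:=\bbQ(i,\alpha)$: one needs $i\in L$ so that $\Gamma_L$ acts trivially on $\End\bar{E}^d=\bbZ[i]$, and $\alpha\in L$ so that $\Gamma_L$ fixes the isomorphism $\phi_\alpha\colon\bar{E}^d\to\bar{E}^c$, $(x,y)\mapsto(\alpha^2 x,\alpha^3 y)$. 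Inflation--restriction then gives $\Br_1(Y)/\Br(\bbQ)=\mathrm{H}^1(\Gal(L/\bbQ),M)$ exactly as in~\eqref{eq:Pichom}, where $M=\bbZ[i]\circ\phi_\alpha^{-1}$ is free of rank one over $\bbZ[i]$ and the $\Gal(L/\bbQ)$-action is pinned down by two facts: complex conjugation acts on $\bbZ[i]=\End\bar{E}^d$ by $i\mapsto -i$, and a generator of $\Gal(L/\bbQ(i))$ sends $\alpha$ to $i^{j}\alpha$ for a suitable $j$, hence sends $\phi_\alpha^{-1}$ to a $\mu_4$-multiple of itself.

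Next I would dispose of the cases in which $E^d$ is a quadratic twist of $E^c$, i.e.\ $c/d\in\bbQ^{\times 2}$, which covers $[\bbQ(\alpha):\bbQ]=1$ together with the sub-case of $[\bbQ(\alpha):\bbQ]=2$ with $\bbQ(\alpha)\ne\bbQ(i)$. Here the identity $(E^c)^{(m)}\cong E^{cm^2}$ shows $E^d=(E^c)^{(m)}$ for any $m$ with $m^2=d/c$, and $\bbQ(\sqrt m)=\bbQ(\sqrt[4]{d/c})=\bbQ(\alpha)$. Applying Theorem~\ref{thm:Br1-2} with $K=\bbQ(i)$ (so $-d\equiv 3\bmod 4$ in the notation there) returns $(\bbZ/2\bbZ)^2$ when $\bbQ(\alpha)$ is quadratic --- noting that $c/d$ being a square but not a fourth power forces $\bbQ(\alpha)$ to be a quadratic field different from $\bbQ(i)$, so $\bbQ(\sqrt m)\not\subset\bbQ(i)$ --- and $\bbZ/2\bbZ$ when $\bbQ(\alpha)=\bbQ$. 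This matches the stated answer in these cases.

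It remains to treat $[\bbQ(\alpha):\bbQ]=4$ and $c/d\in -4\cdot\bbQ^{\times 4}$; since $-4=(1+i)^4$, the latter is precisely $[\bbQ(\alpha):\bbQ]=2$ with $\bbQ(\alpha)=\bbQ(i)$, and there $E^d$ is not a quadratic twist of $E^c$ over $\bbQ$, so one computes $\mathrm{H}^1(\Gal(L/\bbQ),\bbZ[i]\circ\phi_\alpha^{-1})$ by hand. When $c/d\in -4\bbQ^{\times 4}$ we have $L=\bbQ(i)$ and $\Gal(L/\bbQ)\cong\bbZ/2\bbZ$, and the value is read off from the explicit Tate cohomology of the resulting rank-two $\bbZ/2\bbZ$-module. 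When $[\bbQ(\alpha):\bbQ]=4$ the group $\Gal(L/\bbQ)$ is dihedral of order $8$ if $i\notin\bbQ(\alpha)$, and of order $4$ if $i\in\bbQ(\alpha)$; applying inflation--restriction along the normal subgroup $\Gal(L/\bbQ(i))$, which is cyclic because $\mu_4\subset\bbQ(i)$, reduces the problem to the cohomology of a cyclic group acting on $\bbZ[i]$ by multiplication by an element of $\mu_4$, followed by taking $\Gal(\bbQ(i)/\bbQ)$-invariants --- exactly as the degree-$6$ case was handled in the proof of Theorem~\ref{thm:Br1}, with Tate cohomology supplying the intermediate groups $\bbZ[i]/(1+i)^{k}\bbZ[i]$.

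The main obstacle is this bookkeeping: one must track, for each shape of $\Gal(L/\bbQ)$, both the conjugation twist on $\End\bar{E}^d$ and the action on $\alpha$. The delicate point is that the prime $2$ behaves differently than for $\bbQ(\zeta_3)$ --- because $\bbZ[i]=\bbZ\oplus\bbZ i$ splits the conjugation action into a trivial and a sign summand, whereas $\bbZ[\zeta_3]$ does not --- so that $\mathrm{H}^1$ persists as $\bbZ/2\bbZ$ in cases (e.g.\ $[\bbQ(\alpha):\bbQ]\in\{1,4\}$) where the $\bbQ(\zeta_3)$-analogue of Theorem~\ref{thm:Br1} vanished; this is exactly what distinguishes the two case-lists.
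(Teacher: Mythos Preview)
Your outline matches the paper's proof: reduce via~\eqref{eq:Pichom} to $\mathrm{H}^1(\Gal(L/\bbQ),\bbZ[i]\circ\phi_\alpha)$ with $L=\bbQ(i,\alpha)$, dispose of the quadratic-twist cases by Theorem~\ref{thm:Br1-2}, and for $[\bbQ(\alpha):\bbQ]=4$ apply inflation--restriction along the cyclic subgroup $\Gal(L/\bbQ(i))$ exactly as in the proof of Theorem~\ref{thm:Br1}. The paper is terser, folding \emph{all} of $[\bbQ(\alpha):\bbQ]\le 2$ into Theorem~\ref{thm:Br1-2} and writing out only the degree-$4$ case; you are right to flag $c/d\in -4\,\bbQ^{\times 4}$ separately, since there $E^d$ is not a quadratic twist of $E^c$ over~$\bbQ$ and Theorem~\ref{thm:Br1-2} does not literally apply. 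One caution as you fill in that boundary case: with $\alpha=1+i$ one finds $\tau(\phi_\alpha)=i\circ\phi_\alpha$, so complex conjugation swaps the $\bbZ$-basis $\phi_\alpha,\ i\phi_\alpha$ of $\bbZ[i]\circ\phi_\alpha$, making it an induced $\bbZ[\Gal(\bbQ(i)/\bbQ)]$-module with $\mathrm{H}^1=0$ rather than $\bbZ/2\bbZ$---so your careful case-split actually surfaces a point the paper's shortcut does not address, and you should check the stated answer there.
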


\begin{proof}
The proof is very similar to that of Theorem~\ref{thm:Br1} so we shall be brief.
Given Theorem~\ref{thm:Br1-2}, our concern is the case $[\bbQ(\alpha):\bbQ]=4$. Let $\phi_{\alpha}:\bar{E}^d\to \bar{E}^c$ be the isomorphism defined over $\bbQ(\alpha)$ given by $(x,y)\mapsto (\alpha^2x, \alpha^3y)$, whereby
\begin{equation*}\label{eq:infBr12}
\Br_1(Y)/\Br(\bbQ)=\mathrm{H}^1(\Gal(L/\bbQ),\bbZ[i]\circ\phi_\alpha)
\end{equation*}
where $L=\bbQ(i,\alpha)$. Inflation-restriction gives 
\begin{align*}\label{eq:inf-res32}
\mathrm{H}^1(\Gal(L/\bbQ),\bbZ[i]\circ\phi_\alpha) = \mathrm{H}^1(\Gal(L/\bbQ(i)),\bbZ[i]\circ\phi_\alpha)^{\Gal(\bbQ(i)/\bbQ)}\cong\bbZ/2\bbZ. 
\end{align*}
\end{proof}

\section{Constant evaluation maps}\label{sec:pair_const}

In this section, we prove the following analogue of~\cite[Theorem~1.2(i)]{I-S}.

\begin{theorem}\label{thm:const}
Let $K\neq \bbQ(\sqrt{-11})$ be an imaginary quadratic field and 
let $Y=\Kum(E\times E')$ for elliptic curves $E,E'$ over $\bbQ$ with $\End \bar{E}=\End\bar{E}'=\cO_K$. Let $\ell$ be an odd prime and if $K=\bbQ(\zeta_3)$ assume that $\ell>3$. 
Suppose that $\cA\in\Br(Y)_{\ell}$ and let $v\neq \ell$ be a place of $\bbQ$. 
Then the evaluation map $\ev_{\cA,v}:Y(\bbQ_v)\to \Br(\bbQ_v)_\ell$ is constant.
\end{theorem}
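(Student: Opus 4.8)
The plan is to reduce to the Skorobogatov--Zarhin description of the evaluation map recalled in Section~\ref{sec:prelim_eval} and then argue place by place, the crucial input at a finite place $v=p\neq\ell$ being that CM elliptic curves acquire good reduction over an extension of $\bbQ_p$ of degree prime to $\ell$.

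First I would reduce to a Skorobogatov--Zarhin class. Since $\ell$ is odd, and $\ell>3$ when $K=\bbQ(\zeta_3)$, Theorems~\ref{thm:Br1-2}, \ref{thm:Br1} and~\ref{thm:Br1i} show that $\Br_1(Y)/\Br(\bbQ)$ has trivial $\ell$-primary part, so $\Br_1(Y)_\ell=\Br(\bbQ)_\ell$. Using Theorem~\ref{thm:SZBrInject} (for odd $\ell$) and the construction of Section~\ref{sec:prelim_eval}, it follows that $\cA=\cA_0+\cB$ with $\cA_0\in\Br(\bbQ)_\ell$ constant and $\cB$ built from some $\varphi\in\Hom_{\Gamma_\bbQ}(E'_\ell,E_\ell)^-$. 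If this group is zero then $\cA\in\Br(\bbQ)$ and we are done; otherwise, by Theorem~\ref{thm:numerator3}, Proposition~\ref{prop:gen} and the results of~\cite{ISZ,I-S} used in Theorem~\ref{thm:mainoptions}, it is cyclic of order $\ell$ and every nonzero element of it is an isomorphism (a twist of $\phi_\alpha$), so $\varphi$ may be taken to be an isomorphism; since $\ell$ is coprime to $2\disc\cO_K$ in each of the remaining cases, formula~\eqref{eq:pairingcup} applies. As $\ev_{\cA,v}$ and $\ev_{\cB,v}$ differ by a constant, it suffices to show $\ev_{\cB,v}$ is constant; I will show it is identically zero (consistent with Lemma~\ref{lem:constant0}).

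The archimedean case is immediate: $\Br(\bbR)_\ell=\Br(\bbC)_\ell=0$ as $\ell$ is odd. For a finite place $v=p\neq\ell$, I would pass to a finite extension $M/\bbQ_p$ over which both $E$ and $E'$ have good reduction --- such an $M$ exists because CM elliptic curves have potentially good reduction everywhere --- chosen with $[M:\bbQ_p]$ prime to $\ell$: when $\cO_K^\times=\{\pm1\}$ the common $j$-invariant is $\neq 0,1728$, so away from $p=2$ a quadratic extension suffices and at $p=2$ the degree is a power of $2$; when $K=\bbQ(i)$ (resp.\ $K=\bbQ(\zeta_3)$) the degree divides $4p^a$ (resp.\ $6p^a$), which is prime to $\ell$ since $\ell>2$ (resp.\ $\ell>3$). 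Over $M$, and since $p\neq\ell$, the Kummer maps identify $E(M)/\ell$ and $E'(M)/\ell$ with the unramified subgroups of $\mathrm{H}^1(M,E_\ell)$ and $\mathrm{H}^1(M,E'_\ell)$; as $\varphi$ is $\Gamma_\bbQ$-equivariant it carries unramified classes to unramified classes, and the cup product~\eqref{eq:cup} of two unramified classes lands in $\mathrm{H}^2$ of the residue field with $\mu_\ell$-coefficients, which vanishes. Hence $\ev_{\cB,p}$ evaluated at any point of $Y(\bbQ_p)\subseteq Y(M)$, restricted to $\Br(M)_\ell$, is zero; since $\Res_{M/\bbQ_p}$ multiplies invariants by $[M:\bbQ_p]$ it is injective on $\Br(\bbQ_p)_\ell$, and therefore $\ev_{\cB,p}=0$.

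I expect the main obstacle to be the control of the local degrees in the last step --- in particular, pinning down $E$ and $E'$ explicitly as the sextic or quadratic twists of the base curves of Theorems~\ref{thm:mainquad} and~\ref{thm:Br1} in order to bound the extensions over which they acquire good reduction. This is presumably also where the hypothesis $K\neq\bbQ(\sqrt{-11})$ enters: the prime $11$ ramifies in $K$, and one would have to examine separately whether $E$ and $E'$ still attain good reduction at $11$ over an extension of degree prime to $\ell=3$, or whether the evaluation map at $11$ genuinely fails to be constant.
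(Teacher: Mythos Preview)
Your approach is correct and genuinely different from the paper's. The paper argues place by place over $\bbQ_p$ itself: for odd primes of good reduction it invokes \cite[Theorem~D]{BNwild}, and for $p=2$ or primes of bad reduction it shows directly (Lemmas~\ref{lem:div-2} and~\ref{lem:div}, together with \cite[\S5.2]{I-S}) that one of the factors satisfies $E(\bbQ_p)/\ell=0$, so the cup product~\eqref{eq:pairingcup} vanishes. Your argument is more uniform: pass to a finite extension $M/\bbQ_p$ of degree prime to $\ell$ over which both curves have good reduction, observe that the Kummer images are then unramified, and use that the cup product of unramified classes dies in $\mathrm{H}^2(\hat{\bbZ},\mu_\ell)=0$, pulling back by injectivity of restriction on $\ell$-torsion. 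This avoids both the appeal to \cite{BNwild} and the case-by-case Tamagawa and formal-group computations.

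One point deserves more care. Your degree bound at $p=2$ for $\cO_K^\times=\{\pm1\}$ does not follow from $j\neq 0,1728$ alone: since $j(\cE)\equiv 0\pmod 2$ in these cases, the reduction over the good-reduction field is supersingular with automorphism group of order $24$, so a priori the tame part of the inertia image could have order $3$. What saves you is the CM structure: over $K_\fP$ the action of $\Gamma_{K_\fP}$ on $E[\ell]$ factors through $(\cO_K/\ell)^\times$, whose order is prime to $\ell$, and $[I_p:I_{K_\fP}]\le 2$; by N\'eron--Ogg--Shafarevich the good-reduction degree equals $|\rho_\ell(I_p)|$, hence is prime to $\ell$. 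With this filled in, your argument in fact goes through for $K=\bbQ(\sqrt{-11})$ as well (here $2$ is inert and $11$ ramifies in $K$, and the same bound applies), so your method yields a slightly stronger statement than the paper's --- the exclusion of $\bbQ(\sqrt{-11})$ in Theorem~\ref{thm:const} is an artefact of the paper's divisibility approach (see the Remark after Lemma~\ref{lem:div-2}), not of the result itself.
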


The statement for odd primes of good reduction for $Y$ follows immediately from~\cite[Theorem D, Remark~1.6.2]{BNwild}, and in the remaining cases the proof boils down to showing $\ell$-divisibility for the sets of local points of certain elliptic curves with bad reduction. We will need the following elementary lemmas. 

\begin{lemma}\label{lem:varphigen}
Let $K$ be an imaginary quadratic field and 
let $Y=\Kum(E\times E')$ for elliptic curves $E,E'$ over $\bbQ$ with $\End \bar{E}=\End\bar{E}'=\cO_K$. Let $\ell$ be an odd prime and if $K=\bbQ(\zeta_3)$ assume that $\ell>3$. 
Suppose that $\cA\in\Br(Y)_{\ell}\setminus \Br(\bbQ)$.
Then $\ell\leq 7$, $K$ is one of $\bbQ(\zeta_3), \bbQ(i),\bbQ(\sqrt{-2}),\bbQ(\sqrt{-11})$ and
\begin{equation}\label{eq:Brhom}
\Br(Y)_\ell/\Br(\bbQ)_\ell=\Br(Y)_\ell/\Br_1(Y)_\ell=\Hom_{\Gamma_\bbQ}(E'_\ell, E_\ell)^- \cong\bbZ/\ell\bbZ.
\end{equation}
Furthermore, if $\varphi\in \Hom_{\Gamma_\bbQ}(E'_\ell, E_\ell)^- $ is non-zero then it is an isomorphism.
\end{lemma}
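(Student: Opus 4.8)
The plan is to prove Lemma~\ref{lem:varphigen} by assembling results already established in the paper, treating the quadratic cases and the $\bbQ(\zeta_3)$ case in parallel. First I would observe that since $\cA\in\Br(Y)_\ell\setminus\Br(\bbQ)$ and $\ell$ is odd, Theorem~\ref{thm:Br1-2} (in the quadratic case) and Theorem~\ref{thm:Br1} together with Theorem~\ref{thm:Br1i} (in the CM-by-$\bbZ[i]$ or $\bbZ[\zeta_3]$ cases) show that $\Br_1(Y)/\Br(\bbQ)$ is killed by $2$, so $\Br_1(Y)_\ell=\Br(\bbQ)_\ell$ and hence $\cA\notin\Br_1(Y)$. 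Thus the image of $\cA$ in $\Br(Y)/\Br_1(Y)$ is a nonzero element of order dividing $\ell$, so $(\Br(Y)/\Br_1(Y))_\ell\neq 0$.

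Next I would invoke Theorem~\ref{thm:SZBrInject} to transport this to the abelian surface: $(\Br(Y)/\Br_1(Y))_\ell\cong(\Br(E\times E')/\Br_1(E\times E'))_\ell$ since $\ell$ is odd. Theorem~\ref{thm:mainoptions} (or directly Theorem~\ref{thm:quad} in the quadratic case, and Lemma~\ref{lem:prime bound} in the $\bbZ[\zeta_3]$ case) then forces $\ell\leq 7$ and $K\in\{\bbQ(\zeta_3),\bbQ(i),\bbQ(\sqrt{-7}),\bbQ(\sqrt{-2}),\bbQ(\sqrt{-11})\}$; but $\bbQ(\sqrt{-7})$ contributes only $2$- and $4$-torsion by Theorem~\ref{thm:quad}(ii), and for $K=\bbQ(\zeta_3)$ we have assumed $\ell>3$, leaving exactly $K\in\{\bbQ(\zeta_3),\bbQ(i),\bbQ(\sqrt{-2}),\bbQ(\sqrt{-11})\}$ as claimed (with $\ell=3$ in the $\bbQ(\sqrt{-2})$, $\bbQ(\sqrt{-11})$ cases and $\ell\in\{5,7\}$ when $K=\bbQ(\zeta_3)$, and using~\eqref{ISgp} for $K=\bbQ(i)$). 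Now Lemma~\ref{lem:+-} applies since $\ell$ is coprime to $2d$ (resp.\ coprime to $3$ when $d=3$), so Corollary~\ref{cor:Br-} gives $\Br(E\times E')_\ell/\Br_1(E\times E')_\ell=\Hom_{\Gamma_\bbQ}(E'_\ell,E_\ell)^-$. Combining with the paragraph above yields the chain of equalities in~\eqref{eq:Brhom}; the identification $\Br(Y)_\ell/\Br(\bbQ)_\ell=\Br(Y)_\ell/\Br_1(Y)_\ell$ is exactly the statement $\Br_1(Y)_\ell=\Br(\bbQ)_\ell$ already noted. The isomorphism with $\bbZ/\ell\bbZ$ follows from Theorem~\ref{thm:numerator3} (case $F=\bbQ$) when $K\in\{\bbQ(\sqrt{-2}),\bbQ(\sqrt{-11})\}$, from Proposition~\ref{prop:gen} when $K=\bbQ(\zeta_3)$, and from~\eqref{ISgp} when $K=\bbQ(i)$.

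For the final assertion, let $\varphi\in\Hom_{\Gamma_\bbQ}(E'_\ell,E_\ell)^-$ be nonzero. Since $E'_\ell$ and $E_\ell$ are both two-dimensional over $\bbF_\ell$, it suffices to show $\varphi$ is injective, equivalently $\ker\varphi=0$. The kernel is a $\Gamma_\bbQ$-stable $\bbF_\ell$-subspace of $E'_\ell$; if it were nonzero and proper it would be a line, giving a $\Gamma_\bbQ$-stable line in $E'_\ell$. But $\varphi$ lies in the "$-$" part, i.e.\ $\varphi\circ\beta=\bar\beta\circ\varphi$ for $\beta\in\cO_K$, so $\ker\varphi$ is stable under the $\cO_K$-action on $E'_\ell$; since $E'_\ell$ is free of rank one over $\cO_K/\ell$ (as $\ell$ is coprime to the conductor and $\cO_K$ has class number one), the only $\cO_K$-stable $\bbF_\ell$-subspaces are $0$ and $E'_\ell$ when $\ell$ is inert, and the two prime-ideal torsion lines when $\ell$ splits. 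In the inert case we are done immediately. In the split case a stable line would be one of $E'[\mathfrak{l}]$, $E'[\bar{\mathfrak l}]$; I would rule this out using that $\varphi$ intertwines the $\cO_K$-action with its conjugate, so $\varphi$ would carry $E'[\mathfrak l]$ into $E[\bar{\mathfrak l}]$, and then a Galois-action/Grössencharakter argument as in the proof of Proposition~\ref{prop:gen} shows the resulting rank-one map cannot be $\Gamma_\bbQ$-equivariant unless it is zero — contradicting $\varphi\neq 0$ and the fact that $\Hom_{\Gamma_\bbQ}(E'_\ell,E_\ell)^-\cong\bbZ/\ell\bbZ$ is generated by $\varphi$. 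The main obstacle is precisely this last point: ensuring that no nonzero element of the one-dimensional space $\Hom_{\Gamma_\bbQ}(E'_\ell,E_\ell)^-$ can be degenerate. A cleaner route, which I would prefer, is to note that $\Hom(E'_\ell,E_\ell)^-$ is a free rank-one module over $\cO_K/\ell$ and the explicit generators exhibited in Proposition~\ref{prop:gen} (namely $\tau\phi_\alpha$, a reduction of an isomorphism of elliptic curves) and in Theorem~\ref{thm:numerator3} are visibly isomorphisms; since every nonzero element of a one-dimensional $\bbF_\ell$-space is a unit scalar multiple of a fixed generator, and a unit scalar times an isomorphism is an isomorphism, the claim follows.
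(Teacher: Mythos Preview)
Your overall strategy matches the paper's: reduce to the transcendental quotient via the algebraic Brauer group computations, invoke Theorem~\ref{thm:mainoptions}/Theorem~\ref{thm:quad} to pin down $K$ and $\ell$, and then identify $\Br(Y)_\ell/\Br_1(Y)_\ell$ with $\Hom_{\Gamma_\bbQ}(E'_\ell,E_\ell)^-$ via Theorem~\ref{thm:SZBrInject} and Corollary~\ref{cor:Br-}. One minor slip: $\Br_1(Y)/\Br(\bbQ)$ need not be killed by $2$ when $K=\bbQ(\zeta_3)$ (Theorem~\ref{thm:Br1} allows $\bbZ/3\bbZ$); but since you assume $\ell>3$ in that case, the conclusion $\Br_1(Y)_\ell=\Br(\bbQ)_\ell$ survives.

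The genuine gap is in the final assertion that a nonzero $\varphi$ is an isomorphism. Your ``cleaner route'' appeals to explicit generators in Theorem~\ref{thm:numerator3}, but that theorem exhibits no generator --- it only computes the group abstractly --- so this route is unavailable for $K\in\{\bbQ(\sqrt{-2}),\bbQ(\sqrt{-11})\}$, and you have not addressed $K=\bbQ(i)$ at all. Your first approach correctly observes that $\ker\varphi$ is $\cO_K$-stable, hence in the split case must be one of the lines $E'_\lambda$, $E'_{\bar\lambda}$; but the subsequent ``Gr\"ossencharakter argument'' is vague and misdirected. The clean finish, which is what the paper does, is immediate: $\ker\varphi$ is also $\Gamma_\bbQ$-stable since $\varphi$ is $\Gamma_\bbQ$-equivariant, while any $\tau\in\Gamma_\bbQ\setminus\Gamma_K$ swaps $E'_\lambda$ and $E'_{\bar\lambda}$ (because $\tau$ conjugates the CM action, sending $\lambda$ to $\bar\lambda$). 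Hence neither line is $\Gamma_\bbQ$-stable, so neither can be the kernel. For $K=\bbQ(i)$ the paper cites \cite[\S5.1]{I-S} for exactly this argument; for $K=\bbQ(\zeta_3)$ your observation about $\tau\phi_\alpha$ from Proposition~\ref{prop:gen} is correct and is what the paper uses.
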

\begin{proof}
Suppose for contradiction that $\cA\in\Br_1(Y)$. Then Theorems~\ref{thm:Br1-2}, \ref{thm:Br1} and~\ref{thm:Br1i} show that $K=\bbQ(\zeta_3)$ and $\ell=3$, contradicting our assumptions. Consequently, $\cA\in \Br(Y)_\ell\setminus\Br_1(Y)$ and
Theorem~\ref{thm:mainoptions} shows that $K\in \{\bbQ(\zeta_3), \bbQ(i),\bbQ(\sqrt{-7}),\bbQ(\sqrt{-2}),\bbQ(\sqrt{-11})\}$ and $\ell\leq 7$. Since $\ell$ is odd, Theorem~\ref{thm:mainquad} shows that $K\neq \bbQ(\sqrt{-7})$.

For $K=\bbQ(\zeta_3)$,~\eqref{eq:Brhom} follows from Theorem~\ref{thm:Br1}, Theorem~\ref{thm:SZBrInject} and Proposition~\ref{prop:gen}.
For $K=\bbQ(i)$,~\eqref{eq:Brhom} follows from~\eqref{ISgp}
and~\cite[Section~4]{I-S}. For $K$ equal to $\bbQ(\sqrt{-2})$ or $\bbQ(\sqrt{-11})$,~\eqref{eq:Brhom} follows from Theorem~\ref{thm:mainquad}, Theorem~\ref{thm:SZBrInject} and Corollary~\ref{cor:Br-} (note that $\ell$ must equal $3$ in this case). 

Now let $0\neq \varphi\in \Hom_{\Gamma_\bbQ}(E'_\ell, E_\ell)^- $. We claim that $\varphi$ is an isomorphism. For $K=\bbQ(\zeta_3)$, this follows from the explicit generator given in Proposition~\ref{prop:gen}. For $K=\bbQ(i)$, it is proved in~\cite[Section~5.1]{I-S}, and we adapt the argument therein for $K$ equal to $\bbQ(\sqrt{-2})$ or $\bbQ(\sqrt{-11})$, observing that $3$ splits in $K/\bbQ$ as $3=\lambda\bar{\lambda}$ and we can write $E'_3=E'_\lambda\oplus E'_{\bar{\lambda}}$ but neither factor is a $\Gamma_\bbQ$-submodule of $E'_3$ and therefore neither factor can be the kernel of $\varphi$. Thus, the restrictions of $\varphi$ to $E'_\lambda$ and $E'_{\bar{\lambda}}$ are isomorphisms $E'_\lambda\to E_{\bar{\lambda}}$ and $E'_{\bar{\lambda}}\to E_{\lambda}$. Hence, $\varphi$ is an isomorphism.
\end{proof}

\begin{lemma}\label{lem:div-2}
Let $E/\bbQ$ be an elliptic curve with $\End\bar{E}=\bbZ[\sqrt{-2}]$. Then $2$ is a prime of bad reduction for $E$. Furthermore, if $p\neq 3$ is a prime of bad reduction for $E$ then $E(\bbQ_p)$ is $3$-divisible. 
\end{lemma}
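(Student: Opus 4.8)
The plan is to determine $E$ up to quadratic twist and then argue place by place. Since $\bbQ(\sqrt{-2})$ has class number one and $\bbZ[\sqrt{-2}]=\cO_{\bbQ(\sqrt{-2})}$, the $j$-invariant of $E$ equals the unique CM value $j=8000=2^6\cdot 5^3$, so $E$ is a quadratic twist of $\cE\colon y^2=x^3+4x^2+2x$. One computes $\disc(\cE)=2^9$, and for $a\in\bbQ^\times$ the quadratic twist $\cE^a$ has Weierstrass model $y^2=x^3+4ax^2+2a^2x$ of discriminant $2^9a^6$.

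To prove $2$ is a prime of bad reduction, I would note that the discriminants of any two Weierstrass models of $\cE^a$ over $\bbQ_2$ differ by a twelfth power in $\bbQ_2^\times$, so the $2$-adic valuation of the minimal discriminant of $\cE^a$ is congruent modulo $12$ to $v_2(2^9a^6)=9+6v_2(a)$; this is odd, in particular nonzero, so $\cE^a$ (hence $E$) has additive reduction at $2$. Alternatively, one can argue that $2$ ramifies and $3$ splits in $\bbQ(\sqrt{-2})$, so good reduction at $2$ would force the inertia subgroup at $2$ to act trivially on $E[3]$ after base change to $\bbQ_2(\sqrt{-2})$, whereas the nontrivial element of $\Gal(\bbQ_2(\sqrt{-2})/\bbQ_2)$ interchanges the two distinct $\cO_{\bbQ(\sqrt{-2})}$-eigenlines in $E[3]$, a contradiction.

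For the second assertion, fix a prime $p\neq 3$ of bad reduction for $E$. Because $E(\bbQ_p)$ is a commutative $p$-adic Lie group of dimension one, $E(\bbQ_p)\cong\bbZ_p\times T$ with $T=E(\bbQ_p)_{\mathrm{tors}}$ finite; as $3\in\bbZ_p^\times$, the group $E(\bbQ_p)$ is $3$-divisible if and only if $T[3]=0$, i.e.\ $E$ has no $\bbQ_p$-point of order $3$, so I aim to establish the latter. If $p$ is odd then $\disc(\cE)$ being a power of $2$ forces $\cE$ to have good reduction at $p$, whence $E$ is the quadratic twist of a good-reduction curve by a necessarily ramified quadratic character and so has Kodaira type $\mathrm{I}_0^*$ at $p$; the component group $\Phi$ of its N\'eron model then has order dividing $4$. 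As the identity component $E^0(\bbQ_p)$ is an extension of the $p$-group $\bbG_a(\bbF_p)$ by the formal group $\cong\bbZ_p$, it is $3$-divisible, so $E(\bbQ_p)/3E(\bbQ_p)\cong\Phi(\bbF_p)/3\Phi(\bbF_p)=0$.

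The case $p=2$ is the crux, since the Kodaira-type argument fails: $2\mid j(E)$ means the potential good reduction of $E$ has $j$-invariant $0$, with automorphism group of order divisible by $3$. Here I would compute directly with the $3$-division polynomial of the model $y^2=x^3+4ax^2+2a^2x$, which equals $3x^4+16ax^3+12a^2x^2-4a^4=a^4 g(x/a)$ where $g(x)=3x^4+16x^3+12x^2-4$; hence $E$ has a $\bbQ_2$-point of order $3$ only if $g$ has a root in $\bbQ_2$. But the Newton polygon of $g$ over $\bbQ_2$ is the single segment from $(0,2)$ to $(4,0)$, so all four roots of $g$ have valuation $\tfrac{1}{2}$ and none lies in $\bbQ_2$; thus $E(\bbQ_2)$ has no $3$-torsion. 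The main obstacle is precisely this $2$-adic case: the conceptual component-group argument breaks down and one is forced into an explicit computation, but it is harmless because the substitution $x\mapsto ax$ collapses every quadratic twist into the single quartic $g$.
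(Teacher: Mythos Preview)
Your proof is correct. The setup via the explicit quadratic-twist model and the argument for odd primes of bad reduction are essentially the same as in the paper: both identify additive reduction with Tamagawa number coprime to $3$ and then observe that $E_0(\bbQ_p)$ is $3$-divisible (the paper cites Pannekoek for this last step, while you spell it out via the formal group and $\bbG_a(\bbF_p)$).

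Where you genuinely diverge is at $p=2$. The paper treats $p=2$ uniformly with the other bad primes: it runs Tate's algorithm to check that the Tamagawa number is $2$ or $4$ and then again invokes the structure of $E_0(\bbQ_2)$ from Pannekoek. You instead reduce the question to showing $E(\bbQ_2)[3]=0$ and settle this by computing the $3$-division polynomial $\psi_3(x)=a^4g(x/a)$ with $g(x)=3x^4+16x^3+12x^2-4$, then reading off from the Newton polygon of $g$ (a single segment of slope $-\tfrac12$) that $g$ has no root in $\bbQ_2$. This is a nice, completely self-contained alternative: it avoids both Tate's algorithm at $2$ and the external reference, at the cost of an explicit computation. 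The paper's approach has the virtue of uniformity across all bad primes; yours has the virtue of being elementary and of isolating exactly why $p=2$ is the delicate case.
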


\begin{proof}
Since $\bbQ(\sqrt{-2})$ has class number one, $E$ is a quadratic twist of $y^2=x^3+4x^2+2x$ by some squarefree $a\in\bbZ\setminus\{0\}$. Hence $E$ has an affine equation $y^2=x^3+4ax^2+2a^2x$. One checks that this equation is minimal and that $2$ is a prime of bad reduction. Running Tate's algorithm (see~\cite[IV.9]{SilvermanAdv}, for example) shows that if $p$ is a prime of bad reduction for $E$ then the reduction type is additive and the Tamagawa number $|E(\bbQ_p)/E_0(\bbQ_p)|$ is either $2$ or $4$, so it suffices to show that $E_0(\bbQ_p)$ is $3$-divisible. This follows from the description of $E_0(\bbQ_p)$ given in~\cite[Theorem~1]{Rene}. 
\end{proof}

\begin{remark}
The analogue of Lemma~\ref{lem:div-2} for elliptic curves with complex multiplication by $\bbZ[\frac{1+\sqrt{-11}}{2}]$ is false. For example, the elliptic curve $E/\bbQ$ with LMFDB label 121.b2, which has affine equation $y^2+y=x^3-x^2-7x+10$, 
has good (supersingular) reduction at $2$ and $E(\bbQ_2)/E_1(\bbQ_2)\cong \bbZ/3\bbZ$, so $E(\bbQ_2)$ is not $3$-divisible. This is why $\bbQ(\sqrt{-11})$ was excluded from Theorem~\ref{thm:const}; further investigation would be needed to determine whether the statement still holds in that case.
\end{remark}

\begin{lemma}\label{lem:div}
Let $p$ and $\ell$ be distinct primes with $\ell>3$. Let $a\in\bbQ_p^\times$, let $E^a$ denote the elliptic curve with affine equation $y^2=x^3+a$ and if $p$ is odd suppose that $E^a/\bbQ_p$ has bad reduction. Then $E^a(\bbQ_p)$ is $\ell$-divisible. 
\end{lemma}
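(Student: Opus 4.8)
The plan is to show that $E^a(\bbQ_p)$ has no $\ell$-torsion and that its pro-$\ell$ part vanishes, so that multiplication by $\ell$ is surjective on this (profinite) group. More precisely, recall that $E^a(\bbQ_p)$ sits in an exact sequence $0\to E^a_1(\bbQ_p)\to E^a(\bbQ_p)\to \tilde{E}^a_{\mathrm{ns}}(\bbF_p)\to 0$ coming from reduction, where $E^a_1(\bbQ_p)$ is the kernel of reduction (the points reducing to the identity in the smooth locus of a minimal Weierstrass model). The kernel of reduction $E^a_1(\bbQ_p)$ is a pro-$p$ group (it is isomorphic to the formal group of $E^a$ over $\Zp$, hence to $p\Zp$ as a topological group up to a finite-index subgroup when $p=2,3$), so since $\ell\neq p$, multiplication by $\ell$ is an automorphism of $E^a_1(\bbQ_p)$. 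Therefore it suffices to prove that multiplication by $\ell$ is surjective on the finite group $\tilde{E}^a_{\mathrm{ns}}(\bbF_p)$, i.e.\ that $\ell\nmid |\tilde{E}^a_{\mathrm{ns}}(\bbF_p)|$.

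The key point will be bounding $|\tilde{E}^a_{\mathrm{ns}}(\bbF_p)|$ and showing it is never divisible by $\ell$ given $\ell>3$. First I would dispose of $p=2$: for any $a\in\bbQ_2^\times$, the quantity $|E^a(\bbQ_2)|_{\mathrm{tors,prime-to-2}}$ or rather the relevant group $\tilde{E}^a_{\mathrm{ns}}(\bbF_2)$ has order at most $4$ (if good reduction, $|\tilde{E}^a(\bbF_2)|\leq 5$ by the Hasse bound, but in fact for these specific curves one can check the possibilities directly; if bad reduction the non-singular part is $\bbF_2^\times$, $\bbF_4^\times$, or $\bbF_2^+$, all of order $\leq 3$), and in no case is this order divisible by any $\ell>3$. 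For $p$ odd, we are in the bad reduction case by hypothesis, so $\tilde{E}^a_{\mathrm{ns}}(\bbF_p)$ is either $\bbG_m(\bbF_p)=\bbF_p^\times$ (split multiplicative, order $p-1$), a nonsplit torus (order $p+1$), or $\bbG_a(\bbF_p)=\bbF_p^+$ (additive, order $p$). For curves $y^2=x^3+a$ with $a\in\bbQ_p^\times$, $p\geq 5$, the reduction is additive (the curve has $j=0$, and potentially good reduction, so the bad reduction is always additive), hence $|\tilde{E}^a_{\mathrm{ns}}(\bbF_p)|=p$, which is coprime to $\ell$ since $p\neq\ell$. For $p=3$ the hypothesis on $p$ odd with bad reduction still applies but one must be a touch careful as $3$ is the prime of potentially-bad behaviour for $j=0$ curves; however $p=3\neq\ell$ already forces $\ell\nmid p$ in the additive case, and again the multiplicative cases give order $p\pm 1\in\{2,4\}$, coprime to $\ell>3$.

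So the heart of the argument is the elementary observation that for $E^a: y^2=x^3+a$ the reduction, when bad, is always of additive type (because $j(E^a)=0$ and these curves acquire good reduction over a ramified extension), hence the non-singular part of the special fibre is $\bbG_a$ with $\bbF_p$-points of order exactly $p$; combined with the pro-$p$ nature of the kernel of reduction, this gives that $E^a(\bbQ_p)$ has no prime-to-$p$ part beyond what can possibly be $\ell$-torsion, and since $\ell\neq p$ the conclusion follows. I expect the main obstacle to be the case analysis at the small primes $p=2$ and $p=3$, where the formal group is not simply $p\Zp$ and where one should double-check that "bad reduction $\Rightarrow$ additive" really holds for $y^2=x^3+a$ (it does, as $v_p(j)=v_p(0)=\infty\geq 0$ rules out multiplicative reduction); once that is pinned down, the divisibility conclusion is immediate from the structure of $E^a(\bbQ_p)$ as an extension of a finite group of order prime to $\ell$ by a pro-$p$ group.
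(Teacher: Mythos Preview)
Your approach is essentially the same as the paper's, but there is one genuine gap: the exact sequence you write down is incorrect. Reduction gives
\[0\to E^a_1(\bbQ_p)\to E^a_0(\bbQ_p)\to \tilde{E}^a_{\mathrm{ns}}(\bbF_p)\to 0,\]
with $E^a_0(\bbQ_p)$ (the subgroup of points reducing into the smooth locus) in the middle, not $E^a(\bbQ_p)$. The quotient $E^a(\bbQ_p)/E^a_0(\bbQ_p)$ is the component group, of order the Tamagawa number, and you have not said why this is coprime to $\ell$. The paper handles this explicitly: Tate's algorithm shows that for $y^2=x^3+a$ the bad reduction is additive and the Tamagawa number is at most $4$, hence coprime to $\ell>3$. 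Once you insert this step, your argument is complete and coincides with the paper's (which cites \cite{Rene} for the structure of $E^a_0(\bbQ_p)$ in the additive case, where you instead argue directly via the formal group plus the additive special fibre; both are fine).

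A smaller point on the $p=2$ good reduction case: the Hasse bound alone permits $|\tilde{E}^a(\bbF_2)|=5$, which would be divisible by $\ell=5$, so ``one can check the possibilities directly'' really does need to be carried out. The paper does this by observing that good reduction at $2$ forces $\ord_2(a)=4$ and a minimal model $y^2+y=x^3+b$ with $b\in\bbZ_2$, for which $\tilde{E}(\bbF_2)\cong\bbZ/3\bbZ$.
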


\begin{proof}
First suppose that $E^a/\bbQ_p$ has bad reduction. An examination of Tate's algorithm (see~\cite[IV.9]{SilvermanAdv}, for example) shows that $E^a$ has additive reduction at $p$ and the Tamagawa number $[E^a(\bbQ_p):E^a_0(\bbQ_p)] $ is at most $4$. In particular, the Tamagawa number is coprime to $\ell$ and thus the claim is proved once we have shown that $E^a_0(\bbQ_p)$ is $\ell$-divisible. The $\ell$-divisibility of $E^a_0(\bbQ_p)$ follows from the description of this group given in~\cite[Theorem~1]{Rene}. 

Now suppose that $p=2$ and $E^a/\bbQ_2$ has good reduction. Tate's algorithm shows that this can only happen if $\ord_2(a)=4$ and $E^a$ has a minimal Weierstrass equation of the form $y^2+y=x^3+b$ for some $b\in\bbZ_2$. The standard filtration on the $\bbQ_2$ points of $E^a$ is
\[E^a(\bbQ_2)\supset E^a_1(\bbQ_2)\supset E^a_2(\bbQ_2)\supset \dots \]
where $E^a_1(\bbQ_2)$ denotes the kernel of the reduction map. The theory of formal groups (see~\cite[IV, VII]{Silverman}, for example) shows that $E^a_2(\bbQ_2)\cong 4\bbZ_2$, which is $\ell$-divisible, and  $E^a_1(\bbQ_2)/E^a_2(\bbQ_2)\cong \bbZ/2\bbZ$. Therefore, $E^a_1(\bbQ_2)$ is $\ell$-divisible. Finally, $E^a(\bbQ_2)/E^a_1(\bbQ_2)\cong \tilde{E}(\bbF_2)\cong\bbZ/3\bbZ$, whence it follows that $E^a(\bbQ_2)$ is $\ell$-divisible, as required.
\end{proof}

\begin{proof}[Proof of Theorem~\ref{thm:const}]
The statement for the infinite place is clear, since $\Br(\bbR)=\bbZ/2$ has trivial $\ell$-torsion. The statement for odd primes of good reduction for $Y$ follows from~\cite[Theorem~D, Remark~1.6.2]{BNwild}.

If $\cA\in\Br(\bbQ)$ then there is nothing to prove, so henceforth we will assume that $\cA\in\Br(Y)_\ell\setminus\Br(\bbQ)$. Thus, by Lemma~\ref{lem:varphigen}, $K$ is one of $\bbQ(\zeta_3), \bbQ(i),\bbQ(\sqrt{-2})$ and $\ell\leq 7$. 

For $K=\bbQ(i)$, we have $(\Br(Y)/\Br_1(Y))_{\textrm{odd}}\cong\bbZ/\ell\bbZ$ by~\eqref{ISgp}. For $K=\bbQ(\sqrt{-2})$, Theorem~\ref{thm:mainquad} shows that $\ell=3$ and $\Br(Y)/\Br_1(Y)\cong\bbZ/3\bbZ$. For $K=\bbQ(\zeta_3)$, we are assuming that $\ell>3$, and Theorem~\ref{thm:intro57} gives $\Br(Y)/\Br_1(Y)\cong\bbZ/\ell\bbZ$.
So from now on let $p\neq \ell$ be a finite prime and if $p$ is odd assume that $Y$ has bad reduction at $p$. Our task is to show that $\ev_{\cA,p}$ is constant.

Let $0\neq\varphi\in\Hom_{\Gamma_\bbQ}(E'_\ell, E_\ell)^-$ and let $\cB$ be the element of $\Br(Y)_\ell$ constructed from $\varphi$ as in Section~\ref{sec:prelim_eval}. By construction, and Lemma~\ref{lem:varphigen}, $\cB$ generates $\Br(Y)_\ell/\Br_1(Y)_\ell=\Br(Y)_\ell/\Br(\bbQ)_\ell$. Therefore, it suffices to prove that $\ev_\cB:Y(\bbQ_p)\to\Br(\bbQ_p)_\ell$ is the zero map.

Let $E$ and $E'$ have affine equations $E: y^2=f(x)$ and $E': y^2=g(x)$, respectively. Then $Y$ is the minimal desingularisation of the projective surface with affine equation
\begin{equation}\label{eq:affine}
u^2=f(x)g(t).
\end{equation}
Note that if $E$ and $E'$ are both replaced by their quadratic twists by some $\lambda\in\bbQ^\times$, with affine equations $E^\lambda: \lambda y^2=f(x)$ and $E'^\lambda:\lambda y^2=g(x)$, respectively,
then the resulting Kummer surface has affine equation
\[\lambda^2 u^2= f(x)g(t)\]
and the map $(x,t,u)\mapsto (x,t,\lambda u)$ gives an isomorphism back to the original model, showing that the Kummer surface remains the same. 

We adapt the arguments given in~\cite[Section~5]{I-S} to our setting. 
Since the evaluation map $\ev_{\cB,p}:Y(\bbQ_p)\to \Br(\bbQ_p)_\ell$ is locally constant, it is enough to show that it is zero on all $\bbQ_p$-points $R=(x_0,t_0,u_0)$ satisfying~\eqref{eq:affine}. Let $\delta_R=g(t_0)$. Again by local constancy, we are free to use the implicit function theorem to replace $R$ by a point $R'=(x_1,t_1,u_1)$ satisfying~\eqref{eq:affine}, sufficiently close to $R$, such that $\delta=\delta_{R'}\in\bbQ^\times$ and $u_1\neq 0$. Now $R'$ gives rise to points $P=(x_1, u_1/\delta)\in E^\delta(\bbQ_p)\setminus E^\delta_2$ and $Q=(t_1,1)\in E'^\delta(\bbQ_p)\setminus E'^\delta_2$. Note that $\Kum(E^\delta\times E'^\delta)$ is the minimal desingularisation of the projective surface with affine equation
$\delta^2 u^2=f(x)g(t)$ and the map $(x,t,u)\mapsto (x,t,\delta u)$ gives a $\bbQ$-isomorphism from $\Kum(E^\delta\times E'^\delta)$ to $Y$ that sends the point corresponding to $(P,Q)\in E^\delta(\bbQ_p)\times E'^\delta(\bbQ_p)$ to $R'$. Let $\varphi^\delta\in \Hom_{\Gamma_\bbQ}(E'^\delta_\ell, E^\delta_\ell)^-$ denote the isomorphism $E'^\delta_\ell\to E^\delta_\ell$ coming from $\varphi$. 
Now~\eqref{eq:pairingcup} shows that 
\begin{equation}\label{eq:Bpair}
\cB(R')=\chi_{P}\cup\varphi^\delta_*(\chi_{Q})\in\Br(\bbQ_p)_\ell\cong \ell^{-1}\bbZ/\bbZ
\end{equation}
where $\chi_P$ is the image of $P$ under $\chi: E^\delta(\bbQ_p)\to \mathrm{H}^1(\bbQ_p, E^\delta_\ell)$ and $\chi_Q$ is the image of $Q$ under $\chi: E'^\delta(\bbQ_p)\to \mathrm{H}^1(\bbQ_p, E'^\delta_\ell)$. The maps denoted by $\chi$ factor through the quotients $E^\delta(\bbQ_p)/\ell$ and $E'^\delta(\bbQ_p)/\ell$, respectively.

Recall that $p$ is either equal to $2$ or a prime of bad reduction for $Y$. By~\cite[Lemma~4.2]{goodred}, odd primes of good reduction for an abelian surface are primes of good reduction for the corresponding Kummer surface. Therefore, switching $E^\delta$ and $E'^\delta$ if necessary, if $p$ is odd then we may assume that $p$ is a prime of bad reduction for $E^\delta$.
Now Lemmas~\ref{lem:div-2},~\ref{lem:div} and~\cite[Section~5.2]{I-S} show that $E^\delta(\bbQ_p)/\ell=0$ and hence~\eqref{eq:Bpair} shows that $\cB(R')=0$, as required.
\end{proof}

\section{Surjective evaluation maps}\label{sec:pair_nonconst}

In this section, we prove Theorem~\ref{thm:mainWA}. The notation and assumptions of that theorem will be in force throughout this section. Furthermore, henceforth let $\varphi$ be a non-zero element of $\Hom_{\Gamma_\bbQ}(E'_\ell, E_\ell)^{-}$.

\begin{proposition}\label{prop:suffice}
To prove Theorem~\ref{thm:mainWA}, it suffices to prove the existence of $P\in E(\bbQ_\ell)$ and $Q\in E'(\bbQ_\ell)$ such that 
\[\chi_P\cup\varphi_*(\chi_Q)\neq 0\]
where $\chi_P$ is the image of $P$ under $\chi: E(\bbQ_\ell)\to \mathrm{H}^1(\bbQ_\ell, E_\ell)$ and $\chi_Q$ is the image of $Q$ under $\chi: E'(\bbQ_\ell)\to \mathrm{H}^1(\bbQ_\ell, E'_\ell)$.
\end{proposition}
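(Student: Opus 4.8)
The plan is to reduce everything to Lemma~\ref{lem:varphigen} and the cup-product description of the evaluation map from Section~\ref{sec:prelim_eval}. First I would take $\cB\in\Br(Y)_\ell$ to be the class built from $\varphi$ as in Section~\ref{sec:prelim_eval}; by that construction together with Lemma~\ref{lem:varphigen}, $\cB$ generates $\Br(Y)_\ell/\Br_1(Y)_\ell=\Br(Y)_\ell/\Br(\bbQ)_\ell\cong\bbZ/\ell\bbZ$. Now let $\cA\in\Br(Y)_\ell\setminus\Br(\bbQ)$ be arbitrary. By Lemma~\ref{lem:varphigen}, $\cA\notin\Br_1(Y)$, so the class of $\cA$ in $\Br(Y)_\ell/\Br(\bbQ)_\ell$ is non-zero, hence (the group having prime order) a generator; therefore $\cA=m\cB+\cA_0$ for some $m\in\{1,\dots,\ell-1\}$ and some $\cA_0\in\Br(\bbQ)_\ell$. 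Evaluating at $y\in Y(\bbQ_\ell)$ and applying $\inv_\ell$ gives, in $\bbZ/\ell\bbZ$,
\[\inv_\ell(\cA(y))=m\cdot\inv_\ell(\cB(y))+c,\qquad c:=\inv_\ell\bigl((\cA_0)_{\bbQ_\ell}\bigr),\]
with $c$ independent of $y$. Since $m$ is a unit modulo $\ell$, the map $\ev_{\cA,\ell}$ is surjective onto $\Br(\bbQ_\ell)_\ell\cong\bbZ/\ell\bbZ$ as soon as $\ev_{\cB,\ell}$ is; so it suffices to prove surjectivity of $\ev_{\cB,\ell}$, granting the existence of the points $P,Q$ in the statement.

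For that, the idea is to exploit multiplicativity of $\chi$ in the first variable. Given $P\in E(\bbQ_\ell)$, $Q\in E'(\bbQ_\ell)$ with $a:=\chi_P\cup\varphi_*(\chi_Q)\neq 0$, I would first observe that $\chi$ is a group homomorphism factoring through the finite quotient $E(\bbQ_\ell)/\ell$, so $\chi_P$ is unaffected by adding any element of $\ell E(\bbQ_\ell)$; since $\bigcup_{k=1}^{\ell}E[2k]$ is finite and the coset $P+\ell E(\bbQ_\ell)$ is infinite, I may replace $P$ within this coset so as to achieve $kP\notin E_2$ for every $k\in\{1,\dots,\ell\}$, and likewise ensure $Q\notin E'_2$, without changing $\chi_P$ or $\chi_Q$. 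Then $[kP,Q]\in Y(\bbQ_\ell)$ for each such $k$, and~\eqref{eq:pairingcup} together with $\chi_{kP}=k\chi_P$ gives $\cB([kP,Q])=k\,(\chi_P\cup\varphi_*(\chi_Q))=ka$; in particular $\cB([\ell P,Q])=0$. Since $a$ is a non-zero element of the group $\Br(\bbQ_\ell)_\ell\cong\bbZ/\ell\bbZ$ of prime order, $\{ka:k=1,\dots,\ell\}$ is all of $\Br(\bbQ_\ell)_\ell$, so $\ev_{\cB,\ell}$, and hence $\ev_{\cA,\ell}$, is surjective.

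Finally I would record the standard deduction of the failure of weak approximation: $Y$ has a rational point, so $Y(\bbA_\bbQ)\neq\emptyset$; for any $(y_v)_v\in Y(\bbA_\bbQ)$ the sum $\sum_v\inv_v(\cA(y_v))$ is finite, and either it is non-zero — so $(y_v)_v\notin Y(\bbA_\bbQ)^\cA$ — or else, using surjectivity of $\ev_{\cA,\ell}$, one replaces $y_\ell$ by a point $y'_\ell$ with $\inv_\ell(\cA(y'_\ell))\neq\inv_\ell(\cA(y_\ell))$ to obtain an adelic point not orthogonal to $\cA$; either way $Y(\bbA_\bbQ)^\cA\neq Y(\bbA_\bbQ)$. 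The only delicate point in the above is the perturbation keeping each $kP$ away from $2$-torsion while leaving the cup product intact; the genuinely hard work, however, is not in this proposition at all but in the input it isolates, namely producing $P$ and $Q$ with $\chi_P\cup\varphi_*(\chi_Q)\neq 0$, which will require an explicit analysis of the local Galois cohomology of $E$ and $E'$ at $\ell$ via Tate local duality and the specific form of $\varphi$.
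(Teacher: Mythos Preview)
Your proposal is correct and follows essentially the same approach as the paper: build $\cB$ from $\varphi$, note it generates $\Br(Y)_\ell/\Br(\bbQ)_\ell$, reduce the surjectivity for arbitrary $\cA$ to that for $\cB$, and then use $\cB([kP,Q])=k\cdot(\chi_P\cup\varphi_*(\chi_Q))$ together with $\Br(\bbQ_\ell)_\ell\cong\bbZ/\ell\bbZ$ to get surjectivity. Your coset-perturbation to avoid $2$-torsion is a perfectly good substitute for the paper's appeal to continuity, and your explicit $\cA=m\cB+\cA_0$ decomposition just spells out what the paper compresses into ``it suffices to prove Theorem~\ref{thm:mainWA} with $\cA=\cB$''.
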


\begin{proof}
Lemma~\ref{lem:varphigen} shows that $\varphi$ is an isomorphism. Let $\cB\in\Br(Y)_\ell$ be the element constructed from $\varphi$ as in Section~\ref{sec:prelim_eval}. By construction, and Lemma~\ref{lem:varphigen}, $\cB$ generates $\Br(Y)_\ell/\Br_1(Y)_\ell=\Br(Y)_\ell/\Br(\bbQ)_\ell$. Therefore, it suffices to prove Theorem~\ref{thm:mainWA} with $\cA=\cB$. By continuity, we may assume that $P$ and $Q$ are not $2$-torsion points (replacing them with nearby points if necessary).
Let $[P,Q]\in Y(\bbQ_\ell)$ denote the point of $Y$ coming from $(P,Q)$. Then~\eqref{eq:pairingcup} shows that
\[0\neq\chi_P\cup\varphi_*(\chi_Q)=\cB([P,Q])\in\Br(\bbQ_\ell)_\ell\cong \ell^{-1}\bbZ/\bbZ.\]
Furthermore, for all $m\in\bbZ$ such that $mP\notin E_2$, we have 
\[\cB([mP,Q])=\chi_{mP}\cup\varphi_*(\chi_Q)=m(\chi_P\cup\varphi_*(\chi_Q)).\]
Therefore, taking scalar multiples of $P$ gives the desired surjectivity. (Again, we can always substitute a sufficiently close point to avoid any issues with $2$-torsion points.)

Finally, it is well known and easy to verify that having a non-constant evaluation map $\ev_{\cB,\ell}: Y(\bbQ_\ell)\to \Br(\bbQ_\ell)_\ell$ implies the existence of an adelic point on $Y$ that does not pair to zero with $\cB$ under the Brauer--Manin pairing~\eqref{eq:BM}. 
\end{proof}

Therefore, our task is to find $P\in E(\bbQ_\ell)$ and $Q\in E'(\bbQ_\ell)$ such that 
$\chi_P\cup\varphi_*(\chi_Q)\neq 0$, as in Proposition~\ref{prop:suffice}. We begin by treating the easier case where the order $\ell$ of our Brauer group element splits in the CM field $K$.

\subsection{The case of $\ell$ split in $K/\bbQ$}\label{sec:split}
This section mimics~\cite[\S 5.3]{I-S}. Let $\ell$ be an odd prime number that splits in $K/\bbQ$, so $\ell=\lambda\bar{\lambda}$ for some $\lambda\in \cO_K$. This splitting will allow us to replace the cup-product pairing that gives the evaluation map (see~\eqref{eq:pairingcup}) with a non-degenerate pairing $\mathrm{H}^1(\bbQ_\ell, E_{\lambda})\times \mathrm{H}^1(\bbQ_\ell, E'_{\lambda})\rightarrow \Br(\bbQ_\ell)_\ell$ (see~\eqref{eq:pairing2} below). The proof of Theorem~\ref{thm:mainWA} in this setting will then come down to showing that the images of $E(\bbQ_\ell)$ and $E'(\bbQ_\ell)$ in $\mathrm{H}^1(\bbQ_\ell, E_{\lambda})$ and $\mathrm{H}^1(\bbQ_\ell, E'_{\lambda})$, respectively, are sufficiently large.

Choose an embedding of $K$ into $\bbQ_\ell$ such that $\lambda$ is a uniformiser of $\bbZ_\ell$ and $\bar{\lambda}\in\bbZ_\ell^\times$.
Now $E_\ell=E_\lambda\oplus E_{\bar{\lambda}}$ as $\Gamma_{\bbQ_\ell}$-modules and therefore $\mathrm{H}^1(\bbQ_\ell, E_\ell)=\mathrm{H}^1(\bbQ_\ell, E_\lambda)\oplus \mathrm{H}^1(\bbQ_\ell, E_{\bar{\lambda}})$. Since the restriction of the skew-symmetric Weil pairing to each of the one-dimensional $\bbF_\ell$-subspaces $E_\lambda$ and $E_{\bar{\lambda}}$
is trivial, each of the subspaces $\mathrm{H}^1(\bbQ_\ell, E_\lambda)$ and $\mathrm{H}^1(\bbQ_\ell, E_{\bar{\lambda}})$ is isotropic for the pairing
\begin{equation*}
\label{pairing0}
\cup: \mathrm{H}^1(\bbQ_\ell, E_\ell)\times \mathrm{H}^1(\bbQ_\ell,E_\ell)\rightarrow \Br(\bbQ_\ell)_\ell\cong\ell^{-1}\bbZ/\bbZ
\end{equation*}
described in~\eqref{eq:cup}. By the non-degeneracy of the cup product,
these subspaces are maximal isotropic subspaces of $\mathrm{H}^1(\bbQ_\ell,E_\ell)$, each of dimension
$\frac{1}{2} \dim\mathrm{H}^1(\bbQ_\ell,E_\ell)$. Thus,~\eqref{eq:cup} induces a non-degenerate pairing
\begin{equation}
\label{pairing1}
\cup: \mathrm{H}^1(\bbQ_\ell, E_\lambda)\times\mathrm{H}^1(\bbQ_\ell, E_{\bar{\lambda}})\rightarrow \Br(\bbQ_\ell)_\ell\cong\ell^{-1}\bbZ/\bbZ.
\end{equation}

Recall that we have
\[0\neq\varphi\in\Hom_{\Gamma_\bbQ}(E'_\ell, E_\ell)^{-}=\{\psi\in\Hom_{\Gamma_\bbQ}(E'_\ell, E_\ell)\mid \psi \gamma=\bar{\gamma}\psi\ \forall\ \gamma\in\cO_K\}.\]
Then $\varphi=\varphi'+\varphi''$ where $\varphi'$ is an isomorphism of $\Gamma_{\bbQ_\ell}$-modules $E'_{\lambda}\rightarrow E_{\bar{\lambda}}$ and $\varphi''$ is an isomorphism of $\Gamma_{\bbQ_\ell}$-modules $E'_{\bar{\lambda}}\rightarrow E_{\lambda}$. 
Consequently, the induced isomorphism $\varphi_*:\mathrm{H}^1(\bbQ_\ell,E'_\ell)\rightarrow \mathrm{H}^1(\bbQ_\ell, E_\ell)$ is a sum of $\varphi'_*: \mathrm{H}^1(\bbQ_\ell, E'_{\lambda})\rightarrow \mathrm{H}^1(\bbQ_\ell,E_{\bar{\lambda}})$ and $\varphi''_*: \mathrm{H}^1(\bbQ_\ell, E'_{\bar{\lambda}})\rightarrow \mathrm{H}^1(\bbQ_\ell,E_{\lambda})$. In conclusion, \eqref{pairing1} together with $\varphi'_*$ induces a non-degenerate pairing
\begin{equation}
\label{eq:pairing2}
\mathrm{H}^1(\bbQ_\ell, E_{\lambda})\times \mathrm{H}^1(\bbQ_\ell, E'_{\lambda})\rightarrow \Br(\bbQ_\ell)_\ell\cong \ell^{-1}\bbZ/\bbZ.
\end{equation}

We will use the non-degeneracy of the pairing~\eqref{eq:pairing2} to prove Theorem~\ref{thm:mainWA} via Proposition~\ref{prop:suffice} in the case where $\ell$ splits in $K/\bbQ$. For this to work, we will need to show that the images of $E(\bbQ_\ell)$ and $E'(\bbQ_\ell)$ in $\mathrm{H}^1(\bbQ_\ell, E_{\lambda})$ and $\mathrm{H}^1(\bbQ_\ell, E'_{\lambda})$, respectively, are sufficiently large. 
We will frequently use that $\mathrm{H}^1(\bbQ_\ell, E_{\lambda})$ and $E(\bbQ_\ell)/\ell$ are both maximal isotropic subspaces of $\mathrm{H}^1(\bbQ_\ell, E_\ell)$. Moreover,
\[E(\bbQ_\ell)/\lambda\subset \mathrm{H}^1(\bbQ_\ell, E_{\lambda})\]
and $E(\bbQ_\ell)/\ell=E(\bbQ_\ell)/\lambda\oplus E(\bbQ_\ell)/\bar{\lambda}$.

We need to treat four cases as delineated in Lemma~\ref{lem:varphigen}: the CM field $K$ is one of $\bbQ(\zeta_3),\bbQ(i), \bbQ(\sqrt{-2}), \bbQ(\sqrt{-11})$. 

We begin with the case $K=\bbQ(\sqrt{-2})$. In this case, Theorem~\ref{thm:mainquad} shows that $\ell=3$ and $Y=\Kum(E\times E^a)$ where $E/\bbQ$ is the elliptic curve with affine equation $y^2=x^3+4x^2+2x$, $a\in\{-3,6\}$ and $E^a$ denotes the quadratic twist of $E$ by $a$.

\begin{proposition}\label{prop:maxa}
Let $E/\bbQ$ have affine equation $y^2=x^3+4x^2+2x$ and let $E^a$ denote its quadratic twist by $a\in\bbQ^\times$. Choose an embedding of $\bbQ(\sqrt{-2})$ in $\bbQ_3$ such that $\sqrt{-2}\equiv 1\bmod{3}$, whereby $\lambda=1-\sqrt{-2}$ is a uniformiser for $\bbZ_3$ and $\bar{\lambda}=1+\sqrt{-2}\in\bbZ_3^\times$. Then  
 \[E(\bbQ_3)/\lambda\cong E(\bbQ_3)/\bar{\lambda}\cong\bbZ/3\bbZ\]
and for $a\in\{-3,6\}$ we have $E^a(\bbQ_3)/\bar\lambda=0$ and
 \[\mathrm{H}^1(\bbQ_3, E^a_{\lambda})=E^a(\bbQ_3)/\lambda=E^a(\bbQ_3)/3\cong (\bbZ/3\bbZ)^2. \]
\end{proposition}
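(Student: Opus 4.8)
The plan is to compute $E(\bbQ_3)/\lambda$ and $E(\bbQ_3)/\bar\lambda$ (and the corresponding quotients for $E^{-3}$) from the structure of the local points as a module over $\cO_K\otimes_\bbZ\bbZ_3$. First, since $6/(-3)=-2$ is a square in $\bbQ_3$ (a unit congruent to $1$ modulo $3$), the curves $E^6$ and $E^{-3}$ are isomorphic over $\bbQ_3$, so it suffices to treat $a=-3$; moreover $-3\notin\bbQ_3^{\times 2}$, so $E^{-3}$ is a genuine, ramified quadratic twist over $\bbQ_3$. As $-2\in\bbQ_3^{\times 2}$, the field $K$ embeds in $\bbQ_3$ and the complex multiplication by $\cO_K$ is defined over $\bbQ_3$; we have $3=\lambda\bar\lambda$ with $v_3(\lambda)=1$, $v_3(\bar\lambda)=0$, so $\cO_K\otimes\bbZ_3\cong\bbZ_3\times\bbZ_3$, the two factors indexed by $\lambda$ and $\bar\lambda$. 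For any elliptic curve $A/\bbQ_3$ with this CM, the $3$-adic completion $\widehat{A(\bbQ_3)}:=\varprojlim_n A(\bbQ_3)/3^n$ is a finitely generated $\cO_K\otimes\bbZ_3$-module, hence splits canonically as $\widehat{A(\bbQ_3)}_\lambda\oplus\widehat{A(\bbQ_3)}_{\bar\lambda}$, and $A(\bbQ_3)/\lambda=\widehat{A(\bbQ_3)}_\lambda/\lambda$, $A(\bbQ_3)/\bar\lambda=\widehat{A(\bbQ_3)}_{\bar\lambda}/\bar\lambda$. So the task is to identify these two summands for $A=E$ and $A=E^{-3}$.

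For $A=E$: the discriminant of $y^2=x^3+4x^2+2x$ is $2^9$, so $E$ has good reduction at $3$; a direct count gives $\tilde E(\bbF_3)\cong\bbZ/6\bbZ$, and the kernel of reduction $E_1(\bbQ_3)$ is isomorphic to $\bbZ_3$ (as $3\ge 3$). The $3$-division polynomial of $E$ has a root $x_0\in\bbZ_3^\times$ with $x_0\equiv 1\pmod 3$, and $f(x_0)\equiv f(1)=7\equiv 1\pmod 3$ is a square in $\bbZ_3^\times$, so $E(\bbQ_3)$ contains a point of order $3$; together with $0\to E_1(\bbQ_3)\to E(\bbQ_3)\to\tilde E(\bbF_3)\to 0$ this forces $E(\bbQ_3)\cong\bbZ_3\times\bbZ/6\bbZ$, hence $\widehat{E(\bbQ_3)}\cong\bbZ_3\oplus\bbZ/3\bbZ$. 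Now $E_1(\bbQ_3)$ is $\cO_K$-stable and torsion-free, so it is the $\mathfrak p_0$-part for one prime $\mathfrak p_0\mid 3$, and its reduction is the (connected) formal group of $\tilde E$, so $\tilde E[\mathfrak p_0]$ is the connected part of $\tilde E[3]$. On the other hand $E(\bbQ_3)_{\mathrm{tors}}[3]\cong\bbZ/3\bbZ$ maps injectively (reduction is injective on torsion) into $\tilde E(\bbF_3)[3]$, which lies in the \emph{étale} part $\tilde E[\bar{\mathfrak p_0}]$ of $\tilde E[3]$ (the connected part, being infinitesimal over $\bbF_3$, has no nonzero $\bbF_3$-points). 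Thus the $\bbZ_3$-summand and the $\bbZ/3\bbZ$-summand of $\widehat{E(\bbQ_3)}$ sit at the two \emph{different} primes, so for each of $\lambda,\bar\lambda$ exactly one summand survives after dividing, giving $E(\bbQ_3)/\lambda\cong E(\bbQ_3)/\bar\lambda\cong\bbZ/3\bbZ$.

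For $A=E^{-3}$: its minimal model $y^2=x^3-12x^2+18x$ at $3$ has $v_3(\Delta)=6$, $v_3(c_4)=2$, $v_3(j)=0$, so $E^{-3}$ has additive, potentially good reduction at $3$ (Kodaira type $I_0^*$, being a ramified quadratic twist of a curve with good reduction); its component group is killed by a power of $2$, $E^{-3}_1(\bbQ_3)\cong\bbZ_3$, and $E^{-3}_0(\bbQ_3)/E^{-3}_1(\bbQ_3)\cong\mathbb{G}_a(\bbF_3)\cong\bbZ/3\bbZ$. Again the $3$-division polynomial has a root in $\bbZ_3^\times$ reducing to $x\equiv 1$ at which the right-hand side is a square, so $E^{-3}(\bbQ_3)$ contains a point $P$ of order $3$, whence $\widehat{E^{-3}(\bbQ_3)}\cong\bbZ_3\oplus\bbZ/3\bbZ$. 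The crux is that here both summands sit at the \emph{same} prime. Indeed, $E^{-3}$ becomes isomorphic to $E$ over the ring of integers of $\bbQ_3(\sqrt{-3})$, so its formal group is supported at the same prime $\mathfrak p_0$ as for $E$; and the special fibre of the formal group of $E^{-3}$ over $\bbZ_3$ is the formal completion of $\mathbb{G}_a$ (the identity component of the special fibre) at the origin, carrying the same $\cO_K$-action as $\mathbb{G}_a(\bbF_3)$, so the $\cO_K$-action on $E^{-3}_0(\bbQ_3)/E^{-3}_1(\bbQ_3)$ --- and hence on its subgroup $E^{-3}(\bbQ_3)_{\mathrm{tors}}[3]=\langle P\rangle$ --- is also supported at $\mathfrak p_0$. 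A short computation with the explicit formula for $[\sqrt{-2}]$ (obtained by composing the $2$-isogeny killing $(0,0)$ with the twisting isomorphism to the Weierstrass model of $E^{-3}$) shows that $P$ is fixed by $[\sqrt{-2}]$, i.e.\ $P\in E^{-3}[\lambda]$, so $\mathfrak p_0=(\lambda)$. Therefore $\widehat{E^{-3}(\bbQ_3)}=\widehat{E^{-3}(\bbQ_3)}_\lambda\cong\bbZ_3\oplus\bbZ/3\bbZ$ and $\widehat{E^{-3}(\bbQ_3)}_{\bar\lambda}=0$, which gives $E^{-3}(\bbQ_3)/\bar\lambda=0$ and $E^{-3}(\bbQ_3)/\lambda=(\bbZ/3\bbZ)^2=E^{-3}(\bbQ_3)/3$.

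Finally, $\mathrm{H}^1(\bbQ_3,E^{-3}_\lambda)=E^{-3}(\bbQ_3)/\lambda$: the Kummer sequence gives $0\to E^{-3}(\bbQ_3)/\lambda\to\mathrm{H}^1(\bbQ_3,E^{-3}_\lambda)\to\mathrm{H}^1(\bbQ_3,E^{-3})[\lambda]\to 0$, and Tate local duality --- using the Weil pairing $E^{-3}_\lambda\times E^{-3}_{\bar\lambda}\to\mu_3$ and the fact that the Rosati involution is complex conjugation on $\cO_K$ --- identifies $\mathrm{H}^1(\bbQ_3,E^{-3})[\lambda]$ with the Pontryagin dual of $E^{-3}(\bbQ_3)/\bar\lambda=0$; alternatively one compares orders via the local Euler characteristic formula applied to the module $E^{-3}_\lambda$ of order $3$. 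The main obstacle is the claim, for $E^{-3}$, that the free and torsion parts of $\widehat{E^{-3}(\bbQ_3)}$ lie at the same prime --- in contrast with the good-reduction curve $E$, where they lie at different primes --- which requires tracking the $\cO_K$-action through the Néron model and its formal completion (or, concretely, computing $[\sqrt{-2}]$ on the explicit $3$-torsion point together with the comparison of formal groups over $\bbQ_3(\sqrt{-3})$); the accompanying sign bookkeeping needed to decide which of $\lambda,\bar\lambda$ occurs also requires care.
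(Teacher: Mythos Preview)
Your proof is correct but takes a somewhat different route from the paper's. The paper proceeds by direct computation: for $E$ it explicitly checks that $\lambda$ acts as an automorphism on $\tilde E(\bbF_3)$ (listing the six points) and uses the formal logarithm on $E_1(\bbQ_3)$; for $E^{-3}$ it checks by hand that $\sqrt{-2}$ acts as the identity on $\tilde E^{-3}_{\mathrm{ns}}(\bbF_3)=\{\cO,(1,\pm 1)\}$, uses the formal logarithm on $E^{-3}_1(\bbQ_3)$, and then invokes a result of Pannekoek to identify $E^{-3}_0(\bbQ_3)\cong 3\bbZ_3\times\bbZ/3\bbZ$. The final $\mathrm{H}^1$-statement is obtained by observing that $E^{-3}(\bbQ_3)/\lambda\subset\mathrm{H}^1(\bbQ_3,E^{-3}_\lambda)$ and both are maximal isotropic in $\mathrm{H}^1(\bbQ_3,E^{-3}_3)$ of the same dimension.

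Your approach packages the same computations in the language of the $\cO_K\otimes\bbZ_3$-module decomposition and the connected/\'etale dichotomy for the $3$-divisible group; you finish with Tate duality rather than the isotropy argument (these are equivalent). Two remarks: first, the detour through $\bbQ_3(\sqrt{-3})$ to compare formal groups is unnecessary, since the formal logarithm on $E^{-3}_1(\bbQ_3)$ already shows directly that $\sqrt{-2}$ acts by multiplication by its image in $\bbZ_3$, placing $E^{-3}_1$ at $(\lambda)$; second, your claim that the $\cO_K$-action on $\tilde E^{-3}_{\mathrm{ns}}(\bbF_3)$ agrees with that on the reduction of the formal group is justified, but the paper's one-line verification that $[\sqrt{-2}]$ fixes $(1,\pm 1)$ is more transparent here. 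Your framework has the advantage of making clear \emph{why} the two summands land at different primes for $E$ (good ordinary reduction: formal group connected, torsion \'etale) but at the same prime for $E^{-3}$ (additive reduction: both pieces come from the formal/unipotent part), which the paper's computation establishes without highlighting.
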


\begin{proof}
First, we prove the statement for $E$. This elliptic curve has good reduction at $3$ and we have 
\[E(\bbQ_3)/E_1(\bbQ_3)\cong \tilde{E}(\bbF_3)=\{\cO, (0,0), (1,1), (1,-1), (-1,1), (-1,-1)\}.\]
Let $\hat{E}$ denote the formal group associated to $E$. We have isomorphisms of topological groups
\begin{equation}\label{eq:E1log}
E_1(\bbQ_3)\xrightarrow{(x,y)\mapsto \frac{-x}{y}} \hat{E}(3\bbZ_3)\xrightarrow{\log} 3\bbZ_3
\end{equation}
where $\log$ denotes the formal logarithm, which is given by a power series $T+\sum_{n\geq 2}\frac{b_n}{n}T^n$ for some $b_n\in\bbZ$, see~\cite[Proposition~IV.5.5]{Silverman}. These isomorphisms respect the action of $\sqrt{-2}$.
This means that $\bar{\lambda}=1+\sqrt{-2}$ acts as a multiplication by a unit in $\bbZ_3$ and $\lambda=1-\sqrt{-2}$ acts as multiplication by a uniformiser and hence 
\begin{equation}\label{E-2,1}
E_1(\bbQ_3)/\lambda=E_1(\bbQ_3)/3\cong\bbZ/3.
\end{equation}
Now one checks that $\lambda$ induces an automorphism of $E(\bbQ_3)/E_1(\bbQ_3)\cong\tilde{E}(\bbF_3)$. Thus, 
\begin{equation}\label{E-2,3}
E(\bbQ_3)=\lambda E(\bbQ_3)+E_1(\bbQ_3)
\end{equation} and 
\[E(\bbQ_3)/\lambda=E_1(\bbQ_3)/\lambda\cong \bbZ/3\]
by~\eqref{E-2,1}. Furthermore, \eqref{E-2,3} gives 
\[\bar\lambda E(\bbQ_3)=3E(\bbQ_3)+\bar\lambda E_1(\bbQ_3)=3E(\bbQ_3)+E_1(\bbQ_3)\]
since $\bar{\lambda}$ acts as an automorphism on $E_1(\bbQ_3)$ by our discussion above. Therefore,
\[E(\bbQ_3)/\bar\lambda=E(\bbQ_3)/(3E(\bbQ_3)+E_1(\bbQ_3))\cong \tilde{E}(\bbF_3)/3\cong \bbZ/3.\]

Now we deal with $E^{a}$ for $a\in\{-3,6\}$. In fact, since $-2\in\bbQ_3^{\times 2}$, $E^{-3}$ and $E^6$ are isomorphic over $\bbQ_3$ and we may take $a=-3$ in what follows.
The elliptic curve $E^{-3}$ has additive reduction at $3$ and the Tamagawa number $|E^{-3}(\bbQ_3)/E^{-3}_0(\bbQ_3)|$ is $2$, so we can replace $E^{-3}(\bbQ_3)$ by $E^{-3}_0(\bbQ_3)$ in our calculations. 
As above, we find that
\begin{equation}\label{E-2,a}
E^{-3}_1(\bbQ_3)/\lambda=E^{-3}_1(\bbQ_3)/3\cong\bbZ/3
\end{equation}
and 
\begin{equation}\label{E-2,a0}
E^{-3}_1(\bbQ_3)/\bar\lambda=0.
\end{equation}
Furthermore, $E^{-3}_0(\bbQ_3)/E^{-3}_1(\bbQ_3)\cong \tilde{E}^{-3}_{\text {ns}}(\bbF_3)=\{\cO,(1,1), (1,-1)\}$ and $\sqrt{-2}$ acts as the identity on $\tilde{E}^{-3}_{\text {ns}}(\bbF_3)$, whereby
\begin{equation}\label{E-2,asum}
E^{-3}_0(\bbQ_3)=\bar\lambda E^{-3}_0(\bbQ_3)+E^{-3}_1(\bbQ_3)
\end{equation}
and hence
\[E^{-3}_0(\bbQ_3)/\bar\lambda=E^{-3}_1(\bbQ_3)/\bar\lambda=0\]
by~\eqref{E-2,a0}. Moreover, \eqref{E-2,asum} and \eqref{E-2,a} give
\[\lambda E^{-3}_0(\bbQ_3)=3E^{-3}_0(\bbQ_3)+\lambda E^{-3}_1(\bbQ_3)=3E^{-3}_0(\bbQ_3).\]
Now~\cite[Theorem~1]{Rene} shows that $E^{-3}_0(\bbQ_3)\cong 3\bbZ_3\times \bbZ/3\bbZ$ and hence
\[E^{-3}_0(\bbQ_3)/\lambda=E^{-3}_0(\bbQ_3)/3\cong(\bbZ/3\bbZ)^2. \]
Now since $E^{-3}(\bbQ_3)/3=E^{-3}(\bbQ_3)/\lambda\subset \mathrm{H}^1(\bbQ_3, E^{-3}_{\lambda})$, and $E^{-3}(\bbQ_3)/3$ and $\mathrm{H}^1(\bbQ_3, E^{-3}_{\lambda})$ are both maximal isotropic subspaces of $\mathrm{H}^1(\bbQ_3, E^{-3}_3)$, they must be equal.
\end{proof}

\begin{corollary}\label{cor:surj-2}
Let $E/\bbQ$ have affine equation $y^2=x^3+4x^2+2x$ and let $E^a$ denote its quadratic twist by $a\in\bbQ^\times$. Then for $a\in\{-3,6\}$, there exist $P\in E(\bbQ_3)$ and $Q\in E^a(\bbQ_3)$ such that $\chi_P\cup\varphi_*(\chi_Q)$ is non-zero.
\end{corollary}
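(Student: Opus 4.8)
The plan is to deduce Corollary~\ref{cor:surj-2} from Proposition~\ref{prop:maxa} together with the isotropy observations and the non-degeneracy of the pairing~\eqref{eq:pairing2} recorded above. Throughout, $\ell=3$, $K=\bbQ(\sqrt{-2})$, $3\cO_K=\lambda\bar\lambda$ with $\lambda,\bar\lambda$ chosen as in Proposition~\ref{prop:maxa}, and $E'=E^a$ with $a\in\{-3,6\}$; recall from Section~\ref{sec:split} that $E_3=E_\lambda\oplus E_{\bar\lambda}$ and $E'_3=E'_\lambda\oplus E'_{\bar\lambda}$ as $\Gamma_{\bbQ_3}$-modules, that the Kummer maps respect these decompositions (so $E(\bbQ_3)/\lambda\hookrightarrow\mathrm{H}^1(\bbQ_3,E_\lambda)$, $E'(\bbQ_3)/\bar\lambda\hookrightarrow\mathrm{H}^1(\bbQ_3,E'_{\bar\lambda})$, and so on), and that $0\neq\varphi\in\Hom_{\Gamma_\bbQ}(E'_\ell,E_\ell)^-$ decomposes as $\varphi=\varphi'+\varphi''$ with $\varphi'\colon E'_\lambda\to E_{\bar\lambda}$ and $\varphi''\colon E'_{\bar\lambda}\to E_\lambda$ isomorphisms.

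Given $P\in E(\bbQ_3)$ and $Q\in E'(\bbQ_3)$, I would write $\chi_P=\chi_P^\lambda+\chi_P^{\bar\lambda}$ and $\chi_Q=\chi_Q^\lambda+\chi_Q^{\bar\lambda}$ according to these direct sums, so that $\chi_P^\lambda$ is the image of $P$ in $E(\bbQ_3)/\lambda\subset\mathrm{H}^1(\bbQ_3,E_\lambda)$ and similarly for the other components. By Proposition~\ref{prop:maxa} we have $E'(\bbQ_3)/\bar\lambda=0$, hence $\chi_Q^{\bar\lambda}=0$ for every $Q\in E'(\bbQ_3)$, and therefore $\varphi_*(\chi_Q)=\varphi'_*(\chi_Q^\lambda)\in\mathrm{H}^1(\bbQ_3,E_{\bar\lambda})$. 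Since $\mathrm{H}^1(\bbQ_3,E_{\bar\lambda})$ is isotropic for the cup product~\eqref{eq:cup}, the contribution $\chi_P^{\bar\lambda}\cup\varphi'_*(\chi_Q^\lambda)$ vanishes and we obtain
\[\chi_P\cup\varphi_*(\chi_Q)=\chi_P^\lambda\cup\varphi'_*(\chi_Q^\lambda),\]
which is exactly the value of the pairing~\eqref{eq:pairing2} on the pair $(\chi_P^\lambda,\chi_Q^\lambda)$.

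It then remains to see this pairing value can be made non-zero. Proposition~\ref{prop:maxa} gives $E'(\bbQ_3)/\lambda=\mathrm{H}^1(\bbQ_3,E'_\lambda)$, so as $Q$ ranges over $E'(\bbQ_3)$ the component $\chi_Q^\lambda$ ranges over all of $\mathrm{H}^1(\bbQ_3,E'_\lambda)$, and hence $\varphi'_*(\chi_Q^\lambda)$ ranges over all of $\mathrm{H}^1(\bbQ_3,E_{\bar\lambda})$ because $\varphi'_*$ is an isomorphism. Since $E(\bbQ_3)/\lambda\cong\bbZ/3\bbZ$ is non-zero, I may choose $P\in E(\bbQ_3)$ with $\chi_P^\lambda\neq 0$; by the non-degeneracy of~\eqref{pairing1} (equivalently of~\eqref{eq:pairing2}) there is then some element of $\mathrm{H}^1(\bbQ_3,E_{\bar\lambda})$ pairing non-trivially with $\chi_P^\lambda$, i.e.\ some $Q\in E'(\bbQ_3)$ with $\chi_P^\lambda\cup\varphi'_*(\chi_Q^\lambda)\neq 0$, and the displayed identity finishes the argument. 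The arithmetic input is entirely contained in Proposition~\ref{prop:maxa}; the only delicate points are the bookkeeping with the $\lambda/\bar\lambda$-decomposition — in particular that the Kummer maps split compatibly with it and that the $-$-condition on $\varphi$ interchanges the $\lambda$- and $\bar\lambda$-parts — and the role of the vanishing $E'(\bbQ_3)/\bar\lambda=0$, which is what makes the unwanted cross term disappear cleanly.
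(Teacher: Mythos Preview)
Your proof is correct and follows exactly the approach the paper intends: the paper's own proof is the one-line remark that the corollary follows from the non-degeneracy of~\eqref{eq:pairing2} together with Proposition~\ref{prop:maxa}, and you have simply unpacked that sentence carefully, making the $\lambda/\bar\lambda$-bookkeeping and the use of $E^a(\bbQ_3)/\bar\lambda=0$ explicit.
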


\begin{proof}
This follows from the non-degeneracy of~\eqref{eq:pairing2}, with $\ell=3$ and $\lambda=1-\sqrt{-2}$, and Proposition~\ref{prop:maxa}.
\end{proof}

Next, we treat the case where $K=\bbQ(\sqrt{-11})$. In this case, Theorem~\ref{thm:mainquad} shows that $\ell=3$ and $Y=\Kum(E\times E^a)$ where $E/\bbQ$ has affine equation $y^2=x^3-2^5\cdot3^3\cdot11x+2^4\cdot3^3\cdot7\cdot 11^2$, $a\in\{-3,33\}$ and $E^a$ denotes the quadratic twist of $E$ by $a$. A minimal Weierstrass equation for $E$ is given by $y^2+y=x^3-x^2-7x+10$, see~\cite{LMFDB}.

\begin{proposition}\label{prop:maxa-11}
Let $E/\bbQ$ have affine equation $y^2+y=x^3-x^2-7x+10$ and let $E^a$ denote its quadratic twist by $a\in\bbQ^\times$. Choose an embedding of $\bbQ(\sqrt{-11})$ in $\bbQ_3$ such that $\sqrt{-11}\equiv 1\bmod{3}$, whereby $\lambda=(1-\sqrt{-11})/2$ is a uniformiser for $\bbZ_3$ and $\bar{\lambda}=(1+\sqrt{-11})/2\in\bbZ_3^\times$. Then for $a\in \{1,-3,33\}$ we have $E^a(\bbQ_3)/\bar{\lambda}=0$ and
 \[\mathrm{H}^1(\bbQ_3, E^a_{\lambda})=E^a(\bbQ_3)/\lambda=E^a(\bbQ_3)/3 \cong \bbZ/3\bbZ.\]
\end{proposition}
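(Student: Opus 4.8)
The plan is to mimic the proof of Proposition~\ref{prop:maxa}, adapted to the curve with LMFDB label $121.b2$ and to the prime $3$, which splits in $K=\bbQ(\sqrt{-11})$ since $\left(\tfrac{-11}{3}\right)=1$. First I would cut down the number of cases: as $33=-3\cdot(-11)$ and $-11\equiv 1\bmod 3$ is a square in $\bbZ_3^\times$, the twists $E^{-3}$ and $E^{33}$ become isomorphic over $\bbQ_3$, so it suffices to treat $a\in\{1,-3\}$. In both cases the method is the one used for Proposition~\ref{prop:maxa}: describe $E^a(\bbQ_3)$ (or the finite-index subgroup $E^a_0(\bbQ_3)$, when the reduction is bad, after discarding a component group of order prime to $3$) via the filtration $E^a(\bbQ_3)\supseteq E^a_1(\bbQ_3)$, using that the formal logarithm gives an $\cO_K$-equivariant isomorphism $E^a_1(\bbQ_3)\cong 3\bbZ_3$ by~\cite[Proposition~IV.5.5]{Silverman}; then apply CRT for $\cO_K/3\cong\cO_K/\lambda\times\cO_K/\bar\lambda$ to the $\cO_K$-module $E^a(\bbQ_3)$, so that $E^a(\bbQ_3)/3\cong E^a(\bbQ_3)/\lambda\oplus E^a(\bbQ_3)/\bar\lambda$, and analyse the action of $\lambda$ and $\bar\lambda$ on the graded pieces of the filtration.

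For $a=1$ the curve $E$ has conductor $11^2$, hence good reduction at $3$; a one-line count gives $\#\tilde{E}(\bbF_3)=5$, so the reduction is ordinary and $E(\bbQ_3)/E_1(\bbQ_3)\cong\tilde{E}(\bbF_3)\cong\bbZ/5\bbZ$. Since $5$ is prime to $3$, the extension $0\to E_1(\bbQ_3)\to E(\bbQ_3)\to\tilde{E}(\bbF_3)\to 0$ of abelian groups splits, $E(\bbQ_3)/3\cong E_1(\bbQ_3)/3\cong\bbZ/3\bbZ$, and likewise $E(\bbQ_3)/\lambda\cong E_1(\bbQ_3)/\lambda$ and $E(\bbQ_3)/\bar\lambda\cong E_1(\bbQ_3)/\bar\lambda$. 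It then remains to show that on the rank-one $\bbZ_3$-module $E_1(\bbQ_3)$ the element $\bar\lambda$ acts as a unit and $\lambda$ as a uniformiser; this gives $E(\bbQ_3)/\bar\lambda=0$ and $E(\bbQ_3)/\lambda\cong\bbZ/3\bbZ$. Since $E(\bbQ_3)/\lambda=E(\bbQ_3)/3$ is then maximal isotropic in $\mathrm{H}^1(\bbQ_3,E_3)$ and contained in the maximal isotropic subspace $\mathrm{H}^1(\bbQ_3,E_\lambda)$ (cf.\ Section~\ref{sec:split}), the two coincide, exactly as at the end of the proof of Proposition~\ref{prop:maxa}.

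For $a=-3$ (and hence $a=33$), twisting the good-reduction curve $E$ by $-3$ yields additive reduction at $3$ with Tamagawa number dividing $4$, so one may replace $E^{-3}(\bbQ_3)$ by $E^{-3}_0(\bbQ_3)$, which sits in $0\to E^{-3}_1(\bbQ_3)\to E^{-3}_0(\bbQ_3)\to\tilde{E}^{-3}_{\mathrm{ns}}(\bbF_3)\cong\bbG_a(\bbF_3)\to 0$. Here I would invoke the explicit description of $E^{-3}_0(\bbQ_3)$ in~\cite[Theorem~1]{Rene} to show that, in contrast with the $\bbQ(\sqrt{-2})$ situation of Proposition~\ref{prop:maxa}, $E^{-3}_0(\bbQ_3)$ has no $3$-torsion, so again $E^{-3}(\bbQ_3)/3\cong\bbZ/3\bbZ$; then, as above, pinning down the action of $\lambda$ and $\bar\lambda$ on $E^{-3}_1(\bbQ_3)$ (noting that $\sqrt{-11}$ acts on $\tilde{E}^{-3}_{\mathrm{ns}}(\bbF_3)$ through its residue) yields $E^{-3}(\bbQ_3)/\bar\lambda=0$ and $E^{-3}(\bbQ_3)/\lambda\cong\bbZ/3\bbZ$, and the maximal-isotropy argument identifies this with $\mathrm{H}^1(\bbQ_3,E^{-3}_\lambda)$.

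The main obstacle is the one genuinely computational step: deciding, for each $a\in\{1,-3\}$, which of $\lambda,\bar\lambda$ acts as a uniformiser on $E^a_1(\bbQ_3)$, i.e.\ which of $E^a[\lambda^\infty]$ and $E^a[\bar\lambda^\infty]$ is the connected part of the $3$-divisible group over $\bbZ_3$. For $a=1$ this is governed by the Frobenius at $3$: since $a_3(E)=-1$ and $X^2+X+3=(X+\lambda)(X+\bar\lambda)$, identifying the unit-root eigenspace with the \'etale part (compatibly with the chosen embedding $\sqrt{-11}\equiv 1\bmod 3$) fixes the answer; in practice this, and the analogous statement for $E^{-3}$, is most cleanly verified by an explicit computation with the curve $121.b2$ and the endomorphism $\tfrac{1+\sqrt{-11}}{2}$ (for instance via its action on the formal group or on the $3$-division polynomial). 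Everything else is the routine filtration bookkeeping already carried out in the proof of Proposition~\ref{prop:maxa}.
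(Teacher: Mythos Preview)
Your approach is essentially the same as the paper's: reduce to $a\in\{1,-3\}$ using $-11\in\bbQ_3^{\times 2}$; for $a=1$ use good reduction with $|\tilde{E}(\bbF_3)|=5$ to pass to $E_1(\bbQ_3)$ and the formal logarithm; for $a=-3$ discard a component group of order prime to $3$ and analyse $E^{-3}_0(\bbQ_3)$; and finish with the maximal-isotropy argument. Two points of comparison are worth noting.

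First, what you call ``the main obstacle'' --- deciding which of $\lambda,\bar\lambda$ acts as a uniformiser on $E^a_1(\bbQ_3)$ --- is handled by the paper in one line: the formal logarithm $E^a_1(\bbQ_3)\cong 3\bbZ_3$ respects complex multiplication, meaning that $\alpha\in\cO_K$ acts on $3\bbZ_3$ as multiplication by its image under the embedding $\cO_K\hookrightarrow\bbZ_3$ determined by the action on the tangent space. The paper tacitly takes the embedding specified in the statement to be this one, so $\lambda$ acts as a uniformiser by fiat and no Frobenius computation is needed. You are right that matching the stated embedding $\sqrt{-11}\equiv 1\bmod 3$ with the tangent-space embedding is a genuine (if small) verification, which the paper suppresses; your Frobenius/unit-root approach is one valid way to carry it out, though computing $[\sqrt{-11}]'(0)$ directly on the formal group would be more economical.

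Second, for $a=-3$ the paper's argument is slightly cleaner than your filtration analysis: after computing that the Tamagawa number is $1$ (via Tate's algorithm --- your claim that it ``divides $4$'' needs justification, since Kodaira type IV can give Tamagawa number $3$), the paper invokes \cite[Propositions~11 and~18]{Rene} to extend the formal logarithm to an isomorphism $E^{-3}_0(\bbQ_3)\cong\bbZ_3$, and then observes that any endomorphism of $\bbZ_3$ agreeing with multiplication by $\lambda$ (resp.\ $\bar\lambda$) on $3\bbZ_3$ must be multiplication by $\lambda$ (resp.\ $\bar\lambda$) on all of $\bbZ_3$. This bypasses the need to analyse $\tilde{E}^{-3}_{\mathrm{ns}}(\bbF_3)$ separately.
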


\begin{proof}
Standard calculations similar to those in the proof of Proposition~\ref{prop:maxa} show that 
\begin{equation*}
E(\bbQ_3)/\bar{\lambda} =0 \ \textrm{ and }\  E(\bbQ_3)/\lambda=E(\bbQ_3)/3\cong\bbZ/3.
\end{equation*}
Now since $E(\bbQ_3)/3=E(\bbQ_3)/\lambda\subset \mathrm{H}^1(\bbQ_3, E_{\lambda})$ and both are maximal isotropic subspaces of $\mathrm{H}^1(\bbQ_3, E_3)$, they must be equal. This completes the proof of the statement for $E$.

Now we deal with $E^{a}$ for $a\in\{-3,33\}$. Since $-11\in \bbQ_3^{\times 2}$, it suffices to take $a=-3$.
Using~\cite[\S4.3]{Connell} and Tate's algorithm, we find that $E^{-3}(\bbQ_3)=E^{-3}_0(\bbQ_3)$. 
Moreover, the isomorphism~\eqref{eq:E1log} respects complex multiplication. Therefore, 
\begin{equation*}
E_1^{-3}(\bbQ_3)/\bar{\lambda} =0 \ \textrm{ and }\  E_1^{-3}(\bbQ_3)/\lambda=E_1^{-3}(\bbQ_3)/3\cong\bbZ/3.
\end{equation*}
By~\cite[Definition~10 and Proposition~11]{Rene}, the map $(x,y)\mapsto -x/y$ extends to an isomorphism of topological groups $E^{-3}_0(\bbQ_3)\rightarrow \hat{E}^{-3}(\bbZ_3)$. 
Furthermore,~\cite[Proposition~18]{Rene} gives an isomorphism of topological groups $\hat{E}^{-3}(\bbZ_3)\rightarrow \bbZ_3$ extending the isomorphism $ \hat{E}^{-3}(3\bbZ_3)\rightarrow 3\bbZ_3$ given by the formal logarithm. Transporting the action of complex multiplication by $\lambda$ along these isomorphisms gives an endomorphism of $\bbZ_3$ that coincides with multiplication by $\lambda$ on $3\bbZ_3$. But this endomorphism must then be multiplication by $\lambda$. The same argument applies to $\bar{\lambda}$ and hence we have 
\begin{equation}\label{E-11,1}
E_0^{-3}(\bbQ_3)/\bar{\lambda} =0 \ \textrm{ and }\  E_0^{-3}(\bbQ_3)/\lambda=E_0^{-3}(\bbQ_3)/3\cong\bbZ/3.
\end{equation}
The usual argument about maximal isotropic subspaces of $\mathrm{H}^1(\bbQ_3, E^{-3}_3)$ completes the proof.
\end{proof}

\begin{corollary}\label{cor:surj-11}
Let $E/\bbQ$ have affine equation $y^2=x^3-x^2-7x+10$ and let $E^a$ denote its quadratic twist by $a\in\bbQ^\times$. Then for $a\in\{-3,33\}$, there exist $P\in E(\bbQ_3)$ and $Q\in E^a(\bbQ_3)$ such that $\chi_P\cup\varphi_*(\chi_Q)$ is non-zero.
\end{corollary}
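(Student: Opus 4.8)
The strategy is to reproduce, essentially word for word, the proof of Corollary~\ref{cor:surj-2}: by Theorem~\ref{thm:mainquad} we are in the situation $\ell=3$, $Y=\Kum(E\times E^a)$ with $a\in\{-3,33\}$, and the statement should follow formally from the non-degeneracy of the pairing~\eqref{eq:pairing2} established in Section~\ref{sec:split} together with the local computations of Proposition~\ref{prop:maxa-11}.

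First I would confirm that the set-up of Section~\ref{sec:split} is available, i.e.\ that $\ell=3$ splits in $K=\bbQ(\sqrt{-11})$. This is immediate, since $\left(\frac{\Delta_K}{3}\right)=\left(\frac{-11}{3}\right)=\left(\frac{1}{3}\right)=1$ and $3\nmid 11$; moreover $\lambda=(1-\sqrt{-11})/2$ and $\bar\lambda=(1+\sqrt{-11})/2$, with $\lambda\bar\lambda=3$, is precisely the splitting fixed in Proposition~\ref{prop:maxa-11}. Section~\ref{sec:split} then supplies the non-degenerate pairing
\[\mathrm{H}^1(\bbQ_3,E_\lambda)\times\mathrm{H}^1(\bbQ_3,E^a_\lambda)\longrightarrow\Br(\bbQ_3)_3\cong 3^{-1}\bbZ/\bbZ\]
of~\eqref{eq:pairing2}, built from the cup product~\eqref{eq:cup}, the Weil pairing, and the component $\varphi'$ of $\varphi$.

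Next I would feed in Proposition~\ref{prop:maxa-11}: taking $a=1$ gives $\mathrm{H}^1(\bbQ_3,E_\lambda)=E(\bbQ_3)/\lambda\cong\bbZ/3\bbZ$, and for $a\in\{-3,33\}$ it gives $\mathrm{H}^1(\bbQ_3,E^a_\lambda)=E^a(\bbQ_3)/\lambda\cong\bbZ/3\bbZ$, together with $E^a(\bbQ_3)/\bar\lambda=0$. Because $E^a(\bbQ_3)/\bar\lambda=0$, the image $\chi_Q$ of any $Q\in E^a(\bbQ_3)$ lies in the $\lambda$-summand, so $\varphi_*(\chi_Q)=\varphi'_*(\chi_Q)\in\mathrm{H}^1(\bbQ_3,E_{\bar\lambda})$; since $\mathrm{H}^1(\bbQ_3,E_{\bar\lambda})$ is isotropic for the cup product, only the $\lambda$-component of $\chi_P$ contributes to $\chi_P\cup\varphi_*(\chi_Q)$, and this cup product equals the value of~\eqref{eq:pairing2} on the pair of $\lambda$-components. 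A non-degenerate pairing between two nonzero finite groups is not identically zero, so choosing $\chi_P$ and $\chi_Q$ to be generators of $E(\bbQ_3)/\lambda$ and $E^a(\bbQ_3)/\lambda$ produces points $P\in E(\bbQ_3)$ and $Q\in E^a(\bbQ_3)$ with $\chi_P\cup\varphi_*(\chi_Q)\neq 0$, which is exactly the assertion (and precisely the hypothesis needed to apply Proposition~\ref{prop:suffice}).

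I do not expect any genuine obstacle: all the substance has already been discharged in Proposition~\ref{prop:maxa-11} (minimal Weierstrass models, Tamagawa numbers, and the structure of $E^a_0(\bbQ_3)$ via~\cite{Rene}), and the corollary is a formal consequence of it and the non-degeneracy of~\eqref{eq:pairing2}. The only point requiring a little care is bookkeeping of conventions — which of $E$, $E^a$ plays the role of ``$E$'' and which of ``$E'$'' in $Y=\Kum(E\times E')$, and hence the direction of $\varphi\in\Hom_{\Gamma_\bbQ}(E'_\ell,E_\ell)^-$ — but this is handled identically to the $\bbQ(\sqrt{-2})$ case treated in Corollary~\ref{cor:surj-2}.
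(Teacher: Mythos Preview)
Your proposal is correct and takes exactly the same approach as the paper: the paper's proof is a single sentence invoking the non-degeneracy of~\eqref{eq:pairing2} with $\ell=3$ and $\lambda=(1-\sqrt{-11})/2$ together with Proposition~\ref{prop:maxa-11}. Your write-up simply unpacks the bookkeeping (the splitting of $3$, the role of the $\lambda$- and $\bar\lambda$-components, and why only the $\lambda$-part of $\chi_P$ contributes) that the paper leaves implicit.
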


\begin{proof}
This follows from the non-degeneracy of~\eqref{eq:pairing2}, with $\ell=3$ and $\lambda=(1-\sqrt{-11})/2$, and Proposition~\ref{prop:maxa-11}.
\end{proof}

Now we treat the case where $K=\bbQ(\zeta_3)$. By Lemma~\ref{lem:varphigen}, we have $\ell\leq 7$. In this section, our focus is on the case where $\ell$ splits in $K/\bbQ$, so we take $\ell=7$. For $a\in\bbQ^\times$, let $E^a$ denote the ellliptic curve over $\bbQ$ with affine equation $y^2=x^3+a$.
Now Theorems~\ref{thm:intro57} and~\ref{thm:Br1} give $Y=\Kum(E^c\times E^d)$ where $c,d\in\bbQ^\times$ satisfy $-2^4\cdot 7^{-1}\cdot cd\in\langle -3^3\rangle\bbQ^{\times 6}$.

\begin{proposition}
\label{prop:subspace}
For any $a\in\bbQ_7^\times$, let $E$ be the elliptic curve with affine equation $y^2=x^3+a$. Choose an embedding of $\bbQ(\zeta_3)$ in $\bbQ_3$ such that $\zeta_3 \equiv 2\bmod{7}$, whereby $\lambda=1-2\zeta_3^2=3+2\zeta_3$ is a uniformiser for $\bbZ_7$ and $\bar{\lambda}=1-2\zeta_3\in\bbZ_7^\times$. 
\begin{enumerate}
\item \label{item14} If $a\in 2\cdot 7\cdot \bbQ_7^{\times 6}$ then $E^a(\bbQ_7)/\bar{\lambda}=0$ and
\[\mathrm{H}^1(\bbQ_7, E^a_{\lambda})=E^a(\bbQ_7)/\lambda=E^a(\bbQ_7)/7\cong (\bbZ/7\bbZ)^2. \]
\item\label{item-2} If $a\in -2\cdot \bbQ_7^{\times 6}$ then $E^a(\bbQ_7)/\lambda\cong E^a(\bbQ_7)/\bar{\lambda}\cong \bbZ/7\bbZ.$
\item\label{itemelse} In all other cases, $E^a(\bbQ_7)/\bar{\lambda}=0$ and
\[\mathrm{H}^1(\bbQ_7, E^a_{\lambda})=E^a(\bbQ_7)/\lambda=E^a(\bbQ_7)/7\cong \bbZ/7\bbZ. \]
\end{enumerate}
\end{proposition}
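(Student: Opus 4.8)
The plan is to compute the two finite $\bbF_7$-vector spaces $E^a(\bbQ_7)/\lambda$ and $E^a(\bbQ_7)/\bar\lambda$ directly from the structure of $E^a(\bbQ_7)$ as a module over $\cO_K=\bbZ[\zeta_3]$, and then to read off the statement about $\mathrm{H}^1(\bbQ_7,E^a_\lambda)$. Since $(x,y)\mapsto(b^2x,b^3y)$ gives $E^a\cong E^{ab^6}$ over $\bbQ_7$ and $(\bbZ_7^\times)^6=1+7\bbZ_7$, I may and do assume $a=7^iu$ with $0\le i\le 5$ and $u\in\{1,\dots,6\}$, so only $36$ cases arise.

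First I would settle the easy structural input. Because $7\equiv 1\pmod 3$ we have $\zeta_3\in\bbZ_7^\times$, and the chosen embedding has $\zeta_3\equiv 2\pmod 7$; multiplication by $\zeta_3$ on $E^a$ is $(x,y)\mapsto(\zeta_3x,y)$, which on the formal parameter $t=-x/y$ is $t\mapsto\zeta_3 t$, so via the formal logarithm $\zeta_3$ acts on $E^a_1(\bbQ_7)\cong 7\bbZ_7$ as the scalar $\zeta_3\in\bbZ_7$. Hence $\lambda=1-2\zeta_3^2$ acts on $E^a_1(\bbQ_7)$ as a uniformiser and $\bar\lambda=1-2\zeta_3$ as a unit, so $E^a_1(\bbQ_7)/\bar\lambda=0$ and $E^a_1(\bbQ_7)/\lambda\cong\bbZ/7\bbZ$. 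Likewise $E^a[7]=E^a[\lambda]\oplus E^a[\bar\lambda]$, where $\zeta_3$ acts as $2$ on $E^a[\lambda]$ and as $4$ on $E^a[\bar\lambda]$ (the two roots of $X^2+X+1$ mod $7$), and $E^a[\lambda]$ is the part meeting the formal group. Next I would run Tate's algorithm: $E^a$ has good reduction at $7$ iff $i=0$, and otherwise additive reduction (Kodaira type $\mathrm{II},\mathrm{IV},\mathrm{I}_0^*,\mathrm{IV}^*,\mathrm{II}^*$ for $i=1,\dots,5$) with Tamagawa number coprime to $7$; so $E^a(\bbQ_7)/\lambda=E^a_0(\bbQ_7)/\lambda$ and similarly for $\bar\lambda$. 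Writing $E^a_0(\bbQ_7)\cong\bbZ_7\times T$ with $T$ finite and the free part isomorphic to $E^a_1(\bbQ_7)$ (on which $\zeta_3$ acts as $\equiv 2$), the whole proposition reduces to the $7$-part of $T$, i.e.\ to $E^a(\bbQ_7)[7]=T[7]$ together with its $\zeta_3$-eigenvalue.

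I would then carry out the torsion computation case by case. For good reduction ($i=0$) this is clean: $E^a(\bbQ_7)[7]$ injects into $\tilde E^a(\bbF_7)$ and, when $7\mid\#\tilde E^a(\bbF_7)$, the étale $7$-torsion is automatically all $\bbF_7$-rational, so $E^a(\bbQ_7)[7]\neq 0$ iff $7\mid\#\tilde E^a(\bbF_7)$; a direct point count over the six unit classes shows this occurs exactly for $a\equiv -2$, i.e.\ $a\in -2\cdot\bbQ_7^{\times 6}$, and one checks on an explicit generator that $\zeta_3$ then acts as $4$, so the rational $7$-torsion lies in $E^a[\bar\lambda]$. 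This gives case~\ref{item-2} (both quotients $\cong\bbZ/7\bbZ$), while every other good-reduction $a$ has $E^a(\bbQ_7)[7]=0$, landing in case~\ref{itemelse}. The remaining, harder input is the additive case: one must pin down, among the five additive reduction types and six unit classes, for which $a$ the group $E^a(\bbQ_7)$ acquires a rational $7$-torsion point and, if so, in which eigenspace. I would do this with Rene's description of the points of the connected Néron component of a CM elliptic curve over a local field~\cite[Theorem~1]{Rene}, combined with Tate's algorithm to fix the reduction type and Tamagawa number; the outcome isolates exactly the twist $a\in 2\cdot 7\cdot\bbQ_7^{\times 6}$, for which $E^a(\bbQ_7)[7]\cong\bbZ/7\bbZ$ sits in the $\zeta_3=2$ eigenspace $E^a[\lambda]$ (so $E^a(\bbQ_7)/\lambda\cong(\bbZ/7\bbZ)^2$), every other additive $a$ having $E^a(\bbQ_7)[7]=0$. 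I expect this step --- singling out the one exceptional additive twist and its eigenspace --- to be the main obstacle; a useful cross-check is the $\lambda$-adic version of the Gr\"ossencharakter computation of Section~\ref{sec:zetagp}, which expresses rationality of $E^a[\lambda]$ (resp.\ $E^a[\bar\lambda]$) over $\bbQ_7$ as the sextic-residue condition $a\in 2\cdot 7\cdot\bbQ_7^{\times 6}$ (resp.\ $a\in-2\cdot\bbQ_7^{\times 6}$).

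Finally, the passage to $\mathrm{H}^1$ is formal. The $\lambda$-descent sequence $0\to E^a(\bbQ_7)/\lambda\to\mathrm{H}^1(\bbQ_7,E^a_\lambda)\to\mathrm{H}^1(\bbQ_7,E^a)[\lambda]\to 0$ together with local Tate duality --- under which the dual isogeny of $\lambda$ is $\bar\lambda$, since the Rosati involution on $\cO_K$ is complex conjugation --- gives $\#\mathrm{H}^1(\bbQ_7,E^a)[\lambda]=\#\bigl(E^a(\bbQ_7)/\bar\lambda\bigr)$, hence $\dim_{\bbF_7}\mathrm{H}^1(\bbQ_7,E^a_\lambda)=\dim_{\bbF_7}E^a(\bbQ_7)/\lambda+\dim_{\bbF_7}E^a(\bbQ_7)/\bar\lambda$. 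In cases~\ref{item14} and~\ref{itemelse} the torsion computation yields $E^a(\bbQ_7)/\bar\lambda=0$, so $\mathrm{H}^1(\bbQ_7,E^a_\lambda)=E^a(\bbQ_7)/\lambda$; and since $\lambda$ then acts as $7$ times a unit on $E^a(\bbQ_7)$, this equals $E^a(\bbQ_7)/7$, which finishes the proof.
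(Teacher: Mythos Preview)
Your proposal is correct and follows essentially the same route as the paper: reduce to $E^a_0(\bbQ_7)$ via the Tamagawa number, use the formal logarithm to see that $\zeta_3$ acts as the scalar $\zeta_3\equiv 2$ on $E^a_1(\bbQ_7)$, then split into good and additive reduction and use Ren\'e's results together with an explicit computation of the $\zeta_3$-eigenvalue on the finite quotient to isolate the two exceptional twists $a\in -2\cdot\bbQ_7^{\times 6}$ and $a\in 2\cdot 7\cdot\bbQ_7^{\times 6}$.

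The one genuine difference is in the passage to $\mathrm{H}^1$. The paper argues that $E^a(\bbQ_7)/7$ and $\mathrm{H}^1(\bbQ_7,E^a_\lambda)$ are both maximal isotropic subspaces of $\mathrm{H}^1(\bbQ_7,E^a_7)$ for the cup-product pairing, so once $E^a(\bbQ_7)/7=E^a(\bbQ_7)/\lambda\subset\mathrm{H}^1(\bbQ_7,E^a_\lambda)$ they must coincide. Your Tate-duality dimension count (using that $\bar\lambda$ is the Rosati dual of $\lambda$) is an equally clean alternative and has the virtue of giving the dimension of $\mathrm{H}^1(\bbQ_7,E^a_\lambda)$ in case~\ref{item-2} as well, which the paper's argument does not directly address. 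One small caution: your line ``the free part isomorphic to $E^a_1(\bbQ_7)$ (on which $\zeta_3$ acts as $\equiv 2$)'' hides exactly the step the paper singles out---in the additive case one must argue (via \cite[Propositions~11 and~18]{Rene}) that the scalar action of $\zeta_3$ on $E^a_1(\bbQ_7)\cong 7\bbZ_7$ extends to the same scalar action on $E^a_0(\bbQ_7)\cong\bbZ_7$, and it is precisely the failure of this extension for $a\equiv 14\pmod{49}$ that produces the extra $\bbZ/7\bbZ$ in case~\ref{item14}.
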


\begin{proof}
Recall that $E^a(\bbQ_7)/7$ and $\mathrm{H}^1(\bbQ_7, E^a_{\lambda})$ are both maximal isotropic subspaces of $\mathrm{H}^1(\bbQ_7,E^a_7)$, and $E^a(\bbQ_7)/\lambda\subset \mathrm{H}^1(\bbQ_7,E^a_{\lambda})$. Therefore, in cases where we show that $E^a(\bbQ_7)/7=E^a(\bbQ_7)/\lambda$, it will follow that this group is also equal to $\mathrm{H}^1(\bbQ_7, E^a_{\lambda})$.

Since the $\bbQ_7$-isomorphism class of $E^a$ only depends on the class of $a$ in $\bbQ_7^\times/\bbQ_7^{\times 6}$, we may assume that $0\leq\ord_7(a)\leq 5$. The reduction type of $E^a$ is either good or additive. 
The Tamagawa number $|E^a(\bbQ_7)/E^a_0(\bbQ_7)|$ is 
coprime to $7$, so we can replace $E^a(\bbQ_7)$ by $E^a_0(\bbQ_7)$ in our calculations. Standard calculations give 
\begin{align}\label{eq:E1-7}
\bar{\lambda}E^a_1(\bbQ_7)=E^a_1(\bbQ_7) \ \textrm{  and  }\ \lambda E^a_1(\bbQ_7)=7E^a_1(\bbQ_7).
\end{align}
If $a\equiv 14\pmod{49}$, then \cite[Proposition~18]{Rene} shows that the extension $0\rightarrow E^a_1(\bbQ_7)\rightarrow E^a_0(\bbQ_7)\rightarrow \tilde{E}^a_{\text{ns}}(\bbF_7)\rightarrow 0$ is split. Computing the action of $\zeta_3$ on $\tilde{E}^a_{\text{ns}}(\bbF_7)$ shows that it coincides with multiplication by $2$. Therefore, $\bar{\lambda}\tilde{E}^a_{\text{ns}}(\bbF_7)=\tilde{E}^a_{\text{ns}}(\bbF_7)$ and $\lambda\tilde{E}^a_{\text{ns}}(\bbF_7)=7\tilde{E}^a_{\text{ns}}(\bbF_7)=0$. When combined with ~\eqref{eq:E1-7}, this proves part~\eqref{item14} of the proposition.

In the case where $E^a$ has additive reduction and $a\not\equiv 14\pmod{49}$, we use~\cite[Definition~10, Proposition~11 and Proposition~18]{Rene} to obtain an isomorphism $E^a_0(\bbQ_7)\cong\bbZ_7$ respecting the action of $\zeta_3$, and hence show that $\bar{\lambda}E^a_0(\bbQ_7)=E^a_0(\bbQ_7)$ and $\lambda E^a_0(\bbQ_7)=7E^a_0(\bbQ_7)$, proving the proposition in this case.






Now suppose that $E^a$ has good reduction so $E^a(\bbQ_7)/E^a_1(\bbQ_7)\cong \tilde{E}^a(\bbF_7)$. Elementary calculations show that $|\tilde{E}^a(\bbF_7)|$ is coprime to $7$, unless $a\in -2\cdot\bbQ_7^{\times 6}$ when $\tilde{E}^a(\bbF_7)=\tilde{E}^{-2}(\bbF_7)\cong \bbZ/7\bbZ$. Thus,~\eqref{eq:E1-7} proves the proposition
 for $a\notin -2\cdot\bbQ_7^{\times 6}$.
 
Our final task is to prove part~\eqref{item-2}. Taking $a=-2$, we have
\[\tilde{E}^{-2}(\bbF_7)=\{\cO, (3,2), (3,-2),(-2,2),(-2,-2),(-1,2), (-1,-2)\}\]
and multiplication by $\zeta_3$ sends $(x,y)$ to $(2x,y)$, which coincides with multiplication by $4$ on $\tilde{E}^{-2}(\bbF_7)\cong\bbZ/7\bbZ$. Therefore, $\bar{\lambda}\tilde{E}^{-2}(\bbF_7)=0$ and hence $\bar{\lambda}E^{-2}(\bbQ_7)\subset E^{-2}_1(\bbQ_7)$. By~\eqref{eq:E1-7}, this containment is an equality and hence 
\[E^{-2}(\bbQ_7)/\bar{\lambda}=E^{-2}(\bbQ_7)/E^{-2}_1(\bbQ_7)\cong\tilde{E}^{-2}(\bbF_7)\cong \bbZ/7\bbZ.\]
Moreover, $\lambda \tilde{E}^{-2}(\bbF_7)=\tilde{E}^{-2}(\bbF_7)$ and hence 
\[E^{-2}(\bbQ_7)=\lambda E^{-2}(\bbQ_7)+E^{-2}_1(\bbQ_7),\]
whereby~\eqref{eq:E1-7} gives
\[E^{-2}(\bbQ_7)/\lambda=E^{-2}_1(\bbQ_7)/\lambda\cong \bbZ/7\bbZ. \qedhere\] 
\end{proof}

\begin{corollary}\label{cor:surj7}
For $a\in\bbQ^\times$, let $E^a$ denote the ellliptic curve over $\bbQ$ with affine equation $y^2=x^3+a$. Let $c,d\in\bbQ^\times$ satisfy $-2^4\cdot 7^{-1}\cdot cd\in\langle -3^3\rangle\bbQ^{\times 6}$. Then there exist $P\in E^c(\bbQ_7)$ and $Q\in E^d(\bbQ_7)$ such that $\chi_P\cup\varphi_*(\chi_Q)$ is non-zero.
\end{corollary}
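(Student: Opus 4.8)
The plan is to deduce this from the non-degeneracy of the pairing~\eqref{eq:pairing2}, exactly as in the proofs of Corollary~\ref{cor:surj-2} and Corollary~\ref{cor:surj-11}, but now with a small case split dictated by the three cases of Proposition~\ref{prop:subspace}. Since $7\equiv 1\bmod 3$, the prime $\ell=7$ splits in $K=\bbQ(\zeta_3)$, and we take $\lambda=3+2\zeta_3$ as there. Writing $E=E^c$, $E'=E^d$, recall from Section~\ref{sec:split} that $E^a(\bbQ_7)/7=E^a(\bbQ_7)/\lambda\oplus E^a(\bbQ_7)/\bar\lambda$ with $E^a(\bbQ_7)/\lambda\subset\mathrm{H}^1(\bbQ_7,E^a_\lambda)$; so if one lifts classes $\xi\in E^c(\bbQ_7)/\lambda$ and $\eta\in E^d(\bbQ_7)/\lambda$ to points $P\in E^c(\bbQ_7)$, $Q\in E^d(\bbQ_7)$ with vanishing $\bar\lambda$-component, then $\chi_P\in\mathrm{H}^1(\bbQ_7,E^c_\lambda)$ and $\chi_Q\in\mathrm{H}^1(\bbQ_7,E^d_\lambda)$, the $\mathrm{H}^1(\bbQ_7,E^c_\lambda)$-component of $\varphi_*(\chi_Q)=\varphi'_*(\chi_Q)$ vanishes by isotropy, and $\chi_P\cup\varphi_*(\chi_Q)$ equals the image of $(\xi,\eta)$ under~\eqref{eq:pairing2}. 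Hence it suffices to exhibit a non-zero $\xi\in E^c(\bbQ_7)/\lambda$ and a non-zero $\eta\in E^d(\bbQ_7)/\lambda$ that pair non-trivially under the non-degenerate (in fact perfect) pairing~\eqref{eq:pairing2}.

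Next I would extract two consequences of Proposition~\ref{prop:subspace}: first, for every $a\in\bbQ_7^\times$ the group $E^a(\bbQ_7)/\lambda$ is non-zero; second, if $a\notin -2\cdot\bbQ_7^{\times 6}$ then $E^a(\bbQ_7)/\lambda=\mathrm{H}^1(\bbQ_7,E^a_\lambda)$ (this is cases~\eqref{item14} and~\eqref{itemelse}; only case~\eqref{item-2} fails to fill the whole $\mathrm{H}^1$). The crucial point is then that at least one of $c,d$ lies outside $-2\cdot\bbQ_7^{\times 6}$: if both $c$ and $d$ lay in $-2\cdot\bbQ_7^{\times 6}$, then $-2^4\cdot 7^{-1}\cdot cd$ would lie in $-2^6\cdot 7^{-1}\cdot\bbQ_7^{\times 6}$, whose elements have $7$-adic valuation $\equiv 5\pmod 6$, whereas the hypothesis forces $-2^4\cdot 7^{-1}\cdot cd\in\langle-3^3\rangle\bbQ^{\times 6}\subseteq\langle-3^3\rangle\bbQ_7^{\times 6}$, every element of which has $7$-adic valuation $\equiv 0\pmod 6$ (as $-3^3$ is a $7$-adic unit); contradiction.

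Finally I would conclude as follows. Say $c\notin -2\cdot\bbQ_7^{\times 6}$; the case $d\notin -2\cdot\bbQ_7^{\times 6}$ is identical, using non-degeneracy of~\eqref{eq:pairing2} in its other variable. Then $E^c(\bbQ_7)/\lambda$ is all of $\mathrm{H}^1(\bbQ_7,E^c_\lambda)$, while $E^d(\bbQ_7)/\lambda$ is at least non-zero; choosing any non-zero $\eta\in E^d(\bbQ_7)/\lambda$ and invoking the non-degeneracy of~\eqref{eq:pairing2} produces $\xi\in\mathrm{H}^1(\bbQ_7,E^c_\lambda)=E^c(\bbQ_7)/\lambda$ pairing non-trivially with $\eta$, and lifting $\xi,\eta$ to points $P\in E^c(\bbQ_7)$, $Q\in E^d(\bbQ_7)$ as in the first paragraph gives $\chi_P\cup\varphi_*(\chi_Q)\neq 0$. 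The only genuinely new ingredient beyond Proposition~\ref{prop:subspace} is the $7$-adic valuation argument of the second paragraph excluding the case that both $c$ and $d$ lie in $-2\cdot\bbQ_7^{\times 6}$ — this is precisely where the lone factor of $7$ in the hypothesis $-2^4\cdot 7^{-1}\cdot cd\in\langle-3^3\rangle\bbQ^{\times 6}$ is used; everything else is routine $\bbF_7$-linear algebra. I expect this valuation check to be the only place requiring any care.
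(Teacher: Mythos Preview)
Your proof is correct and follows essentially the same approach as the paper's: both arguments reduce to Proposition~\ref{prop:subspace} together with the non-degeneracy of the pairing~\eqref{eq:pairing2}, and both use the hypothesis on $cd$ to exclude the problematic configuration where neither of $E^c(\bbQ_7)/\lambda$, $E^d(\bbQ_7)/\lambda$ fills its ambient $\mathrm{H}^1$. The paper organises the case split via the biconditional $c\in 2\cdot 7\cdot\bbQ_7^{\times 6}\iff d\in -2\cdot\bbQ_7^{\times 6}$, whereas you argue directly by $7$-adic valuation that $c$ and $d$ cannot both lie in $-2\cdot\bbQ_7^{\times 6}$; your formulation is marginally more streamlined, but the content is the same.
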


\begin{proof}
First note that the relation $-2^4\cdot 7^{-1}\cdot cd\in\langle -3^3\rangle\bbQ^{\times 6}$ implies that $c\in 2\cdot 7\cdot \bbQ_7^{\times 6}$ if and only if $d\in -2 \cdot \bbQ_7^{\times 6}$.
Suppose that $c\in 2\cdot 7\cdot \bbQ_7^{\times 6}$. Then $d\in -2 \cdot \bbQ_7^{\times 6}$ and the result follows from the non-degeneracy of~\eqref{eq:pairing2}, with $\ell=7$ and $\lambda=1-2\zeta_3^2$, and Proposition~\ref{prop:subspace}\eqref{item14} and~\eqref{item-2}. The proof in the case where $c\in -2 \cdot \bbQ_7^{\times 6}$ follows by symmetry in $c$ and $d$.
In the remaining case, where $c$ and $d$ are in neither $2\cdot 7\cdot \bbQ_7^{\times 6}$ nor $-2 \cdot \bbQ_7^{\times 6}$, the result follows from the non-degeneracy of~\eqref{eq:pairing2} and Proposition~\ref{prop:subspace}\eqref{itemelse}.
\end{proof}

The remaining case is where $K=\bbQ(i)$ and~\eqref{ISgp} together with the splitting condition gives $\ell=5$ and $Y=\Kum(E^{m_1}\times E^{m_2})$ where $m_1,m_2\in\bbQ^\times$ satisfy $5^3 m_1 m_2\in \langle -4\rangle\bbQ^{\times 4}$ and $E^m/\bbQ$ is the elliptic curve with affine equation $y^2=x^3-mx$. This case was tackled by Ieronymou and Skorobogatov in~\cite{I-S}, en route to their treatment of diagonal quartic surfaces. In particular,~\cite[Proposition~5.7 and Corollary~5.8]{I-S} shows that if $m_1$ and $m_2$ are not in $(1\pm 2i)\bbQ_5^{\times 4}$ then there exist $P\in E^{m_1}(\bbQ_5)$ and $Q\in E^{m_2}(\bbQ_5)$ such that $\chi_P\cup\varphi_*(\chi_Q)$ is non-zero. If $m_1$ or $m_2$ is in $(1\pm 2i)\bbQ_5^{\times 4}$ then replace $m_1$ and $m_2$ by $5^2m_1$ and $5^2m_2$, respectively. Now the relation $5^3 m_1 m_2\in \langle -4\rangle\bbQ^{\times 4}$ implies that $5^2m_1$ and $5^2m_2$ are not in $(1\pm 2i)\bbQ_5^{\times 4}$ and one can apply~\cite[Proposition~5.7 and Corollary~5.8]{I-S} as before. The Kummer surface $Y$ is unchanged when we replace $m_1$ and $m_2$ by $5^2m_1$ and $5^2m_2$, respectively, because this simply amounts to replacing each of $E^{m_1}$ and $E^{m_2}$ by their quadratic twists by $5\in\bbQ^\times/\bbQ^{\times 2}$. We have seen previously (in the proof of Theorem~\ref{thm:const}) that this does not change the Kummer surface.

At this stage, thanks to Proposition~\ref{prop:suffice}, Corollaries~\ref{cor:surj-2},~\ref{cor:surj-11},~\ref{cor:surj7} and the above discussion for $K=\bbQ(i)$ and $\ell=5$, we have proved Theorem~\ref{thm:mainWA} in all cases where $\ell$ splits in $K$.

\subsection{The case of $\ell$ inert in $K/\bbQ$}\label{sec:inert}
This is the hardest case. We will follow the work of Ieronymou and Skorobogatov in~\cite{IScorrigendum} and use a result of Harpaz and Skorobogatov (Proposition~\ref{prop:HS} below) to reduce the proof of Theorem~\ref{thm:mainWA} in this case to the task of showing that the function fields of torsors associated to certain elements of $\mathrm{H}^1(\bbQ_5, E_{5})$ are not isomorphic. We will avoid difficult calculations with these totally wildly ramified extensions of degree $25$ by using quadratic twists to obtain function fields with distinct discriminants.

 By Lemma~\ref{lem:varphigen} and our assumptions in the statement of Theorem~\ref{thm:mainWA} that $\ell$ is odd, and if $K=\bbQ(\zeta_3)$ then $\ell>3$, we have excluded all cases where $\ell$ ramifies in $K/\bbQ$. For $K=\bbQ(\zeta_3)$, the evaluations at $3$-adic points of Brauer group elements of orders $3$ and $9$  will be studied in future work.

By Lemma~\ref{lem:varphigen} and Theorems~\ref{thm:mainquad},~\ref{thm:intro57} and~\eqref{ISgp}, the only cases where the order $\ell$ of our Brauer group element is inert in the CM field $K$ are when $\ell=5$ and $K=\bbQ(\zeta_3)$, and when $\ell=3$ and $K=\bbQ(i)$.

The case where $K=\bbQ(i)$ and $\ell=3$ was tackled by Ieronymou and Skorobogatov in~\cite{IScorrigendum}. In this case,~\eqref{ISgp} gives $Y=\Kum(E^{m_1}\times E^{m_2})$ where $m_1,m_2\in\bbQ^\times$ satisfy $-3 m_1 m_2\in \langle -4\rangle\bbQ^{\times 4}$ and $E^m/\bbQ$ is the elliptic curve with affine equation $y^2=x^3-mx$. By~\cite[Proposition~2.2]{IScorrigendum}, if the $3$-adic valuations of $m_1$ and $m_2$ are both non-zero modulo $4$, then there exist $P\in E^{m_1}(\bbQ_3)$ and $Q\in E^{m_2}(\bbQ_3)$ such that $\chi_P\cup\varphi_*(\chi_Q)\neq 0$. If the $3$-adic valuation of $m_1$ or $m_2$ is zero modulo $4$, then we can replace $m_1$ and $m_2$ by $3^2m_1$ and $3^2m_2$, respectively, which does not change the Kummer surface $Y$, and then apply~\cite[Proposition~2.2]{IScorrigendum}. Now applying Proposition~\ref{prop:suffice} completes the proof of Theorem~\ref{thm:mainWA} in this case. 

Note the qualitative difference in behaviour between the Kummer surfaces $Y$ and the closely related diagonal quartic surfaces studied by Ieronymou and Skorobogatov. In~\cite[Theorem~2.3]{IScorrigendum}, the authors show that for some diagonal quartic surfaces, a Brauer group element of order $3$ has constant evaluation on $3$-adic points, while for others it attains all three possible values.

The remainder of this section is devoted to proving the last remaining case of Theorem~\ref{thm:mainWA}, wherein $K=\bbQ(\zeta_3)$ and $\ell=5$. Henceforth, for $a\in\bbQ^\times$, we use $E^a$ to denote the elliptic curve over $\bbQ$ with affine equation $y^2=x^3+a$. Theorems~\ref{thm:intro57} and~\ref{thm:Br1} give $Y=\Kum(E^c\times E^d)$ where $c,d\in\bbQ^\times$ satisfy $2^4\cdot 5\cdot cd\in\langle -3^3\rangle\bbQ^{\times 6}$.
The work below should be compared to~\cite{IScorrigendum}.

We begin by showing that the image of $E^a(\bbQ_5)$ in $\mathrm{H}^1(\bbQ_5, E^a_{5})$ is a one-dimensional vector space over $\bbF_5$.

\begin{lemma}\label{lem:1dim}
Let $a\in\bbQ_5^\times$. Then $E^a(\bbQ_5)/5\cong \bbZ/5\bbZ$ as abelian groups.
\end{lemma}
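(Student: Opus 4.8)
The plan is to reduce the statement to the vanishing of the $5$-torsion $E^a(\bbQ_5)[5]$, and then to deduce that vanishing from the complex multiplication together with the fact that $5$ is inert in $\bbQ(\zeta_3)$.

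\emph{Step 1 (structural reduction).} Since $5>2$, the formal logarithm identifies $E^a_1(\bbQ_5)=\hat{E}^a(5\bbZ_5)$ with $(5\bbZ_5,+)\cong\bbZ_5$, which is a torsion-free pro-$5$ group of $\bbZ_5$-rank $1$ and is open of finite index in the profinite abelian group $E^a(\bbQ_5)$. Decomposing $E^a(\bbQ_5)$ into its pro-$p$ parts, the pro-$5$ part is a finitely generated $\bbZ_5$-module containing $E^a_1(\bbQ_5)$ with finite index, hence isomorphic to $\bbZ_5\times F$ with $F=E^a(\bbQ_5)[5^\infty]$ finite, while the prime-to-$5$ part is finite. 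Therefore $E^a(\bbQ_5)/5\cong\bbZ/5\times F/5\cong\bbZ/5\times E^a(\bbQ_5)[5]$, so it suffices to show $E^a(\bbQ_5)[5]=0$. (When $E^a$ has good reduction at $5$ this is elementary: the reduction map has torsion-free kernel, so $E^a(\bbQ_5)[5]\hookrightarrow\tilde E^a(\bbF_5)$, and since $x\mapsto x^3$ is a bijection on $\bbF_5$ one computes $|\tilde E^a(\bbF_5)|=6$; the remaining, additive-reduction case is where the work lies.)

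\emph{Step 2 (vanishing of $E^a(\bbQ_5)[5]$).} Suppose for contradiction that $P\in E^a(\bbQ_5)$ has exact order $5$. As $5\equiv 2\bmod 3$, the prime $5$ is inert in $K=\bbQ(\zeta_3)$, so $\cO_K/5\cO_K\cong\bbF_{25}$ and $M:=\bbQ_5(\zeta_3)$ is the unramified quadratic extension of $\bbQ_5$. The endomorphism $[\zeta_3]\colon (x,y)\mapsto(\zeta_3 x,y)$ is defined over $M$, so $E^a$ has CM by $\cO_K$ over $M$. Since $[5]$ kills $E^a[5]$, the group $E^a[5]$ is a module over $\cO_K/5\cO_K=\bbF_{25}$; having $25$ elements, it is a one-dimensional $\bbF_{25}$-vector space, whence $E^a[5]=\cO_K\cdot P$. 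The $\cO_K$-action being defined over $M$ and $P$ lying in $E^a(\bbQ_5)\subseteq E^a(M)$, we get $E^a[5]\subseteq E^a(M)$, i.e.\ $E^a$ attains full level-$5$ structure over $M$. By Galois-equivariance and non-degeneracy of the Weil pairing $E^a[5]\times E^a[5]\to\mu_5$, this forces $\mu_5\subseteq M$. But $[\bbQ_5(\mu_5):\bbQ_5]=4$ does not divide $[M:\bbQ_5]=2$, a contradiction. Hence $E^a(\bbQ_5)[5]=0$ and, by Step 1, $E^a(\bbQ_5)/5\cong\bbZ/5$.

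\emph{Main obstacle.} The only real content is ruling out a $5$-torsion point in the additive-reduction case; the local structure theory in Step 1 is routine. The crucial feature — which makes the argument special to $\ell=5$ and explains why the analogue fails for $\ell=7$, where $7$ splits in $\bbQ(\zeta_3)$ and one genuinely sees $E^a(\bbQ_7)/7\cong(\bbZ/7\bbZ)^2$, cf.\ Proposition~\ref{prop:subspace}\eqref{item14} — is that over a field carrying the full CM a single nonzero $\ell$-torsion point generates all of $E^a[\ell]$, so its existence propagates to full level-$\ell$ structure over the \emph{unramified} quadratic extension $\bbQ_5(\zeta_3)$, which is too small to contain $\mu_5$. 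One could instead argue via an explicit determination of $E^a_0(\bbQ_5)$ using~\cite[Theorem~1]{Rene}, but then the split-versus-nonsplit subtlety of the extension $0\to E^a_1(\bbQ_5)\to E^a_0(\bbQ_5)\to\tilde E^a_{\mathrm{ns}}(\bbF_5)\to 0$ would have to be resolved by hand, which the Weil pairing argument circumvents uniformly.
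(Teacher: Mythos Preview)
Your proof is correct and takes a genuinely different route from the paper's. The paper argues by cases on the reduction type: it notes that the Tamagawa number is coprime to $5$, then for additive reduction invokes~\cite[Theorem~1]{Rene} to describe $E^a_0(\bbQ_5)$ explicitly, and for good reduction computes $|\tilde{E}^a(\bbF_5)|=6$ and uses the formal group for $E^a_1(\bbQ_5)$. Your argument instead reduces uniformly to the vanishing of $E^a(\bbQ_5)[5]$ via the structure theory of $E^a(\bbQ_5)$ as a compact $5$-adic Lie group, and then dispatches that vanishing with the observation that a single rational $5$-torsion point would generate all of $E^a[5]$ over the unramified quadratic extension $\bbQ_5(\zeta_3)$ (because $5$ is inert in $\bbQ(\zeta_3)$), forcing $\mu_5\subset\bbQ_5(\zeta_3)$ by the Weil pairing, which is impossible on degree grounds. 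The paper's approach is more hands-on and does not lean on the CM structure; yours is more conceptual, avoids the external citation to~\cite{Rene}, treats both reduction types at once, and makes transparent why the analogous statement fails for $\ell=7$ (where $7$ splits in $\bbQ(\zeta_3)$).
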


\begin{proof}
An inspection of Tate's algorithm shows that $E^a/\bbQ_5$ has either good or additive reduction and in all cases the Tamagawa number $|E^a(\bbQ_5)/E^a_0(\bbQ_5)|$ is coprime to $5$. So it is enough to show that $E^a_0(\bbQ_5)/5\cong \bbZ/5$. In the case of additive reduction, this follows from~\cite[Theorem~1]{Rene}. Now suppose that $E^a/\bbQ_5$ has good reduction. Then we compute $E^a(\bbQ_5)/E^a_1(\bbQ_5)\cong \tilde{E^a}(\bbF_5)\cong \bbZ/6\bbZ$, so it suffices to show that $E^a_1(\bbQ_5)/5\cong \bbZ/5\bbZ$. The theory of formal groups gives $E^a_1(\bbQ_5)\cong 5\bbZ_5$.
\end{proof}

\begin{proposition}\label{prop:suffice5}
Let $c,d\in\bbQ^\times$ be such that $2^4\cdot 5\cdot cd\in\langle -3^3\rangle\bbQ^{\times 6}$.
Let $P$ and $Q$ generate $E^c(\bbQ_5)/5$ and $E^d(\bbQ_5)/5$, and let $\chi_P$ and $\chi_Q$ denote their images in $\mathrm{H}^1(\bbQ_5, E^c_{5})$ and $\mathrm{H}^1(\bbQ_5, E^d_{5})$, respectively.
To prove Theorem~\ref{thm:mainWA} for $Y=\Kum(E^c\times E^d)$ and $\ell=5$, it suffices to show that $\varphi_*(\chi_Q)$ is not a scalar multiple of $\chi_P$.
\end{proposition}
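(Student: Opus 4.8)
The plan is to deduce this from Proposition~\ref{prop:suffice} by a short linear-algebra argument exploiting the self-duality of $E^c_5$. Unlike the split case treated in Section~\ref{sec:split}, here $5$ is inert in $K=\bbQ(\zeta_3)$, so $E^c_5$ does not decompose $\Gamma_{\bbQ_5}$-equivariantly into eigenspaces; instead the key input is the one-dimensionality of the local Kummer image supplied by Lemma~\ref{lem:1dim}. By Proposition~\ref{prop:suffice}, to prove Theorem~\ref{thm:mainWA} for $Y=\Kum(E^c\times E^d)$ and $\ell=5$ it suffices to produce $P\in E^c(\bbQ_5)$ and $Q\in E^d(\bbQ_5)$ with $\chi_P\cup\varphi_*(\chi_Q)\neq 0$ in $\Br(\bbQ_5)_5\cong\bbF_5$, where $\cup$ is the pairing~\eqref{eq:cup} on $\mathrm{H}^1(\bbQ_5, E^c_5)$. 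The map $\chi$ factors through $E^c(\bbQ_5)/5$ and $E^d(\bbQ_5)/5$, each of which is isomorphic to $\bbZ/5$ by Lemma~\ref{lem:1dim} and embeds as a line in the respective $\mathrm{H}^1$; so if $P$ and $Q$ generate $E^c(\bbQ_5)/5$ and $E^d(\bbQ_5)/5$, then $\chi_P$ and $\chi_Q$ are nonzero. Since $\varphi$ is an isomorphism by Lemma~\ref{lem:varphigen}, the induced map $\varphi_*$ on $\mathrm{H}^1(\bbQ_5,-)$ is an isomorphism, and hence $\varphi_*(\chi_Q)$ is a nonzero element of $\mathrm{H}^1(\bbQ_5, E^c_5)$.

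Next I would use the standard fact, already invoked in Section~\ref{sec:split}, that the image of $E^c(\bbQ_5)/5$ in $\mathrm{H}^1(\bbQ_5, E^c_5)$ is a maximal isotropic subspace for the non-degenerate pairing~\eqref{eq:cup}. As this image is one-dimensional by Lemma~\ref{lem:1dim}, it follows that $\mathrm{H}^1(\bbQ_5, E^c_5)$ is two-dimensional over $\bbF_5$ and that the line $\bbF_5\,\chi_P$ coincides with its own orthogonal complement; equivalently, for $w\in\mathrm{H}^1(\bbQ_5, E^c_5)$ one has $\chi_P\cup w=0$ if and only if $w\in\bbF_5\,\chi_P$. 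Taking $w=\varphi_*(\chi_Q)$, which is nonzero, we conclude that $\chi_P\cup\varphi_*(\chi_Q)\neq 0$ exactly when $\varphi_*(\chi_Q)$ is not a scalar multiple of $\chi_P$. Since rescaling the generators $P$ and $Q$ only rescales $\chi_P$ and $\varphi_*(\chi_Q)$ inside the fixed lines they span, this condition is independent of the choice of generators; hence verifying it for one choice produces $P,Q$ as required by Proposition~\ref{prop:suffice}, proving the proposition.

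This reduction contains no real difficulty: the only points needing care are recording that the local Kummer map realises $E^c(\bbQ_5)/5$ as a maximal isotropic subspace of $\mathrm{H}^1(\bbQ_5, E^c_5)$ (a consequence of local Tate duality and the principal polarisation on $E^c$, used throughout Section~\ref{sec:split}), and noting that the resulting condition does not depend on the chosen generators. The genuinely substantive step -- actually checking that $\varphi_*(\chi_Q)$ fails to lie on the line $\bbF_5\,\chi_P$ -- is not addressed here; it is carried out in the remainder of the section by making the two lines explicit, using the structure of the groups $E^c(\bbQ_5)$ and $E^d(\bbQ_5)$ together with the Gr\"{o}ssencharakter description of $\varphi$ coming from Proposition~\ref{prop:gen}.
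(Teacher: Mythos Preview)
Your proposal is correct and follows essentially the same approach as the paper's own proof, which is a one-line citation of Proposition~\ref{prop:suffice}, Lemma~\ref{lem:1dim}, and the maximal isotropy of the Kummer image. You simply unpack the linear algebra that the paper leaves implicit: the one-dimensional maximal isotropic line $\bbF_5\chi_P$ is its own orthogonal complement in the two-dimensional space $\mathrm{H}^1(\bbQ_5,E^c_5)$, so $\chi_P\cup\varphi_*(\chi_Q)\neq 0$ exactly when $\varphi_*(\chi_Q)\notin\bbF_5\chi_P$.
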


\begin{proof}
This follows from Proposition~\ref{prop:suffice}, Lemma~\ref{lem:1dim} and the fact that the image of $E^a(\bbQ_5)/5$ in $\mathrm{H}^1(\bbQ_5, E^a_{5})$ is a maximal isotropic subspace for the pairing~\eqref{eq:cup}.
\end{proof}

 To prove that $\varphi_*(\chi_Q)$ is not a scalar multiple of $\chi_P$ (with the notation of Proposition~\ref{prop:suffice5}), we wish to apply the following special case of a result of Harpaz and Skorobogatov.
 
 \begin{proposition}[{\cite[Corollary~3.7]{HS}}]\label{prop:HS}
 Let $k$ be a field of characteristic zero 
 and let $M$ be a finite simple $\Gamma_k$-module, identified with the group of $\bar{k}$-points of
a finite \'{e}tale commutative group $k$-scheme $\cG_M$. 
 Let $K$ be the smallest extension of $k$ such that $\Gamma_K$ acts trivially on $M$, let $G=\Gal(K/k)$ and suppose that $\mathrm{H}^1(G,M)=0$. 
 Let $\alpha,\beta\in \mathrm{H}^1(k,M)$ be non-zero. Then the associated torsors $Z_\alpha$ and $Z_\beta$ for $\cG_M$ are integral $k$-schemes. Furthermore, the following conditions are equivalent:
 \begin{enumerate}
\item there exists $r \in R:=\End_G(M)$ such that $r\alpha = \beta$;
\item $R\alpha = R\beta \subset \mathrm{H}^1(k, M)$;
\item $Z_\alpha$ and $Z_\beta$ are isomorphic as abstract $k$-schemes.
\end{enumerate}
 \end{proposition}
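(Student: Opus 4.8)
The plan is to translate the whole statement into the language of $\Gamma_k$-sets. Fixing a separable closure, write $\cG_M$ for the finite étale commutative $k$-group scheme with $\cG_M(\bar{k})=M$; a torsor under $\cG_M$ is a finite étale $k$-scheme whose $\bar{k}$-points, with their $\Gamma_k$-action, form a copy of $M$ carrying a twisted action $\sigma\cdot m=\sigma(m)+c(\sigma)$ for a $1$-cocycle $c$ representing the class. Two such torsors are isomorphic as abstract $k$-schemes exactly when the underlying twisted $\Gamma_k$-sets are isomorphic (forgetting the $M$-action), so an isomorphism of $k$-schemes $Z_\alpha\cong Z_\beta$ is precisely a $\Gamma_k$-equivariant bijection $\psi\colon M\to M$, a priori \emph{not} assumed additive.

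First I would establish integrality. Since $\Gamma_K$ acts trivially on $M$ and $\mathrm{H}^1(G,M)=0$, inflation--restriction shows the restriction map $\mathrm{H}^1(k,M)\to\mathrm{H}^1(K,M)=\Hom(\Gamma_K,M)$ is injective with image contained in the $G$-invariants; let $f_\alpha\colon\Gamma_K\to M$ be the homomorphism attached to $\alpha$, so that its image is a $G$-submodule of $M$, nonzero because $\alpha\neq0$, hence all of $M$ by simplicity. Thus $f_\alpha$ is surjective, and in the twisted-set picture each $\sigma\in\Gamma_K$ acts on $Z_\alpha(\bar{k})$ by translation by $f_\alpha(\sigma)$, so $\Gamma_K$ already acts transitively; hence $Z_\alpha(\bar{k})$ is a single $\Gamma_k$-orbit and $Z_\alpha=\operatorname{Spec} k_\alpha$ for a separable field extension $k_\alpha/k$ of degree $|M|$, and likewise for $\beta$. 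The equivalence $(1)\Leftrightarrow(2)$ is then immediate from Schur's lemma: $R=\End_G(M)$ is a division ring, so a nonzero $r$ with $r\alpha=\beta$ is invertible and $R\alpha=R\beta$, while conversely $R\alpha=R\beta$ forces $\beta\in R\alpha$. For $(1)\Rightarrow(3)$, a nonzero $r\in R$ is a $\Gamma_k$-automorphism of $M$ and hence of $\cG_M$; twisting $Z_\alpha$ by $r$ (the contracted product) produces a torsor of class $r\alpha=\beta$ without changing the underlying $k$-scheme, so $Z_\alpha\cong Z_\beta$.

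The substantive direction is $(3)\Rightarrow(1)$. Given a $\Gamma_k$-equivariant bijection $\psi\colon M\to M$ between the two twisted sets, I would first restrict to $\sigma\in\Gamma_K$, where equivariance reads $\psi(m+f_\alpha(\sigma))=\psi(m)+f_\beta(\sigma)$ for all $m$; taking $\sigma\in\ker f_\alpha$ forces $f_\beta(\sigma)=0$, and the symmetric argument with $\psi^{-1}$ gives $\ker f_\alpha=\ker f_\beta$. Hence there is a unique group isomorphism $r\colon M\to M$ with $r\circ f_\alpha=f_\beta$, and since $f_\alpha$ is surjective the displayed relation becomes $\psi(m+n)=\psi(m)+r(n)$ for all $m,n\in M$, so $\psi$ is $r$ followed by translation by $\psi(0)$. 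Substituting this affine form into the full $\Gamma_k$-equivariance and subtracting off the $m=0$ case yields $r(\sigma(m))=\sigma(r(m))$ for all $\sigma$ and $m$, so $r\in\Aut_G(M)\subset R$; the $m=0$ case itself shows $c_\beta$ differs from $r\circ c_\alpha$ by the coboundary of $\psi(0)$, whence $\beta=r\alpha$ in $\mathrm{H}^1(k,M)$.

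The main obstacle is exactly the step in $(3)\Rightarrow(1)$ that promotes the merely set-theoretic $\Gamma_k$-equivariant bijection $\psi$ to an affine (hence, up to translation, additive) map. This is where one must exploit that $\Gamma_K$ acts trivially on $M$ together with the surjectivity of $f_\alpha$---itself forced by the simplicity of $M$ and $\mathrm{H}^1(G,M)=0$---so that every translation of $M$ is visible inside the Galois action and $\psi$ has no room to be non-additive. The remaining ingredients (the inflation--restriction bookkeeping, Schur's lemma, and the twisting of torsors) are routine.
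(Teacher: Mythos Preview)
Your argument is correct. Note, however, that the paper does not supply its own proof of this proposition: it is quoted as \cite[Corollary~3.7]{HS} and used as a black box in the application to Proposition~\ref{prop:disc}. Your direct cocycle-level argument---translating torsors into twisted $\Gamma_k$-sets, using simplicity together with $\mathrm{H}^1(G,M)=0$ and inflation--restriction to force the restricted homomorphisms $f_\alpha,f_\beta$ to be surjective, and then exploiting that surjectivity to upgrade the a priori set-theoretic bijection $\psi$ to an affine map---is exactly the natural approach and is essentially how the result is proved in \cite{HS} as well.
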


 Lemmas~\ref{lem:Gsame} and~\ref{lem:HSconditions} below are used to show that the hypotheses relevant for our application of Proposition~\ref{prop:HS} are satisfied. 
 
 \begin{lemma}\label{lem:Gsame}
Let $a\in\bbQ^\times$ and let $G$ and $H$ be the images of $\Gamma_{\bbQ}$ and $\Gamma_{\bbQ_5}$ in $\Aut(E^{a}_5)$, respectively. Then $[G:H]$ divides $3$.
\end{lemma}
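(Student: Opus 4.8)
The plan is to cut the problem down to the non-split Cartan subgroup of $\GL_2(\bbF_5)$ determined by the complex multiplication, reduce to a statement about subgroups of a cyclic group of order $24$, and then invoke Lubin--Tate theory together with a sextic twist to control the image of the decomposition group at $5$.

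\textbf{Set-up and reduction to the Cartan.} Since $5\equiv 2\pmod 3$, the prime $5$ is inert in $K=\bbQ(\zeta_3)$, so $\cO_K/5\cO_K\cong\bbF_{25}$ is a field and the CM action of $\cO_K$ makes $E^a_5$ a free rank-one $\bbF_{25}$-module. Hence the centraliser in $\Aut(E^a_5)=\GL_2(\bbF_5)$ of the automorphism $[\zeta_3]$ is the non-split Cartan subgroup $C\cong\bbF_{25}^\times$, cyclic of order $24$. Writing $\rho$ for the mod-$5$ representation: the endomorphism $[\zeta_3]$ is defined over $K$ and over $\bbQ_{25}:=\bbQ_5(\zeta_3)$ (the unramified quadratic extension of $\bbQ_5$), but not over $\bbQ$ or $\bbQ_5$, and $[\zeta_3]\neq[\zeta_3]^{-1}$ on $E^a_5$ because $\bbF_5^\times$ has no element of order $3$. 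I would deduce that $\rho(\sigma)\in C$ for $\sigma\in\Gamma_\bbQ$ (resp.\ $\sigma\in\Gamma_{\bbQ_5}$) exactly when $\sigma$ fixes $K$ (resp.\ $\bbQ_{25}$). This gives $G\cap C=\rho(\Gamma_K)$ with $[G:G\cap C]=2$, and $H\cap C=\rho(\Gamma_{\bbQ_{25}})$ with $[H:H\cap C]=2$; since $H\cap C\subseteq G\cap C$, it follows that $[G:H]=[G\cap C:H\cap C]$, a divisor of $|C|=24$.

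\textbf{It then suffices that $8\mid|\rho(\Gamma_{\bbQ_{25}})|$.} Granting this, $G\cap C$ and $H\cap C$ are cyclic with $8\mid|H\cap C|\mid|G\cap C|\mid 24$, so both have $2$-part of order $8$ and $[G\cap C:H\cap C]$ is odd, hence divides $3$. To bound $|\rho(\Gamma_{\bbQ_{25}})|$ I would first treat $E^1\colon y^2=x^3+1$, which has good reduction at $5$; as $5$ is inert in $K$ this reduction is supersingular, so the formal group of $E^1$ over $\cO_{\bbQ_{25}}=\cO_K\otimes\bbZ_5$ is a one-dimensional formal module of height one, i.e.\ a Lubin--Tate formal group over $\cO_{\bbQ_{25}}$ for the uniformiser $5$, with $E^1_5$ equal to its $5$-torsion. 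Lubin--Tate theory then shows that $\bbQ_{25}(E^1_5)/\bbQ_{25}$ is totally ramified with Galois group $(\cO_K/5)^\times=C$, so $\rho^{E^1}(\Gamma_{\bbQ_{25}})=C$. Now $E^a$ is a sextic twist of $E^1$, the twisting cocycle being a character $\eta\colon\Gamma_{\bbQ_{25}}\to\Aut(\bar E^1)=\cO_K^\times=\mu_6$; reducing modulo $5$ and restricting to $\Gamma_{\bbQ_{25}}$ (where $\rho^{E^a}$ and $\rho^{E^1}$ both take values in $C$) gives $\rho^{E^a}=\eta\cdot\rho^{E^1}$ as characters into $C$, with $\eta$ valued in the subgroup $\mu_6=C^4$ of fourth powers. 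Hence $\rho^{E^a}(\Gamma_{\bbQ_{25}})\cdot C^4=C$, so $\rho^{E^a}(\Gamma_{\bbQ_{25}})$ surjects onto $C/C^4\cong\bbZ/4\bbZ$; any subgroup of a cyclic group of order $24$ that surjects onto the quotient by the fourth powers contains an element of order divisible by $8$, which gives the claim.

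\textbf{Main obstacle.} The crux is the local analysis at $\ell=p=5$, where Néron--Ogg--Shafarevich and Frobenius are unavailable and one must genuinely identify the image of $\Gamma_{\bbQ_{25}}$ on $E^a_5$. The device that makes this work uniformly in $a\in\bbQ^\times$ is the sextic twist: it reduces the computation for an arbitrary $E^a$ --- including those with additive reduction at $5$, which would otherwise require a separate and more delicate Lubin--Tate/Néron-model analysis --- to the single curve $E^1$ of good supersingular reduction, where Lubin--Tate theory pins the image down exactly. The remaining ingredients (the Cartan bookkeeping and the elementary group theory in $\bbZ/24\bbZ$) are routine.
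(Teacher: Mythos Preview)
Your argument is correct and takes a genuinely different route from the paper's. The paper works globally first: using the Gr\"ossencharacter and sextic reciprocity it identifies the field $F=\bbQ(\zeta_3,\sqrt[6]{2^4\cdot 5\cdot a^2})$ cut out by the action of $\Gamma_\bbQ$ on $\End(E^a_5)$, then computes its completion $F_5=\bbQ_5(\zeta_3,\sqrt[6]{5^{2r+1}})$ at $5$ to see that the projective image $\Phi(H)$ has index dividing $3$ in $\Phi(G)$, and finishes with a short cyclic-group argument inside $\bbF_{25}^\times$ to force $\bbF_5^\times\subset H$. You instead stay entirely local: Lubin--Tate theory for the supersingular curve $E^1$ over $\bbQ_{25}$ gives $\rho^{E^1}(\Gamma_{\bbQ_{25}})=C$ outright, and the sextic twist by a $\mu_6$-valued character (landing in $C^4$) lets you transport this to an arbitrary $E^a$ without any case analysis on the reduction type. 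Both proofs end with the same elementary observation about subgroups of $\bbZ/24\bbZ$; what differs is the arithmetic input. Your approach avoids the explicit global field computation and treats the bad-reduction twists uniformly; the paper's approach, in exchange, yields the concrete description of $F$ which could be of independent use. One cosmetic point: saying the formal group of $E^1$ is Lubin--Tate ``for the uniformiser $5$'' is slightly loose --- it is Lubin--Tate for \emph{some} uniformiser of $\cO_{\bbQ_{25}}$ --- but this does not affect the conclusion, since the $5$-torsion and the extension it generates are insensitive to that choice.
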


\begin{proof}
We adapt the strategy of the proof of~\cite[Lemma~2.1]{IScorrigendum}.
Multiplying by a $6$th power, we may assume that $a\in\bbZ\setminus\{0\}$.
 Let $L=\bbQ(E^{a}_5)$, so $G=\Gal(L/\bbQ)$. 
For any $E$-primary prime $\pi\in\bbZ[\zeta_3]$ that is coprime to $2\cdot 3\cdot 5\cdot a$ and unramified in $L/\bbQ(\zeta_3)$, the action of $\Frob_\pi\in\Gamma_{\bbQ(\zeta_3)}$ on $E^{a}_5$ is given by multiplication by (the reduction modulo $5$ of) 
\begin{equation}\label{eq:Frobaction}
\pm\overline{\left(\frac{4a}{\pi}\right)}_6\pi,
\end{equation}
see~\cite[Example II.10.6]{SilvermanAdv}. Thus, the action of $\Gamma_{\bbQ(\zeta_3)}$ on $E^{a}_5$ factors through a homomorphism to $\bbF_5[\zeta_3]^\times=\bbF^\times_{25}$. 

Let $F\subset L$ be the fixed field of the kernel of the action of $\Gamma_\bbQ$ on $\End(E^{a}_5)$. The natural map $\GL(E^{a}_5)\to \PGL(E^{a}_5)$ restricts to a surjective homomorphism $\Phi: G\to\Gal(F/\bbQ)$ with kernel $\bbF_5^\times$.
By Lemma~\ref{lem:+-}, $\End(E^{a}_5)=(\bbZ[\zeta_3]/5)\oplus\End(E^{a}_5)^-$. Therefore, $\bbQ(\zeta_3)\subset F$.

The proof of Proposition~\ref{prop:gen} shows that $\Frob_\pi\in \Gal(L/\bbQ(\zeta_3))$ acts on $\End(E^{a}_5)^-$ as left-multiplication by $\left(\frac{2^4\cdot 5\cdot a^2}{\pi}\right)_6^{-1}$. By the definition of the sextic residue symbol and the fact that these Frobenius elements generate $\Gal(L/\bbQ(\zeta_3))$, this means that the action of $\Gal(L/\bbQ(\zeta_3))$ on $\End(E^{a}_5)^-$ is given by the sextic character attached to $(2^4\cdot 5\cdot a^2)^{-1}$, which sends $\sigma\in \Gamma_{\bbQ(\zeta_3)}$ to $\frac{\sqrt[6]{2^4\cdot 5\cdot a^2}}{\sigma(\sqrt[6]{2^4\cdot 5\cdot a^2})}$. Therefore, $F=\bbQ(\zeta_3,\sqrt[6]{2^4\cdot 5\cdot a^2})$.

Now fix an inclusion $\Gamma_{\bbQ_5}\subset \Gamma_{\bbQ}$. Let $K=\bbQ_5(E^{a}_5)$, so $H=\Gal(K/\bbQ_5)$. Let $F_5=\bbQ_5(\zeta_3,\sqrt[6]{2^4\cdot 5\cdot a^2})$ be the fixed field of the kernel of the action of $\Gamma_{\bbQ_5}$ on $\End(E^{a}_5)$. Write $a=5^ru$ for $u\in\bbZ_5^\times$ and use that every unit in $\bbZ_5^\times$ is a cube to see that $F_5=\bbQ_5(\zeta_3,\sqrt[6]{5^{2r+1}})$. Therefore, $[F_5:\bbQ_5(\zeta_3)]$ is either $2$ or $6$ depending on whether $r\equiv 1\bmod{3}$. Thus, $\Phi(H)=\Gal(F_5/\bbQ_5)$ has index dividing $3$ in $\Phi(G)=\Gal(F/\bbQ)$. To complete the proof, it remains to show that $\bbF_5^\times=\ker(\Phi)\subset H$.

Consider the restriction of $\Phi$ to $G':=\Gal(L/\bbQ(\zeta_3))$. This is a cyclic group of order dividing $|\bbF_{25}^\times|=24$, which contains $\bbF_5^\times$ as its unique subgroup of order $4$, and $\Phi:G'\to \Gal(F/\bbQ(\zeta_3))$ is the quotient by $\bbF_5^\times$. It suffices to show that $\bbF_5^\times\subset H':= \Gal(K/\bbQ_5(\zeta_3))$. We have 
\[H'/(H'\cap \bbF_5^\times) \cong \Phi(H')=\Gal(F_5/\bbQ_5(\zeta_3)).\]
Therefore, $[F_5:\bbQ_5(\zeta_3)]$ divides $|H'|$ (whereby $H'$ has even order) and $H'\not\subset \bbF_5^\times$. Since $H'$ is a subgroup of the cyclic group $G'$ of order dividing $24$, we can now conclude that $\bbF_5^\times\subset H'$ unless $H'$ has order $6$. Suppose for contradiction that $|H'|=6$. Then $|H'\cap \bbF_5^\times|=2$ and $[F_5:\bbQ_5(\zeta_3)]=|H'/(H'\cap \bbF_5^\times)|=3$, a contradiction.
\end{proof}

\begin{lemma}\label{lem:HSconditions}
Let $a\in\bbQ_5^\times$ 
and let $H$ denote the image of $\Gamma_{\bbQ_5}$ in $\Aut(E^{a}_5)$. 
Then 
\begin{enumerate}
\item $\mathrm{H}^1(H, E^{a}_5)=0$; \label{item:H1triv}
\item $E^{a}_5$ is a simple $\Gamma_{\bbQ_5}$-module;\label{item:simple}
\item $\End_H(E^{a}_5)=\bbF_{5}$. \label{item:EndF5}
\end{enumerate}
\end{lemma}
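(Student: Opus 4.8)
The plan is to exploit that $5$ is inert in $K=\bbQ(\zeta_3)$ (since $\left(\frac{-3}{5}\right)=-1$), so that $\bbZ[\zeta_3]/5\cong\bbF_{25}$ is a field and $E^{a}_5$ is a one-dimensional $\bbF_{25}$-vector space. Because the complex multiplication $[\zeta_3]\colon(x,y)\mapsto(\zeta_3 x,y)$ is defined over $\bbQ_5(\zeta_3)$, the group $\Gamma_{\bbQ_5(\zeta_3)}$ acts $\bbF_{25}$-linearly on $E^{a}_5$, so the restriction to $\Gamma_{\bbQ_5(\zeta_3)}$ of the mod-$5$ representation $\rho\colon\Gamma_{\bbQ_5}\to\Aut(E^{a}_5)$ is given by a character $\psi\colon\Gamma_{\bbQ_5(\zeta_3)}\to\bbF_{25}^\times$ (acting as scalar multiplication on the rank-one $\bbF_{25}$-module $E^a_5$). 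A Frobenius element $\phi$ generating $\Gamma_{\bbQ_5}/\Gamma_{\bbQ_5(\zeta_3)}$ acts semilinearly: since $\phi(\zeta_3)=\zeta_3^{5}=\zeta_3^{2}=\overline{\zeta_3}$, we have $\phi\circ[\mu]\circ\phi^{-1}=[\mu^{5}]$ for all $\mu\in\bbF_{25}$, i.e. $\phi$ conjugates the $\bbF_{25}$-action by the Frobenius of $\bbF_{25}/\bbF_5$. Write $H'=\rho(\Gamma_{\bbQ_5(\zeta_3)})=\im\psi$, a subgroup of $H$ of index dividing $2$. Note the whole setup is uniform in $a\in\bbQ_5^\times$: neither the inclusion $\rho(\Gamma_{\bbQ_5(\zeta_3)})\subseteq\bbF_{25}^\times$ nor the Weil-pairing identity used below depends on the reduction type of $E^{a}$.

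The key step is to pin down $|\im\psi|$. The Weil pairing $E^{a}_5\times E^{a}_5\to\mu_5$ identifies $\det\rho$ with the mod-$5$ cyclotomic character $\chi$, and $\det\psi(\sigma)=N_{\bbF_{25}/\bbF_5}(\psi(\sigma))=\psi(\sigma)^{6}$, so $\sigma\mapsto\psi(\sigma)^{6}$ is the restriction of $\chi$ to $\Gamma_{\bbQ_5(\zeta_3)}$. Now $\bbQ_5(\zeta_3)/\bbQ_5$ is unramified of degree $2$ while $\bbQ_5(\mu_5)/\bbQ_5$ is totally ramified of degree $4$, so these extensions are linearly disjoint and $\chi|_{\Gamma_{\bbQ_5(\zeta_3)}}$ is surjective onto $\bbF_5^\times$. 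Hence $\psi^{6}$ has image of order $4$; writing $m=|\im\psi|$, which divides $24$ since $\im\psi$ is cyclic inside $\bbF_{25}^\times$, we need $m/\gcd(m,6)=4$, forcing $m\in\{8,24\}$. In particular $|H|$ divides $48$, so $\gcd(|H|,5)=1$, and $m\nmid 4$, so $\im\psi\not\subseteq\bbF_5^\times$.

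The three assertions now follow quickly. For \eqref{item:H1triv}, $\mathrm{H}^1(H,E^{a}_5)=0$ because $|H|$ is coprime to the exponent $5$ of the $H$-module $E^{a}_5$. For \eqref{item:simple} and \eqref{item:EndF5}, pick $\lambda\in\im\psi$ with $\lambda\notin\bbF_5$; then $\bbF_5[\lambda]=\bbF_{25}$, so the centraliser of $[\lambda]$ in $\End(E^{a}_5)\cong M_2(\bbF_5)$ is exactly $\bbF_{25}$. Since $\End_{\Gamma_{\bbQ_5(\zeta_3)}}(E^{a}_5)$ contains $\bbF_{25}$ (the CM commutes with the abelian group $\rho(\Gamma_{\bbQ_5(\zeta_3)})\subseteq\bbF_{25}^\times$) and is contained in that centraliser, it equals $\bbF_{25}$; taking $\phi$-invariants and using $\phi\circ[\mu]\circ\phi^{-1}=[\mu^5]$ gives $\End_H(E^{a}_5)=\{\mu\in\bbF_{25}:\mu=\mu^5\}=\bbF_5$, proving \eqref{item:EndF5}. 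Likewise, any $\Gamma_{\bbQ_5(\zeta_3)}$-submodule of $E^{a}_5$ is stable under $[\lambda]$, hence under $\bbF_5[\lambda]=\bbF_{25}$, hence is an $\bbF_{25}$-subspace of the one-dimensional $\bbF_{25}$-space $E^{a}_5$, so it is $0$ or everything; a fortiori $E^{a}_5$ is a simple $\Gamma_{\bbQ_5}$-module, proving \eqref{item:simple}.

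The only genuinely substantive point is the order computation in the second paragraph; once $5\nmid|H|$ and $\im\psi\not\subseteq\bbF_5^\times$ are established, everything else is formal. The minor things to be careful about are that the argument must not assume good reduction at $5$ (addressed above) and that one correctly identifies $\phi$'s action with the Frobenius of $\bbF_{25}/\bbF_5$ via $\phi(\zeta_3)=\overline{\zeta_3}$.
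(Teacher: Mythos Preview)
Your proof is correct and takes a genuinely different route from the paper's. The paper first reduces to $a\in\bbZ$ and then invokes Lemma~\ref{lem:Gsame}, which compares the \emph{global} image $G=\rho(\Gamma_\bbQ)$ with the local image $H$ and shows $[G:H]\mid 3$; parts~\eqref{item:simple} and~\eqref{item:EndF5} are then obtained by exhibiting explicit elements of $H$ of the form $\Frob_\pi^3$ for suitably chosen primes $\pi$ of $\bbZ[\zeta_3]$ and computing their action directly. Your argument, by contrast, is entirely local: you use the Weil-pairing identity $\det\rho=\chi$ together with the linear disjointness of the unramified extension $\bbQ_5(\zeta_3)/\bbQ_5$ and the totally ramified extension $\bbQ_5(\mu_5)/\bbQ_5$ to force $|\im\psi|\in\{8,24\}$, from which $5\nmid|H|$ and $\im\psi\not\subset\bbF_5^\times$ drop out immediately. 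This is more self-contained and avoids both the global detour through Lemma~\ref{lem:Gsame} and the Chebotarev-style choice of auxiliary primes; the paper's approach, on the other hand, reuses machinery (Lemma~\ref{lem:Gsame}) that is already in place. One small point worth making explicit in your write-up: the step ``taking $\phi$-invariants'' for~\eqref{item:EndF5} uses that $\rho(\phi)\notin H'$, which follows since $\rho(\phi)$ conjugates $[\zeta_3]$ to $[\bar\zeta_3]\neq[\zeta_3]$ in $\bbF_{25}$ and hence cannot lie in the commutative subgroup $H'\subset\bbF_{25}^\times$.
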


\begin{proof}
Multiplying by a $6$th power in $\bbQ_5^\times$, we may assume that $a\in\bbZ\setminus\{0\}$. Let $G$ denote the image of $\Gamma_{\bbQ}$ in $\Aut(E^{a}_5)$. Recall from the proof of Lemma~\ref{lem:Gsame} that the action of $\Gamma_{\bbQ(\zeta_3)}$ on $E^{a}_5$ factors through a homomorphism to $\bbF_5[\zeta_3]^\times=\bbF^\times_{25}$. Therefore, $G$ has order dividing $2\cdot 24$. Now~\eqref{item:H1triv} is immediate since $H\subset G$ is a finite group of order coprime to $5$ and $\mathrm{H}^1(H, E^{a}_5)$ is killed by $|H|$ and by $5$.
 
The strategy of the rest of the proof is to use Lemma~\ref{lem:Gsame} to move from the action of $\Gamma_{\bbQ_5}$ on $E^{a}_5$ to that of $\Gamma_\bbQ$, and from there to the action of $\Gamma_{\bbQ(\zeta_3)}$, where we have the explicit action of Frobenius elements on $E^{a}_5$ given by~\eqref{eq:Frobaction}.
Henceforth, let $\pi\in\bbZ[\zeta_3]$ be an E-primary prime lying above a rational prime $p$ that is coprime to $2\cdot 3\cdot 5\cdot a$ and completely split in $\bbQ(\zeta_3, \sqrt[3]{4a})/\bbQ$. Then~\eqref{eq:Frobaction} shows that the action of $\Frob_\pi$ on $E^{a}_5$ is given by multiplication by the reduction modulo $5$ of $\pm\pi=x+y\zeta_3$ for some $x,y\in\bbZ$. Assume in addition that $p$ is inert in $\bbQ(\sqrt{5})/\bbQ$. Then, writing $p=\pi\bar{\pi}=x^2-xy+y^2$, we see that if $y\equiv 0\pmod{5}$ then $p\equiv x^2\mod{5}$ and hence quadratic reciprocity gives $\left(\frac{5}{p}\right)=\left(\frac{p}{5}\right)=1$, contradicting the fact that $p$ is inert in $\bbQ(\sqrt{5})/\bbQ$. Therefore, $y\not\equiv 0\pmod{5}.$ Similarly, $x(x-y)\not\equiv 0\pmod{5}.$ By Lemma~\ref{lem:Gsame}, the image of $\Frob_{\pi}^3$ in $\Aut(E^{a}_5)$ is contained in $H$. It acts on $E^{a}_5$ as multiplication by the reduction modulo $5$ of $(x+y\zeta_3)^3=x^3+y^3-3xy^2+3xy(x-y)\zeta_3$.

Let $M$ be a $\Gamma_{\bbQ_5}$-submodule of $E^{a}_5$ and suppose $T\in M$ is non-zero. We have $\Frob_\pi^3\cdot T=(x^3+y^3-3xy^2+3xy(x-y)\zeta_3)\cdot T\in M$. Since $3xy(x-y)\not\equiv 0\pmod{5}$, 
$T$ and $(x^3+y^3-3xy^2+3xy(x-y)\zeta_3)\cdot T$ form an $\bbF_5$-basis of $E^{a}_5$ and hence $M=E^{a}_5$, proving~\eqref{item:simple}.
 
For \eqref{item:EndF5}, Propositions~\ref{prop:SZhom} and~\ref{prop:gen} show that 
\[\End_{\Gamma_{\bbQ}}(E^{a}_5)=(\End(\bar{E}^a)/5)^{\Gamma_\bbQ}=(\bbZ[\zeta_3]/5)^{\Gamma_\bbQ}=\bbF_5.\]
Thus, it suffices to show that $\End_{H}(E^{a}_5)=\End_{\Gamma_{\bbQ}}(E^{a}_5)$. Clearly, 
\[\End_{\Gamma_{\bbQ}}(E^{a}_5)=\End_{G}(E^{a}_5)\subset \End_{H}(E^{a}_5)\]
so it suffices to show the reverse inclusion. Let $\varphi\in \End_{H}(E^{a}_5)$. Then for $\pi$ as above, $\varphi$ commutes with the image of $\Frob_{\pi}^3$ in $\Aut(E^{a}_5)$, which implies that $\varphi$ commutes with the image of $\zeta_3$ in $\Aut(E^{a}_5)$. Therefore, $\varphi$ commutes with the image of $\Gamma_{\bbQ(\zeta_3)}$ in $\Aut(E^{a}_5)$. Let $\tau\in \Gamma_\bbQ$ denote complex conjugation, which generates $\Gamma_\bbQ/\Gamma_{\bbQ(\zeta_3)}$. Since $\tau$ has order $2$, Lemma~\ref{lem:Gsame} shows that the image of $\tau$ in $\Aut(E^{a}_5)$ lies in $H$. Therefore, $\varphi$ commutes with $\tau$ and hence with the whole image of $\Gamma_\bbQ$ in $\Aut(E^{a}_5)$, as required.
\end{proof}

For $\chi\in \mathrm{H}^1(\bbQ_5, E^c_{5})$, write $K(\chi)$ for the function field of the $\bbQ_5$-torsor for the group $\bbQ_5$-scheme $E^c_5$ corresponding to $\chi$. Note that $K(\chi)$ is a finite extension of $\bbQ_5$.

\begin{proposition}\label{prop:disc}
Let $c,d,\chi_P,\chi_Q$ be as in Proposition~\ref{prop:suffice5}.
To prove Theorem~\ref{thm:mainWA} for $Y=\Kum(E^c\times E^d)$ and $\ell=5$, it suffices to show that $K(\chi_{P})$ is not isomorphic to $K(\chi_Q)$. In particular, it suffices to show that $\disc(K(\chi_{P}))$ is not equal to $\disc(K(\chi_{Q}))$.
\end{proposition}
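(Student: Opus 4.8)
The plan is to derive this from the Harpaz--Skorobogatov result quoted as Proposition~\ref{prop:HS}, applied over $k=\bbQ_5$ to the finite $\Gamma_{\bbQ_5}$-module $M=E^c_5=\cG_M(\overline{\bbQ}_5)$. By Proposition~\ref{prop:suffice5} it suffices to prove that $\varphi_*(\chi_Q)$ is not an $\bbF_5$-multiple of $\chi_P$ in $\mathrm{H}^1(\bbQ_5,E^c_5)$. First I would record that both classes are nonzero: $\chi_P,\chi_Q\neq 0$ because $P,Q$ generate $E^c(\bbQ_5)/5$ and $E^d(\bbQ_5)/5$, which are nontrivial by Lemma~\ref{lem:1dim}, and the descent maps $\chi$ are injective on these quotients; and $\varphi_*$ is an isomorphism on $\mathrm{H}^1$ since $\varphi$ is an isomorphism of $\Gamma_{\bbQ_5}$-modules (Lemma~\ref{lem:varphigen}), so $\varphi_*(\chi_Q)\neq 0$ as well.

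Next I would check the hypotheses of Proposition~\ref{prop:HS}. The smallest extension of $\bbQ_5$ over which $\Gamma$ acts trivially on $E^c_5$ is $\bbQ_5(E^c_5)$, with Galois group the subgroup $H\subseteq\Aut(E^c_5)$ studied in Lemma~\ref{lem:HSconditions}; that lemma supplies exactly the three facts needed, namely $\mathrm{H}^1(H,E^c_5)=0$, that $E^c_5$ is a simple $\Gamma_{\bbQ_5}$-module, and that $R:=\End_H(E^c_5)=\bbF_5$. Since $R$ is a field and $\chi_P,\varphi_*(\chi_Q)$ are both nonzero, condition~(1) of Proposition~\ref{prop:HS} (existence of $r\in R$ with $r\chi_P=\varphi_*(\chi_Q)$) is equivalent to $\varphi_*(\chi_Q)$ being a scalar multiple of $\chi_P$. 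Thus Proposition~\ref{prop:HS} yields: $\varphi_*(\chi_Q)$ is a scalar multiple of $\chi_P$ if and only if the $E^c_5$-torsors $Z_{\chi_P}$ and $Z_{\varphi_*(\chi_Q)}$ are isomorphic as $\bbQ_5$-schemes, and moreover both torsors are integral, hence each is $\operatorname{Spec}$ of a finite extension of $\bbQ_5$.

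It remains to identify $Z_{\varphi_*(\chi_Q)}$ with the torsor attached to $\chi_Q$. Pushing a torsor forward along the group-scheme isomorphism $\varphi\colon E^d_5\to E^c_5$ changes only the structure group, not the underlying $\bbQ_5$-scheme; hence $Z_{\varphi_*(\chi_Q)}\cong\operatorname{Spec}K(\chi_Q)$ as $\bbQ_5$-schemes, where $K(\chi_Q)$ is the function field of the $E^d_5$-torsor corresponding to $\chi_Q$ (the evident analogue for $E^d_5$ of the notation set up before the proposition). Combining the last two steps, $\varphi_*(\chi_Q)$ is a scalar multiple of $\chi_P$ if and only if $K(\chi_P)\cong K(\chi_Q)$ as extensions of $\bbQ_5$; so if $K(\chi_P)$ is not isomorphic to $K(\chi_Q)$, then $\varphi_*(\chi_Q)$ is not a scalar multiple of $\chi_P$, and Theorem~\ref{thm:mainWA} for $Y=\Kum(E^c\times E^d)$ and $\ell=5$ follows from Proposition~\ref{prop:suffice5}. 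The final assertion is then immediate, since the discriminant of a finite extension of $\bbQ_5$ depends only on its $\bbQ_5$-isomorphism class, so $\disc(K(\chi_P))\neq\disc(K(\chi_Q))$ forces $K(\chi_P)\not\cong K(\chi_Q)$. The only genuinely delicate point is the scheme-level identification $Z_{\varphi_*(\chi_Q)}\cong\operatorname{Spec}K(\chi_Q)$ --- that transporting a torsor along a group isomorphism is harmless for the underlying scheme --- together with lining up the hypotheses of Proposition~\ref{prop:HS} with Lemma~\ref{lem:HSconditions}; both are routine once made precise.
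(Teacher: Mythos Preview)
Your proposal is correct and follows essentially the same route as the paper: reduce via Proposition~\ref{prop:suffice5}, verify the hypotheses of Proposition~\ref{prop:HS} using Lemma~\ref{lem:HSconditions}, and then use that $\varphi$ is a group-scheme isomorphism (Lemma~\ref{lem:varphigen}) to identify the underlying scheme of $Z_{\varphi_*(\chi_Q)}$ with that of the $E^d_5$-torsor for $\chi_Q$. Your write-up is in fact more careful than the paper's in explicitly checking that $\chi_P$ and $\varphi_*(\chi_Q)$ are nonzero (needed to invoke Proposition~\ref{prop:HS}) and in flagging that $K(\chi_Q)$ is the analogue for $E^d_5$ of the notation defined only for $E^c_5$.
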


\begin{proof}
Proposition~\ref{prop:suffice5} tells us that it suffices to show that $\varphi_*(\chi_Q)$ is not a scalar multiple of $\chi_P$. By Lemma~\ref{lem:HSconditions}, we can apply Proposition~\ref{prop:HS} to see that this is equivalent to showing that the $\bbQ_5$-torsor for the group $\bbQ_5$-scheme $E^c_5$ determined by $\varphi_*(\chi_Q)$ is not isomorphic (as an abstract $\bbQ_5$-scheme) to that determined by $\chi_P$. For this, it suffices to show that the function fields of the torsors are not isomorphic as extensions of $\bbQ_5$. Since $\varphi$ is an isomorphism (by Lemma~\ref{lem:varphigen}), it follows from the construction of the pushforward that $\chi_Q$ and $\phi_*(\chi_Q)$ are isomorphic as $\bbQ_5$-schemes.
\end{proof}

Note that if we prove Theorem~\ref{thm:mainWA} with $\ell=5$ and $Y=\Kum(E^c\times E^d)$ for a given pair of rational numbers $c$ and $d$, then it also holds for all multiples of $c$ and $d$ by elements in $\bbQ^\times\cap \bbQ_5^{\times 6}$, since the relevant elliptic curves are isomorphic over $\bbQ_5$. Hence, we can reduce to considering $c,d\in\{2^i\cdot5^j\mid 0\leq i\leq 1, 0\leq j\leq 5\}$, since these elements represent all cosets in $\bbQ_5^\times/\bbQ_5^{\times 6}$. For $a\in \{2^i\cdot5^j\mid 0\leq i\leq 1, 0\leq j\leq 5\}$, Table~\ref{table:cd} records a generator $P_a$ for $E^a(\bbQ_5)/5$ and the discriminant of the function field of the torsor $\chi_{P_a}$, which we denote by $\disc(K(\chi_{P_a}))$. 
\begin{table}[h!]
\centering
\begin{tabular}{|c|c|c|}
 \hline
 $a$ & $P_a$  & $\disc(K(\chi_{P_a}))$ \\
 \hline
 $1$ & $(\frac{1}{5^2},\frac{\sqrt{1+5^6}}{5^3})$  & $5^{25}$ \\
  $5$ & $(1,\sqrt{1+5})$  & $5^{45}$  \\
  $5^2$ & $(1,\sqrt{1+5^2})$  & $5^{41}$ \\
  $5^3$ & $(1,\sqrt{1+5^3})$ & $5^{37}$ \\
  $5^4$ & $(1,\sqrt{1+5^4})$ &  $5^{33}$\\
  $5^5$ & $(1,\sqrt{1+5^5})$ & $5^{25}$\\
   $2$ & $(\frac{1}{5^2},\frac{\sqrt{1+2\cdot 5^6}}{5^3})$  & $5^{25}$ \\
  $2\cdot 5$ & $(1,\sqrt{1+2\cdot 5})$  & $5^{45}$  \\
  $2\cdot 5^2$ & $(1,\sqrt{1+2\cdot 5^2})$  & $5^{41}$ \\
  $2\cdot 5^3$ & $(1,\sqrt{1+2\cdot 5^3})$ & $5^{37}$ \\
  $2\cdot 5^4$ & $(1,\sqrt{1+2\cdot 5^4})$ &  $5^{33}$\\
  $2\cdot 5^5$ & $(1,\sqrt{1+2\cdot 5^5})$ & $5^{25}$\\
 \hline
\end{tabular}   
 \caption{} 
\label{table:cd}
\end{table}

Note that in all cases $\disc(K(\chi_{P_a}))=\disc(K(\chi_{P_{2a}}))$.

We are now ready to prove the last remaining case of Theorem~\ref{thm:mainWA}, in which $Y=\Kum(E^c\times E^d)$ for $c,d\in\bbQ^\times$ with $2^4\cdot 5\cdot cd\in\langle -3^3\rangle\bbQ^{\times 6}$, and $\ell=5$.

\begin{proof}[Completion of the proof of Theorem~\ref{thm:mainWA}]
Since $2^4\cdot 5\cdot cd\in\langle-3^3\rangle\cdot\bbQ^{\times 6}$, $2\in\bbQ_5^{\times 3}$ and $-3\cdot \bbQ_5^{\times 6}=2\cdot \bbQ_5^{\times 6}$, Table~\ref{table:cd} and the preceding discussion show that $\disc(K(\chi_{P_d}))=\disc(K(\chi_{P_{5^5c^5}}))$, where we can take $5^5c^5$ modulo $\bbQ_5^{\times 6}$. Let $S = \bbQ_5^{\times 6}\cup 2\cdot \bbQ_5^{\times 6}\cup 5^5\cdot \bbQ_5^{\times 6}\cup 2\cdot 5^5\cdot\bbQ_5^{\times 6}$. 
An inspection of Table~\ref{table:cd} shows that $\disc(K(\chi_{P_c}))$ is not equal to $\disc(K(\chi_{P_{5^5c^5}}))$ unless $c\in S$. Thus, by Proposition~\ref{prop:disc}, we have proved Theorem~\ref{thm:mainWA} for $c\notin S$. 
Now suppose that $c\in S$. Then $5^3c\notin S$ so Theorem~\ref{thm:mainWA} holds for $\Kum(E^{5^3c}\times E^{5^3d})$. Now recall that $\Kum(E^{c}\times E^{d})\cong \Kum(E^{5^3c}\times E^{5^3d})$, so the proof of Theorem~\ref{thm:mainWA} is complete.
\end{proof}

\end{document}